\theoremstyle{plain}
\newtheorem{thm}{Theorem}[section]
\newtheorem{lem}[thm]{Lemma}
\newtheorem{prop}[thm]{Proposition}
\newtheorem{rem}{Remark}[section]
\newtheorem{defn}{Definition}[section]
\numberwithin{equation}{section}
\DeclareMathOperator{\dive}{div}
\begin{document}

\title[Fujita-Kato solution of the  inhomogeneous MHD system]
{Global Fujita-Kato solutions of the incompressible inhomogeneous magnetohydrodynamic equations}

\author[f.-C. Li]{fucai Li}
\address{Department of Mathematics, Nanjing University, Nanjing
 210093, P. R. China}
\email{fli@nju.edu.cn}

\author[J.-K. Ni]{Jinkai Ni$^*$} \thanks{$^*$\! Corresponding author}
\address{Department of Mathematics, Nanjing University, Nanjing
 210093, P. R. China}
\email{602023210006@smail.nju.edu.cn}

\author[L.-Y. Shou]{Ling-Yun Shou} 
\address{Department of Mathematics, Nanjing Normal University, Nanjing
 210023, P. R. China}
\email{shoulingyun11@gmail.com}

\begin{abstract}
We investigate the incompressible inhomogeneous magnetohydrodynamic  equations in $\mathbb{R}^3$, under the assumptions that the initial density $\rho_0$ is only bounded, and the initial velocity $u_0$ and magnetic field $B_0$ exhibit critical regularities. In particular, the density is allowed to be piecewise constant with jumps. First, we establish the global-in-time well-posedness and large-time behavior of solutions to the Cauchy problem in the case that $\rho_0$ has small variations, and $u_0$ and
$B_0$ are sufficiently small in the critical Besov space $\dot{B}^{3/p-1}_{p,1}$ with $1<p<3$. Moreover, the small variation assumption on $\rho_0$ is no longer required in the case $p=2$. Then, we construct a unique global Fujita-Kato solution under the weaker condition that $u_0$ and $B_0$ are small in $\dot{B}^{1/2}_{2,\infty}$ but may be large in $\dot{H}^{1/2}$. Additionally, we show a general uniqueness result with only bounded and nonnegative density, without assuming the $L^1(0,T;L^{\infty})$ regularity of the velocity. Our study systematically addresses the global solvability of the inhomogeneous  magnetohydrodynamic  equations with rough density in the critical regularity setting.


\end{abstract}

\keywords{Incompressible inhomogeneous MHD equations, discontinuous density, critical regularity,  Fujita-Kato solution, uniqueness.  }

\subjclass[2020]{76D03, 35Q30, 35Q35.}

\maketitle

\setcounter{equation}{0}
 \indent \allowdisplaybreaks

\section{Introduction }\label{Sec:intro-resul}

The magnetohydrodynamic (MHD) equations are widely used to describe the evolution of electrically conducting fluids such as plasmas, liquid metals and electrolytes, and have played an important role in understanding various phenomena in geophysics, cosmology and engineering. In these applications, Maxwell’s equations of electromagnetism are coupled with the Navier-Stokes equations with Lorentz force generated by the magnetic field, where the displacement current is typically neglected (see, e.g. \cite{biskamp1,davidson1,laudau1}). Although they share similarities with the Navier-Stokes equations, the MHD equations constitute a coupled and highly interactive system with more complex structures, making the study of their dynamics challenging.

In this paper, we address the global well-posedness and stability of the incompressible inhomogeneous MHD equations  in $\mathbb{R}^3$:
\begin{equation}\label{I-1}
\left\{
\begin{aligned}
& \partial_t \rho+u\cdot\nabla\rho =0, \\
& \rho (\partial_t u+u\cdot \nabla u)-\mu\Delta u+\nabla P=B\cdot\nabla B, \\
& \partial_t B+u\cdot\nabla B-\nu\Delta B=B\cdot\nabla u,\\
& \dive u=\dive B=0,
\end{aligned}\right.
\end{equation}
where $\rho=\rho(t,x)\geq 0$, $u=(u_1,u_2,u_3)(t,x)$, $B=(B_1,B_2,B_3)(t,x)$, and $P=P(x,t)$, stand for
the fluid density, the fluid velocity field, the magnetic field, and the fluid pressure,  respectively, with $(t,x)\in \mathbb{R}_+\times\mathbb{R}^3$.

We consider the Cauchy problem of the MHD system
\eqref{I-1} supplemented with the initial data
\begin{align}
 (\rho,u,B)(0,x)=(\rho_0,u_0,B_0)(x),\quad\quad x\in\mathbb{R}^3.\label{d}
\end{align}

\vspace{3mm}

So far, significant progresses have been made regarding the global existence, regularity and stability of the MHD system \eqref{I-1}. When $\rho$ is a constant, indicating that the fluid is homogeneous, the first pioneering work was studied by Duraut and Lions \cite{DL-ARMA-1972}. They established the local existence and uniqueness of strong solutions in the Sobolev spaces $H^s(\mathbb{R}^d)$ with $s \geq d$ and proved the global existence for small initial data. Subsequently, Sermange and Temam \cite{sermange1} investigated the large-time properties of solutions and established the global well-posedness for large data in two dimensions.
Recent advancements on regularity and singularity criteria can be found in  \cite{wu1,wu2,chenmiaozhang1,hexin1,Zhou0}. 
Concerning the global regularity and large-time asymptotics of solutions on the MHD equations with reduced dissipation or close to non-zero equilibrium states, 
interested readers can
refer to \cite{DT-CPDE-2021,LZ-CPAM-2014,CW-Adv-2011,LXZ-JDE-2015,chemin1,he1} and the references therein.

When the variable density is considered, i.e. the fluid is inhomogeneous, the system becomes more complicated due to the influence of the density. Gerbeau and Le Bris \cite{Gerbeau1} proved the global-in-time existence of finite energy weak solutions  with density-dependent viscosity coefficients and general initial data in a bounded domain (see also the work of Desjardins and Le Bris \cite{Desjardins1}). For the ideal (i.e. inviscid and non-resistive) case, the local well-posedness of classical or strong solutions has been established by many authors, cf., for example,
Schmidt \cite{Schmidt1}, Secchi \cite{Secchi1}, and Zhou and Fan \cite{Zhou1}. In absence of vacuum, Li and Wang \cite{LiWang1} obtained the existence and uniqueness of local strong solutions to the inhomogeneous viscous MHD system with general 
initial data in Sobolev spaces and proved the global existence as well as global strong solutions with small
initial data. They also gave a weak-strong uniqueness result. 
When the vacuum appears, the local well-posedness of strong solutions was investigated by Chen, Tan and Wang \cite{ChenTan1} under some compatibility conditions, and the global existence of strong solutions in the 2D case was obtained by Huang and Wang \cite{HuangWang1} and L\"u, Xu and Zhong \cite{LuXuZhong1}. In the case where the density is only bounded from above and below but without the assumption of small variation, Gui \cite{Ggl-JFA-2014} constructed a unique global solution for the 2D MHD system with variable electrical conductivity under the assumption $u_0, B_0\in H^s$ with $s>0$ (see also the 3D case by Chen-Guo-Zhai \cite{CGZ-KRM-2019} with $u_0, B_0\in H^s$, $s>1/2$).

There are many studies on the problem of solvability for the inhomogeneous MHD system \eqref{I-1} in $\mathbb{R}^d$ ($d\geq2$) in the so-called “critical regularity" setting. The key ingredient, originating  from the work of Fujita and
Kato \cite{fujita1} for the incompressible Navier-Stokes equations with constant density, is that the “optimal” functional spaces for
well-posedness of \eqref{I-1} obey the scaling invariances:
\begin{equation*}
\begin{aligned}
(\rho_{\lambda}, u_{\lambda},B_{\lambda},  P_{\lambda})(t,x)\leadsto (\rho, \lambda^2 u,  \lambda^2 B,  \lambda P)(\lambda^2 t, \lambda x),\label{scaling1}
\end{aligned}
\end{equation*}
and the corresponding scaling invariances for the initial data: 
\begin{equation*}
\begin{aligned}
(\rho_{0,\lambda}, u_{0,\lambda},B_{0,\lambda})(x)\leadsto (\rho_0, \lambda^2 u_0, \lambda^2 B_0)(\lambda x).\label{scaling2}
\end{aligned}
\end{equation*}
There are several possible choices for critical spaces of initial data, for example, $L^{\infty}(\mathbb{R}^d)\times \dot{H}^{-1+d/2}\times \dot{H}^{-1+d/2}$ (Fujita-Kato space) or Besov spaces $\dot{B}^{d/p}_{p,r}\times \dot{B}_{p,r}^{-1+d/p}\times \dot{B}^{-1+d/p}_{p,r}$. 
The global existence of solutions in critical spaces for the  system \eqref{I-1} was firstly established by Abidi  and Hmidi \cite{AH-AMBP-2007} (see also Abidi and Paicu \cite{AP-PRSESA-2008}). They assumed  that the initial density is close to some constant state in $\dot{B}^{d/p}_{p,1}$ and that the
initial velocity and magnetic fields are suitably small in $\dot{B}^{-1+d/p}_{p,1}$. For more recent advances on the well-posedness of solutions to \eqref{I-1} in critical spaces,  interested readers can see \cite{CLX-DCDS-2016,Ggl-JFA-2014,ZY-JDE-2017,Tan1} and the references therein.

When the effect of the magnetic field is neglected, i.e. $B = 0$, the system \eqref{I-1} reduces to the well-known incompressible inhomogeneous  Navier-Stokes system:
\begin{equation}\label{I-2}
\left\{
\begin{aligned}
& \partial_t \rho + u \cdot \nabla \rho = 0, \\
& \rho (\partial_t u + u \cdot \nabla u) - \mu \Delta u + \nabla P = 0, \\
& \dive u = 0,
\end{aligned}
\right.
\end{equation}
which arises in the theory of geophysical flows where the fluid is incompressible but has variable density, as in oceans or rivers. The system \eqref{I-2} has been studied extensively with many significant results. The existence of smooth solutions away from vacuum was established in, e.g. \cite{LS-ZMSLOMIS-1975,Dr-2004-adv}, whereas the theory of global weak solutions with finite energy was developed in the book by Lions \cite{Lions-book-1996} (see also earlier works \cite{Kazhihov,simon}). When the vacuum is excluded, and the perturbation of the initial density $\rho_0$ and the initial velocity $u_0$ belong to the critical spaces $\dot{B}^{d/p}_{p,1}$ and $\dot{B}^{d/p-1}_{p,1}$, respectively,  the well-posedness issue in $\mathbb{R}^d$ ($d\geq2$) has been studied in many works \cite{Ah-RMI-2007,DM-CPAM-2012, AP-AIF-2007, Dr-PRSES-2003, HPZ-ARMA-2013,AG-ARMA-2021,Xh-JDE-2022}. When the vacuum appears, the corresponding global well-posedness of strong solutions was recently established by \cite{craig1,he1,heli1,LuXuZhong1}. For rough and discontinuous density that only has lower and upper bounds, Zhang \cite{Zp-Adv-2020} established the global existence of Fujita-Kato type solutions to the system \eqref{I-2} under the condition that the initial velocity $u_0$ is small in $\dot{B}_{2,1}^{1/2}$. Subsequently, Danchin and Wang \cite{DW-CMMP-2023} provided the uniqueness of solutions constructed in \cite{Zp-Adv-2020} and showed a new global existence result for $u_0$ in $\dot{B}^{-1+d/p}_{p,1}$ and $\rho_0$ close to some positive constant. Very recently, Hao et al. \cite{HSWZ-arXiv-2024-08} established the global existence and uniqueness of weak solutions for the  system \eqref{I-2} in $\mathbb{R}^2$ with bounded initial density away from vacuum and initial velocity in the energy space $L^2$. In particular, they solved Lions’ open problem \cite{Lions-book-1996}: the uniqueness of weak solutions for \eqref{I-2} in two dimensions. Moreover, the authors in \cite{HSWZ-arXiv-2024-08} constructed a unique global solution to \eqref{I-2} in $\mathbb{R}^3$ by assuming only nonnegative and bounded density $\rho_0$ and small initial velocity $u_0$ in $\dot{H}^{1/2}$. Abidi, Gui, and Zhang \cite{AGZ-arXiv-2024} further improved this result, allowing the velocity $u_0$ to be large in $\dot{H}^{1/2}$ and  small in $\dot{B}^{1/2}_{2,\infty}$.

\vspace{2mm}

It is interesting to investigate whether the incompressible inhomogeneous MHD system \eqref{I-1} is well-posed for discontinuous density. In this case, the piecewise constant density with jumps for fluid mixtures is allowed. However, to the best of our knowledge, there is no result concerning the global solvability issue of the inhomogeneous MHD system \eqref{I-1} for discontinuous and large density.

In this paper, our aim is to study the global dynamics of Fujita-Kato type solutions to the Cauchy problem of the 3D inhomogeneous MHD system \eqref{I-1} with rough density in critical Besov spaces. 
Our main results can be summarized as follows:

\begin{itemize}
\item The global existence and large-time stability of solutions under the assumption conditions that $\|\rho_0-1\|_{L^{\infty}}$ and $\mu^{-1}\|(u_0,B_0)\|_{\dot{B}^{-1+3/p}_{p,1}}$ with $1<p<3$ are sufficiently small. 

\item A global well-posedness result under the assumption that $\rho_0$ is merely bounded and may have a large variation, and $\mu^{-1}\|(u_0,B_0)\|_{\dot{B}^{1/2}_{2,1}}$ is suitable small. 

\item  The existence of a unique global Fujita-Kato solution under the case that $\rho_0$ and $\|(u_0,B_0)\|_{\dot{H}^{1/2}_{2,1}}$ are only bounded, with a refined smallness of $\mu^{-1}\|(u_0,B_0)\|_{\dot{B}^{1/2}_{2,\infty}}$.  

\item A general uniqueness result  within the critical regularity framework, allowing for bounded and non-negative density, without requiring $L^1(0,T;L^{\infty})$-regularity of the fluid velocity. 
\end{itemize}

\vspace{1mm}

More precisely, our first result states the global existence of the Cauchy problem \eqref{I-1}--\eqref{d} in the $L^p$ framework with small density variations, which reads

\begin{thm}\label{T2.3}
Let $1<p<3$ and $1<q
<\infty$ such that $3/p+2/q=3$. Assume that $(u_0,B_0)$ fulfills $\dive u_0=\dive B_0=0$ and $ u_0,B_0\in\dot{B}_{p,1}^{-1+3/p}$, and
there exists a positive constant $\varepsilon_1>0$ depending only on $p$ and 
the ratio ${\nu}/{\mu}$ such that 
\begin{align}\label{A1.13}
\|\rho_0-1\|_{L^\infty}\leq\varepsilon_1,\quad\|(u_0,B_0)\|_{\dot{B}_{p,1}^{-1+3/p}}\leq\varepsilon_1\mu.  
\end{align}
Then the Cauchy problem \eqref{I-1}--\eqref{d} has a global-in-time solution $(\rho,u,B,\nabla P)$ enjoying the following properties:
\begin{itemize}
\item $\rho$ satisfies
\begin{align}
\|(\rho-1)(t)\|_{L^{\infty}}=\|\rho_{0}-1\|_{L^{\infty}},\quad\quad t>0;\label{1.7}
\end{align}
\item $u,B\in C(\mathbb{R}_{+};\dot{B}^{-1+3/p}_{p,1})${\rm;}
\item For any $3<m<\infty$ and $q<s<\infty$ satisfy $3/m+2/s=1$, the following uniform bound holds for all $t>0${\rm:}
\begin{align}
&\|(u,B)\|_{L^{\infty}(\mathbb{R_{+}};\dot{B}^{-1+3/p}_{p,1})}+\|(\nabla^2 u,\nabla^2 B,\nabla P)\|_{ L^{q,1}(\mathbb{R_{+}};L^{p})}+\|(\nabla u,\nabla B)\|_{ L^{1}(\mathbb{R_{+}};L^{\infty})}\nonumber\\
&\quad+
\|(u,B)\|_{ L^{2}(\mathbb{R_{+}};L^{\infty})}+\|(u,B)\|_{L^{s,1}(\mathbb{R_{+}};{L^m})}+\|(t\nabla P, \partial_t (t u), \partial_t (t B)\|_{ L^{s,1}(\mathbb{R_{+}};L^{m})}\nonumber\\
&\quad+\|t(D_t u, tD_t{B})\|_{ L^{s,1}(\mathbb{R_{+}};L^{p})}+\|t(\nabla^2 D_t u, t\nabla^2 D_t{B})\|_{ L^{s,1}(\mathbb{R_{+}};L^{p})}\nonumber\\
&\quad\quad\quad\leq C\|(u_{0},B_0)\|_{\dot{B}_{p,1}^{-1+3/p}}\nonumber,
\end{align}
furthermore, we have the following decay rates,
\begin{equation}\label{1.9}
\left\{
\begin{aligned}
&\|(\nabla u,\nabla B)(t)\|_{\dot{B}^{-1+3/m}_{m,1}}\leq C\|(u_{0},B_0)\|_{\dot{B}_{p,1}^{-1+3/p}} t^{-1/2} ,\\
&\|(\nabla^2 u,\nabla^2 B)(t)\|_{\dot{B}^{-1+3/m}_{m,1}}\leq C\|(u_{0},B_0)\|_{\dot{B}_{p,1}^{-1+3/p}} t^{-1},\\
&\|(D_t u, D_t B)(t)\|_{\dot{B}^{-1+3/p}_{p,1}}\leq C\|(u_{0},B_0)\|_{\dot{B}_{p,1}^{-1+3/p}} t^{-1}.
\end{aligned}
\right.
\end{equation}
\end{itemize}
Here   $C>0$ is a constant depending on $\rho_0, \mu$ and $\nu$.   
In the case $1<p\leq 2$, the solution is unique.
\end{thm}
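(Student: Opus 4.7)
The plan is to adapt the perturbative scheme developed in \cite{DM-CPAM-2012, DW-CMMP-2023} for the inhomogeneous Navier--Stokes system to the coupled MHD setting, where the Lorentz force $B\cdot\nabla B$ and the induction term $B\cdot\nabla u$ need to be controlled simultaneously with the convection term. First, I would rewrite $\rho\partial_t u=\partial_t u+(\rho-1)\partial_t u$, so that the momentum equation becomes the Stokes system
\[
\partial_t u-\mu\Delta u+\nabla P=-(\rho-1)\partial_t u-\rho\, u\cdot\nabla u+B\cdot\nabla B,
\]
with the forcing composed of a ``small-coefficient'' perturbation and standard nonlinear terms; the transport equation for $\rho$ yields \eqref{1.7} automatically, and the magnetic equation is treated as a forced heat equation with source $-u\cdot\nabla B+B\cdot\nabla u$.

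Next, I would build a sequence of approximate solutions by regularizing $(\rho_0,u_0,B_0)$ (e.g.\ by mollification while preserving $\|\rho_0^n-1\|_{L^\infty}\le\varepsilon_1$ and the smallness in $\dot B^{-1+3/p}_{p,1}$), and invoking a known local existence result for smooth data (such as that of Abidi--Hmidi \cite{AH-AMBP-2007}) to produce smooth solutions $(\rho^n,u^n,B^n,\nabla P^n)$ on a maximal interval. The heart of the argument is a uniform a priori estimate in the critical Lorentz--Besov framework. Define
\[
\mathcal{E}(T):=\|(u,B)\|_{L^\infty_T(\dot B^{-1+3/p}_{p,1})}+\|(\nabla^2u,\nabla^2B,\nabla P)\|_{L^{q,1}_T(L^p)}+\|(\nabla u,\nabla B)\|_{L^1_T(L^\infty)}.
\]
Applying maximal regularity for the Stokes system and for the heat equation in the $L^{q,1}_TL^p$--scale (the Lorentz-based endpoint that is scaling critical with $3/p+2/q=3$), the term $(\rho-1)\partial_t u$ is absorbed into the left-hand side thanks to $\|\rho-1\|_{L^\infty}\le\varepsilon_1$, whereas the nonlinearities $\rho u\cdot\nabla u$, $B\cdot\nabla B$, $u\cdot\nabla B$, $B\cdot\nabla u$ are controlled by paraproduct/product estimates à la Bony yielding $\mathcal{E}(T)^2$--type bounds. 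Closing the bootstrap under the initial smallness in \eqref{A1.13} extends the solution globally.

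To obtain the time-weighted norms and the decay rates in \eqref{1.9}, I would run a second bootstrap after multiplying the equations by appropriate powers of $t$ and commuting $\partial_t$ (or the material derivative $D_t$) through, which produces Stokes/heat systems for $(tu,tB)$ and $(t D_tu,tD_tB)$ with forcing that involves only already-controlled quantities plus small quadratic terms; combining these with the Sobolev embeddings $\dot B^{-1+3/m}_{m,1}\hookrightarrow L^{3m/(3-m)}$ and the Lorentz-in-time estimates converts integrability in $L^{s,1}$ into pointwise temporal decay. Simultaneously tracking both $u$ and $B$ in this weighted framework is what sets the MHD analysis apart from the Navier--Stokes case: the symmetric structure between the Lorentz force and the induction term must be exploited (for instance via the divergence-free property and integration by parts) to avoid a net gain of derivatives.

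Finally, uniqueness for $1<p\le2$ is obtained by reformulating the system in Lagrangian coordinates, in which the density becomes time-independent and the difference of two solutions satisfies a Stokes-type system with coefficients controlled by the regularity already established. The range $p\le2$ is needed to ensure that the flow map has Lipschitz regularity and that the relevant multiplier estimates close in a subcritical difference norm (one derivative less than the existence space). The main technical obstacle is the uniform closure of the nonlinear bootstrap in the sharp Lorentz space $L^{q,1}$ while dealing with the rough density $\rho\in L^\infty$: because $\rho$ carries no extra regularity, one cannot differentiate the equation directly, and the coupling $B\cdot\nabla B$ must be absorbed purely through maximal regularity combined with the smallness $\varepsilon_1$, with no room to lose on either the spatial exponent $p$ or the temporal exponent $q$.
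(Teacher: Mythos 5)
Your existence argument is essentially the paper's: the same decomposition of the momentum equation into a Stokes system with forcing $-(\rho-1)\partial_t u-\rho u\cdot\nabla u+B\cdot\nabla B$ and of the magnetic equation into a forced heat equation, the same absorption of $(\rho-1)\partial_t u$ by the smallness of $\|\rho_0-1\|_{L^\infty}$ inside $L^{q,1}_T(L^p)$ maximal regularity, the same mollification of the data plus the Abidi--Hmidi local theory, and the same second bootstrap on $(tu,tB)$ and $(tD_tu,tD_tB)$ to extract the Lipschitz bound and the decay rates (one small difference: the paper controls the quadratic and trilinear terms by plain H\"older and Sobolev embedding in Lebesgue/Lorentz spaces rather than Bony paraproducts, which is all that is needed since the spatial norms are Lebesgue-type).

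Where you genuinely diverge is the uniqueness step. You propose Lagrangian coordinates, whereas the paper proves an Eulerian stability estimate (Proposition \ref{propunique}): the density difference is measured in $\dot W^{-1,3}$ by testing the transport equation against a solution of the backward transport equation $\partial_s\Psi+u_2\cdot\nabla\Psi=0$, following the method of \cite{HSWZ-arXiv-2024-08}, and the velocity/magnetic differences are closed in the energy space with time-weighted norms of $\nabla D_tu_2$. A Lagrangian argument is plausible here because $\nabla u\in L^1(0,T;L^{\infty})$ is indeed available from \eqref{A5.19}, but note that this Lipschitz bound holds for the whole range $1<p<3$, so your stated reason for the restriction $p\le2$ (Lipschitz regularity of the flow) is not the operative one. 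In the paper the restriction comes from the need to verify the $L^2$-based hypotheses \eqref{1.23} for the difference --- e.g. $\sqrt{\rho_1}(u_1-u_2)\in L^{\infty}_T(L^2)$ and $\nabla(u_1-u_2)\in L^2_T(L^2)$ --- which relies on the embedding $\dot B^{-1+3/p}_{p,1}\hookrightarrow\dot H^{1/2}$, valid only for $1<p\le2$ (consistently with the remark that for $2<p<3$ uniqueness is recovered under the extra assumption $u_0,B_0\in L^2$). If you pursue the Lagrangian route you would need to identify the correct subcritical difference norm in the $L^p$ scale and check that the Lorentz force and induction terms close there; as written, that step is asserted rather than carried out.
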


Next, we prove a global existence and uniqueness result in the $L^2$ framework, assuming that the density is only bounded from above and below.

\begin{thm}\label{T2.1} 
Assume that the initial data $(\rho_0, u_0, B_0)$ satisfies $\dive u_0=\dive B_0=0$,
\begin{align}\label{A1.2}
0<\inf\limits_{x\in\mathbb{R}^3}\rho_0(x)\leq\rho_0(x)\leq \sup\limits_{x\in\mathbb{R}^3}\rho_0(x)<\infty\quad\text{and}\quad u_0, B_0\in \dot{B}^{1/2}_{2,1}.
\end{align}
Then there exists a constant $\varepsilon_{2}>0$ depending only on $\rho_0$ and ${\nu}/{\mu}$ such that if
\begin{gather}\label{A1.3}
\|(u_0,B_0)\|_{\dot{B}^{1/2}_{2,1}}\leq\varepsilon_2\mu,    
\end{gather}
the system \eqref{I-1} has a global unique weak solution $(\rho,u,B,\nabla P)$  satisfying the following properties:
\begin{itemize}
\item $\rho$ fulfills
\begin{align}\label{A1.4}
0<\inf\limits_{x\in\mathbb{R}^3}\rho_0(x)\leq\rho(t,x)\leq \sup\limits_{x\in\mathbb{R}^3}\rho_0(x)<\infty,\quad (t,x)\in\mathbb{R}_+\times\mathbb{R}^3;  
\end{align}

\item $u, B\in$ $C(\mathbb{R}_+,\dot{B}^{1/2}_{2,1})${\rm;}

\item It holds that
\begin{align}
&\|(u,B)\|_{\widetilde{L}^{\infty}(\mathbb{R}_{+};\dot{B}^{1/2}_{2,1})} +\|(u,B)\|_{\widetilde{L}^{2}(\mathbb{R}_{+};\dot{B}^{3/2}_{2,1})}\nonumber\\
&\quad+\|{t^{1/2}}(\partial_t u,\partial_t B)\|_{\widetilde{L}^{2}(\mathbb{R}_{+};\dot{B}^{1/2}_{2,1})}+\|t(D_{t}u,D_{t}B)\|_{\widetilde{L}^{2}(\mathbb{R}_{+};\dot{B}^{3/2}_{2,1})}\nonumber\\
&\quad+{\|(\nabla u,\nabla B)\|_{L^4(\mathbb{R}_{+};L^2)}}+{{\|(\nabla u,\nabla B)\|_{L^1(\mathbb{R}_+;L^{\infty})}}+\|t^{1/2}(\nabla u,\nabla B)\|_{L^2(\mathbb{R}_+;L^{\infty})}}\nonumber\\
&\quad+\|t^{1/4} (\nabla^2 u, \nabla^2 B, \nabla P, \partial_t u,\partial_t B)\|_{L^{2}(\mathbb{R}_+;L^2)} +{\|t^{3/4}(\nabla^{2}u,\nabla^2 B, \nabla P)\|_{L^{2}(\mathbb{R}_+;L^{6})} }\nonumber\\
&\quad+{{\| t^{3/4}(\nabla\partial_{t}u,\nabla\partial_{t}B,\nabla D_t u,\nabla  D_t B)\|_{L^2(\mathbb{R}_{+};L^2)}}} \nonumber\\
&\quad\quad\quad\leq C\|(u_{0},B_0)\|_{\dot{B}^{1/2}_{2,1}}, \label{T2.1:e1}   
\end{align}

\item For all $t>0$, the following decay estimates hold{\rm:}
\begin{equation}\label{T2.1:e2}   
\left\{
    \begin{aligned}
         &\|(\nabla u,\nabla B)(t)\|_{\dot{B}^{0}_{2,1}}\leq C\|(u_{0},B_0)\|_{\dot{B}^{1/2}_{2,1}} t^{-\frac{1}{4}},\\
         &\|(\nabla u,\nabla B)(t)\|_{\dot{B}^{1/2}_{2,1}}\leq C\|(u_{0},B_0)\|_{\dot{B}^{1/2}_{2,1}} t^{-\frac{1}{2}},\\
         &\|(\nabla^2 u, \nabla^2 B,\partial_t u,\partial_t B)(t)\|_{L^2}\leq  {C \|(u_{0},B_0)\|_{\dot{B}^{1/2}_{2,1}} t^{-3/4}},\\
         &\|(\partial_t u,\partial_t B)(t)\|_{\dot{B}^{1/2}_{2,1}}\leq C \|(u_{0},B_0)\|_{\dot{B}^{1/2}_{2,1}} t^{-1 }.
    \end{aligned}
    \right.
\end{equation}
\end{itemize}
Here $C>0$ is a constant depending on $\rho_0, \mu$ and $\nu$. 

\end{thm}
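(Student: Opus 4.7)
The plan is to construct the solution via a density regularization scheme, close a continuity argument for the critical $\dot B^{1/2}_{2,1}$-norm of $(u,B)$ driven by the smallness \eqref{A1.3}, and defer the uniqueness claim to the fourth main result of the paper. Since the $p=2$ case of Theorem \ref{T2.3} already covers small density variations, the genuine novelty of Theorem \ref{T2.1} lies in removing the smallness of $\rho_0-1$, so the strategy I would follow adapts to MHD the non-perturbative scheme recently developed in \cite{HSWZ-arXiv-2024-08} for the inhomogeneous Navier--Stokes system, with the additional task of closing the coupled $u$--$B$ bootstrap jointly.

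First I would mollify $\rho_0$ while preserving the pointwise bounds in \eqref{A1.2}, and approximate $(u_0, B_0)$ by smooth divergence-free data $(u_0^n, B_0^n)$ with $\|(u_0^n, B_0^n)\|_{\dot B^{1/2}_{2,1}} \le \|(u_0, B_0)\|_{\dot B^{1/2}_{2,1}}$. Classical strong-solution theory (in the spirit of \cite{AH-AMBP-2007, LiWang1}) supplies a local smooth solution $(\rho^n, u^n, B^n, \nabla P^n)$, and the entire problem reduces to proving that the norms appearing in \eqref{T2.1:e1}--\eqref{T2.1:e2} are bounded by $C\|(u_0,B_0)\|_{\dot B^{1/2}_{2,1}}$ uniformly in $n$, whence global existence and the asserted regularity follow by a standard continuation argument.

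These uniform estimates are obtained in three layers. The basic $L^2$-energy identity, exploiting the cancellation between the Lorentz force $B\cdot\nabla B$ and the induction term $B\cdot\nabla u$ after testing against $(u^n,B^n)$, controls $\|\sqrt{\rho^n}\, u^n\|_{L^\infty L^2} + \|B^n\|_{L^\infty L^2} + \mu^{1/2}\|\nabla u^n\|_{L^2 L^2} + \nu^{1/2}\|\nabla B^n\|_{L^2 L^2}$. The critical estimate is then obtained by rewriting the momentum equation as a Stokes problem with variable coefficient $\rho^n$ and source $-\rho^n u^n\cdot\nabla u^n + B^n\cdot\nabla B^n$, handling the pressure via the elliptic problem $-\dv((\rho^n)^{-1}\nabla P^n) = \dv F^n$ as in \cite{DM-CPAM-2012, Zp-Adv-2020}, and applying a maximal $L^2$-regularity estimate that requires only the $L^\infty$ bounds on $\rho^n$; on the magnetic side, the standard heat semigroup takes care of the $\nu\Delta B^n$ part, and Bony paraproducts estimate the bilinear terms $u\cdot\nabla u$, $u\cdot\nabla B$, $B\cdot\nabla u$, $B\cdot\nabla B$ in $L^2_t(\dot B^{1/2}_{2,1})$, yielding a self-contained inequality that is closed by \eqref{A1.3}. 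The time-weighted smoothing bounds in \eqref{T2.1:e1} and the decay rates \eqref{T2.1:e2} then follow by multiplying the equations by appropriate powers of $t$, re-applying the maximal-regularity scheme to the derivatives $\partial_t u^n, \partial_t B^n, D_t u^n, D_t B^n$, and interpolating. Passage to the limit $n\to\infty$ is subsequently standard --- weak-$\ast$ convergence of $\rho^n$ in $L^\infty$, and Aubin--Lions strong convergence of $(u^n, B^n)$ in $L^2_{\mathrm{loc}}$ --- allowing the identification of all nonlinear terms and of the transport equation, and uniqueness follows from the general uniqueness result announced as the fourth main theorem, since the limit velocity belongs only to $L^2_t(L^\infty)$ and not to $L^1_t(L^\infty)$.

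The main obstacle will be the critical $\dot B^{1/2}_{2,1}$ estimate in the absence of any smallness on $\rho_0-1$: the perturbative argument of Theorem \ref{T2.3}, which treats $\rho-1$ as a small multiplier of $\partial_t u$, is unavailable. One must replace it by a non-perturbative handling of the variable-coefficient Stokes problem --- the most delicate piece of the whole proof --- in which the smallness lies exclusively in $(u, B)$ and the density enters only through the elliptic equation for $P$. A secondary subtlety is the coupled bootstrap: since the MHD nonlinearity $B\cdot\nabla u$ in the induction equation has the same critical scaling as $u\cdot\nabla u$, the pair $(u,B)$ must be propagated jointly, and the single smallness hypothesis \eqref{A1.3} has to simultaneously dominate the nonlinear feedback of both fields through the parabolic smoothing of the coupled system.
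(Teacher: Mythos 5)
Your overall scaffolding (mollification of the data, local well-posedness from \cite{AH-AMBP-2007}, uniform bounds, compactness, and deferring uniqueness to the fourth theorem) matches the paper, and you correctly identify the crux: obtaining the critical $\dot B^{1/2}_{2,1}$ estimate with a density that is only bounded above and below. But the mechanism you propose for that crux does not exist as stated, and the actual key idea of the paper's proof is absent from your plan. You propose to ``apply a maximal $L^2$-regularity estimate that requires only the $L^\infty$ bounds on $\rho^n$'' to the variable-coefficient Stokes problem and then close with paraproduct estimates of the bilinear terms in $L^2_t(\dot B^{1/2}_{2,1})$. No such maximal-regularity theorem is available: the operator $\rho\,\partial_t-\Delta$ with $\rho$ merely in $L^\infty$ does not commute with the Littlewood--Paley blocks, the commutator $[\dot\Delta_j,\rho]\partial_t u$ cannot be controlled without regularity of $\rho$, and the only way to invoke the constant-coefficient maximal regularity of Lemma \ref{L2.8} is to treat $(\rho-1)\partial_t u$ as a perturbation --- which is exactly the small-variation hypothesis of Theorem \ref{T2.3} that Theorem \ref{T2.1} is designed to remove. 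Saying that ``one must replace it by a non-perturbative handling of the variable-coefficient Stokes problem'' names the difficulty without resolving it.

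What the paper actually does (following \cite{Zp-Adv-2020}) is decompose $(u,B,\nabla P)=\sum_{j\in\mathbb Z}(u_j,B_j,\nabla P_j)$, where each triple solves the \emph{linear} convected system \eqref{A2.2} with frequency-localized data $(\dot\Delta_j u_0,\dot\Delta_j B_0)$ and with the full $(\rho,u,B)$ appearing only as coefficients. For each $j$ one needs nothing beyond $L^2$ energy estimates (where the rough density enters harmlessly through $\|\sqrt\rho\,\partial_t u_j\|_{L^2}$, using only its upper and lower bounds), the constant-coefficient Stokes/elliptic estimate \eqref{A2.42} at fixed time, and suitable time weights; the summability in $j$ that reconstitutes the $\dot B^{1/2}_{2,1}$ norm comes entirely from the factor $d_j2^{\mp j/2}\|(u_0,B_0)\|_{\dot B^{1/2}_{2,1}}$ carried by the localized initial data, resummed via Bernstein's inequality as in \eqref{A2.14}. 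This decomposition is precisely the device that circumvents both maximal regularity with rough coefficients and paraproduct estimates of the nonlinearity; without it (or an equivalent substitute) your argument cannot close, so the proposal has a genuine gap at its central step.
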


Moreover, we state
a result on Fujita-Kato type solutions to the system \eqref{I-1}.

\begin{thm}\label{T2.2}
Assume that the initial data $(\rho_0, u_0, B_0)$ fulfills $\dive u_0=\dive B_0=0$, 
\begin{gather}\label{A1.6}
0<\inf\limits_{x\in\mathbb{R}^3}\rho_0(x)\leq\rho_0(x)\leq \sup\limits_{x\in\mathbb{R}^3}\rho_0(x)<\infty\quad\text{and}\quad  u_0, B_0\in \dot{H}^{1/2}.
\end{gather}
Then there exists a constant $\varepsilon_{3}>0$ depending only on $\rho_0$ 
and  ${\nu}/{\mu}$  such that if
\begin{gather}\label{A1.7}
\|(u_0,B_0)\|_{\dot{B}^{1/2}_{2,\infty}}\leq\varepsilon_3\mu,    
\end{gather}
then the system \eqref{I-1} has a unique global solution $(\rho,u,B,\nabla P)$. In addition, the following properties hold{\rm:}
\begin{itemize}

\item $\rho$ satisfies
\begin{align}\label{A1.8}
0<\inf\limits_{x\in\mathbb{R}^3}\rho_0(x)\leq\rho(t,x)\leq \sup\limits_{x\in\mathbb{R}^3}\rho_0(x)<\infty,\quad  (t,x)\in\mathbb{R}_+\times\mathbb{R}^3; 
\end{align}

\item $u,B\in C(\mathbb{R}_+,\dot{H}^{1/2})${\rm;}

\item It holds that
\begin{align}
&\|(u,B)\|_{L^2(\mathbb{R}_{+};L^{\infty})}+\|(\nabla u,\nabla B)\|_{L^{4}(\mathbb{R}_+;L^{2})}+\|t^{1/2}(\nabla u,\nabla B)\|_{L^{2}(\mathbb{R}_+;L^{\infty})} \nonumber\\
&\quad+\|t^{1/4} (\nabla^2 u, \nabla^2 B, \nabla P, \partial_t u,\partial_t B)\|_{L^{2}(\mathbb{R}_+;L^2)}+\|t^{3/4}(\nabla^{2}u,\nabla^2 B, \nabla P)\|_{L^{2}(\mathbb{R}_+;L^{6})}\nonumber\\
&\quad+\|t^{3/4}(\nabla\partial_{t}u,\nabla\partial_{t}B,\nabla D_{t}u, \nabla D_{t}B)\|_{L^{2}(\mathbb{R}_+;L^{2})}\nonumber\\
&\quad\quad\quad\leq C\|(u_{0},B_0)\|_{\dot{H}^{1/2}};\label{1.13}
\end{align} 

\item For any $2\leq r\leq \infty,$ $(u,B)$ satisfies
\begin{align}
&\|(u,B)\|_{\widetilde{L}^{\infty}(\mathbb{R}_+;\dot{B}_{2,r}^{1/2})}+\|(u,B)\|_{\widetilde{L}^{2}(\mathbb{R}_+;\dot{B}_{2,r}^{3/2})}\nonumber\\
&\quad +\|t^{1/2}(u,B)\|_{\widetilde{L}^{\infty}(\mathbb{R}_+;\dot{B}_{2,r}^{3/2})}+\|t^{1/2}(\partial_{t}u, \partial_{t}B)\|_{\widetilde{L}^{2}(\mathbb{R}_+;\dot{B}_{2,r}^{1/2})}\leq C\|(u_{0},B_0)\|_{\dot{B}_{2,r}^{1/2}};\label{1.14}
\end{align}

\item 

The following decay estimates hold{\rm:}
\begin{equation}\label{1.22}
    \left\{
    \begin{aligned}
    &\|(\nabla u,\nabla B)(t)\|_{L^2}\leq C \|(u_{0},B_0)\|_{\dot{H}^{1/2}}t^{-\frac{1}{4}},\\
    &\|(\nabla u,\nabla B)(t)\|_{\dot{H}^{1/2}}\leq C \|(u_{0},B_0)\|_{\dot{H}^{1/2}}t^{-\frac{1}{2}},\\
    &\|(\partial_{t}u,\partial_t B,\nabla^{2}u,\nabla^2 B,D_{t}u)(t)\|_{L^{2}}\leq C \|(u_{0},B_0)\|_{\dot{H}^{1/2}} t^{-3/4}.
    \end{aligned}
    \right.
\end{equation}

\end{itemize}
Here, $C>0$ is a constant depending on $\rho_0, \mu$ and $\nu$.
\end{thm}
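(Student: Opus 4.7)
Since $\dot H^{1/2}=\dot B^{1/2}_{2,2}\hookrightarrow \dot B^{1/2}_{2,\infty}$, the smallness in \eqref{A1.7} is strictly weaker than that of Theorem~\ref{T2.1}, and the initial data may be large in $\dot H^{1/2}$. My plan is to build the solution by approximation, obtaining uniform-in-$n$ bounds controlled by $\|(u_0,B_0)\|_{\dot H^{1/2}}$ together with the small parameter $\varepsilon_3$. I would regularize by mollifying $\rho_0$ to $\rho_0^n$ (preserving the upper and lower bounds) and setting $u_0^n=\dot S_n u_0$, $B_0^n=\dot S_n B_0$; the regularized data lie in every critical Besov space, so a local strong solution $(\rho^n,u^n,B^n)$ exists for each $n$, either from Theorem~\ref{T2.1} applied after a preliminary frequency truncation or via an $H^s$-type existence theorem. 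The essential feature is
\begin{equation*}
\|(u_0^n,B_0^n)\|_{\dot B^{1/2}_{2,\infty}}\leq\|(u_0,B_0)\|_{\dot B^{1/2}_{2,\infty}}\leq\varepsilon_3\mu,\qquad \|(u_0^n,B_0^n)\|_{\dot H^{1/2}}\leq\|(u_0,B_0)\|_{\dot H^{1/2}},
\end{equation*}
uniformly in $n$.

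\textbf{Propagation of the $\dot B^{1/2}_{2,\infty}$ smallness.} The heart of the argument is to propagate \eqref{A1.7} globally. Dyadically localizing the momentum and induction equations via $\dot\Delta_j$ and testing against $\dot\Delta_j u^n$ and $\dot\Delta_j B^n$, I would derive block-by-block energy inequalities, controlling the commutators $[\dot\Delta_j,u\cdot\nabla]$ and the nonlinear products by paraproduct decomposition, while the variable-density pressure is handled through the elliptic equation $\dv(\rho^{-1}\nabla P)=\dv(\ldots)$ and the perturbative bounds developed in the proof of Theorem~\ref{T2.1}. Each nonlinear contribution is schematically of the form \emph{smallness in $\dot B^{1/2}_{2,\infty}$ times dissipation gain}, so the hypothesis \eqref{A1.7} lets one absorb the nonlinear terms into the linear dissipation, closing
\begin{equation*}
\|(u^n,B^n)\|_{\widetilde L^\infty(\mathbb R_+;\dot B^{1/2}_{2,\infty})}+\mu\|(u^n,B^n)\|_{\widetilde L^2(\mathbb R_+;\dot B^{3/2}_{2,\infty})}\lesssim\varepsilon_3\mu.
\end{equation*}
This global smallness prevents blow-up and yields global existence for each~$n$.

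\textbf{$\dot H^{1/2}$ bounds, higher regularity, and decay.} With the $\dot B^{1/2}_{2,\infty}$-smallness secured, I would propagate the $\dot H^{1/2}$ norm of $(u^n,B^n)$ by the same frequency-localized estimate in $\ell^2$-summation: the nonlinear terms now carry a small prefactor coming from the $\dot B^{1/2}_{2,\infty}$ norm, so the estimate closes linearly in $\|(u^n,B^n)\|_{\dot H^{1/2}}$. The bounds in \eqref{1.13}, the $r$-indexed bounds in \eqref{1.14}, and the decay rates in \eqref{1.22} then follow by the time-weighted bootstrap already used for Theorem~\ref{T2.1}: one time-differentiates the equations, applies maximal $L^p_t$-regularity for the Stokes and parabolic systems with $L^\infty$ density, and interpolates. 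Only cosmetic changes are required to swap $\dot B^{1/2}_{2,1}$ for the endpoint pair $\dot B^{1/2}_{2,\infty}$/$\dot H^{1/2}$.

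\textbf{Passage to the limit, uniqueness, and main obstacle.} The uniform bounds combined with Aubin-Lions compactness and DiPerna-Lions theory for the density transport yield a limit that is a solution of \eqref{I-1}--\eqref{d} in the claimed class; uniqueness follows from the general uniqueness result announced earlier in the paper, which applies under only bounded, nonnegative density without requiring $u\in L^1(0,T;L^\infty)$. The main obstacle is the propagation step: the endpoint $\ell^\infty$ index forbids standard summable commutator/product estimates, so one must exploit refined paraproduct bounds, and the non-constant $\rho$ in the pressure equation forces delicate elliptic analysis in critical spaces with merely $L^\infty$ coefficients. The MHD coupling further complicates matters because $B\cdot\nabla B$ carries no parabolic smoothing of its own; however, the symmetric role of $u$ and $B$ in \eqref{I-1} permits the momentum and induction estimates to be closed jointly.
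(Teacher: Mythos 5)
There is a genuine gap in the central step of your plan. For the propagation of the $\dot B^{1/2}_{2,\infty}$ and $\dot H^{1/2}$ bounds you propose to apply $\dot\Delta_j$ to the momentum and induction equations, test against $\dot\Delta_j u$ and $\dot\Delta_j B$, and control the resulting commutators and the pressure via the variable-coefficient elliptic equation $\dive(\rho^{-1}\nabla P)=\dive(\ldots)$. This route fails under the hypotheses of Theorem \ref{T2.2}: the density is merely bounded above and below and may be piecewise constant with jumps, so the commutator $[\dot\Delta_j,\rho]\partial_t u$ gains nothing (there is no modulus of continuity of $\rho$ to exploit), and the elliptic operator $\dive(\rho^{-1}\nabla\cdot)$ with an $L^\infty$ coefficient admits no $L^p$ regularity theory beyond the energy level. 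You name this as ``the main obstacle'' but do not resolve it, and you also misattribute this strategy to the proof of Theorem \ref{T2.1}, which does not use it.

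What the paper actually does (following Zhang's method for the inhomogeneous Navier--Stokes system) is to decompose the \emph{solution} rather than the \emph{equation}: one writes $(u,B,\nabla P)=\sum_{j}(u_j,B_j,\nabla P_j)$, where each block solves the localized quasi-linear system \eqref{A2.2} with frequency-localized data $\dot\Delta_j u_0$, $\dot\Delta_j B_0$ but with the full coefficients $\rho$, $u$, $B$. Since each $(u_j,B_j)$ exactly solves its own transport-diffusion system, the energy identities \eqref{A2.4}, \eqref{A2.7}, \eqref{A2.23} contain no commutators at all, and the pressure $\nabla P_j$ is recovered from the \emph{constant-coefficient} Stokes problem \eqref{A2.42}, in which $\rho$ appears only as a bounded multiplier on the right-hand side. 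The Besov norms of $(u,B)$ are then reassembled from the blocks by the Bernstein/frequency-splitting argument of \eqref{A2.14}, and the smallness in $\dot B^{1/2}_{2,\infty}$ enters through the product law $\|fg\|_{L^2}\lesssim\|f\|_{\dot B^{1/2}_{2,\infty}}\|g\|_{\dot H^1}$ (Lemma \ref{L2.3}) inside the bootstrap \eqref{A4.13}. Your remaining steps (approximation of the data, time-weighted higher-order estimates, compactness, DiPerna--Lions for the density, and uniqueness via Theorem \ref{thmunique}) agree with the paper in outline, but without replacing the equation-localization by the solution-decomposition the argument does not close.
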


Finally, our uniqueness result is stated as follows.

\begin{thm}\label{thmunique}
Let $(\rho_{1},u_{1},B_{1},\nabla P_{1})$ and $(\rho_{2},u_{2},B_{2},\nabla P_{2})$ be two solutions to the system \eqref{I-1} on $[0,T]\times\mathbb{R}^3 $ with $0<T<\infty$ supplemented with the same initial data. In addition, we assume that
\begin{equation}\label{1.23}
\left\{
\begin{aligned}
&0\leq \rho_1,\rho_2\in L^{\infty}(0,T;L^{\infty}),\\
&\nabla u_2,\, \nabla B_2,\, \in L^4(0,T;L^2),\\
&t^{1/2}\nabla u_2 \, \in L^2(0,T;L^{\infty}),\\
&t^{3/4} D_t u_2\,\in L^2(0,T;L^2),\\
&t^{-3/4}(\rho_1-\rho_2)\in L^{\infty}(0,T;\dot{W}^{-1,3}),\\
&\sqrt{\rho_1}(u_1-u_2),\, B_1-B_2\,\in L^{\infty}(0,T;L^2),\\
&\nabla (u_1-u_2),\, \nabla (B_1-B_2)\in  L^{2}(0,T;L^2).
\end{aligned}
\right.
\end{equation}
Then, $(\rho_{1},u_{1},B_{1},\nabla P_{1})(t,x)=(\rho_{2},u_{2},B_{2},\nabla P_{2})(t,x)$ holds on $[0,T]\times\mathbb{R}^3$.
\end{thm}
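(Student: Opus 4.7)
The plan is to work with the difference variables $\delta\rho:=\rho_1-\rho_2$, $\delta u:=u_1-u_2$, $\delta B:=B_1-B_2$, $\delta P:=P_1-P_2$, which satisfy (writing $D_{t,i}:=\partial_t+u_i\cdot\nabla$)
\begin{align*}
&\partial_t\delta\rho+u_1\cdot\nabla\delta\rho=-\dive(\rho_2\,\delta u),\\
&\rho_1 D_{t,1}\delta u-\mu\Delta\delta u+\nabla\delta P=-\delta\rho\,D_{t,2}u_2-\rho_1(\delta u\cdot\nabla u_2)+B_1\cdot\nabla\delta B+\delta B\cdot\nabla B_2,\\
&\partial_t\delta B+u_1\cdot\nabla\delta B-\nu\Delta\delta B=-\delta u\cdot\nabla B_2+B_1\cdot\nabla\delta u+\delta B\cdot\nabla u_2,
\end{align*}
together with $\dive\delta u=\dive\delta B=0$ and vanishing initial data. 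Since $\rho_1,\rho_2$ are only $L^\infty$, $\delta\rho$ cannot be controlled at any positive regularity; I therefore measure it in the negative space $\dot W^{-1,3}$ with the time weight $t^{-3/4}$ prescribed by \eqref{1.23}, and combine a transport estimate there with an $L^2$-energy estimate for $(\delta u,\delta B)$, closing the whole thing with a weighted Gronwall inequality.

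For $(\delta u,\delta B)$, testing the momentum equation by $\delta u$, the induction equation by $\delta B$, summing, and exploiting the MHD cancellation $\int(B_1\cdot\nabla\delta B)\cdot\delta u+\int(B_1\cdot\nabla\delta u)\cdot\delta B=0$ (from $\dive B_1=0$) gives
\begin{align*}
\tfrac12\tfrac{d}{dt}&\!\int\!\bigl(\rho_1|\delta u|^2+|\delta B|^2\bigr)dx+\mu\|\nabla\delta u\|_{L^2}^2+\nu\|\nabla\delta B\|_{L^2}^2\\
&=-\!\int\!\delta\rho\,D_{t,2}u_2\cdot\delta u-\!\int\!\rho_1(\delta u\cdot\nabla u_2)\cdot\delta u\\
&\quad+\!\int\!(\delta B\cdot\nabla B_2)\cdot\delta u-\!\int\!(\delta u\cdot\nabla B_2)\cdot\delta B+\!\int\!(\delta B\cdot\nabla u_2)\cdot\delta B.
\end{align*}
All right-hand terms except the first are controlled by pulling out $\|\nabla u_2\|_{L^\infty}$ or $\|\nabla B_2\|_{L^\infty}$ against the $L^2$ energy of $(\delta u,\delta B)$; the weight $t^{1/2}\nabla u_2\in L^2_tL^\infty$ from \eqref{1.23} (and the analogous bound for $\nabla B_2$ inherited by symmetry from the induction equation) produces a Gronwall kernel $s^{-1/2}\in L^1(0,T)$.

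The decisive term is $I_1:=-\!\int\delta\rho\,D_{t,2}u_2\cdot\delta u$. Because $\delta\rho$ is only in $\dot W^{-1,3}$, I would bound it by duality with $\dot W^{1,3/2}$, followed by H\"older and Sobolev:
$$|I_1|\ \lesssim\ \|\delta\rho\|_{\dot W^{-1,3}}\,\|D_{t,2}u_2\cdot\delta u\|_{\dot W^{1,3/2}}\ \lesssim\ \|\delta\rho\|_{\dot W^{-1,3}}\,\|\nabla D_{t,2}u_2\|_{L^2}\,\|\nabla\delta u\|_{L^2}.$$
The needed bound $t^{3/4}\nabla D_{t,2}u_2\in L^2_tL^2$, although not listed explicitly in \eqref{1.23}, is extracted from maximal regularity applied to the stationary Stokes problem $\mu\Delta u_2-\nabla P_2=-\rho_2 D_{t,2}u_2+B_2\cdot\nabla B_2$ (and a time-differentiated version), using crucially $\rho_2\in L^\infty$; the weight $t^{3/4}$ then matches the $t^{-3/4}$ on $\delta\rho$, and $\|\nabla\delta u\|_{L^2}^2$ is absorbed by the viscous dissipation. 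For $\delta\rho$ itself, I rewrite the transport equation in conservative form $\partial_t\delta\rho+\dive(u_1\delta\rho+\rho_2\delta u)=0$ and estimate in $\dot W^{-1,3}$, using $\|\delta u\|_{L^3}\lesssim\|\delta u\|_{L^2}^{1/2}\|\nabla\delta u\|_{L^2}^{1/2}$ for the source together with a commutator bound for $\dive(u_1\delta\rho)$. Assembling everything in
$$X(t):=\|\sqrt{\rho_1}\delta u\|_{L^\infty_tL^2}^2+\|\delta B\|_{L^\infty_tL^2}^2+\|(\nabla\delta u,\nabla\delta B)\|_{L^2_tL^2}^2+\sup_{0<s\le t}\!s^{-3/2}\|\delta\rho(s)\|_{\dot W^{-1,3}}^2$$
produces $X(t)\le C\int_0^t K(s)X(s)\,ds$ with $K\in L^1(0,T)$, so Gronwall's inequality forces $X\equiv0$.

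The main obstacle is the control of $I_1$: the density difference has no classical regularity, and the absence of $L^1_tL^\infty$ velocity regularity rules out the standard Danchin--Mucha Lagrangian approach used in the literature. The resolution requires the precise balance of the weight $t^{-3/4}$ on $\delta\rho$ from \eqref{1.23} against a matching $t^{3/4}$-weight on $\nabla D_{t,2}u_2$, which itself must be recovered from the momentum equation via Stokes maximal regularity under the sole hypothesis $\rho_2\in L^\infty$. A secondary subtlety is rigor: since $u_1$ need not be Lipschitz in space, the transport identity for $\delta\rho$ in $\dot W^{-1,3}$ and the chain rule underlying the energy estimate must be justified through a mollification procedure compatible with the DiPerna--Lions renormalized transport framework.
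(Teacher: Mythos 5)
Your overall architecture matches the paper's: the same difference system, the cancellation of $\int(B_1\cdot\nabla\delta B)\cdot\delta u$ against $\int(B_1\cdot\nabla\delta u)\cdot\delta B$, the duality pairing $\|\delta\rho\|_{\dot W^{-1,3}}\|\nabla(D_tu_2\cdot\delta u)\|_{\dot W^{1,3/2}}$ for the critical term, and the matching of the weight $t^{-3/4}$ on $\delta\rho$ against $t^{3/4}$ on $\nabla D_tu_2$. However, there is a genuine gap in your control of $\delta\rho$. You propose to propagate $\|\delta\rho\|_{\dot W^{-1,3}}$ directly from the conservative form $\partial_t\delta\rho+\dive(u_1\delta\rho+\rho_2\delta u)=0$ ``together with a commutator bound for $\dive(u_1\delta\rho)$.'' This cannot work: $u_1$ is the rough solution and carries no Lipschitz-type regularity under \eqref{1.23}, so negative Sobolev regularity of $\delta\rho$ cannot be propagated along its flow. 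Switching to the other decomposition $\partial_t\delta\rho=-\dive(\rho_1\delta u+\delta\rho\, u_2)$ does not rescue a direct Gronwall argument either, because $t^{1/2}\nabla u_2\in L^2(0,T;L^{\infty})$ does \emph{not} give $\nabla u_2\in L^1(0,T;L^{\infty})$ (since $t^{-1/2}\notin L^2$ near $t=0$), so the transport Gronwall factor diverges. The paper's resolution --- the ingredient your proposal is missing --- is the \emph{backward} dual transport equation $\partial_s\Psi+u_2\cdot\nabla\Psi=0$, $\Psi(t,\cdot)=\psi\in\dot W^{1,3/2}$, which turns $\langle\delta\rho(t),\psi\rangle$ into $\int_0^t\langle\rho_1\delta u,\nabla\Psi\rangle\,{\rm d}s$ with no commutator at all; the growth of $\|\nabla\Psi(s)\|_{L^{3/2}}$ is then only $\exp\{C\|t^{1/2}\nabla u_2\|_{L^2_T(L^{\infty})}|\log(t/s)|^{1/2}\}$, and the explicit estimate $\int_0^t e^{C|\log(t/s)|}\,{\rm d}s\lesssim t$ closes \eqref{deltarho}. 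Without this device (adapted from \cite{HSWZ-arXiv-2024-08}), your $\delta\rho$ step fails.

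A secondary flaw: in the energy estimate you ``pull out $\|\nabla u_2\|_{L^{\infty}}$ or $\|\nabla B_2\|_{L^{\infty}}$,'' but the resulting Gronwall kernel is again not in $L^1(0,T)$ for the same reason, and no $L^2_t(L^{\infty})$ bound on $t^{1/2}\nabla B_2$ appears in \eqref{1.23} at all. The paper instead bounds the terms $\delta I_1,\delta I_2,\delta J_2,\delta J_3$ using only $\|\nabla u_2\|_{L^2}$ and $\|\nabla B_2\|_{L^2}$ via $L^2\times L^3\times L^6$ H\"older and $\|f\|_{L^3}\le\|f\|_{L^2}^{1/2}\|f\|_{L^6}^{1/2}$, which produces the integrable kernel $\|(\nabla u_2,\nabla B_2)\|_{L^2}^4$ and is precisely where the hypothesis $\nabla u_2,\nabla B_2\in L^4(0,T;L^2)$ enters. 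On the other hand, your observation that one in fact needs $t^{3/4}\nabla D_tu_2\in L^2_t(L^2)$ (slightly more than the listed $t^{3/4}D_tu_2\in L^2_t(L^2)$) is correct and consistent with Proposition \ref{propunique}, whose weight $f_2$ contains $\|t^{3/4}\nabla D_tu_2\|_{L^2}^2$.
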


\begin{rem}

A few comments to our results are listed below.

\begin{itemize}

\item For global solutions given in Theorems \ref{T2.3}--\ref{T2.2}, if we assume  additionally that $u_0, B_0\in L^2$, then one has the basic energy equality:
\begin{align*}
&\frac{1}{2}\|\sqrt{\rho} u(t)\|_{L^2}^2+\frac{1}{2}\|B(t)\|_{L^2}^2+\int_0^t\big( \mu \|\nabla u(t')\|_{L^2}^2+\nu\|\nabla B(t')\|_{L^2}^2\big)\,{\rm d}t'\\
&\quad =\frac{1}{2}\|\sqrt{\rho_0} u_0\|_{L^2}^2+\frac{1}{2}\|B_0\|_{L^2}^2.
\end{align*}
Note that this additional condition will ensure the uniqueness of the solutions stated in Theorem \ref{T2.3} in the case $2<p<3$.

\item When $\nu\sim \mu$, we can relax the smallness of the corresponding norms for $(u_0,B_0)$ if $\mu$ is sufficiently large.

\item In Theorem \ref{thmunique},  the regularities on $(\rho_2,u_2,B_2,\nabla P_2)$ are assumed to be critical, but the information of $u_1$ and $B_1$ is not necessary to be the same. Under the assumption that the basic energy inequality holds, Theorem \ref{thmunique} in fact provides a “weak-strong" uniqueness result{\rm:} If $(\rho_1,u_1,B_1,\nabla P_1)$ is a finite-energy weak solution to \eqref{I-1}, and  $(\rho_2,u_2,B_2,\nabla P_2)$ is a solution with higher regularity required in \eqref{1.23}, subject to the same initial data, then the two solutions coincide with each other. In fact, we shall prove a more general stability result, see Proposition \ref{propunique} below. 

\end{itemize}

\end{rem}

In what follows, we will give some comments on the proofs of Theorems \ref{T2.3}--\ref{thmunique}. 

Motivated by \cite{DW-CMMP-2023},  our first result is to establish the global existence of solutions to the system \eqref{I-1} for $\rho_0$ with small variation and $u_0, B_0$ in the critical $L^p$ framework.  To achieve this, we first decompose \eqref{I-1} into two auxiliary systems \eqref{A5.5} and \eqref{A5.7},
which play a key role in deriving uniform {\emph{a priori}} estimates. In Propositions \ref{P5.1}--\ref{P5.3}, we extend the maximal regularity estimates developed in \cite{DMT-JEE-2021,DW-CMMP-2023} to the incompressible inhomogeneous  MHD equations setting. The main challenge in Proposition
\ref{P5.3} lies in estimating the nonlinear term $B\cdot \nabla B$ caused by the interaction of the magnetic and fluid  (for example, see the estimates of $\|D_t(B\cdot\nabla B)\|_{{L^{q,1}_T(L^{p})}}$ in \eqref{A5.50}).  Compared with \cite[Proposition 4.3]{DW-CMMP-2023}, where only 
bilinear terms are involved, we need to handle the additional trilinear term $u\cdot\nabla(B\cdot\nabla B)$. 
 To overcome this difficulty, we make full use of the elaborate estimates based on interpolation and embedding inequalities in Besov and Lorentz spaces. 
Building on uniform bounds and a compactness argument, we obtain the existence of a solution $(\rho,u,B,\nabla P)$ to the problem \eqref{I-1}--\eqref{d}.

To prove Theorem \ref{T2.1} concerning the global well-posedness for large density, inspired by  \cite{Zp-Adv-2020}, we decompose $(u,B,\nabla P)=\sum\limits_{j\in\mathbb{Z}}(u_j,B_j,\nabla P_j)$ with $(u_j,B_j,\nabla P_j)$ solving the following localized quasi-linear system with convection:
\begin{equation*}
\left\{
\begin{aligned}
& \rho(\partial_tu_j+u\cdot\nabla u_j)-\Delta u_j+\nabla P_j=B\cdot\nabla B_j, \\
& \partial_t B_j+u\cdot \nabla B_j-\Delta B_j=B\cdot\nabla u_j, \\
& \dive u_j=\dive B_j=0,\\
& u_j|_{t=0}=\dot{\Delta}_j u_0, \quad B_j|_{t=0}=\dot{\Delta}_j B_0.
\end{aligned}\right.
\end{equation*}
By performing functional inequalities for $(u_j,B_j,\nabla P_j)$ and their derivatives with suitable time weights for initial layer analysis, we establish the localized energy estimates of $(u_j,B_j,\nabla P_j)$ (see Propositions \ref{P3.1}--\ref{P3.6}), which lead to the desired uniform bounds of $(u,B,\nabla P)$ by carrying out an interpolation argument. Using these bounds, one can extend the local approximate solutions to the global ones and show the convergence to a desired global solution.

Furthermore, to show Theorem \ref{T2.2}, by using the refined embeddings and estimates of products in Lorentz spaces in the same spirit of \cite{Ggl-JFA-2014}, we succeed in deriving {\emph{a priori}} estimates under the refined smallness of $\|(u_0,B_0)\|_{\dot{B}^{1/2}_{2,\infty}}$ (cf. Propositions \ref{P4.1}--\ref{P4.3}).

Finally, Theorem \ref{thmunique} is based on a more general stability result, i.e. Proposition \ref{propunique}. The main challenge arises in the lack of $L_t^1L^{\infty}_x$--regularity for $\nabla u$. Here we adapt the method of \cite{HSWZ-arXiv-2024-08} on the backward transport equation. On the other hand, we also handle some difficult nonlinear terms coming from the interactions from velocity and magnetic fields, which rely on the additional $L_t^{4}L^2_x$ bound of $\nabla B_2$ and a cancellation property. With these observations, we establish the stability estimates of the incompressible inhomogeneous MHD system \eqref{I-1}.

\vspace{2mm}

The rest of this paper is organized as follows. 
In Section 2, we offer some preliminaries
with regard to a series of notations and definitions.
In Section 3, we establish some uniform estimates in Propositions
\ref{P5.1}--\ref{P5.3} and give the proof of Theorem \ref{T2.3}.
In Section 4, we present the proof of Theorem \ref{T2.1} based on
Propositions
\ref{P3.1}--\ref{P3.6}.
In Section 5, we show the proof of Theorem \ref{T2.2}.
Section 6 is devoted to a stability result, which in particular implies Theorem \ref{thmunique}. 
Finally, we show some practical lemmas 
in the Appendix.

\section{Preliminaries}
In this section, we introduce  notations, definitions and some basic facts, which shall be used
frequently  throughout this paper. 

  The letter $C$ 
is a generic positive constant, which may change from line to line.
And $a\lesssim b$ means $a\leq Cb$. We denote $(d_j)_{j\in\mathbb{Z}}$ $({\rm resp}.\, (c_{j,r})_{j\in\mathbb{Z}}$) to be a generic element of $\ell^1(\mathbb{Z})$ $({\rm resp}.\,\ell^r(\mathbb{Z})$). 
For two operators $\mathcal{T}_1,\mathcal{T}_2$, we set $[\mathcal{T}_1,\mathcal{T}_2]:=\mathcal{T}_1\mathcal{T}_2-\mathcal{T}_2\mathcal{T}_1$ as the commutator of $\mathcal{T}_1$ and $\mathcal{T}_2$.
We denote $D_{t}:=\partial_{t}+u\cdot\nabla$  the standard material derivative.

 For a Banach space $X$ and an interval $I$ of $\mathbb{R}$, we define by $C(I;X)$ 
the set of continuous functions on $I$ with values in $X$, and $L^q(I;X)$ stands for the set of measurable functions on $I$ with values in $X$. If $I=(0,T)$, we denote its norm as $\|\cdot\|_{L^q_T(X)}$ or $\|\cdot\|_{L^q(0,T;X)}$. 
Besides, we set $\|(g,h)\|_{X}:=\|g\|_{X}+\|h\|_{X}$ for the Banach
space $X$, where $g=g(x)$ and $h=h(x)$ belong to $X$.

Let us recall some notations with regard to
Besov spaces on $\mathbb{R}^3$ (see \cite{BCD-Book-2011}). 
Define
\begin{align*}
\dot{\Delta}_{j}u:{=}\mathcal{F}^{-1}(\varphi(2^{-j}|\xi|)\widehat{u}),\quad S_{j}u:{=}\mathcal{F}^{-1}(\chi(2^{-j}|\xi|)\widehat{u}),    
\end{align*}
where $\mathcal{F}u=\widehat{u}$ represents the Fourier transform of the
tempered distribution $u$, and  $\varphi(\xi)$ and $\chi(\xi)$ are smooth functions satisfying
\begin{gather*}
 \mathrm{Supp} \, \varphi\subset\Big\{\xi\in\mathbb{R}: \frac34\leq|\xi|\leq\frac83\Big\}\quad\mathrm{for}\quad\forall\xi>0,\quad \sum_{j\in\mathbb{Z}}\varphi(2^{-j}\xi)=1,\\
 \mathrm{Supp}\, \chi\subset\Big\{\xi \in\mathbb{R}: |\xi|\leq \frac43\Big\}\quad\mathrm{for}\quad\forall\xi\in\mathbb{R}, \quad \chi(\xi)+\sum_{j\geq0}\varphi(2^{-j}\xi)=1.    
\end{gather*}
The homogeneous Littwood-Paley decomposition of $u$ reads
$$
u=\sum_{j\in\mathbb{Z}}\dot{\Delta}_j u.
$$
The above equality holds true for any tempered distribution $u$ in the set
$$
\mathcal{S}_{h}^{\prime}(\mathbb{R}^{3}):=\Big\{u\in \mathcal{S}^{\prime}(\mathbb{R}^{3})\,:\, \lim\limits_{j \to -\infty} \|S_j u\|_{L^{\infty}}=0 \Big\}.
$$

\vspace{2mm}

Below, we give some definitions of Besov spaces and Chemin-Lernel type spaces. For more details on Besov spaces and related properties, interested readers can refer to \cite{BCD-Book-2011}.

\begin{defn}{\rm(\!\!\cite{BCD-Book-2011})}
Let $1\leq p,r\leq \infty$, and $s\in\mathbb{R}$.  We define  the homogeneous Besov norm $\|\cdot\|_{\dot{B}^{s}_{p,r}}$ on $\mathbb{R}^3$ as 
\begin{align*}
\|u\|_{\dot{B}_{p,r}^{s}}:=\Big\|\Big(2^{js}\|\dot{\Delta}_{j}u\|_{L^{p}}\Big)_{j\in\mathbb{Z}}\Big\|_{\ell^{r}(\mathbb{Z})}.   
\end{align*}
 Moreover,

\begin{itemize}
\item  If $s<3/p$ {\rm(}or $s
=3/p$ and $r=1${\rm)}, we denote $\dot{B}_{p,r}^{s}:=\Big\{u\in\mathcal{S}_{h}^{\prime}(\mathbb{R}^{3}):\|u\|_{\dot{B}_{p,r}^{s}}<\infty\Big\}$.

\item  If $k\in \mathbb{N}$ and if $s<3/p+k+1$ {\rm(}or $s=3/p+k+1$ and $r=1${\rm)}, we denote $\dot{B}_{p,r}^{s}$ as the subset
of $u\in\mathcal{S}_{h}^{\prime}(\mathbb{R}^3)$ such that $\partial^{\beta} u$ belongs to $\dot{B}_{p,r}^{s-k} $ while $|\beta|=k$.

\item  In particular, the Besov space $\dot{B}_{2,2}^{s}$ is consistent with the homogeneou Sobolev spaces $\dot{H}^{s}$.

\end{itemize}

\end{defn}


\begin{defn}{\rm(\!\!\cite{BCD-Book-2011})}
Let $s\in\mathbb{R}$, $1\leq p,q,r\leq\infty$,
and $0<T\leq \infty$. We denote $\widetilde{L}_{T}^{q}(\dot{B}_{p,r}^{s})$
with $C([0,T],\mathcal{S}_h(\mathbb{R}^3))$ through the norm
\begin{align*}
\|f\|_{\widetilde{L}_{T}^{q}(\dot{B}_{p,r}^{s})}:=\Big\|\Big(2^{js}\|\dot{\Delta}_{j}u\|_{L^q_T(L^{p})}
\Big)_{j\in\mathbb{Z}}\Big\|_{\ell^{r}(\mathbb{Z})}<\infty.    
\end{align*} 
Due to Minkowski's inequality, we have
$$
\|f\|_{\widetilde{L}_{T}^{\infty}(\dot{B}_{p,1}^{s})}\geq \|f\|_{L_{T}^{\infty}(\dot{B}_{p,1}^{s})}\quad\text{and}\quad \|f\|_{\widetilde{L}_{T}^{1}(\dot{B}_{p,1}^{s})}= \|f\|_{L_{T}^{1}(\dot{B}_{p,1}^{s})}.
$$
\end{defn}

We also recall the definition of Lorentz spaces (see \cite{Or-DM-1963,LR-Book-2002,Grafakos1}).

\begin{defn}{\rm(\!\! Lorentz spaces)} 
Let $1\leq p,q\leq\infty$, and assume that $f$ is a measurable function on  a measure space $(X,\mu^*)$. We define its non-increasing rearrangement by $f^*$ as
\begin{align*}
f^*(\tau):=\inf\big\{s\geq0:|\{x\in X, d_f(s)\leq\tau         \big\}.  
\end{align*}
Here, $d_f(s)$ is given by
$$
d_f(\tau)=\mu^*\big(\{x\in X: |f(x)|>\tau\}\big).
$$
Then $f$ belongs to the Lorentz space $L^{p,q}(X)$ only if
\begin{align*}
\|f\|_{L^{p,q}}:=\begin{cases}\Big(\int_0^\infty\big(t^{\frac{1}{p}}f^*(t)\big)^q\frac{{\rm d}t}{t}\Big)^{\frac{1}{q}}<\infty,&q<\infty,\\
\sup\limits_{t>0}\bigl(t^{\frac{1}{p}}f^*(t)\bigr)<\infty,&q=\infty.\end{cases}    
\end{align*}  
Alternatively, the Lorentz space $L^{p,q}(X)$ can be defined by the real interpolation between the Lebesgue spaces:
\begin{align*}
L^{p,q}(X)=\big(L^{p_1}(X),L^{p_2}(X)\big)_{\theta,q},
\end{align*}
with $1\leq p_1<p<p_2\leq\infty, 0<\theta<1, 1\leq q\leq \infty$ such that $1/p=(1-\theta)/p_1+\theta/p_2$.
\end{defn}

\section{Proof of Theorem \ref{T2.3}}
This section is devoted to the proof of Theorem \ref{T2.3} concerning the global existence in the critical $L^p$ framework. Before showing Theorem \ref{T2.3},
we always use the fundamental fact: the solution $(\rho,u,B,\nabla P)$ solves the MHD system \eqref{I-1} with the coefficient $\mu$ and $\nu$ if and only if the
rescaled solution
\begin{equation}\label{2.1}
\begin{aligned}
    (\widetilde{\rho},\widetilde{u},\widetilde{B},\nabla\widetilde{P})(t,x):=\Big(\rho,\frac{1}{\mu}u,\frac{1}{\mu}B,\frac{1}{\mu^2}\nabla P\Big)\Big(\frac{1}{\mu}t,x\Big)
\end{aligned}
\end{equation}
fulfills  \eqref{I-1} with the coefficients $1$ and ${\nu}/{\mu}$. Therefore, one can perform the change of unknowns \eqref{2.1} and the change of initial data \begin{align}
  (\widetilde{\rho}_0,\widetilde{u}_0,\widetilde{B}_0)(x):=\Big(\rho_0,\frac{1}{\mu}u_0,\frac{1}{\mu}B_0\Big)(x)  \label{2.2}
\end{align}
to reduce the proof to the case $\mu=1$ due to the scaling invariances of the homogeneous Besov norms. More precisely, once we obtain the {\emph{a priori}} bounds under the assumption \eqref{A1.2}--\eqref{A1.3} for $(\widetilde{\rho}_0,\widetilde{u}_0,\widetilde{B}_0)$ with $\mu=1$, one can recover the dependence of $\mu$ in \eqref{A1.3} for the original data $(\rho_0,u_0,B_0)$ (see a similar setting in \cite{DT-CPDE-2021}). To simplify the notation, below we take the magnetic resistivity $\nu=\mu=1$.

 The proof of Theorem \ref{T2.3} relies on the {\emph{a priori}} estimates which will be stated in Propositions \ref{P5.1}--\ref{P5.3}.  
\begin{prop}\label{P5.1}
Let $(\rho,u,B,\nabla P)$ be a smooth solution of \eqref{I-1} in $[0,T^*)\times\mathbb{R}^{3}$ with some  $T^*>0$. There exists a generic constant $\varepsilon_1>0$ such that if
\begin{align}\label{A5.1}
\sup\limits_{t\in[0,T^*)} \|(\rho-1)(t)\|_{L^\infty}+\|(u_0,B_0)\|_{\dot{B}_{p,1}^{-1+3/p}}<\varepsilon_1, 
\end{align}
then for any  $1<m,p,q,s<\infty$ satisfying $p<m<\infty$, $q<s<\infty$,
and
\begin{gather*}
3/p+2/q=3,\quad 3/m+2/s=1,     
\end{gather*}
it holds that, for any $T\in(0,T^*)$, there exists a universal constant $C$ such that
\begin{align}\label{A5.3}
&\|(u,B)\|_{L^{\infty}_T(\dot{B}_{p,1}^{-1+3/p})}+\|(u,B)\|_{L^{s,1}_T(L^{m})}+\|(u,B)\|_{L^{2}_T(L^{\infty})}\nonumber\\
&\quad +\|( D_t u,D_t B,\partial_t u,\partial_t B)\|_{L^{q,1}_T(L^{p})}+\|(\nabla^{2}u,\nabla^2 B,\nabla P)\|_{L^{q,1}_T(L^{p})}\leq C\|(u_{0},B_0)\|_{\dot{B}_{p,1}^{-1+3/p}}.
\end{align}
\end{prop}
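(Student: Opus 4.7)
The plan is to close the bound \eqref{A5.3} by a continuity/bootstrap argument anchored on Lorentz-in-time maximal regularity for the Stokes and heat operators. To this end, I would rewrite the momentum equation (using $\dive u=0$) and the induction equation as the constant-coefficient problems
\begin{equation*}
\partial_t u-\Delta u+\nabla P = G_1:= B\cdot\nabla B-u\cdot\nabla u+(1-\rho)\,D_t u,\qquad \partial_t B-\Delta B = G_2:=B\cdot\nabla u-u\cdot\nabla B,
\end{equation*}
so that the smallness $\|1-\rho\|_{L^\infty}\leq\varepsilon_1$ turns the density perturbation into an absorbable term once we have control of $D_t u$ in $L^{q,1}_T(L^p)$. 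The heat flow applied to $u_0,B_0\in\dot B^{-1+3/p}_{p,1}$ yields the linear $L^\infty_T(\dot B^{-1+3/p}_{p,1})$ estimate, and by standard Littlewood-Paley/Lorentz embeddings I would deduce the $L^{s,1}_T(L^m)$ and $L^2_T(L^\infty)$ bounds from a combination of $L^\infty_T(\dot B^{-1+3/p}_{p,1})\cap L^{q,1}_T(\dot B^{1+3/p}_{p,1})$ control (using the two scaling relations $3/p+2/q=3$ and $3/m+2/s=1$).

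Next, I would invoke the $L^{q,1}_T(L^p)$ maximal regularity for $\partial_t-\Delta$ together with the Stokes projector, namely
\begin{equation*}
\|(\partial_t u,\nabla^2 u,\nabla P)\|_{L^{q,1}_T(L^p)}\lesssim \|G_1\|_{L^{q,1}_T(L^p)},\qquad \|(\partial_t B,\nabla^2 B)\|_{L^{q,1}_T(L^p)}\lesssim \|G_2\|_{L^{q,1}_T(L^p)},
\end{equation*}
and then use $D_t=\partial_t+u\cdot\nabla$ together with the bound on $u\cdot\nabla u$ (and $u\cdot\nabla B$) to pass between $\partial_t$ and $D_t$. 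The term $(1-\rho)D_t u$ in $G_1$ contributes $\|1-\rho\|_{L^\infty}\|D_tu\|_{L^{q,1}_T(L^p)}\leq\varepsilon_1\|D_tu\|_{L^{q,1}_T(L^p)}$, which for $\varepsilon_1$ small can be absorbed into the left-hand side.

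The heart of the work is the bilinear estimate of the nonlinearities in $L^{q,1}_T(L^p)$. Using H\"older in Lorentz spaces together with the two scaling identities, terms like $u\cdot\nabla u$, $u\cdot\nabla B$, $B\cdot\nabla u$, $B\cdot\nabla B$ all obey schematic bounds of the form
\begin{equation*}
\|f\cdot\nabla g\|_{L^{q,1}_T(L^p)}\lesssim \|f\|_{L^{s,1}_T(L^m)}\,\|\nabla g\|_{L^{2,\infty}_T(L^r)}+\|f\|_{L^{2}_T(L^\infty)}\,\|\nabla g\|_{L^{q,1}_T(L^p)},
\end{equation*}
with $1/r=1/p-1/m$, so both factors sit inside $X(T)$, the sum of the norms appearing on the LHS of \eqref{A5.3}. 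Combining the maximal regularity and these bilinear bounds produces the master inequality
\begin{equation*}
X(T)\leq C_0\|(u_0,B_0)\|_{\dot B^{-1+3/p}_{p,1}}+C_1\bigl(X(T)^2+\varepsilon_1\,X(T)\bigr),
\end{equation*}
from which the a priori bound \eqref{A5.3} follows by a standard continuity argument once $\varepsilon_1$ is chosen small enough.

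The main obstacle, compared with the analogous estimate for the inhomogeneous Navier-Stokes system in Danchin-Wang, is the Lorentz force $B\cdot\nabla B$: it forces $u$ and $B$ to be treated simultaneously, and, more seriously, when one propagates $D_t u$ in $L^{q,1}_T(L^p)$ the trilinear term $u\cdot\nabla(B\cdot\nabla B)$ appears in $G_1$ and has no Navier-Stokes analogue. Controlling it requires a refined splitting between the $L^{s,1}_T(L^m)$ norm of $u$ and the $L^{q,1}_T(L^p)$ norm of $\nabla^2 B$ (or $\nabla B\otimes \nabla B$), combined with a Besov-Lorentz embedding, in order to re-express the product within $X(T)$. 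This is precisely the step where the interpolation inequalities between Besov and Lorentz spaces must be used sharply, and it is the principal technical point that extends the Navier-Stokes argument of Danchin-Wang to the MHD setting.
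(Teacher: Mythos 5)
Your proposal follows essentially the same route as the paper: the same rewriting of the momentum and induction equations as constant-coefficient Stokes/heat systems with the density perturbation moved to the source, the same Lorentz-in-time maximal regularity lemmas, absorption of the $(1-\rho)$ term by the smallness of $\varepsilon_1$, product estimates placing both factors inside the working norm, and closure by the quadratic bootstrap inequality. The paper's bilinear estimate is simpler than your schematic one: it uses only H\"older in the form $\|f\cdot\nabla g\|_{L^{q,1}_t(L^p)}\lesssim\|f\|_{L^\infty_t(L^3)}\|\nabla g\|_{L^{q,1}_t(L^{p^*})}$ with $1/p^*=1/p-1/3$ and the embedding $\dot B^{-1+3/p}_{p,1}\hookrightarrow L^3$, so no splitting into two Lorentz-in-time pieces is needed, and your specific exponent $L^{2,\infty}_T(L^r)$ with $1/r=1/p-1/m$ only matches the scaling for particular choices of $m$.

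One point in your last paragraph is misplaced: for \emph{this} proposition the trilinear term $u\cdot\nabla(B\cdot\nabla B)$ never appears, because the $L^{q,1}_T(L^p)$ bound on $D_t u$ is obtained trivially from $D_t u=\partial_t u+u\cdot\nabla u$ once $\partial_t u$ and $u\cdot\nabla u$ are separately controlled; one does not apply $D_t$ to the equation here. That trilinear difficulty genuinely arises only in the later weighted estimates for $t\,\nabla^2 D_t u$ (Proposition \ref{P5.3} of the paper), so presenting it as the principal technical point of the present estimate overstates what is needed, though it does not invalidate your argument.
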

\begin{proof}
First, employing the maximal regularity estimates in Lemma \ref{L2.8} to the following system:
\begin{equation}\label{A5.5}
\left\{
\begin{aligned}
& \partial_t u-\Delta u+\nabla P=-(\rho-1)\partial_t u-\rho u\cdot\nabla u+B\cdot\nabla B, \\
& \dive u=0,\\
& u|_{t=0}=u_0
\end{aligned}\right.
\end{equation}
for $t\in(0,T)$, we have
\begin{align}\label{A5.6}
&\|u\|_{L^{\infty}_t(\dot{B}_{p,1}^{-1+3/p})}+\|(\partial_t u, \nabla^{2}u, \nabla P)\|_{L^{q,1}_t(L^{p})}+\|u\|_{L^{\infty}_t(L^{m})} \nonumber\\
&\quad\quad\leq\, C\|u_{0}\|_{\dot{B}_{p,1}^{-1+3/p}}+C\|(\rho-1)\partial_t u+\rho u\cdot\nabla u+B\cdot\nabla B\|_{L^{q,1}_t(L^{p})}. 
\end{align}
A direct application of H\"{o}lder’s inequality gives
\begin{align}\label{A5.611}
&\|(\rho-1)\partial_t u+\rho u\cdot\nabla u+B\cdot\nabla B\|_{L^{q,1}_t(L^{p})}\nonumber\\
&\quad\quad\leq C\|\rho-1\|_{L^{\infty}(0,t)\times\mathbb{R}^{3})}\|\partial_t u\|_{L^{q,1}_t(L^{p})}\nonumber\\
&\,\quad\quad\quad\quad+C\|\rho\|_{L^{\infty}(0,t)\times\mathbb{R}^{3})}\|u\cdot\nabla u\|_{L^{q,1}_t(L^{p})}+C\|B\cdot\nabla B\|_{L^{q,1}_t(L^{p})}.
\end{align}

On the other hand, we recall that $B$ satisfies
\begin{equation}\label{A5.7}
\left\{
\begin{aligned}
& \partial_t B-\Delta B=B\cdot\nabla u-u\cdot\nabla B, \\
& \dive B=0,\\
& B|_{t=0}=B_0.
\end{aligned}\right.
\end{equation}
Using Lemma \ref{L2.9} for \eqref{A5.7}, we arrive at
\begin{align}\label{A5.8}
&\|B\|_{L^{\infty}_t(\dot{B}_{p,1}^{-1+3/p})}+\|(\partial_t B, \nabla^{2}B)\|_{L^{q,1}_t(L^{p})}+\|B\|_{L^{\infty}_t(L^{m})} \nonumber\\
\leq\,& C\|B_{0}\|_{\dot{B}_{p,1}^{-1+3/p}}+C\| B\cdot\nabla u-u\cdot\nabla B\|_{L^{q,1}_t(L^{p})} \nonumber\\
\leq\,&C\|B_{0}\|_{\dot{B}_{p,1}^{-1+3/p}}+C\|B\cdot\nabla u\|_{L^{q,1}_t(L^{p})}+C\|u\cdot\nabla B\|_{L^{q,1}_t(L^{p})}.
\end{align}
Combining \eqref{A5.1} with $\varepsilon_1$ suitably small, \eqref{A5.6}, \eqref{A5.611} and \eqref{A5.8}, we end up with
\begin{align}\label{A5.9}
&\|(u,B)\|_{L^{\infty}_t(\dot{B}_{p,1}^{-1+3/p})}+\|(\partial_t u,\partial_t B, \nabla^{2}u,\nabla^2 B, \nabla P)\|_{L^{q,1}_t(L^{p})}+\|(u,B)\|_{L^{\infty}_t(L^{m})} \nonumber\\
&\quad\leq C\|(u_{0},B_0)\|_{\dot{B}_{p,1}^{-1+3/p}}+C\|u\cdot\nabla u\|_{L^{q,1}_t(L^{p})}+C\|B\cdot\nabla B\|_{L^{q,1}_t(L^{p})}\nonumber\\
&\quad\quad+C\|B\cdot\nabla u\|_{L^{q,1}_t(L^{p})}+C\|u \cdot\nabla B\|_{L^{q,1}_t(L^{p})}.
\end{align}

By utilizing Lemma \ref{lemmabesov} and the Sobolev embedding
\begin{align}\label{A5.10}
\dot{W}_p^1 \hookrightarrow L^{p^*} \quad\text{with}\quad  1/p^*=1/p-1/3,   
\end{align}
we infer
\begin{align}\label{A5.11}
&\|u\cdot\nabla u\|_{L^{q,1}_t(L^{p})}+\|B\cdot\nabla B\|_{L^{q,1}_t(L^{p})}+\|B\cdot\nabla u\|_{L^{q,1}_t(L^{p})}+\|u \cdot\nabla B\|_{L^{q,1}_t(L^{p})}\nonumber\\
 \leq\,&C\|(u,B)\|_{L^{\infty}_t(L^{3})}\|(\nabla u,\nabla B)\|_{L^{q,1}_t(L^{p^{*}})}  \nonumber\\
\leq\,&C\|(u,B)\|_{L^{\infty}_t(\dot{B}_{p,1}^{-1+3/p})}\|(\nabla^2 u,\nabla^2 B)\|_{L^{q,1}_t(L^{p})},
\end{align}
which, together with \eqref{A5.9}, implies that
\begin{align}\label{A5.12}
&\|(u,B)\|_{L^{\infty}_t(\dot{B}_{p,1}^{-1+3/p})}+\|(\partial_t u,\partial_t B, \nabla^{2}u,\nabla^2 B, \nabla P)\|_{L^{q,1}_t(L^{p})}+\|(u,B)\|_{L^{\infty}_t(L^{m})} \nonumber\\
&\quad\leq C\|(u_{0},B_0)\|_{\dot{B}_{p,1}^{-1+3/p}}+C\|(u,B)\|_{L^{\infty}_t(\dot{B}_{p,1}^{-1+3/p})}\|(\nabla^2 u,\nabla^2 B)\|_{L^{q,1}_t(L^{p})}.
\end{align}

Let us denote 
\begin{align*}
\mathcal{E}_1(t):=\|(u,B)(t)\|_{\dot{B}_{p,1}^{-1+3/p}}+\|(\partial_t u,\partial_t B,\nabla^{2}u,\nabla^2B,\nabla P)\|_{L^{q,1}_t(L^{p})}+\|(u,B)\|_{L^{\infty}_t(L^{m})},   
\end{align*}
and
\begin{align}
T_1^*:=\sup\bigl\{t\in(0,T^*)~:~\mathcal{E}_1(t)\leq  \beta_1\bigl\},\label{A5.141} 
\end{align}
where $\beta_1$ will be determined later.
Then \eqref{A5.12} implies that there exists a constant $C_1^*>0$ such that
\begin{align}\label{A5.14}
\sup_{s\in[0,T_1^*)}\mathcal{E}_1(s)\leq C_1^*\|(u_{0},B_0)\|_{\dot{B}_{p,1}^{-1+3/p}}+C_1^*\Big(\sup_{s\in[0,T_1^*)}\mathcal{E}_1(s)\Big)^2.   
\end{align}
By \eqref{A5.141} and \eqref{A5.14}, one can choose $\beta_1=\frac{1}{3C_1^*}$ and let $\|(u_{0},B_0)\|_{\dot{B}_{p,1}^{-1+3/p}}<\frac{2\beta_1}{3C_1^*}$ to deduce that
\begin{align*}
\sup_{s\in (0,T_1^*)}\mathcal{E}_1(t)\leq \frac{3}{2}C_1^*\|(u_{0},B_0)\|_{\dot{B}_{p,1}^{-1+3/p}}<\beta_1.   
\end{align*}
This implies that if $T_1^*<T^*$, then thanks to the continuity of $\mathcal{E}_1(t)$, one can take some time $t^*\in (T_1^*, T^*)$ such that $\mathcal{E}_1(t^*)\leq \beta_1$ holds, which contradicts with the maximality of $T_1^*$. Therefore, we conclude $T_1^*=T^*$, and  we infer that, for any $T\in(0,T^*)$,
\begin{align}\label{A5.12111}
&\|(u,B)\|_{L^{\infty}_T(\dot{B}_{p,1}^{-1+3/p})}+\|(u,B)\|_{L^{s,1}_T(L^{m})}\nonumber\\
&\quad +\|(\partial_t u,\partial_t B, \nabla^{2}u,\nabla^2 B, \nabla P)\|_{L^{q,1}_T(L^{p})}\leq C\|(u_{0},B_0)\|_{\dot{B}_{p,1}^{-1+3/p}}.
\end{align}
Moreover, from the definition $D_t=\partial_t+u\cdot \nabla $, \eqref{A5.10} and \eqref{A5.12111}, it holds that
\begin{align*}
\|(D_tu,D_tB)\|_{L^{q,1}_T(L^{p})}\leq&\,\|(\partial_t u,\partial_t B)\|_{L^{q,1}_T(L^{p})}+\|(u,B)\|_{L^{\infty}_T(\dot{B}_{p,1}^{-1+3/p})}\|(\nabla^2 u,\nabla^2 B)\|_{L^{q,1}_T(L^{p})} \nonumber\\
\leq&\, C\|(u_{0},B_0)\|_{\dot{B}_{p,1}^{-1+3/p}}.
\end{align*}

Finally, it follows from \eqref{A5.12111},  Gagliardo-Nirenberg inequality, Lemma \ref{lemmabesov} and $\dot{B}_{p,1}^{-1+3/p}\hookrightarrow L^3$ that
\begin{align}\label{A5.17}
\int_{0}^{T}\|(u,B) \|_{L^{\infty}}^{2} {\rm d}t& \,\leq C\int_{0}^{T}\|(u,B) \|_{L^3}^{2-q}\|(\nabla^2 u,\nabla^2 B) \|_{L^{p}}^{q} {\rm d}t' \nonumber\\
&\,\leq C\|(u,B)\|_{L^{\infty}_T(\dot{B}_{p,1}^{-1+3/p})}^{2-q}\|(\nabla^2 u,\nabla^2 B)\|_{L^{q,1}_T(L^{p})}^{q} \nonumber\\
&\leq  C\|(u_{0},B_0)\|_{\dot{B}_{p,1}^{-1+3/p}}^2.    
\end{align}
Owing to \eqref{A5.12111}--\eqref{A5.17}, we eventually obtain \eqref{A5.3}.
\end{proof}

Next, we establish higher-order weighted estimates of $(u,B,\nabla P)$, which leads to the time integrability of the Lipschitz norms for $(u, B)$ and the weighted time integrability for $(D_t u, D_tB)$.

\begin{prop}\label{P5.2}
Under the assumptions of Proposition \ref{P5.1}, for any $T\in(0,T^*)$, it holds that
\begin{align}
\|t(u,B)\|_{L^{\infty}_T(\dot{B}_{m,1}^{1+3/m})}+\|((tu)_{t},(tB)_{t},t\nabla^{2}u,t\nabla^{2}B, t\nabla P)\|_{L^{s,1}_T(L^{m})}&\leq \|(u_{0},B_0)\|_{\dot{B}_{p,1}^{-1+3/p}}, \label{A5.18}
\end{align}
and
\begin{align}
\|t^{1/2}(u,B)\|_{L^{\infty}_T(\dot{B}_{m,1}^{3/m})}+\|(\nabla u,\nabla B)\|_{L^1_T(L^{\infty})}+\|t (\nabla u,\nabla B)\|_{L^2_T(L^{\infty})}&\leq C\|(u_{0},B_0)\|_{\dot{B}_{p,1}^{-1+3/p}}.\label{A5.19}
\end{align}   
 
\end{prop}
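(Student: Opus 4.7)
The plan is to derive time-weighted equations for $(tu, tB, tP)$ and apply the maximal regularity framework of Lemmas \ref{L2.8} and \ref{L2.9}. Starting from the reformulations \eqref{A5.5} and \eqref{A5.7}, multiplying by $t$ and using the identity $t\partial_t u = \partial_t(tu) - u$, one obtains
\begin{align*}
\partial_t(tu) - \Delta(tu) + \nabla(tP) &= \rho u - (\rho-1)\partial_t(tu) - \rho u\cdot\nabla(tu) + tB\cdot\nabla B,\\
\partial_t(tB) - \Delta(tB) &= B + tB\cdot\nabla u - u\cdot\nabla(tB),
\end{align*}
both with zero initial data at $t=0$. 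Since $3/m + 2/s = 1$ gives $2 - 2/s = 1 + 3/m$, applying the maximal regularity lemmas in the $L^{s,1}_T(L^m)$ framework produces precisely the target norms $\|t(u,B)\|_{L^\infty_T(\dot B^{1+3/m}_{m,1})}$ and $\|((tu)_t, (tB)_t, t\nabla^2 u, t\nabla^2 B, t\nabla P)\|_{L^{s,1}_T(L^m)}$.

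Next, I estimate the source terms in $L^{s,1}_T(L^m)$. The lower-order pieces $\rho u$ and $B$ are controlled by $C\|(u,B)\|_{L^{s,1}_T(L^m)}$, which is already bounded by $C\|(u_0,B_0)\|_{\dot B^{-1+3/p}_{p,1}}$ via Proposition \ref{P5.1}. The density contribution $\|(\rho-1)\partial_t(tu)\|_{L^{s,1}_T(L^m)} \leq \varepsilon_1\|(tu)_t\|_{L^{s,1}_T(L^m)}$ is absorbed into the left-hand side thanks to the smallness \eqref{A5.1}. For the quadratic transport terms, I exploit the commutation $t(X\cdot\nabla Y) = X\cdot\nabla(tY)$ and the Besov embedding $\dot B^{1+3/m}_{m,1}\hookrightarrow \dot W^{1,\infty}$ to place $\nabla(tu), \nabla(tB)$ in $L^\infty_T(L^\infty)$; pairing with $u, B \in L^{s,1}_T(L^m)$ from Proposition \ref{P5.1} yields, for example,
\begin{align*}
\|tB\cdot\nabla B\|_{L^{s,1}_T(L^m)} = \|B\cdot\nabla(tB)\|_{L^{s,1}_T(L^m)} \leq C\|B\|_{L^{s,1}_T(L^m)}\|tB\|_{L^\infty_T(\dot B^{1+3/m}_{m,1})},
\end{align*}
and analogously for the terms $\rho u\cdot\nabla(tu)$, $tB\cdot\nabla u$, $u\cdot\nabla(tB)$. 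A continuity/bootstrap argument parallel to that of Proposition \ref{P5.1} then closes the first estimate \eqref{A5.18}.

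For \eqref{A5.19}, the $t^{1/2}$-weighted Besov bound follows by real interpolation: Proposition \ref{P5.1} combined with the embedding $\dot B^{-1+3/p}_{p,1}\hookrightarrow\dot B^{-1+3/m}_{m,1}$ (valid since $p\leq m$) gives $u\in L^\infty_T(\dot B^{-1+3/m}_{m,1})$, and the midpoint interpolation with the first inequality yields
\begin{align*}
\|t^{1/2}u\|_{L^\infty_T(\dot B^{3/m}_{m,1})} \leq C\|u\|_{L^\infty_T(\dot B^{-1+3/m}_{m,1})}^{1/2}\|tu\|_{L^\infty_T(\dot B^{1+3/m}_{m,1})}^{1/2}.
\end{align*}
For the Lipschitz bounds, I apply $\dot B^{3/m}_{m,1}\hookrightarrow L^\infty$ to $\nabla u$ and interpolate in time between the $\nabla^2 u \in L^{q,1}_T(L^p)$ bound of Proposition \ref{P5.1} and the $t\nabla^2 u \in L^{s,1}_T(L^m)$ bound just proved, combined via Lorentz--Hölder; the scaling relations $3/p + 2/q = 3$ and $3/m + 2/s = 1$ make the resulting time integrals fall into $L^1_T$ and $L^2_T$ respectively. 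The principal obstacle will be the careful matching of Lorentz indices in these interpolations, since one must simultaneously account for the time weight ($1$ versus $t$) and the Lebesgue exponent ($p$ versus $m$); here the specific structure $p < 3 < m$ ensures that Besov embeddings and Gagliardo--Nirenberg-type inequalities produce integrable time decay without any logarithmic loss near $t = 0$.
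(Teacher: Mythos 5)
Your proposal is correct and follows essentially the same route as the paper: the same time-weighted equations for $(tu,tB,tP)$ fed into the maximal regularity Lemmas \ref{L2.8}--\ref{L2.9}, the same absorption of $(\rho-1)(tu)_t$ by smallness, the same product estimate $\|B\cdot\nabla(tB)\|_{L^{s,1}_T(L^m)}\leq C\|B\|_{L^{s,1}_T(L^m)}\|tB\|_{L^\infty_T(\dot B^{1+3/m}_{m,1})}$, the same midpoint interpolation for the $t^{1/2}$ bound, and the same Gagliardo--Nirenberg/Lorentz--H\"older interpolation between $\nabla^2 u\in L^{q,1}_T(L^p)$ and $t\nabla^2 u\in L^{s,1}_T(L^m)$ for $\|(\nabla u,\nabla B)\|_{L^1_T(L^\infty)}$ (the paper's exponent $\alpha=\tfrac{m(3-p)}{3(m-p)}$ with $t^{-\alpha}\in L^{1/\alpha,\infty}$, whose indices sum exactly to $1$). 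The only cosmetic difference is that the paper absorbs the quadratic term directly using the smallness of $\|(u,B)\|_{L^{s,1}_T(L^m)}$ from Proposition \ref{P5.1} rather than running a separate bootstrap, and it obtains the final weighted $L^2_T(L^\infty)$ bound by writing $\int_0^T t\|\nabla u\|_{L^\infty}^2\,{\rm d}t\leq \|tu\|_{L^\infty_T(\dot B^{1+3/m}_{m,1})}\int_0^T\|\nabla u\|_{L^\infty}\,{\rm d}t$.
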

\begin{proof}
Multiplying \eqref{I-1}$_2$ and $\eqref{I-1}_3$ by the time $t$ respectively, we obtain
\begin{align}\label{A5.20}
(tu)_t-\Delta(tu)+\nabla(tP)=-(\rho-1)(tu)_t+\rho u-\rho u\cdot\nabla (tu)+B\cdot\nabla (tB),   
\end{align}
and
\begin{align}\label{A5.21}
(tB)_t-\Delta(tB)=B+B\cdot\nabla (tu)-u\cdot\nabla (tB).   
\end{align}
Then, employing Lemmas \ref{L2.8}--\ref{L2.9} to \eqref{A5.20} and \eqref{A5.21}, we arrive at
\begin{align}\label{A5.22}
&\|(tu,tB)\|_{L^{\infty}_T(\dot{B}_{m,1}^{1+3/m})}+\|((tu)_{t},(tB)_{t},\nabla^{2}(tu),\nabla^2(tB),\nabla(tP))\|_{L^{s,1}_T(L^{m})} \nonumber\\
&\quad\leq C\|\rho-1\|_{L^{\infty}_T(L^{\infty})}\|(tu)_{t}\|_{L^{s,1}_T(L^{m})}+C\|tB\cdot(\nabla u,\nabla B)\|_{L^{s,1}_T(L^{m})} \nonumber\\
&\quad\quad+C\big(\|\rho-1\|_{L^{\infty}_T(L^{\infty})}+1\big)\big(\|(u,B)\|_{L^{s,1}_T(L^{m})}+\|tu\cdot(\nabla u,\nabla B)\|_{L^{s,1}_T(L^{m})}\big).
\end{align}
Using  H\"{o}lder’s inequality and the properties given in Lemmas \ref{lemmabesov} and \ref{D A.9}, we have
\begin{align}\label{A5.23}
&\|tu\cdot(\nabla u,\nabla B)\|_{L^{s,1}_T(L^{m})}+\|tB\cdot(\nabla u,\nabla B)\|_{L^{s,1}_T(L^{m})}\nonumber\\
\leq&\, C\|(u,B)\|_{L^{s,1}_T(L^{m})}\|t(\nabla u,\nabla B)\|_{L^{\infty}_T(L^{\infty})} \nonumber\\
\leq&\, C\|(u,B)\|_{L^{s,1}_T(L^{m})}\|(tu,tB)\|_{L^{\infty}_T(\dot{B}_{m,1}^{1+3/m})},
\end{align}
which, together with \eqref{A5.1},  \eqref{A5.11}, \eqref{A5.22}, yields the desired estimate \eqref{A5.18}.

To prove \eqref{A5.19}, we obtain from \eqref{A5.3} and \eqref{A5.18}, Lemma \ref{L2.7} and $p<m$ that
\begin{align}
\|t^{1/2}( u,  B)\|_{L^{\infty}_T(\dot{B}_{m,1}^{\frac{3}{m}})}&\,\leq C\|( u,  B)\|_{L^{\infty}_T(\dot{B}_{m,1}^{-1+3/m})}^{1/2} \|t(  u,  B)\|_{L^{\infty}_T(\dot{B}_{m,1}^{1+3/m})}^{1/2} \nonumber \\
&\,\leq C \|( u,B)\|_{L^{\infty}_T(\dot{B}_{p,1}^{-1+3/p})}^{1/2} \|t(u, B)\|_{L^{\infty}_T(\dot{B}_{m,1}^{1+3/m})}^{1/2} \nonumber \\
&\,\leq C\|(u_{0},B_0)\|_{\dot{B}_{p,1}^{-1+3/p}}.\nonumber
\end{align}
In addition, we take advantage of the following Gagliardo-Nirenberg inequality:
\begin{align*}
\|(u,B)\|_{L^{\infty}}\leq C\|(\nabla u,\nabla B)\|_{L^{p}}^{\frac{p(m-3)}{3(m-p)}}\|(\nabla^2 u,\nabla^2 B)\|_{L^{m}}^{\frac{m(3-p)}{3(m-p)}}.
\end{align*}
Here, we used $p<3<m$. Using \eqref{A5.3}, \eqref{A5.23},
Lemma \ref{L2.6} and the fact that $t^{-a}\in L^{1/\alpha,\infty}(\mathbb{R}_{+})$ with $\alpha=\frac{m(3-p)}{3(m-p)}$,
we derive
\begin{align*}
\int_{0}^{T}\|(\nabla u,\nabla B)\|_{L^{\infty}} {\rm d}t
\leq\,& C\int_{0}^{T}t^{-\frac{m(3-p)}{3(m-p)}}\|(\nabla^2 u,\nabla^2 B)\|_{L^{p}}^{\frac{p(m-3)}{3(m-p)}}\|t(\nabla^2 u,\nabla^2 B) \|_{L^{m}}^{\frac{m(3-p)}{3(m-p)}} {\rm d}t \nonumber \\
\leq\,& C\|(\nabla^2 u,\nabla^2 B)\|_{L^{q,1}_T(L^{p})}^{\frac{p(m-3)}{3(m-p)}}\|t (\nabla^2 u,\nabla^2 B)\|_{L^{s,1}_T(L^{m})}^{\frac{m(3-p)}{3(m-p)}} \nonumber\\
\leq\,&C\|(u_{0},B_0)\|_{\dot{B}_{p,1}^{-1+3/p}},
\end{align*}
from which and \eqref{A5.18} one infers
\begin{align*}
\int_{0}^{T}t\|(\nabla u,\nabla B) \|_{L^{\infty}}^{2} {\rm d}t&\,\leq C\|(tu,tB)\|_{L^{\infty}_T(\dot{B}_{m,1}^{1+3/m})}\int_{0}^{T}\|(\nabla u,\nabla B)\|_{L^{\infty}}{\rm d}t \nonumber\\
&\leq C\|(u_{0},B_0)\|_{\dot{B}_{p,1}^{-1+3/p}}^{2}.
\end{align*} 
These lead to \eqref{A5.19} and  the proof of Proposition \ref{P5.2} is complete.
\end{proof}

Furthermore, we explore some weighed estimates for $D_t u$ and $D_t B$.

\begin{prop}\label{P5.3}
Under the assumptions of Proposition \ref{P5.1}, for any $T\in(0,T^*)$, it holds that
\begin{align}\label{A5.31}
&\|(tD_tu,tD_tB)\|_{L^{\infty}_T(\dot{B}_{p,1}^{-1+3/p})}+\|(t D_t u,tD_tB)\|_{L^{s,1}_T(L^{p})}\nonumber\\
&\quad\quad +\|(\partial_t(tD_tu),\partial_t(tD_tB),\nabla^{2}(tD_tu),\nabla^{2}(t D_tB)\|_{L^{q,1}_T(L^{p})}\leq C\|(u_{0},B_0)\|_{\dot{B}_{p,1}^{-1+3/p}},
\end{align}
and
\begin{align}\label{A5.32}
\|(t \nabla D_t u,t \nabla D_t B)\|_{L^{2}_T(L^{3})}+\|(t D_t u,t D_t B)\|_{L^{2}_T(L^{\infty})}\leq C\|(u_{0},B_0)\|_{\dot{B}_{p,1}^{-1+3/p}}.   
\end{align}
\end{prop}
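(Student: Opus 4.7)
The plan is to apply the material derivative $D_t = \partial_t + u\cdot\nabla$ to the momentum and magnetic equations and analyze the resulting parabolic systems for $tD_t u$ and $tD_t B$ via the same maximal-regularity tools (Lemmas \ref{L2.8}--\ref{L2.9}) already exploited in Propositions \ref{P5.1}--\ref{P5.2}. Since $D_t\rho = 0$ by the transport equation, applying $D_t$ to $\rho D_t u - \Delta u + \nabla P = B\cdot\nabla B$ yields
\begin{equation*}
\rho\,\partial_t(D_t u) + \rho u\cdot\nabla(D_t u) - \Delta(D_t u) + \nabla(D_t P) = [\Delta,u\cdot\nabla]u - (\nabla u)^{\top}\nabla P + D_t(B\cdot\nabla B),
\end{equation*}
and multiplication by $t$ produces an extra linear source $\rho D_t u$. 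An analogous calculation for $\partial_tB + u\cdot\nabla B - \Delta B = B\cdot\nabla u$ gives a parallel equation for $tD_tB$ with source proportional to $D_tB + t\bigl\{[\Delta,u\cdot\nabla]B + D_t(B\cdot\nabla u)\bigr\}$.

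The first bound in \eqref{A5.31} will then follow from the Stokes and heat maximal-regularity inequalities of Lemmas \ref{L2.8}--\ref{L2.9} applied to the pair $(tD_tu, tD_tB)$, treating the factor $\rho-1$ as small in $L^\infty$ and absorbing the corresponding term on the left. Linear sources such as $\rho D_t u$ and $D_tB$ are already controlled in $L^{q,1}_T(L^p)$ by \eqref{A5.3}. Commutator contributions $[\Delta,u\cdot\nabla]u$ unfold into expressions in $\nabla u\otimes \nabla^2 u$, which I would estimate via H\"older's inequality in Lorentz spaces together with the embedding $\dot{B}^{-1+3/p}_{p,1}\hookrightarrow L^3$ and the weighted control $\|t(\nabla u,\nabla B)\|_{L^2_T(L^{\infty})}$ from \eqref{A5.19}; the pressure term $(\nabla u)^{\top}\nabla P$ is treated identically using the $L^{q,1}_T(L^p)$-bound on $\nabla P$ from \eqref{A5.3}.

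The main obstacle is the trilinear magnetic forcing $tD_t(B\cdot\nabla B)$, which is precisely the new difficulty flagged in the introduction relative to \cite{DW-CMMP-2023}. Expanding
\begin{equation*}
D_t(B\cdot\nabla B) = (D_tB)\cdot\nabla B + B\cdot\nabla(D_tB) - B\cdot(\nabla u\cdot\nabla)B,
\end{equation*}
and substituting $D_tB = \Delta B + B\cdot\nabla u$ from \eqref{A5.7}, all contributions reduce to products of quantities already under control: $(B,\nabla B)$ at the $L^\infty$ level with the appropriate $t$-weight from \eqref{A5.18}--\eqref{A5.19}, $\nabla^2 B$ in $L^{q,1}_T(L^p)$, and $B$ in $L^{s,1}_T(L^m)$. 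Distributing the $t$-factor so that each factor matches one of the norms in \eqref{A5.3}--\eqref{A5.19} and invoking refined product estimates in Lorentz--Besov spaces will close the trilinear terms against the quadratic smallness carried by \eqref{A5.1}. The analogous forcing in the $tD_tB$ equation is then handled in the same fashion by the symmetry of the system.

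Finally, \eqref{A5.32} follows from Gagliardo--Nirenberg interpolation applied to $tD_tu$ and $tD_tB$. For $\|tD_tu\|_{L^2_T(L^\infty)}$ I would interpolate between $L^\infty_T(\dot{B}^{-1+3/p}_{p,1})$ and the parabolic gain encoded in $\nabla^2(tD_tu)\in L^{q,1}_T(L^p)$, in exactly the same spirit as the derivation of \eqref{A5.19} in Proposition \ref{P5.2}, using Lemma \ref{L2.6} to absorb the Lorentz endpoint; the bound $\|t\nabla D_tu\|_{L^2_T(L^3)}$ is obtained by the same interpolation one derivative lower combined with Sobolev embedding. The estimates for $B$ are immediate by symmetry, which completes the scheme.
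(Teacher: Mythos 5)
Your overall scheme (apply $D_t$, multiply by $t$, run maximal regularity, close the trilinear magnetic term with the weighted bounds of Propositions \ref{P5.1}--\ref{P5.2}, then interpolate for \eqref{A5.32}) matches the paper's strategy, but two steps as written would fail.

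First, you propose to apply the Stokes maximal-regularity Lemma \ref{L2.8} directly to $tD_tu$. But $D_tu$ is \emph{not} divergence-free: $\dive D_tu=\mathrm{Tr}(\nabla u\cdot\nabla u)$, so the hypotheses of Lemma \ref{L2.8} are violated and the pressure $\nabla(tD_tP)$ cannot be read off from a Stokes system. The paper instead introduces the Helmholtz projectors $\mathbb{P}$, $\mathbb{Q}$, uses the identity $\mathbb{Q}(t\Delta D_tu)=t\nabla\mathrm{Tr}(\nabla u\cdot\nabla u)$ to eliminate the pressure, and reduces \eqref{A5.35} to a genuine heat equation \eqref{A5.42} for $tD_tu$, to which Lemma \ref{L2.9} applies. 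This projection generates additional source terms, namely $-\nabla\mathrm{Tr}(t\nabla u\cdot\nabla u)$ and $\mathbb{Q}(D_tu+t\partial_tu\cdot\nabla u+tu\cdot\nabla\partial_tu)$, which must themselves be estimated in $L^{q,1}_T(L^p)$ (see \eqref{A5.43}--\eqref{A5.46}); your plan omits them.

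Second, your treatment of the trilinear term substitutes $D_tB=\Delta B+B\cdot\nabla u$ into $B\cdot\nabla(D_tB)$, which produces $B\cdot\nabla\Delta B$, i.e.\ \emph{third} derivatives of $B$. Only $\nabla^2B\in L^{q,1}_T(L^p)$ is available from Proposition \ref{P5.1}, so these terms are not "already under control" and the substitution loses a derivative. The paper's estimate \eqref{A5.50} avoids this: it keeps $(tu,tB)\cdot\nabla(D_tu,D_tB)$ intact, bounds it by $\|(u,B)\|_{L^\infty_T(L^3)}\,\|t(\nabla^2D_tu,\nabla^2D_tB)\|_{L^{q,1}_T(L^p)}$ via the Sobolev embedding $\dot W^1_p\hookrightarrow L^{p^*}$, and then absorbs the highest-order factor into the left-hand side of \eqref{A5.43} using the smallness of $\|(u_0,B_0)\|_{\dot B^{-1+3/p}_{p,1}}$. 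Your interpolation argument for \eqref{A5.32} is consistent with the paper's, though note the paper splits into the cases $1<p\le 3/2$ and $3/2<p<3$ to make the Gagliardo--Nirenberg exponents admissible.
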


\begin{proof}
Applying the operator $D_t$ to $\eqref{I-1}_2$ and $\eqref{I-1}_3$ yields
\begin{align}\label{A5.33}
\rho D_{tt}^2u-\Delta  D_t u+\nabla D_t P &\,=-\Delta u\cdot\nabla u-2\nabla u\cdot\nabla^2 u+\nabla u\cdot\nabla P+D_t(B\cdot\nabla B):=f,\\
D_{tt}^2 B-\Delta  D_{t} B&\,=-\Delta u\cdot\nabla B-2\nabla u\cdot\nabla^2B+D_t(B\cdot\nabla u):=h.\label{A5.34}
\end{align}
Multiplying \eqref{A5.33} and \eqref{A5.34} by $t$, we obtain
\begin{align}\label{A5.35}
 \rho\partial_t(t D_t u)-\Delta (t D_t u)+\nabla (t D_t P)&=-t\rho u\cdot\nabla  D_t u+\rho D_t u+tf,\\
  \partial_t(t D_t B)-\Delta (t D_t B)&
=-t u\cdot\nabla D_t B+D_t B+th.\label{A5.36}
\end{align}
To cancel the term $\nabla (t D_t P)$ in \eqref{A5.35}, we consider the
Helmholtz projectors $\mathbb{P}$ and $\mathbb{Q}$ as
\begin{align}\label{A5.37}
\mathbb{P}:=\mathrm{Id}+\nabla(-\Delta)^{-1}\dive, \quad \mathbb{Q}:=-\nabla(-\Delta)^{-1}\dive.
\end{align}
From \eqref{A5.35} and the facts that $\dive  u=0$ and $\dive { D_t u}={\rm Tr}(\nabla u\cdot\nabla u)$, it is clear that
\begin{align}\label{A5.38}
\nabla (t D_t P)=\mathbb{Q}\big(-\rho(t D_t u)_t+\Delta (t D_t u)-t\rho u\cdot\nabla  D_t u+\rho D_t u+tf                  \big),    
\end{align}
and
\begin{align}\label{A5.39}
\mathbb{Q}(t\Delta D_t u)=t\nabla {\rm Tr}(\nabla u\cdot\nabla u).
\end{align}
Therefore, we deduce from \eqref{A5.35}--\eqref{A5.39} that
\begin{align}\label{A5.42}
(t D_t u)_{t}-\Delta (t D_t u)=&\,\mathbb{P}\big((1-\rho)(t D_t u)_{t}-t\rho u\cdot\nabla D_t u+\rho D_t u+tf\big)-\nabla\mathrm{Tr}(t\nabla u\cdot\nabla u)\nonumber\\
&\,+\mathbb{Q}( D_t u+t\partial_t u\cdot\nabla u+tu\cdot\nabla \partial_t u),
\end{align}
where we have used 
\begin{align}
\mathbb{Q}(u\cdot\nabla \partial_t u)&=\mathbb{Q}(\partial_t u\cdot\nabla u),\nonumber\\
\mathbb{Q}(\rho\partial_t(t D_t u))&=\mathbb{Q}\big((\rho-1)\partial_t(t D_t u)+ t D_t u+ D_t u+t u\cdot\nabla \partial_t u+t\partial_t u\cdot\nabla u                 \big).\nonumber
\end{align}

We are in a position to prove \eqref{A5.31}. By utilizing the maximal regularity estimates in Lemma \ref{L2.9} to \eqref{A5.42} and the continuity of projectors $\mathbb{P}$ and $\mathbb{Q}$ on $L^{q,1}(0,T;L^{p})$, we obtain
\begin{align}\label{A5.43}
&\|(t D_t u,t D_t B)\|_{L^{\infty}_T(\dot{B}_{p,1}^{-1+3/p})}+\|(t D_t u,t D_t B)\|_{L^{s,1}_T(L^{m})} \nonumber\\
&\quad+\|((t D_t u)_{t},(t D_t B)_{t},t\nabla^{2} D_t u,t\nabla^{2}D_t B)\|_{L^{q,1}_T(L^{p})} \nonumber\\
&\quad\quad\leq C\|\rho-1\|_{L^{\infty}_T(L^{\infty})}\|\partial_t(t D_t u)\|_{L^{q,1}_T(L^{p})}+C\|t\rho u\cdot\nabla D_t u\|_{L^{q,1}_T(L^{p})}+C\|D_t B\|_{L^{q,1}_T(L^{p})} \nonumber\\
&\quad\quad\quad+C\|\rho D_t u\|_{L^{q,1}_T(L^{p})}+\|(tf,th)\|_{L^{q,1}_T(L^{p})}+\| D_t u+t\partial_t u\cdot\nabla u\|_{L^{q,1}_T(L^{p})} \nonumber\\
&\quad\quad\quad+C\|t \nabla\mathrm{Tr}(\nabla u\cdot\nabla u)\|_{L^{q,1}_T(L^{p})}+C\|t u\cdot\nabla D_t B\|_{L^{q,1}_T(L^{p})}.
\end{align}
The nonlinear terms on the right-hand side of \eqref{A5.43} are analyzed as follows.
First, by using the uniform bounds obtained in Propositions \ref{P5.1}--\ref{P5.2} as well as \eqref{A5.10}, and Lemma \ref{L2.6} and  H\"{o}lder’s inequality, we discover that
\begin{align}\label{A5.45}
\|t \nabla\mathrm{Tr}(\nabla u\cdot\nabla u)\|_{L^{q,1}_T(L^{p})} 
\leq\,&C\|t\nabla u\|_{L^{\infty}_T(L^{\infty})}\|\nabla^{2}u\|_{L^{q,1}_T(L^{p})}\nonumber \\
\leq\,&C\|tu\|_{L^{\infty}_T(\dot{B}_{m,1}^{1+3/m})}\|\nabla^{2}u\|_{L^{q,1}_T(L^{p})} \nonumber\\
\leq\,&C\|(u_{0},B_0)\|_{\dot{B}_{p,1}^{-1+3/p}}^{2},
\end{align}
and
\begin{align*}
\|t\partial_t u\cdot\nabla u\|_{L^{q,1}_T(L^{p})} \leq\,&C\|\partial_t u\|_{L^{q,1}_T(L^{p})}\|t\nabla u\|_{L^{\infty}_T(L^{\infty})}\nonumber \\
\leq\,&C\|\partial_t u\|_{L^{q,1}_T(L^p)}\|tu\|_{L^\infty_T(\dot{B}_{m,1}^{1+3/m})} \nonumber\\
\leq\,&C\|(u_{0},B_0)\|_{\dot{B}_{p,1}^{-1+3/p}}^{2}.
\end{align*}
Recalling the embedding \eqref{A5.10}, we also deduce that 
\begin{align}
&\|t\rho u\cdot\nabla D_t u\|_{L^{q,1}_T(L^{p})}+\|t u\cdot\nabla D_t B\|_{L^{q,1}_T(L^{p})} \nonumber\\
\leq\,&C\|t(\nabla D_t u,\nabla D_t B)\|_{L^{q,1}_T(L^{p^{*}})}\|(u,B)\|_{L^{\infty}_T(L^{3})} \nonumber\\
\leq\,&C\|t(\nabla^{2} D_t u,\nabla^2 D_t B)\|_{L^{q,1}_T(L^{p})}\|(u_{0},B_0)\|_{\dot{B}_{p,1}^{-1+3/p}}. \label{A5.46}
\end{align}
Here and below, let $1<p^{*}<\infty$ satisfy  $1/p^{*}=1/p-1/3$.
Note that $\|(tf,th)\|_{L^{q,1}_T(L^{p})}$ can be decomposed as
\begin{align}\label{A5.47}
&\|(tf,th)\|_{L^{q,1}_T(L^{p})}\nonumber\\
&\quad\leq  C\|-t\Delta u\cdot\nabla u-2t\nabla u\cdot\nabla^{2}u+t\nabla u\cdot\nabla P\|_{{L^{q,1}_T(L^{p})}}+C\|D_t(B\cdot\nabla B)\|_{{L^{q,1}_T(L^{p})}}\nonumber\\
&\quad\quad+C\|t\Delta u\cdot\nabla B+2t\nabla u\cdot\nabla^{2}B\|_{{L^{q,1}_T(L^{p})}}+C\|D_t(B\cdot\nabla u)\|_{{L^{q,1}_T(L^{p})}}.
\end{align}
Similar to \eqref{A5.45}, direct calculations give rise to
\begin{align}\label{A5.48}
&\|-t\Delta u\cdot\nabla u-2t\nabla u\cdot\nabla^{2}u+t\nabla u\cdot\nabla P\|_{{L^{q,1}_T(L^{p})}}\nonumber\\
\leq\,&C\|t\nabla u\|_{L^{\infty}_T(L^{\infty})}\|(\nabla^{2}u,\nabla^2 P)\|_{L^{q,1}_T(L^{p})}\nonumber \\
\leq\,&C\|tu\|_{L^{\infty}_T(\dot{B}_{m,1}^{1+3/m})}\|(\nabla^{2}u,\nabla^2P)\|_{L^{q,1}_T(L^{p})} \nonumber\\
\leq\,&C\|(u_{0},B_0)\|_{\dot{B}_{p,1}^{-1+3/p}}^{2},   
\end{align}
and
\begin{align}\label{A5.49}
\|t\Delta u\cdot\nabla B+2t\nabla u\cdot\nabla^{2}B\|_{{L^{q,1}_T(L^{p})}}
\leq\,& C\|t(\nabla u,\nabla B)\|_{L^{\infty}_T(L^{\infty})}\|(\nabla^{2}u,\nabla^2 B)\|_{L^{q,1}_T(L^{p})}\nonumber \\
\leq\,&C\|(u_{0},B_0)\|_{\dot{B}_{p,1}^{-1+3/p}}^{2}.
\end{align}

On the other hand, due to the facts that $\partial_t u= D_t u-u\cdot\nabla u$ and $\partial_t B=D_t B-u\cdot\nabla B$, it holds
\begin{align}\label{A5.50}
&\|D_t(B\cdot\nabla B)\|_{{L^{q,1}_T(L^{p})}}+\|D_t(B\cdot\nabla u)\|_{{L^{q,1}_T(L^{p})}}\nonumber\\
\leq\,& C\|(tu,tB)\cdot\nabla( D_t u,D_t B)\|_{{L^{q,1}_T(L^{p})}}+ C\|t(u,B)\otimes(u,B):\nabla^2(u,B)\|_{{L^{q,1}_T(L^{p})}}\nonumber\\
&+C\|t(u,B)\cdot\nabla({u},{B})\cdot(\nabla u,\nabla B)\|_{{L^{q,1}_T(L^{p})}}+ C\|t(\partial_t u,\partial_t B)\cdot(\nabla u,\nabla B)\|_{{L^{q,1}_T(L^{p})}}\nonumber\\
\leq\,&C\|t(\nabla D_t u,\nabla D_t B)\|_{L^{q,1}_T(L^{p^{*}})}\|(u,B)\|_{L^{\infty}_T(L^{3})}\nonumber\\
&+C\|\nabla^{2}({u},B)\|_{L^{q,1}_T(L^{p})}\|(u,B)\|_{L^{\infty}_T(L^{\infty})}\|t(u,B)\|_{L^{\infty}_T(L^{\infty})} \nonumber\\
&+C\|t(\nabla u,\nabla B)\|_{L^{\infty}_T(L^{\infty})}\|(u,B)\cdot\nabla({u},B)\|_{L^{q,1}_T(L^{p})}\nonumber\\
&+C\|({u}_t,{B}_t)\|_{L^{q,1}_T(L^{p})}\|t(\nabla u,\nabla B)\|_{L^{\infty}_T(L^{\infty})}\nonumber\\
\leq &\,C\|t(\nabla^{2} D_t u,\nabla^2 D_t B)\|_{L^{q,1}_T(L^{p})}\|(u_{0},B_0)\|_{\dot{B}_{p,1}^{-1+3/p}}\nonumber\\
&+C\|(\partial_t u,\partial_t B)\|_{L^{q,1}_T(L^p)}\|(tu,tB)\|_{L^\infty_T(\dot{B}_{m,1}^{1+3/m})} \nonumber\\
&+C\|(tu,tB)\|_{L^\infty_T(\dot{B}_{m,1}^{1+3/m})}\|(u,B)\|_{L^\infty_T(\dot{B}_{p,1}^{-1+3/p})}\|\nabla^2({u},{B})\|_{L^{q,1}_T(L^{p^{*}})}\nonumber\\
&+C\|\nabla^{2}({u},B)\|_{L^{q,1}_T(L^{p})}\|(tu,tB)\|_{L^\infty_T(\dot{B}_{m,1}^{1+3/m})}\|(u,B)\|_{L^\infty_T(\dot{B}_{m,1}^{-1+3/m})}\nonumber\\
\leq\,&C\|t(\nabla^{2} D_t u,\nabla^2 D_t B)\|_{L^{q,1}_T(L^{p})}\|(u_{0},B_0)\|_{\dot{B}_{p,1}^{-1+3/p}}\nonumber\\
&+C\|(u_{0},B_0)\|_{\dot{B}_{p,1}^{-1+3/p}}^2+C\|(u_{0},B_0)\|_{\dot{B}_{p,1}^{-1+3/p}}^3,
\end{align}
where we have used H\"{o}lder’s inequality,
Propositions \ref{P5.1}--\ref{P5.2}, Lemmas \ref{lemmabesov}--\ref{L2.6},
and \eqref{A5.10}.

Then, we substitute \eqref{A5.48}--\eqref{A5.50} into \eqref{A5.47} to obtain 
\begin{align}\label{A5.51}
&\|(tf,th)\|_{L^{q,1}_T(L^{p})}\leq C\|(u_{0},B_0)\|_{\dot{B}_{p,1}^{-1+3/p}}^2\Big(1+\|(u_{0},B_0)\|_{\dot{B}_{p,1}^{-1+3/p}}+\|t(\nabla^{2} D_t u,\nabla^2 D_t B)\|_{L^{q,1}_T(L^{p})}\Big).   
\end{align}
Hence, combining the above estimates \eqref{A5.45}--\eqref{A5.51} with \eqref{A5.43} and remembering the smallness condition in \eqref{A5.1}, we end up with \eqref{A5.31}. 

Let us prove the last estimate \eqref{A5.32}. In the case $1<p\leq 3/2$, one derives \eqref{A5.32} by using the embedding $\dot{B}_{p,1}^{-1+3/p}\hookrightarrow \dot{B}_{p_1,1}^{-1+3/p_1}$ for any $3/2<p_1<3$. For $3/2<p<3$ while $1<q<2$, it follows from the embedding properties in Lemma \ref{lemmabesov} that
\begin{align*}
 \|( D_t u, D_t B)\|_{L^{\infty}}&\leq C\|( D_t u, D_t B)\|_{L^{3}}^{1-{q}/{2}}\|(\nabla^{2} D_t u,\nabla^{2} D_t B)\|_{L^{p}}^{{q}/{2}}\nonumber\\
 &\leq C\|( D_t u, D_t B)\|_{\dot{B}_{p,1}^{-1+3/p}}^{1-{q}/{2}}\|(\nabla^{2} D_t u,\nabla^{2} D_t B)\|_{L^{p}}^{{q}/{2}},   
\end{align*}
which, together with \eqref{A5.31}, implies that
\begin{align*}
\|(t D_t u,t D_t B)\|_{L^{2}_T(L^{\infty})}
\leq&\, C\|(t D_t u,t D_t B)\|_{L^{\infty}_T(\dot{B}_{p,1}^{-1+3/p})}^{1-{q}/{2}}\|(\nabla^{2}(t D_t u),\nabla^{2}(t D_t B))\|_{L^{q}_T(L^{p})}^{{q}/{2}}\nonumber\\
\leq&\, C\|(u_{0},B_0)\|_{\dot{B}_{p,1}^{-1+3/p}}^2.    
\end{align*}
Similarly, one can obtain
\begin{align*}
\|(t\nabla D_t u,t\nabla D_t B)\|_{L^{2}_T(L^{3})}
\leq&\, C\|(t D_t u,t D_t B)\|_{L^{\infty}_T(\dot{B}_{p,1}^{-1+3/p})}^{\frac{2p-3}{3p-3}}\|(t\nabla^{2} D_t u,t\nabla^2 D_t B)\|_{L^{q}_T(L^{p})}^{\frac{p}{3p-3}}\nonumber\\
\leq&\, C\|(u_{0},B_0)\|_{\dot{B}_{p,1}^{-1+3/p}}.
\end{align*}
Thus, we conclude \eqref{A5.32} and finish the proof of Proposition \ref{P5.3}.
\end{proof}

\begin{rem}
In the case $1<p\leq 2$, one also has the following estimate: 
\begin{align}
&\|(\nabla u,\nabla B)\|_{L_T^{4}(L^{2})}+\|t^{1/4}(\nabla^2 u, \nabla^2 B,\partial_t u,\partial_t B)\|_{L^2_T(L^2)}\nonumber\\
&\quad+\|t^{3/4}(\nabla \partial_t u , \nabla \partial_t B, D_{t}u,\nabla D_{t}B)\|_{L_{T}^{2}(L^{2})}\leq C\|(u_{0},B_0)\|_{\dot{B}_{p,1}^{-1+3/p}},\label{uniquep}
\end{align}
which can be shown by the embedding $\dot{B}_{p,1}^{-1+p/3}\hookrightarrow \dot{H}^{1/2}\hookrightarrow \dot{B}_{2,\infty}^{1/2}
 $ for $1<p\leq 2$ and the inequalities \eqref{A4.53} and \eqref{A4.55}. Note that \eqref{uniquep} will be used in the proof of uniqueness. 
\end{rem}

\vspace{2mm}

Now, we are going to prove Theorem \ref{T2.3}.

\vspace{1mm}

\noindent
\begin{proof}[Proof of Theorem \ref{T2.3}]
Assume that $(\rho_0,u_0,B_0)$ satisfies the conditions \eqref{A1.13}. Let $K_\varepsilon$ be the standard molifier in $\mathbb{R}^3$. For $\varepsilon\in(0,1)$, we defined the regularized initial data by 
\begin{eqnarray}
(\rho_0^{\varepsilon},u_0^{\varepsilon},B_0^{\varepsilon})(x)=\chi(\varepsilon^{-1}x)(\rho_0\ast K_\varepsilon,u_0\ast K_\varepsilon,B_0\ast K_\varepsilon)(x),\label{appa}
\end{eqnarray}
where the cut-off function $\chi(x) \in \mathcal{S}(\mathbb{R}^{d})$ satisfies  $\chi(0)=1$ and Supp $\mathcal{F}(\chi)(\xi)\subset B(0,1)$. Then, one can check that $(\rho_0^{\varepsilon},u_0^{\varepsilon},B_0^{\varepsilon})$ fulfills the uniform bounds
\begin{equation}
\left\{
\begin{aligned}\label{A5.57}
&\|\rho_0^{\varepsilon}-1\|_{L^{\infty}}\leq \|\rho_0-1\|_{L^{\infty}},\\
&\|(u_0^{\varepsilon},B_0^{\varepsilon})\|_{\dot{B}^{-1+3/p}_{p,1}}\leq C\|\chi \|_{\dot{B}^{d/p}_{p,1}}\|(u_0,B_0)\|_{\dot{B}^{-1+3/p}_{p,1}}\leq C\|(u_0,B_0)\|_{\dot{B}^{-1+3/p}_{p,1}},
\end{aligned}
\right.
\end{equation}
where we have used the spatial scaling invariance of $\dot{B}^{d/p}_{p,1}$ such that $\|\chi(\varepsilon^{-1}\cdot)\|_{\dot{B}^{d/p}_{p,1}}\sim \|\chi(\cdot)\|_{\dot{B}^{d/p}_{p,1}}$. In addition, $\rho_0^{\varepsilon}-1$ and $(u_0^{\varepsilon},B_0^{\varepsilon})$, respectively, converges strongly to $\rho_0-1$ in $L^{l} $ ($1\leq l<\infty$) and $(u_0,B_0)$ in
$\dot{B}_{p,1}^{-1+3/p}$. 

According the construction \eqref{appa}, for any $1<p<3$, $(\rho_0^{\varepsilon},u_0^{\varepsilon},B_0^{\varepsilon})$ is smooth, and we have $\inf_{x\in\mathbb{R}^3}\rho^{\varepsilon}_0(x) >0$, $\rho_0^{\varepsilon}-1\in H^{3/2+s}$ and $(u_0^{\varepsilon},B_0^{\varepsilon}) \in  H^{1/2+s}$ with arbitrary $s>0$. According to \cite[Theorem 1.1]{AH-AMBP-2007}, for fixed $\varepsilon\in(0,1)$, there exists a maximal time $T^\varepsilon$ such that the system \eqref{I-1} associated with the initial data $(\rho_0^{\varepsilon},u_0^{\varepsilon},B_0^{\varepsilon})$ has a unique smooth solution $(\rho^{\varepsilon},u^{\varepsilon},B^{\varepsilon},\nabla P^{\varepsilon})$ on $[0,T^\varepsilon)\times\mathbb{R}^3$.

Denote 
\begin{align*} 
\dot{W}_{p,(q,r)}^{2,1}((0,T)\times\mathbb{R}^3):=&\,\Big\{u,B\in C(0,T;\dot{B}_{p,r}^{2-2/q}): \partial_t u,\partial_t B,\nabla^2u,\nabla^2 B\in L^{q,r}(0,T;L^p)        \Big\},\\
\dot{W}_{p,r}^{2,1}((0,T)\times\mathbb{R}^3):=&\,\Big\{u,B\in C(0,T;\dot{B}_{p,r}^{2-2/r}): \partial_t u,\partial_t B,\nabla^2u,\nabla^2 B\in L^{r}(0,T;L^p)        \Big\}.
\end{align*}
As pointed out in \cite{DW-CMMP-2023}, we consider here the larger space $\dot{W}_{p,r}^{2,1}((0,T)\times\mathbb{R}^3)$ to overcome the lack of reflexivity in Lorentz spaces. We aim to establish the uniform estimate of 
$(u^{\varepsilon},B^{\varepsilon},\nabla P^{\varepsilon})$ in the space $E_{T^*}^{p,r}$ given by
\begin{align*}
E_T^{p,r}:=\Big\{(u,B,\nabla P):u,B\in\dot{W}_{p,r}^{2,1}((0,T)\times\mathbb{R}^3),\quad \nabla P\in L^{r}(0,T;L^p)     \Big\},
\end{align*}
for $r\geq 1$.

From \eqref{A5.57} and the transport nature of $\eqref{I-1}_1$, we have for any $T\in(0,T^\varepsilon)$, 
\begin{align*}
\|(\rho^{\varepsilon}-1)(t)\|_{L^{\infty}}=\|\rho^{\varepsilon}_0-1\|_{L^{\infty}}\leq 
\|a_0\|_{L^{\infty}},\quad t\in[0,T],
\end{align*}
and due to Proposition \ref{P5.1}, $(u^{\varepsilon},B^{\varepsilon},\nabla P^{\varepsilon})$ belongs to 
\begin{align*}
\|(u^{\varepsilon},B^{\varepsilon})\|_{\dot{W}_{p,(q,1))}^{2,1}(0,T)\times\mathbb{R}^3)}+\|\nabla P^{\varepsilon}\|_{L^{q,1}_T(L^p)}\leq C\|(u_0,B_0)\|_{\dot{B}^{-1+3/p}_{p,1}}.
\end{align*}
By taking the same progress as that in \cite[Section 5]{DW-CMMP-2023}, we can obtain
\begin{align}\label{A5.62}
\|(u^{\varepsilon},B^{\varepsilon},\nabla P^{\varepsilon})\|_{E_T^{p,r}}^r\leq C\|(u_0^{\varepsilon},B_0^{\varepsilon})\|_{\dot{B}_{p,r}^{2-2/r}}^r\exp\bigg\{C\int_0^T\|(u^{\varepsilon},B^{\varepsilon})\|_{L^{\frac{3r}{r-1}}}^{2r}{\rm d}t\bigg\}.    
\end{align}
It follows from Lemma \ref{lemmabesov} and Gagliardo-Nirenberg inequality that
\begin{align*}
\int_{0}^{T}\|(u^{\varepsilon},B^{\varepsilon})\|_{L^{\frac{3r}{r-1}}}^{2r}{\rm d}t\leq\|(u^{\varepsilon},B^{\varepsilon})\|_{L^{2}_T(L^{\infty})}^{2}\|(u^{{\varepsilon}},B^{{\varepsilon}})\|_{L^{\infty}_T(\dot{B}_{p,1}^{-1+3/p})}^{2(r-1)}.   
\end{align*}
According to \eqref{A5.3} in Proposition \ref{P5.1},
we get
\begin{align*}
\int_{0}^{T}\|(u^{\varepsilon},B^{\varepsilon})\|_{L^{\frac{3r}{r-1}}}^{2r}{\rm d}t\leq C \|(u_0,B_0)\|_{\dot{B}_{p,1}^{-1+3/p}}^{2r}.   
\end{align*}
This, together with \eqref{A5.62} and a continuous argument, implies that
$(u^{\varepsilon},B^{\varepsilon},\nabla P^{\varepsilon})$ belongs to $\dot{W}_{p,r}^{2,1}$ 
with $1<r<\infty$ and $T_{\varepsilon}^{*}=\infty$. 
Owing to \eqref{A5.57} and the smoothness of $(u^{\varepsilon},B^{\varepsilon},\nabla P^{\varepsilon})$,
we establish the {\emph{a priori}} estimates presented in Propositions \ref{P5.1}--\ref{P5.3}  are uniform in $\varepsilon$ and $T^\varepsilon$. Then, based on these bounds, we are able to obtain higher-order $H^s$ regularity estimates of $(\rho^\varepsilon, u^{\varepsilon}, B^{\varepsilon}, \nabla P^{\varepsilon})$ for any suitably large $s$. If $T^\varepsilon<\infty$, then in accordance with the local existence theorem again, one can extend the solution $(\rho^\varepsilon, u^{\varepsilon}, B^{\varepsilon}, \nabla P^{\varepsilon})$ beyond $T^\varepsilon$, which leads to the contradiction of the maximality of $T^\varepsilon$. Thus, we have $T^\varepsilon=\infty$ and justify that  $(\rho^\varepsilon, u^{\varepsilon}, B^{\varepsilon}, \nabla P^{\varepsilon})$ is indeed a global-in-time smooth solution. 

According to the uniform boundedness of $\rho^\varepsilon$ in $L^{\infty}(0,T;L^{\infty})$ and $(u^{\varepsilon},B^{\varepsilon},\nabla P^{\varepsilon})$ in $E_T^{p,r}$ for any $0<T<\infty$, there exists a limit $(\rho,u,B,\nabla P)$ such that up to a subsequence, the following convergence properties hold:
\begin{equation}
\left\{
\begin{aligned}\label{A5.65}
\rho^{\varepsilon}-1\rightharpoonup&\, \rho-1 && \mathrm{weak}^{*}\  \mathrm{in} &&L^{\infty}(\mathbb{R}_{+};L^{\infty}), \\
u^{\varepsilon}\rightharpoonup& \,u && \mathrm{weak}^{*}\ \mathrm{in} &&L^{\infty}(\mathbb{R}_{+};\dot{B}_{p,q}^{2-2/q}), \\
\nabla P^{\varepsilon}\rightharpoonup&\,\nabla P
&& \mathrm{weakly}\  {\rm in}  &&L^{q}(\mathbb{R}_{+};L^{p}),\\
B^{\varepsilon}\rightharpoonup& \,B 
&&
\mathrm{weak}^{*}\ \mathrm{in}  &&L^{\infty}(\mathbb{R}_{+};\dot{B}_{p,q}^{2-2/q}),\\
(\partial_{t}u^{\varepsilon},\nabla^{2}u^{\varepsilon})\rightharpoonup&\,(\partial_{t}u,\nabla^{2}u)
&&\mathrm{weakly}\  {\rm in}  && L^{q}(\mathbb{R}_{+};L^{p}), \\
(\partial_{t}B^{\varepsilon},\nabla^{2}B^{\varepsilon})\rightharpoonup&\,(\partial_{t}B,\nabla^{2}B)
&& \mathrm{weakly}\ {\rm in}  &&L^{q}(\mathbb{R}_{+};L^{p}),   
\end{aligned}
\right.
\end{equation}

 Using Fatou's Lemma and Propositions 
\ref{P5.1}--\ref{P5.3}, we also have
\begin{align}\label{A5.67}
u,B\in\dot{W}_{p,(q,1)}^{2,1}(\mathbb{R}_{+}\times\mathbb{R}^{3}),\quad\nabla P\in L^{q,1}(\mathbb{R}_{+};L^{p}).
\end{align}
Furthermore, it follows from the Aubin–Lions Lemma ({\!\!\cite{Lions-book-1996}})
and the Cantor diagonal formula that 
\begin{equation}
\left\{
\begin{aligned}\label{A5.68}
u^{\varepsilon}\to&\, u\quad\quad{\rm in}\quad L^{\infty}_{\rm loc}(\mathbb{R}_{+};L^{3-\eta}_{\rm loc}), \\
\nabla u^{\varepsilon}\to&\,\nabla u\,\quad{\rm in}\quad L^{q}_{\rm loc}(\mathbb{R}_{+};L^{p^{*}-\eta}_{\rm loc}),\\
\nabla B^{\varepsilon}\to&\,\nabla B\quad{\rm in}\quad L^{q}_{\rm loc}(\mathbb{R}_{+};L^{p^{*}-\eta}_{\rm loc}), 
\end{aligned}
\right.
\end{equation}
where $p^{*}\in(1,\infty)$ is given by $1/p^{*}=1/p-1/3$ and $\eta>0$ is a suitably small constant. Applying the Diperna-Lions theory ({\!\!\cite{DL-IM-1998}}), we also have
\begin{align}
\rho^{\varepsilon}-1\to \rho-1 \quad {\rm in}\quad L^{\alpha}_{\rm loc}(\mathbb{R}_+;L^{\alpha}_{\rm loc}),\label{astrongLp}
\end{align}
for any $1<\alpha<\infty$. Combining \eqref{A5.65}--\eqref{astrongLp}, one can show the convergence of  the system \eqref{I-1} satisfied by $(\rho^{\varepsilon},u^{\varepsilon},B^{\varepsilon},\nabla P^{\varepsilon})$ to \eqref{I-1} for $(\rho,u,B,\nabla P)$ in the sense of distributions, and therefore $(\rho,u,B,\nabla P)$ is indeed a global weak solution to \eqref{I-1} supplemented with the original initial data $(\rho_0,u_0,B_0)$. For brevity, we omit some details here. Applying Propositions 
\ref{P5.1}--\ref{P5.3} to $(\rho,u,B,\nabla P)$ again, one can show that $(\rho,u,B,\nabla P)$ satisfies the bounds exhibited in \eqref{1.7}--\eqref{1.9}. We also have the property $u, B\in C(\mathbb{R}_{+};\dot{B }^{-1+3/p}_{p,1})$ due to Lemmas \ref{L2.8}--\ref{L2.9}.

To complete the proof of Theorem \ref{T2.3}, we finally explain the uniqueness part for $1<p\leq 2$. Let $(\rho_1,u_1,B_1,\nabla P_1)$ and $(\rho_2,u_2,B_2,\nabla P_2)$ be two solutions to \eqref{I-1} on $[0,T]\times\mathbb{R}^3$ subject to the same initial data $(\rho_0,u_0,B_0)$. We shall apply Theorem \ref{thmunique} to achieve the goal.
According to \eqref{A5.19} and \eqref{uniquep}, in order to verify the conditions in Theorem \ref{thmunique}, it suffices to justify $\eqref{1.23}_5$--$\eqref{1.23}_7$ for the difference $(\rho_1-\rho_2,u_1-u_2,B_1-B_2)$. To this end, as in \cite{HSWZ-arXiv-2024-08},  we first integrate the equations of $\rho_i$ ($i=1,2$) and taking the $\dot{W}^{-1,3}$ norms for $\rho_i$ ($i=1,2$) to get
\begin{align}
\|(\rho^i-\rho_0)(t)\|_{\dot{W}^{-1,3}}\leq \int_0^t \|\rho^i u^i\|_{L^3}{\rm d}t'\leq \|\rho^i\|_{L^{\infty}_t(L^{\infty})}\|u^i\|_{L^{\infty}_t(\dot{B}^{-1+3/p}_{p,1})}<\infty,\label{3.64}
\end{align}
where we have used the embedding chain $\dot{B}^{-1+3/p}_{p,1} (1<p\leq2) \hookrightarrow \dot{H}^{1/2}\hookrightarrow L^3$. This implies that $t^{-3/4}(\rho_1-\rho_2)\in L^{\infty}(0,T;\dot{W}^{-1,3})$. On the other hand,  we obtain
\begin{align}
&\|\sqrt{\rho_1}(u_1-u_2)(t)\|_{L^2}^2\nonumber\\
=\,&\|\sqrt{\rho_0}(u_1-u_2)(t)\|_{L^2}^2+\|\sqrt{\rho_1-\rho_0}(u_1-u_2)(t)\|_{L^2}^2\nonumber\\
\leq\,& C \|\sqrt{\rho_0}(u_1-u_0)(t)\|_{L^2}^2+\|\sqrt{\rho_0}(u_2-u_0)(t)\|_{L^2}^2+C\|\sqrt{\rho_1-\rho_0}(u_1-u_2)(t)\|_{L^2}^2.\label{3.66}
\end{align}
Here, for $i=1,2$, it follows that
\begin{align}
\|\sqrt{\rho_0}(u_i-u_0)(t)\|_{L^2}&=\Big\|\sqrt{\rho_0}\int_0^t \partial_s u_i\,{\rm d}s \Big\|_{L^2}\nonumber\\
&\leq C\int_0^t s^{-1/4}\|s^{1/4}\sqrt{\rho_0} \partial_s u_i\|_{L^2}{\rm d}s\nonumber\\
&\leq C\Big(\int_0^t s^{-1/2}\,{\rm d}s\Big)^{1/2} \|s^{1/4}\sqrt{\rho_0} \partial_s u_i\|_{L^2_t(L^2)}\nonumber\\
&\leq C \|s^{1/4}\sqrt{\rho_0} \partial_s u_i\|_{L^2_t(L^2)}<\infty,\label{3.67}
\end{align}
where we have used
\begin{align*}
 & \|s^{1/4}\sqrt{\rho_0} \partial_s u_i\|_{L^2_t(L^2)}^2\\
 =\,&\int_0^t\int_{\mathbb{R}^3} s^{1/2}\rho_i |\partial_s u_i|^2\,{\rm d}x{\rm d}s+\int_0^t\int_{\mathbb{R}^3} s^{1/2}(\rho_0-\rho_i) |u_t|^2\,{\rm d}x {\rm d}s\\
 \leq \,&\|s^{1/4}\sqrt{\rho_i}\partial_s u_i\|_{L^2_s(L^2)}^2+C\|s^{-1}(\rho_i-\rho_0)\|_{L^{\infty}_t(\dot{W}^{-1,3})}\|s^{3/4}\nabla \partial_s u_i\|_{L^2_t(L^2)}^2<\infty,
\end{align*}
derived from \eqref{uniquep} and \eqref{3.64}. Similarly, one also has
\begin{align}
\|(B_1-B_2)(t)\|_{L^2}&\leq C\|(B_1-B_0)(t)\|_{L^2}^2+C\|(B_2-B_0)(t)\|_{L^2}\nonumber\\
&\leq C\int_0^t\|(\partial_s B_1,\partial_s B_2)(s)\|_{L^2}\,{\rm d}s\nonumber\\
&\leq C\Big(\int_0^t s^{-1/2}\,{\rm d}s\Big)^{1/2} \|s^{1/4}(\partial_s B_1,\partial_s B_1)\|_{L^2_t(L^2)}<\infty.\label{3.68}
\end{align}
By \eqref{3.66}--\eqref{3.68}, one gets $\sqrt{\rho_1}(u_1-u_2), B_1-B_2\in L^{\infty}(0,T;L^2)$. Finally, we have $\nabla (u_1-u_2),\, \nabla (B_1-B_2)\in  L^{2}(0,T;L^2)$ due to the facts that $\nabla u_1,\nabla u_2\in L^4(0,T;L^2)$ and $T<\infty$. Thus, all the conditions in \eqref{1.23} are satisfied, and hence Theorem \ref{thmunique} implies that  $(\rho_{1},u_{1},B_{1},\nabla P_{1})(t,x)=(\rho_{2},u_{2},B_{2},\nabla P_{2})(t,x)$  on $[0,T]\times\mathbb{R}^3$. The whole proof of Theorem \ref{T2.3} is completed. 
\end{proof}

\section{Proof of Theorem \ref{T2.1}}

The purpose of this section is to show Theorem \ref{T2.1}, which relies on the {\emph{a priori}} bounds established  Propositions
\ref{P3.1}--\ref{P3.6}. We shall follow the rescaling argument \eqref{2.1}--\eqref{2.2} and always assume $\mu=\nu=1$ in the computations.

\begin{prop}\label{P3.1}
Let $(\rho,u,B,\nabla P)$ be a smooth solution of the system \eqref{I-1} on $[0,T^*)\times\mathbb{R}^{3}$ with some given  $T^*>0$. There exists a uniform constant $\varepsilon_2>0$ such that if $(\rho_0,u_0,B_0)$ satisfies \eqref{A1.2} and 
\begin{align*}
\|(u_0,B_0)\|_{\dot{B}_{2,1}^{1/2}}<\varepsilon_2, 
\end{align*}
then it holds that
\begin{align}\label{A2.1}
\|(u,B)\|_{\widetilde{L}_{T}^{\infty}(\dot{B}^{1/2}_{2,1})}+\|(\nabla u,\nabla B)\|_{\widetilde{L}_{T}^{2}(\dot{B}^{1/2}_{2,1})}\leq C\|(u_{0},B_0)\|_{\dot{B}^{1/2}_{2,1}},
\end{align}
where $C>0$ is a generic constant.
\end{prop}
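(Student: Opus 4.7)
The plan is to adapt Zhang's decomposition scheme from \cite{Zp-Adv-2020} to the MHD setting, as flagged in the introduction. First I would write $(u,B,\nabla P)=\sum_{j\in\mathbb{Z}}(u_j,B_j,\nabla P_j)$, where each triple $(u_j,B_j,\nabla P_j)$ solves the linear-in-$(u_j,B_j)$ system
\begin{equation*}
\left\{
\begin{aligned}
& \rho(\partial_t u_j+u\cdot\nabla u_j)-\Delta u_j+\nabla P_j=B\cdot\nabla B_j,\\
& \partial_t B_j+u\cdot\nabla B_j-\Delta B_j=B\cdot\nabla u_j,\\
& \dive u_j=\dive B_j=0,\\
& u_j|_{t=0}=\dot{\Delta}_j u_0,\quad B_j|_{t=0}=\dot{\Delta}_j B_0,
\end{aligned}
\right.
\end{equation*}
so that summation in $j$ formally recovers $(u,B,\nabla P)$ with the data $(u_0,B_0)$. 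The very point of the decomposition is to avoid commuting a dyadic block $\dot{\Delta}_k$ against the rough density $\rho$.

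The core step is an unconditional $L^2$ energy identity for each $(u_j,B_j)$. Testing the $u_j$-equation against $u_j$ and the $B_j$-equation against $B_j$ and adding, the convective contributions vanish via the transport equation for $\rho$ together with $\dive u=0$, while the MHD coupling cancels through
\[
\int_{\mathbb{R}^3}(B\cdot\nabla B_j)\cdot u_j\,\dd x+\int_{\mathbb{R}^3}(B\cdot\nabla u_j)\cdot B_j\,\dd x=\int_{\mathbb{R}^3}B_k\partial_k(u_j\cdot B_j)\,\dd x=0,
\]
using $\dive B=0$. This reduces the identity to
\[
\frac{\dd}{\dd t}\bigl(\|\sqrt{\rho}\,u_j\|_{L^2}^2+\|B_j\|_{L^2}^2\bigr)+2\bigl(\|\nabla u_j\|_{L^2}^2+\|\nabla B_j\|_{L^2}^2\bigr)=0,
\]
so that, invoking the pointwise bounds on $\rho$ from \eqref{A1.2}, one obtains the uniform localized estimate
\[
\|u_j\|_{L^\infty_T(L^2)}^2+\|B_j\|_{L^\infty_T(L^2)}^2+\|(\nabla u_j,\nabla B_j)\|_{L^2_T(L^2)}^2\leq C_{\rho_0}\|\dot{\Delta}_j(u_0,B_0)\|_{L^2}^2.
\]

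The next step is to upgrade this family of localized $L^2$-bounds into the targeted Chemin--Lerner Besov estimates for $u=\sum_j u_j$ and $B=\sum_j B_j$. Since $\dot{\Delta}_j u_0$ is frequency-localized near $2^j$ and the parabolic part of the evolution preserves this localization up to summable corrections, one establishes an approximate frequency localization of the form
\[
\|\dot{\Delta}_k u_j\|_{L^\infty_T(L^2)}+2^{-j}\|\dot{\Delta}_k\nabla u_j\|_{L^2_T(L^2)}\lesssim c_{k-j}\|\dot{\Delta}_j u_0\|_{L^2},
\]
with $(c_n)_n$ rapidly decaying so that $\sum_n 2^{n/2}c_n<\infty$, and similarly for $B_j$. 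Multiplying by $2^{k/2}$ and summing in $k$ then yields
\[
\|u_j\|_{\widetilde{L}^\infty_T(\dot{B}^{1/2}_{2,1})}+\|\nabla u_j\|_{\widetilde{L}^2_T(\dot{B}^{1/2}_{2,1})}\lesssim 2^{j/2}\|\dot{\Delta}_j(u_0,B_0)\|_{L^2},
\]
whereupon a further summation in $j$ reconstructs exactly $\|(u_0,B_0)\|_{\dot{B}^{1/2}_{2,1}}$ on the right.

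The hard part is justifying the effective frequency localization in the previous step. The transport term $u\cdot\nabla u_j$, the pressure correction arising from the quasi-incompressibility of $\rho u_j$, and the MHD cross terms $B\cdot\nabla B_j$ and $B\cdot\nabla u_j$ all inject frequencies outside the initial dyadic block, producing remainders trilinear in $(u,B,u_j)$ or $(u,B,B_j)$. This is precisely where the smallness assumption $\|(u_0,B_0)\|_{\dot{B}^{1/2}_{2,1}}\leq\varepsilon_2\mu$ is brought in: it guarantees that these trilinear contributions carry a small prefactor proportional to $\|(u,B)\|_{\widetilde{L}^\infty_T(\dot{B}^{1/2}_{2,1})}$ which can then be absorbed by the left-hand side, closing the bootstrap and leading to \eqref{A2.1}.
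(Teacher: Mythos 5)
Your overall strategy coincides with the paper's: the decomposition $(u,B,\nabla P)=\sum_j(u_j,B_j,\nabla P_j)$ with data $(\dot\Delta_ju_0,\dot\Delta_jB_0)$, the basic $L^2$ energy identity with the cancellation $\int(B\cdot\nabla B_j)\cdot u_j+\int(B\cdot\nabla u_j)\cdot B_j=0$, and a Bernstein-based recombination over $j$ are all exactly what the paper does. The first half of your argument is correct.

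The gap is in the step you yourself flag as "the hard part." The localized $L^2$ bound $\|u_j\|_{L^\infty_t(L^2)}+\|\nabla u_j\|_{L^2_t(L^2)}\lesssim\|\dot\Delta_j(u_0,B_0)\|_{L^2}$ is \emph{not} sufficient to produce a frequency-localization estimate with $\sum_n2^{n/2}c_n<\infty$: for output frequencies $k\geq j$ one needs $2^{k/2}\|\dot\Delta_ku_j\|_{L^2}\leq 2^{-k/2}\|\nabla u_j\|_{L^2}$ to be summable, which requires the \emph{pointwise-in-time} bound $\|\nabla u_j\|_{L^\infty_t(L^2)}\lesssim 2^{j/2}d_j\|(u_0,B_0)\|_{\dot B^{1/2}_{2,1}}$ (and $\|\nabla^2u_j\|_{L^2_t(L^2)}$ at the same level for the $\widetilde L^2_t\dot B^{3/2}_{2,1}$ part of \eqref{A2.1}). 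There is no parabolic "propagation of localization up to summable corrections" at work here; the decay in $|k-j|$ comes purely from interpolating between two Sobolev levels of a priori bounds via Bernstein, and the higher level is missing from your proposal. Obtaining it is where all the structure of the problem enters: one must test the $u_j$- and $B_j$-equations against $\partial_tu_j$ and $\partial_tB_j$, bound the convection and coupling terms by $C\|(u,B)\|_{\dot H^{1/2}}\|(\nabla^2u_j,\nabla^2B_j)\|_{L^2}\|(\sqrt{\rho}\,\partial_tu_j,\partial_tB_j)\|_{L^2}$, close the loop with the Stokes/elliptic estimate $\|\nabla^2(u_j,B_j)\|_{L^2}+\|\nabla P_j\|_{L^2}\lesssim\|(\sqrt{\rho}\,\partial_tu_j,\partial_tB_j)\|_{L^2}$ (valid only under a bootstrap hypothesis $\|(u,B)(t)\|_{\dot H^{1/2}}\leq c_2$, and using the lower bound on $\rho$ from \eqref{A1.2} so that $\|\sqrt{\rho}\,\partial_tu_j\|_{L^2}\sim\|\partial_tu_j\|_{L^2}$), and then run a continuity argument in $t$ to propagate the smallness. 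Your closing paragraph names the right ingredients (trilinear remainders, smallness, absorption) but does not supply this mechanism, and the $L^2$ identity you actually derived cannot replace it. A minor additional point: the pressure causes no "remainder" in either energy estimate since $\dive u_j=\dive\partial_tu_j=0$; it only enters through the elliptic estimate above.
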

\begin{proof}
First, in view of \eqref{A1.2} and the classical theory on transport equation,
\eqref{A1.4} holds for $0<t<T^{*}$.
Then, we consider the following coupled system of $(u_{j},B_j,\nabla P_{j})$ 
\begin{equation}\label{A2.2}
\left\{
\begin{aligned}
& \rho(\partial_tu_j+u\cdot\nabla u_j)-\Delta u_j+\nabla P_j=B\cdot\nabla B_j, \\
& \partial_t B_j+u\cdot \nabla B_j-\Delta B_j=B\cdot\nabla u_j, \\
& \dive u_j=\dive B_j=0,\\
& u_j|_{t=0}=\dot{\Delta}_j u_0, \quad B_j|_{t=0}=\dot{\Delta}_j B_0.
\end{aligned}\right.
\end{equation}
One infers from the uniqueness of local smooth solution to \eqref{I-1} that
\begin{gather}\label{A2.3}
u=\sum_{j\in\mathbb Z}u_j, \quad B=\sum_{j\in\mathbb{Z}}B_j,      \quad\nabla P=\sum_{j\in\mathbb Z}\nabla P_j.    
\end{gather}
Then, by taking $L^{2}$ inner product of \eqref{A2.2} with $u_{j}$ and $B_j$ respectively and using \eqref{I-1}$_1$, we get
\begin{align}\label{A2.4}
\frac{1}{2}\frac{\rm d}{{\rm d}t}\bigg(\int_{\mathbb{R}^3}\rho|u_j|^2\, {\rm d}x+\int_{\mathbb{R}^3}|B_j|^2\, {\rm d}x\bigg)+\int_{\mathbb{R}^3}|\nabla u_j|^2\, {\rm d}x+\int_{\mathbb{R}^3}|\nabla B_j|^2\, {\rm d}x=0.   
\end{align}
Integrating \eqref{A2.4} over $[0,t]$ gives rise to
\begin{align*}
\frac{1}{2}\big(&\|\sqrt{\rho}u_{j}(t)\|_{L^{2}}^{2}+\|B_{j}(t)\|_{L^{2}}^{2}\big)+\|\nabla u\|_{L_{t}^{2}(L^{2})}^{2}+\|\nabla B\|_{L_{t}^{2}(L^{2})}^{2}\nonumber\\
&=\frac{1}{2}\big(\|\sqrt{\rho_{0}}\dot{\Delta}_{j}u_{0}\|_{L^{2}}^{2}+\|\dot{\Delta}_{j}B_{0}\|_{L^{2}}^{2}\big),
\end{align*}
and thus it yields
\begin{align}\label{A2.6}
\|(u_j,B_j)\|_{L_t^\infty(L^2)}+\|(\nabla u_j,\nabla B_j)\|_{L_t^2(L^2)}
&\leq\, C\|\dot{\Delta}_j (u_0,B_0)\|_{L^2}\nonumber\\
&\leq Cd_j2^{-j/2}\|(u_0,B_0)\|_{\dot{B}^{1/2}_{2,1}}.  
\end{align}
Taking $L^{2}$ inner product of  \eqref{A2.2} with $\partial_{t}u_{j}$ and $\partial_{t}B_{j}$ respectively leads to 
\begin{align}\label{A2.7}
&\frac{1}{2}\frac{\rm d}{{\rm d}t}\big(\|\nabla u_{j}(t)\|_{L^{2}}^{2}+\|\nabla B_{j}(t)\|_{L^{2}}^{2}\big)+\|\sqrt{\rho}\partial_{t}u_{j}\|_{L^{2}}^{2}+\|\partial_{t}B_{j}\|_{L^{2}}^{2}\nonumber\\
=\,&-\int_{\mathbb{R}^{3}}(\rho u\cdot\nabla u_{j})\cdot \partial_{t}u_{j}\, {\rm d}x
+\int_{\mathbb{R}^{3}}(B \cdot\nabla B_{j})\cdot \partial_{t}u_{j}\, {\rm d}x\nonumber\\
\,&-\int_{\mathbb{R}^{3}}( u\cdot\nabla B_{j})\cdot \partial_{t}B_{j}\, {\rm d}x
+\int_{\mathbb{R}^{3}}(B \cdot\nabla u_{j})\cdot \partial_{t}B_{j}\, {\rm d}x\nonumber\\
\leq\,&C \|u\|_{L^3}\|\nabla u_{j}\|_{L^{6}}\|\sqrt{\rho}\partial_{t}u_j\|_{L^2}+C\|B\|_{L^3}\|\nabla B_{j}\|_{L^{6}}\|\sqrt{\rho}\partial_{t}u_j\|_{L^2}\nonumber\\
&\quad+C \|u\|_{L^3}\|\nabla B_{j}\|_{L^{6}}\|\partial_{t}B_j\|_{L^2}+C\|B\|_{L^3}\|\nabla u_{j}\|_{L^{6}}\|\partial_{t}B_j\|_{L^2}\nonumber\\
\leq\,& C\|(u,B)\|_{\dot{H}^{1/2}}\|(\nabla^{2}u_{j}, \nabla^{2} B_j)\|_{L^{2}}\|(\sqrt{\rho}\partial_{t}u_{j},\partial_t B_j)\|_{L^{2}}^{2}.
\end{align}

We look at the elliptic structure for $\eqref{A2.2}$:
\begin{equation}\label{A2.42}
\left\{
\begin{aligned}
&-\Delta u_j+\nabla P_j=-\rho (\partial_t u_j+u\cdot \nabla u_j)+B\cdot\nabla B_j, \\
&-\Delta B_j=-(\partial_t B_j+u\cdot\nabla B_j)+B\cdot\nabla u_j,\\
&\dive u_j=\dive B_j=0.
\end{aligned}\right.
\end{equation}
On the one hand, it follows from the standard Stokes estimates for \eqref{A2.42}, that
\begin{align*}
\|\nabla^{2}u_{j}\|_{L^{2}}+\|\nabla P_{j}\|_{L^{2}}\leq&\,C\big(\|\sqrt{\rho}\partial_{t}u_{j}\|_{L^{2}}+\|u\cdot\nabla u_{j}\|_{L^{2}}+\|B\cdot\nabla B_{j}\|_{L^{2}}\big)\nonumber\\
\leq&\,C\big(\|\sqrt{\rho}\partial_{t}u_{j}\|_{L^{2}}+\|(u,B)\|_{\dot{H}^{1/2}}\|(\nabla^{2}u_{j}, \nabla^{2} B_j)\|_{L^{2}}\big).
\end{align*}
On the other hand, it is direct to see that
\begin{align*}
\|\nabla^{2}B_j\|_{L^{2}}\leq&\,C\big(\|\partial_{t}B_{j}\|_{L^{2}}+\|u\cdot\nabla B_{j}\|_{L^{2}}+\|B\cdot\nabla u_{j}\|_{L^{2}}\big)             \nonumber\\
\leq&\,C\big(\|\partial_t B_j\|_{L^{2}}+\|(u,B)\|_{L^3}\|(\nabla u_{j}, \nabla  B_j)\|_{L^{6}}\big).
\end{align*}
Then, there exists a constant $C_2^*>0$ such that
\begin{align}
&\|\nabla^2(u_j,B_j)\|_{L^2}+\|\nabla P_j\|_{L^2}\nonumber\\
&\quad\leq C_2^*\big(\|(\sqrt{\rho}\partial_tu_j,\partial_tB_j)\|_{L^2}+\|(u,B)\|_{\dot{B}^{1/2}_{2,1}}\|(\nabla u_{j}, \nabla B_j)\|_{L^{6}}\big).\label{AA} 
\end{align}

We denote
\begin{align}\label{A2.9}
T_{2}^{*}:=\sup\bigl\{t\in (0,T^*)\,:\,\|(u,B)(t)\|_{\dot{H}^{1/2}}\leq  c_{2}\bigr\}.    
\end{align}
For $c_2$ being sufficiently small  such that $c_{2}C_2^*\leq {1}/{2}$ and for $t\leq T_{2}^{*}$,  we derive from \eqref{AA}  that 
\begin{align}\label{A2.10}
\|\nabla^2(u_j,B_j)\|_{L^2}+\|\nabla P_j\|_{L^2}\leq C\|(\sqrt{\rho}\partial_tu_j,\partial_tB_j)\|_{L^2}. 
\end{align}
Substituting the above inequality \eqref{A2.10} into \eqref{A2.7}, we infer that there exists a  constant $C^*_3=C^*_3(\rho_0)>0$ such that, for all $t\leq T_2^*$,
\begin{align}\label{3.13}
&\frac{1}{2}\frac{\rm d}{{\rm d}t}\big(\|\nabla u_{j}(t)\|_{L^{2}}^{2}+\|\nabla B_{j}(t)\|_{L^{2}}^{2}\big)+\|(\sqrt{\rho}\partial_{t}u_{j},\partial_t B_j)\|_{L^{2}}^{2}\nonumber\\
&\quad\leq C_3^*\|(u,B)\|_{\dot{H}^{1/2}}\|(\sqrt{\rho}\partial_{t}u_{j},\partial_t B_j)\|_{L^{2}}^{2}. 
\end{align}
Using \eqref{3.13} and choosing $c_2$ in \eqref{A2.9} such that 
$$
c_2:=\frac{1}{2}\min\Big\{\frac{1}{C_2^*},\frac{1}{C_3^*}\Big\},
$$
we arrive at
\begin{align}\label{A2.12}
\frac{\rm d}{{\rm d}t}\big(\|\nabla u_{j}(t)\|_{L^{2}}^{2}+\|\nabla B_{j}(t)\|_{L^{2}}^{2}\big)+\|(\sqrt{\rho}\partial_{t}u_{j},\partial_t B_j)\|_{L^{2}}^{2}\leq0,\quad t\leq T_2^*.
\end{align}
From  \eqref{A2.10} and \eqref{A2.12}, we infer that
\begin{align}\label{A2.13}
&\|(\nabla u_{j},\nabla B_j)\|_{L^{\infty}(L^{2})}+\|(\nabla^{2}u_{j},\nabla^2 B_j)\|_{L_{t}^{2}(L^{2})} +\|(\sqrt{\rho}\partial_{t}u_{j},\partial_t B_j,\nabla P_{j})\|_{L_{t}^{2}(L^{2})}\nonumber\\
&\quad\leq C\|(\nabla u_{j},\nabla B_j)(0)\|_{L^{2}}
\leq C\|\nabla\dot{\Delta}_{j}u_{0}\|_{L^{2}}+C\|\nabla\dot{\Delta}_{j}B_0\|_{L^{2}}
\leq C d_{j}2^{j/2}\|(u_{0},B_0)\|_{\dot{B}^{1/2}_{2,1}}.    
\end{align}
By virtue of \eqref{A2.3}, \eqref{A2.6}, \eqref{A2.13}, and applying Bernstein's
inequality, we arrive at
\begin{align}\label{A2.14}
&\|\dot{\Delta}_{j}(u,B)\|_{L_{t}^{\infty}(L^{2})}+\|\nabla\dot{\Delta}_{j}(u,B)\|_{L_{t}^{2}(L^{2})}\nonumber\\
\leq\,&C\sum_{j^{\prime}\geq j}\big(\|\dot{\Delta}_{j}(u_{j^{\prime}},B_{j^{\prime}})\|_{L_{t}^{\infty}(L^{2})}
+\|\nabla\dot{\Delta}_{j}(u_{j^{\prime}},B_{j^{\prime}})\|_{L_{t}^{2}(L^{2})}\big) \nonumber\\
&+C2^{-j}\sum_{j^{\prime}\leq j}\big(\|\nabla\dot{\Delta}_{j}(u_{j^{\prime}},B_{j^{\prime}})\|_{L_{t}^{\infty}(L^{2})}
+\|\nabla^{2}\dot{\Delta}_{j}(u_{j^{\prime}},B_{j^{\prime}})\|_{L_{t}^{2}(L^{2})}\big) \nonumber\\
\leq\,&C\sum_{j^{\prime}\geq j}\big(\|(u_{j^{\prime}},B_{j^{\prime}})\|_{L_{t}^{\infty}(L^{2})}+\|(\nabla u_{j^{\prime}},B_{j^{\prime}})\|_{L_{t}^{2}(L^{2})}\big) \nonumber\\
&+C2^{-j}\sum_{j^{\prime}\leq j}\big(\|\nabla (u_{j^{\prime}},B_{j^{\prime}})\|_{L_{t}^{\infty}(L^{2})}
+\|\nabla^{2}(u_{j^{\prime}},B_{j^{\prime}})\|_{L_{t}^{2}(L^{2})}\big) \nonumber\\
\leq\,&C d_{j}2^{-j/2}\|(u_{0},B_0)\|_{\dot{B}^{1/2}_{2,1}}.
\end{align}
This means that
$$
\|(u,B)\|_{\widetilde{L}_{t}^{\infty}(\dot{B}^{1/2}_{2,1})}+\|(\nabla u,\nabla B)\|_{\widetilde{L}_{t}^{2}(\dot{B}^{1/2}_{2,1})}\leq C\|(u_{0},B_0)\|_{\dot{B}^{1/2}_{2,1}}
$$
holds for all $t\in (0,T_1^*)$.  Then, as $(u,B)\in C([0,T^*);\dot{B}^{1/2}_{2,1})$, a standard continuous argument leads to $T_2^*=T$ as long as $C\|(u_0,B_0)\|_{\dot{B}^{1/2}_{2,1}}<c_2$, and therefore the desired bound \eqref{A2.1} follows. 
\end{proof}

Next, we establish higher order estimates of $(u,B,\nabla P)$ as follows.
\begin{prop}
Under the assumptions of Proposition \ref{P3.1}, we have
\begin{align}\label{A2.20}
\|{t^{1/2}}(\nabla u,\nabla B)\|_{\widetilde{L}_{T}^{\infty}(\dot{B}^{{1}/{2}}_{2,1})}+\|{t^{1/2}}(\partial_{t}u,\partial_t B)\|_{\widetilde{L}_{T}^{2}(\dot{B}^{\frac{1}{2}}_{2,1})}\leq\,& C \|(u_{0},B_0)\|_{\dot{B}^{1/2}_{2,1}}.
\end{align}
\end{prop}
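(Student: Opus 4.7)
The plan is to follow the pattern of Proposition \ref{P3.1}, inserting the time weight $t$ at the energy-identity level so as to compensate for the initial-layer singularity. Concretely, I would test the momentum equation in the localized system \eqref{A2.2} against $t\partial_t u_j$ and the magnetic equation against $t\partial_t B_j$, and integrate by parts over $\mathbb{R}^3$. Using the identity
$$-t\langle\Delta u_j,\partial_t u_j\rangle_{L^2}=\frac12\frac{d}{dt}\bigl(t\|\nabla u_j\|_{L^2}^2\bigr)-\frac12\|\nabla u_j\|_{L^2}^2,$$
the divergence-free nature of $\partial_t u_j$ (which kills the $t\nabla P_j$ contribution), and the analogous computation for $B_j$, one obtains the weighted identity
$$\frac12\frac{d}{dt}\bigl(t\|(\nabla u_j,\nabla B_j)\|_{L^2}^2\bigr)+t\|(\sqrt\rho\partial_t u_j,\partial_t B_j)\|_{L^2}^2=\frac12\|(\nabla u_j,\nabla B_j)\|_{L^2}^2+t\mathcal N_j,$$
where $\mathcal N_j$ collects the four convection/coupling contributions from the right-hand side of \eqref{A2.2}.

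Next I would estimate $t\mathcal N_j$ exactly as in \eqref{A2.7}, namely by $Ct\|(u,B)\|_{\dot H^{1/2}}\|(\nabla^2 u_j,\nabla^2 B_j)\|_{L^2}\|(\sqrt\rho\partial_t u_j,\partial_t B_j)\|_{L^2}$. The smallness $\|(u,B)(t)\|_{\dot H^{1/2}}\leq c_2$ secured during the proof of Proposition \ref{P3.1} and the Stokes-type regularity \eqref{A2.10} for the system \eqref{A2.42} allow me to absorb the $\nabla^2$ factor into the left-hand side, yielding
$$\frac{d}{dt}\bigl(t\|(\nabla u_j,\nabla B_j)\|_{L^2}^2\bigr)+t\|(\sqrt\rho\partial_t u_j,\partial_t B_j)\|_{L^2}^2\lesssim\|(\nabla u_j,\nabla B_j)\|_{L^2}^2.$$
Integration in time (the left-hand side vanishes at $t=0$, so no initial contribution appears) combined with the bound $\|\nabla(u_j,B_j)\|_{L^2_T(L^2)}\leq Cd_j 2^{-j/2}\|(u_0,B_0)\|_{\dot B^{1/2}_{2,1}}$ coming from \eqref{A2.6} then produces the localized weighted estimate
$$\|t^{1/2}(\nabla u_j,\nabla B_j)\|_{L^\infty_T(L^2)}+\|t^{1/2}(\sqrt\rho\partial_t u_j,\partial_t B_j)\|_{L^2_T(L^2)}\leq Cd_j 2^{-j/2}\|(u_0,B_0)\|_{\dot B^{1/2}_{2,1}}.$$

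To promote this bound to the Besov estimate \eqref{A2.20}, I would perform a Bernstein-type summation in the spirit of \eqref{A2.14}. Writing $\dot\Delta_j\nabla u=\sum_{j'}\dot\Delta_j\nabla u_{j'}$ and splitting the sum at $j'=j$, the high-frequency part $j'\geq j$ is handled directly by the weighted estimate just derived, while the low-frequency part $j'<j$ carries an extra Bernstein factor $2^{-(j-j')}$ and therefore calls for a companion bound of type $\|t^{1/2}\nabla^2 u_{j'}\|_{L^\infty_T(L^2)}$, together with the analogous $\|t^{1/2}\nabla\partial_t u_{j'}\|_{L^2_T(L^2)}$ for the $\partial_t u$ component. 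The hard part will be producing these auxiliary higher-regularity weighted bounds, which do not follow from the first weighted identity alone; I would obtain them either by a second weighted energy estimate, testing the $\partial_t$-differentiated equation against a suitable multiplier such as $t^2 D_t u_j$ and absorbing through \eqref{A2.10}, or by interpolating the unweighted estimates of \eqref{A2.13} with the newly derived weighted bound. Once these auxiliary bounds are in place, the two partial sums reduce to convolutions of $\ell^1$-sequences in $j$, and \eqref{A2.20} follows.
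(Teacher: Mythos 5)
Your proposal follows essentially the same route as the paper: the paper likewise multiplies the $\partial_t$-tested energy identity by $t$ to obtain $\|t^{1/2}(\nabla u_j,\nabla B_j)\|_{L^\infty_T(L^2)}+\|t^{1/2}(\partial_t u_j,\partial_t B_j)\|_{L^2_T(L^2)}\lesssim d_j2^{-j/2}\|(u_0,B_0)\|_{\dot{B}^{1/2}_{2,1}}$ (its \eqref{4.22}), then produces the companion bounds $\|t^{1/2}\nabla^2(u_j,B_j)\|_{L^\infty_T(L^2)}+\|t^{1/2}\nabla\partial_t(u_j,B_j)\|_{L^2_T(L^2)}\lesssim d_j2^{j/2}\|(u_0,B_0)\|_{\dot{B}^{1/2}_{2,1}}$ precisely by your first option, a weighted energy estimate for the $\partial_t$-differentiated system \eqref{A2.21}--\eqref{A2.22} tested against $t\,\partial_t(u_j,B_j)$ and closed via \eqref{A2.10} together with the interpolated bound $\|t^{1/4}(\partial_t u,\partial_t B)\|_{L^2_T(L^2)}\lesssim\|(u_0,B_0)\|_{\dot{B}^{1/2}_{2,1}}$, before concluding with the Bernstein splitting of \eqref{A2.14}. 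Only your interpolation-based fallback should be discarded: interpolating the $L^2_t$ bounds of \eqref{A2.13} with the new weighted bound cannot yield the $L^\infty_t$ control of $t^{1/2}\nabla^2 u_{j'}$ at the $2^{j'/2}$ scaling that the low-frequency part of the sum requires, so the second energy estimate is genuinely needed.
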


\begin{proof}
We deal with the inequality \eqref{A2.7} in the following way:
\begin{align*}
&\frac{1}{2}\frac{\rm d}{{\rm d}t}\big(\|\nabla u_{j}(t)\|_{L^{2}}^{2}+\|\nabla B_{j}(t)\|_{L^{2}}^{2}\big)+\|\sqrt{\rho}\partial_{t}u_{j}\|_{L^{2}}^{2}+\|\partial_{t}B_{j}\|_{L^{2}}^{2}\nonumber\\
\leq\, &C\|u\|_{L^{\infty}}\|\nabla u_{j}\|_{L^{2}}\|\sqrt{\rho}\partial_{t}u_{j}\|_{L^{2}}+C\|B\|_{L^{\infty}}\|\nabla B_{j}\|_{L^{2}}\|\sqrt{\rho}\partial_{t}u_{j}\|_{L^{2}} \nonumber\\
&+C\|u\|_{L^{\infty}}\|\nabla B_{j}\|_{L^{2}}\|\partial_{t}B_{j}\|_{L^{2}}+C\|B\|_{L^{\infty}}\|\nabla u_{j}\|_{L^{2}}\|\partial_{t}B_{j}\|_{L^{2}}                         \nonumber\\
\leq\,&C\|(u,B)\|_{L^{\infty}}\|\nabla(u_j,B_j)\|_{L^2}\|(\sqrt{\rho}\partial_tu_j,\partial_t B_j)\|_{L^2} \nonumber\\
\leq\, &C\|(u,B)\|_{\dot{B}^{3/2}_{2,1}}^{2}\|(\nabla u_{j},\nabla B_j)\|_{L^{2}}^{2}+\frac{1}{2}\|(\sqrt{\rho}\partial_{t}u_{j},\partial_t B_j)\|_{L^{2}}^{2},    
\end{align*}
which implies that
\begin{align}\label{A2.17}
&\frac{1}{2}\frac{\rm d}{{\rm d}t}\big(\|\nabla u_{j}(t)\|_{L^{2}}^{2}+\|\nabla B_{j}(t)\|_{L^{2}}^{2}\big)+\|\sqrt{\rho}\partial_{t}u_{j}\|_{L^{2}}^{2}+\|\partial_{t}B_{j}\|_{L^{2}}^{2}\nonumber\\
&\quad \leq C\|(u,B)\|_{\dot{B}^{3/2}_{2,1}}^{2}\|(\nabla u_{j},\nabla B_j)\|_{L^{2}}^{2}.  
\end{align}
Multiplying \eqref{A2.17} by $t$, we have
\begin{align*}
&\frac{\rm d}{{\rm d}t}\big(t\|(\nabla u_{j},\nabla B_j)(t)\|_{L^{2}}^{2}\big)+t\|(\sqrt{\rho}\partial_{t}u_{j},\partial_t B_j)\|_{L^{2}}^{2}\nonumber\\
&\quad \leq C\|(\nabla u_{j},\nabla B_j)(t)\|_{L^{2}}^{2}+C\|(u,B)\|_{\dot{B}^{3/2}_{2,1}}^{2}t\|(\nabla u_{j},\nabla B_j)\|_{L^{2}}^{2}.
\end{align*}
Applying Gronwall's inequality and then using \eqref{A2.1}, \eqref{A2.6} and \eqref{A2.10}, we arrive at
\begin{align}
&\|{t^{1/2}}(\nabla u_{j},\nabla B_j)\|_{L_{t}^{\infty}(L^{2})}^{2}+\|{t^{1/2}}(\nabla^2u_j,\nabla^2 B_j,\nabla P_j,\partial_{t}u_{j},\partial_t B_j)\|_{L_{t}^{2}(L^{2})}^{2}\nonumber\\
\leq\, &C\|(\nabla u_{j},\nabla B_j)\|_{L_{t}^{2}(L^{2})}^{2}\exp\Big\{C\|(u,B)\|_{L_{t}^{2}(\dot{B}^{3/2}_{2,1})}^{2}\Big\} \nonumber\\
\leq\, &Cd_{j}^{2}2^{-j}\|(u_{0},B_0)\|_{\dot{B}^{1/2}_{2,1}}^{2}.\label{4.22}
\end{align}



In order to derive \eqref{A2.20}, following a similar line in \eqref{A2.14}, we shall establish the weighted $L^2$-estimate of $(\nabla^2u,\nabla^2 B)$. To this end, we apply $\partial_{t}$ to  \eqref{A2.2}$_1$ and \eqref{A2.2}$_2$ to get
\begin{align}\label{A2.21}
&\rho\partial_t^2u_j+\rho u\cdot\nabla\partial_tu_j-\Delta\partial_tu_j+\nabla\partial_t P_j\nonumber\\
&\quad\quad=-\rho_tD_tu_j-\rho \partial_t u\cdot\nabla u_j+\partial_t B\cdot\nabla B_j+B\cdot\nabla\partial_t B_j,\\ \label{A2.22}
&\rho\partial_t^2B_j+\partial_t u\cdot\nabla B_j+u\cdot\nabla\partial_tB_j-\Delta\partial_tB_j
=\partial_t B\cdot\nabla u_j+B\cdot\nabla\partial_t u_j.
\end{align}
By taking $L^{2}$ inner product of \eqref{A2.21} and \eqref{A2.22} with $\partial_{t}u_{j}$ and $\partial_t B_j$ respectively and using \eqref{I-1}$_1$, we get
\begin{align}\label{A2.23}
&\frac{1}{2}\frac{\rm d}{{\rm d}t}\bigg(\int_{\mathbb{R}^{3}}\rho|\partial_{t}u_{j}|^{2}\, {\rm d}x+\int_{\mathbb{R}^{3}}|\partial_{t}B_{j}|^{2}\, {\rm d}x\bigg)+\|\nabla\partial_{t}u_{j}\|_{L^{2}}^{2}+\|\nabla\partial_{t}B_{j}\|_{L^{2}}^{2}\nonumber\\
=\,&-\int_{\mathbb{R}^{3}}(\rho_{t}D_{t}u_{j})\cdot
\partial_{t}u_{j}\, {\rm d}x-\int_{\mathbb{R}^{3}}(\rho \partial_t u\cdot\nabla u_{j})\cdot\partial_{t}u_{j}\, {\rm d}x-\int_{\mathbb{R}^{3}}(\partial_t B\cdot\nabla B_{j})\cdot\partial_{t}u_{j}\, {\rm d}x\nonumber\\
&\,+\int_{\mathbb{R}^{3}}(\partial_t B\cdot\nabla u_{j})\cdot\partial_{t}B_{j}\, {\rm d}x-\int_{\mathbb{R}^{3}}(\partial_t u\cdot\nabla B_{j})\cdot\partial_{t}B_{j}\, {\rm d}x\nonumber\\
\equiv:\,&\sum_{j=1}^5I_j.
\end{align}
Below we handle with the terms on the right-hand side of \eqref{A2.23} one by one. First, by integration by parts and $\eqref{I-1}_1$, one has
\begin{align}\label{A2.24}
I_1=\,&-\int_{\mathbb{R}^{3}}(\rho_{t}\partial_{t}u_{j})\cdot\partial_{t}u_{j}\, {\rm d}x -\int_{\mathbb{R}^{3}}(\rho_{t}u\cdot\nabla u_{j})\cdot\partial_{t}u_{j}\, {\rm d}x\nonumber\\
=\,&-2\int_{\mathbb{R}^{3}}(\rho u\cdot\nabla\partial_{t}u_{j})\cdot\partial_{t}u_{j}\, {\rm d}x -\int_{\mathbb{R}^{3}}(\rho u\cdot\nabla u)\cdot\nabla u_j\cdot\partial_{t}u_{j}\, {\rm d}x\nonumber\\
\,&-\int_{\mathbb{R}^{3}}(\rho (u\otimes u):\nabla^2u_{j})\cdot\partial_{t}u_{j}\, {\rm d}x-\int_{\mathbb{R}^{3}}(\rho u \cdot\nabla \partial_tu_{j})\cdot(u\cdot \nabla u_{j})\, {\rm d}x,
\end{align}
which, together with \eqref{A2.10} and Young's inequality, gives rise to
\begin{align}\label{A2.25}
I_1\leq\,& C\|u\|_{L^{\infty}}\|\sqrt{\rho}\partial_{t}u_{j}\|_{L^{2}}\|\nabla\partial_{t}u_{j}\|_{L^{2}}+C\|u\|_{L^{\infty}}\|\nabla u\|_{L^{3}}\|\nabla u_{j}\|_{L^{6}}\|\sqrt{\rho}\partial_{t}u_{j}\|_{L^{2}}  \nonumber\\
\,&+C\|u\|_{L^{\infty}}^2\|\nabla^2 u_j\|_{L^{2}}\|\sqrt{\rho}\partial_t u_j\|_{L^{2}}+C\|u\|_{L^{\infty}}\|\nabla u_j\|_{L^{6}}\|u\|_{L^{3}}\|\nabla\partial_{t}u_{j}\|_{L^{2}}\nonumber\\
\leq\,&C\|u\|_{\dot{B}^{3/2}_{2,1}}^{2}\|(\sqrt{\rho}\partial_{t}u_{j},\partial_t B_j)\|_{L^{2}}^{2}+C\|u\|_{\dot{B}^{1/2}_{2,1}}^{2}\|u\|_{\dot{B}^{3/2}_{2,1}}^{2}\|(\sqrt{\rho}\partial_{t}u_{j},\partial_t B_j)\|_{L^{2}}^{2}+\frac{1}{4}\|\nabla\partial_{t}u_{j}\|_{L^{2}}^{2}\nonumber\\
\leq\,&C\Big(1+\|u\|_{\dot{B}^{1/2}_{2,1}}^{2}\Big)\|u\|_{\dot{B}^{3/2}_{2,1}}^{2}\|(\sqrt{\rho}\partial_{t}u_{j},\partial_t B_j)\|_{L^{2}}^{2}+\frac{1}{4}\|\nabla\partial_{t}u_{j}\|_{L^{2}}^{2}.
\end{align}
Similarly, we obtain
\begin{align}\label{A2.26}
I_2+I_3\leq\,& C\|(\partial_t u,\partial_t B)\|_{L^{2}}\|(\nabla u_{j},\nabla B_j)\|_{L^{3}}\|\partial_{t}u_{j}\|_{L^{6}}\nonumber\\ 
\leq\,& C\|(\partial_t u,\partial_t B)\|_{L^{2}}\|(\nabla u_{j},\nabla B_j)\|_{L^{2}}^{1/2}\|(\nabla^{2}u_{j}, \nabla^{2} B_j)\|_{L^{2}}^{1/2}\|\nabla\partial_{t}u_{j}\|_{L^{2}} \nonumber\\
\leq\,& C\|(\partial_t u,\partial_t B)\|_{L^{2}}^{2}\|(\nabla u_{j},\nabla B_j)\|_{L^{2}}\|(\nabla^{2}u_{j}, \nabla^{2} B_j)\|_{L^{2}}+\frac{1}{4}\|\nabla\partial_{t}u_{j}\|_{L^{2}}^{2} \nonumber\\
\leq\,& C\|(\partial_t u,\partial_t B)\|_{L^{2}}^{2}\|(\nabla u_{j},\nabla B_j)\|_{L^{2}}\|(\sqrt{\rho}\partial_{t}u_{j},\partial_t B_j)\|_{L^{2}}+\frac{1}{4}\|\nabla\partial_{t}u_{j}\|_{L^{2}}^{2},
\end{align}
and
\begin{align}
I_4+I_5\leq\,& C\|(\partial_t u,\partial_t B)\|_{L^{2}}\|(\nabla u_{j},\nabla B_j)\|_{L^{3}}\|\partial_{t}B_{j}\|_{L^{6}}\nonumber\\ 
\leq\,& C\|(\partial_t u,\partial_t B)\|_{L^{2}}\|(\nabla u_{j},\nabla B_j)\|_{L^{2}}^{1/2}\|(\nabla^{2}u_{j}, \nabla^{2} B_j)\|_{L^{2}}^{1/2}\|\nabla\partial_{t}B_{j}\|_{L^{2}} \nonumber\\
\leq\,& C\|(\partial_t u,\partial_t B)\|_{L^{2}}^{2}\|(\nabla u_{j},\nabla B_j)\|_{L^{2}}\|(\nabla^{2}u_{j}, \nabla^{2} B_j)\|_{L^{2}}+\frac{1}{2}\|\nabla\partial_{t}B_{j}\|_{L^{2}}^{2} \nonumber\\
\leq\,& C\|(\partial_t u,\partial_t B)\|_{L^{2}}^{2}\|(\nabla u_{j},\nabla B_j)\|_{L^{2}}\|(\sqrt{\rho}\partial_{t}u_{j},\partial_t B_j)\|_{L^{2}}+\frac{1}{2}\|\nabla\partial_{t}B_{j}\|_{L^{2}}^{2}.\label{4.291}
\end{align}
Inserting \eqref{A2.25}--\eqref{4.291} into \eqref{A2.24}, we arrive at
\begin{align}\label{A2.28}
&\frac{\rm d}{{\rm d}t}\bigg(\int_{\mathbb{R}^{3}}\rho|\partial_{t}u_{j}|^{2}\, {\rm d}x+\int_{\mathbb{R}^{3}}|\partial_{t}B_{j}|^{2}\, {\rm d}x\bigg)+\|\nabla\partial_{t}u_{j}\|_{L_{t}^{2}(L^{2})}^{2}
+\|\nabla\partial_{t}B_{j}\|_{L_{t}^{2}(L^{2})}^{2}\nonumber\\
 &\quad  \leq C\Big(1+\|u\|_{\dot{B}^{1/2}_{2,1}}^{2}\Big)\|u\|_{\dot{B}^{3/2}_{2,1}}^{2}\|(\sqrt{\rho}\partial_{t}u_{j},\partial_t B_j)\|_{L^{2}}^{2}\nonumber\\
&\quad \quad +C\|(\partial_t u,\partial_t B)\|_{L^{2}}^{2}\|(\nabla u_{j},\nabla B_j)\|_{L^{2}}\|(\sqrt{\rho}\partial_{t}u_{j},\partial_t B_j)\|_{L^{2}}.
\end{align}
Multiplying \eqref{A2.28} by $t$ and using \eqref{A2.1} gives
\begin{align*}
&\frac{\rm d}{{\rm d}t}\|{t^{1/2}}(\sqrt{\rho}\partial_{t}u_{j},\partial_t B_j)\|_{L^{2}}^{2}+\|{t^{1/2}}(\nabla\partial_{t}u_{j},\nabla\partial_{t}B_j)\|_{L_{t}^{2}(L^{2})}^{2}\nonumber\\
&\quad \leq \|(\sqrt{\rho}\partial_{t}u_{j},\partial_t B_j)\|_{L^{2}}^{2}+C\|t^{1/4}(\partial_t u,\partial_t B)\|_{L^{2}}^{2}\|(\nabla u_{j},\nabla B_j)\|_{L^{2}}^{2}\nonumber\\
&\quad\quad+C\Big(\|u\|_{\dot{B}^{3/2}_{2,1}}^{2}+\|t^{1/4}(\partial_t u,\partial_t B)\|_{L^{2}}^{2}\Big)\|{t^{1/2}}({\sqrt{\rho}}\partial_{t}u_{j},\partial_t B_j)\|_{L^{2}}^{2}.
\end{align*}
Applying Gronwall's inequality and making use of \eqref{A1.4}, we achieve
\begin{align*}
&\|{t^{1/2}}(\partial_{t}u_{j},\partial_t B_j)\|_{L_{t}^{\infty}(L^{2})}^{2}+\|{t^{1/2}}(\nabla\partial_{t}u_{j},\nabla\partial_t B_j)\|_{L_{t}^{2}(L^{2})}^{2}\nonumber\\
& \quad \leq C\exp\bigg\{\int_{0}^{t}\Big(\|u \|_{\dot{B}^{3/2}_{2,1}}^{2}+\| ^{1/4}(\partial_t u,\partial_t B) \|_{L^{2}}^{2}\Big){{\rm d}t}^{\prime}\bigg\}\nonumber\\
&\quad \quad \times\Big(\|(\sqrt{\rho}\partial_{t}u_{j},\partial_t B_j)\|_{L_{t}^{2}(L^{2})}^{2}+\|t^{1/4}(\partial_t u,\partial_t B)\|_{L_{t}^{2}(L^{2})}^{2}\|(\nabla u_{j},\nabla B_j)\|_{L_{t}^{\infty}(L^{2})}^{2}\Big).
\end{align*}
With the aid of \eqref{A2.13} and \eqref{4.22}, it holds that
\begin{align*}
\|t^{1/4}(\partial_{t}u_{j},\partial_t B_j)\|_{L_{t}^{2}(L^{2})} \leq&\,\|(\partial_{t}u_{j},\partial_{t}B_j)\|_{L_{t}^{2}(L^{2})}^{1/2}\|{t^{1/2}}(\partial_{t}u_{j},\partial_t B_j)\|_{L_{t}^{2}(L^{2})}^{1/2}\nonumber\\ 
\leq&\, Cd_{j}\|(u_{0},B_0)\|_{\dot{B}^{1/2}_{2,1}}, 
\end{align*}
from which we infer
\begin{align}\label{A2.32}
\|t^{1/4}(\partial_{t}u,\partial_t B)\|_{L_{t}^{2}(L^{2})}\leq\sum_{j\in\mathbb{Z}}\|t^{1/4}(\partial_{t}u_{j},\partial_t B_j)\|_{L_{t}^{2}(L^{2})}\leq C\|(u_{0},B_0)\|_{\dot{B}^{1/2}_{2,1}}.    
\end{align}
It follows from \eqref{A2.1}, \eqref{A2.13} and \eqref{A2.32} that
\begin{align}\label{A2.33}
&\|{t^{1/2}}(\partial_{t}u_{j},\partial_t B_j)\|_{L_{t}^{\infty}(L^{2})}+\|{t^{1/2}}(\nabla\partial_{t}u_{j},\nabla\partial_{t}B_j)\|_{L_{t}^{2}(L^{2})}\nonumber\\
\leq\,& Cd_{j}2^{j/2}\exp\Big\{C\|(u_{0},B_0)\|_{\dot{B}^{1/2}_{2,1}}^{2}\Big\}\|(u_{0},B_0)\|_{\dot{B}^{1/2}_{2,1}} \nonumber\\
\leq\,& Cd_{j}2^{j/2}\|(u_{0},B_0)\|_{\dot{B}^{1/2}_{2,1}}.
\end{align}
Combining \eqref{4.22} and \eqref{A2.33} and carrying out a frequency splitting argument similar to \eqref{A2.14}, we get \eqref{A2.20}.
\end{proof}

\begin{prop}\label{prop4.33}
Under the assumptions of Proposition \ref{P3.1}, we have for any $T\in(0,T^*)$, 
\begin{align}\label{A2.35}
\|t(D_{t} u, D_{t} B)\|_{\widetilde{L}_{t}^{\infty}(\dot{B}^{1/2}_{2,1})}+\|t (\nabla D_{t} u, \nabla D_{t} B)\|_{\widetilde{L}_{t}^{2}(\dot{B}^{1/2}_{2,1})}\leq&\, C\|(u_{0},B_0)\|_{\dot{B}^{1/2}_{2,1}}.
\end{align}

\end{prop}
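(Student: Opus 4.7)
The plan is to derive a localized equation for the material derivative $D_t u_j$ and $D_t B_j$, perform a weighted energy estimate, and conclude by the same frequency recombination trick used in \eqref{A2.14}. Applying $D_t$ to the momentum and magnetic equations in \eqref{A2.2} yields
\begin{align*}
\rho D_t^2 u_j - \Delta D_t u_j + \nabla D_t P_j &= -[\Delta, u\cdot\nabla] u_j - [\nabla, u\cdot\nabla]P_j + D_t(B\cdot\nabla B_j), \\
D_t^2 B_j - \Delta D_t B_j &= -[\Delta, u\cdot\nabla] B_j + D_t(B\cdot\nabla u_j),
\end{align*}
where $[\Delta, u\cdot\nabla]f = -\Delta u\cdot\nabla f - 2\nabla u\cdot\nabla^2 f$ and $[\nabla,u\cdot\nabla]P_j = \nabla u\cdot\nabla P_j$. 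Multiplying by $t$ produces a Stokes/heat system for $(tD_tu_j, tD_tB_j)$ with inhomogeneous forcing and source terms $D_t u_j, D_t B_j$.

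Testing the $(tD_t u_j)$-equation against $tD_t u_j$ and the $(tD_t B_j)$-equation against $tD_t B_j$, using $\partial_t\rho + \dv(\rho u)=0$ and $\dv u = 0$, produces
\begin{align*}
\frac{1}{2}\frac{\rm d}{{\rm d}t}\bigl(\|t\sqrt{\rho}D_t u_j\|_{L^2}^2 + \|tD_tB_j\|_{L^2}^2\bigr) + \|t\nabla D_t u_j\|_{L^2}^2 + \|t\nabla D_t B_j\|_{L^2}^2 \leq t\|(\sqrt{\rho}D_tu_j,D_tB_j)\|_{L^2}^2 + t^2\,\mathcal{R},
\end{align*}
where $\mathcal{R}$ collects the commutator and nonlinear contributions. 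The cancellation $\nabla D_t P_j \perp \mathbb{P}(tD_tu_j)$ is ensured by projecting as in \eqref{A5.38}--\eqref{A5.42}. Each piece in $\mathcal{R}$ is estimated in $L^2$ through the interpolation inequality $\|f\|_{L^\infty}\lesssim \|f\|_{\dot B^{3/2}_{2,1}}$, Sobolev embedding $\dot H^1\hookrightarrow L^6$, and the bounds established in Propositions \ref{P3.1}--\eqref{A2.20}, e.g.
\begin{align*}
\|\Delta u\cdot\nabla u_j\|_{L^2} + \|\nabla u\cdot\nabla^2 u_j\|_{L^2} &\leq C\|\nabla u\|_{\dot B^{1/2}_{2,1}}\|\nabla^2 u_j\|_{L^2},\\
\|D_t(B\cdot\nabla B_j)\|_{L^2} &\leq C\|B\|_{L^\infty}\|\nabla D_t B_j\|_{L^2} + C\|(D_t B, B\cdot\nabla B)\|_{L^2}\|\nabla B_j\|_{L^\infty} + \cdots
\end{align*}
Time-weights are absorbed using $\|t^{1/2}(\nabla u,\nabla B)\|_{\widetilde L^\infty_t(\dot B^{1/2}_{2,1})}$ and $\|t^{1/2}(\partial_t u,\partial_t B)\|_{\widetilde L^2_t(\dot B^{1/2}_{2,1})}$ from \eqref{A2.20}, together with the elliptic identity \eqref{A2.10} replacing $\|\nabla^2 u_j\|_{L^2}+\|\nabla P_j\|_{L^2}$ by $\|(\sqrt\rho\partial_t u_j,\partial_t B_j)\|_{L^2}$. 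After absorbing the $\|t\nabla D_t (u_j,B_j)\|_{L^2}^2$ contributions into the left-hand side and applying Gronwall with the integrable weight $\|u\|_{\dot B^{3/2}_{2,1}}^2 + \|t^{1/4}(\partial_t u,\partial_t B)\|_{L^2}^2$, one obtains
\begin{align*}
\|t(D_tu_j,D_tB_j)\|_{L^\infty_t(L^2)} + \|t\nabla(D_tu_j,D_tB_j)\|_{L^2_t(L^2)} \leq C d_j 2^{-j/2}\|(u_0,B_0)\|_{\dot B^{1/2}_{2,1}}.
\end{align*}
A parallel bound at level $\dot B^{3/2}_{2,1}$ follows by estimating $\|t\nabla(D_tu_j,D_tB_j)\|_{L^2_t(L^2)}$ and using Bernstein's inequality; one obtains $\|t(D_t u_j,D_tB_j)\|_{L^\infty_t(L^2)}\leq Cd_j 2^{-j/2}\|(u_0,B_0)\|_{\dot B^{1/2}_{2,1}}$ and $\|t(\nabla D_t u_j,\nabla D_t B_j)\|_{L^2_t(L^2)}\leq Cd_j 2^{j/2}\|(u_0,B_0)\|_{\dot B^{1/2}_{2,1}}$ on low/high frequencies respectively, which combine via the dyadic summation performed exactly as in \eqref{A2.14} to yield \eqref{A2.35}.

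The main obstacle is the trilinear structure inside $D_t(B\cdot\nabla B_j)$ and $D_t(B\cdot\nabla u_j)$: after expanding $D_t = \partial_t + u\cdot\nabla$, we must control terms such as $u\cdot\nabla(B\cdot\nabla B_j)$, which are cubic in $(u,B)$ and their derivatives. Here one cannot afford the trivial $L^\infty$ bound on $\nabla(B\cdot\nabla B_j)$; instead, the product is decomposed by paraproduct and remainder, with the high-frequency factor absorbed into $\|t\nabla D_t (\cdot)\|_{L^2_t(L^2)}$ via the elliptic identity \eqref{A2.10}, while the low-frequency factor is measured in $\dot B^{1/2}_{2,1}$ or $L^\infty$ using \eqref{A2.1} and \eqref{A2.20}. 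The extra time weight $t$ is exactly what makes the $L^2_t$ integration of these cubic terms finite, mirroring the role played by the weights in Propositions \ref{P5.2}--\ref{P5.3}.
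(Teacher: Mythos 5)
Your overall architecture (apply $D_t$ to the localized system \eqref{A2.2}, run a $t$-weighted energy estimate, then recombine frequencies as in \eqref{A2.14}) matches the paper's, and your low-order bound
$\|t(D_tu_j,D_tB_j)\|_{L^\infty_t(L^2)}+\|t\nabla(D_tu_j,D_tB_j)\|_{L^2_t(L^2)}\lesssim d_j2^{-j/2}\|(u_0,B_0)\|_{\dot B^{1/2}_{2,1}}$
is the right first half (the paper obtains it slightly differently, by multiplying the $\partial_t$-energy inequality \eqref{A2.28} by $t^2$ and then adding back $u\cdot\nabla u_j$, rather than by testing the $D_t$-equation against $tD_tu_j$; either route is workable once the pressure is projected away).

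The genuine gap is in the high-frequency half. The recombination argument of \eqref{A2.14} applied to $tD_tu$ requires, for the sum over $j'\leq j$, the \emph{second-order} estimates
$\|t\nabla D_tu_{j'}\|_{L^\infty_t(L^2)}+\|t\nabla^2 D_tu_{j'}\|_{L^2_t(L^2)}\lesssim d_{j'}2^{j'/2}\|(u_0,B_0)\|_{\dot B^{1/2}_{2,1}}$,
i.e. one full derivative more than what your energy functional controls. Your sketch only asserts $\|t\nabla D_tu_j\|_{L^2_t(L^2)}\lesssim d_j2^{j/2}$, which is both weaker than your own low-order bound for the same quantity and not the quantity the dyadic sum needs; and the claim that the missing estimates "follow by Bernstein's inequality" fails because $u_j$, and hence $D_tu_j$, is \emph{not} spectrally supported in an annulus (the convection and the variable density destroy the frequency localization of the data $\dot\Delta_ju_0$), so Bernstein cannot trade derivatives on $D_tu_j$. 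Testing the $D_t$-differentiated equation against $tD_tu_j$ can never produce $L^\infty_t$ control of $t\nabla D_tu_j$ or any control of $t\nabla^2D_tu_j$; one must test against (essentially) $t^2D_t^2u_j$ and $t^2D_t^2B_j$, which is what the paper does in \eqref{A2.45}--\eqref{A2.69}. That step is the bulk of the actual proof: it forces one to handle the commutators $[\nabla,D_t]D_tu_j$, the pressure $\nabla D_tP_j$ via the elliptic bound \eqref{A2.51} together with the integration by parts \eqref{A2.65}, and to recover $\|t\nabla^2D_tu_j\|_{L^2_t(L^2)}$ from $\|t\sqrt{\rho}D_t^2u_j\|_{L^2_t(L^2)}$ and $\|tf_j\|_{L^2_t(L^2)}$ through \eqref{A2.61}. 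You correctly identify the trilinear terms $D_t(B\cdot\nabla B_j)$, $D_t(B\cdot\nabla u_j)$ as a difficulty, but you attach them to an energy estimate that cannot close the proposition; the second, higher-order energy estimate where they actually must be controlled is missing from your argument.
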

\begin{proof}
We first handle the weighted $L^2$ estimates of $(D_{t} u_j, D_{t} B_j)$. Multiplying \eqref{A2.28} by $t^2$ gives
\begin{align}\label{A2.37}
&\frac{\rm d}{{\rm d}t}\big(\|t\sqrt{\rho}\partial_{t}u_{j}\|_{L^{2}}^{2}+\|t\partial_{t}B_{j}\|_{L^{2}}^{2}\big)+\|t\nabla\partial_{t}u_{j}\|_{L^{2}}^{2}+\|t\nabla\partial_{t}B_{j}\|_{L^{2}}^{2}\nonumber\\
&\quad\leq 2\|{t^{1/2}}(\sqrt{\rho}\partial_{t}u_{j},\partial_t B_j)\|_{L^{2}}^{2}+C\|t^{1/4}(\partial_t u,\partial_t B)\|_{L^{2}}^{2}\|{t^{1/2}}(\nabla u_{j},\nabla B_j)\|_{L^{2}}^{2}\nonumber\\
&\quad\quad+C\big(\|u\|_{\dot{B}^{3/2}_{2,1}}^{2}+\|t^{1/4}(\partial_t u,\partial_t B)\|_{L^{2}}^{2}\big)\|t(\sqrt{\rho}\partial_{t}u_{j},\partial_t B_j)\|_{L^{2}}^{2}.
\end{align}
Applying Gronwall's inequality to \eqref{A2.37} implies that
\begin{align*}
&\|t(\partial_{t}u_{j},\partial_{t}B_j)\|_{L_{t}^{\infty}(L^{2})}^{2}+\|t(\nabla\partial_{t}u_{j},\nabla\partial_{t}B_j)\|_{L_{t}^{2}(L^{2})}^{2}\nonumber
\\
&\quad \leq\exp\bigg\{C\int_{0}^{t}\Big(\|u \|_{\dot{B}^{3/2}_{2,1}}^{2}+\|t^{1/4}(\partial_t u,\partial_t B) \|_{L^{2}}^{2}\Big){{\rm d}t}^{\prime}\bigg\}\nonumber\\
\,&\quad \quad \times\Big(\|{t^{1/2}}(\partial_{t}u_{j},\partial_t B_j)\|_{L_{t}^{2}(L^{2})}^{2}+C\|t^{1/4}(\partial_t u,\partial_t B)\|_{L_{t}^{2}(L^{2})}^{2}\|{t^{1/2}}(\nabla u_{j},\nabla B_j)\|_{L_{t}^{\infty}(L^{2})}^{2}\Big).
\end{align*}
This, together with \eqref{A2.1} and \eqref{A2.32}, leads to
\begin{align}\label{A2.39}
\|t(\partial_tu_j,\partial_t B_j)\|_{L_t^\infty(L^2)}^2+\|t\nabla\partial_t(u_j,B_j)\|_{L_t^2(L^2)}^2\leq Cd_j^22^{-j}\|(u_0,B_0)\|_{\dot{B}^{1/2}_{2,1}}^2.  
\end{align}
In view of \eqref{A2.1} and \eqref{A2.39}, it holds that
\begin{align*}
&\|t(D_{t} u_{j}, D_{t} B_j)\|_{L_{t}^{\infty}(L^{2})}\nonumber\\
 \leq\,&C \|t(\partial_{t}u_{j},\partial_t B_j)\|_{L_{t}^{\infty}(L^{2})}+\|{t^{1/2}}u\|_{L_{t}^{\infty}(\dot{B}^{3/2}_{2,1})}\|{t^{1/2}}(\nabla u_{j},\nabla B_j)\|_{L_{t}^{\infty}(L^{2})} \nonumber\\
\leq\,& Cd_{j}2^{-j/2}\|(u_{0},B_0)\|_{\dot{B}^{1/2}_{2,1}}.
\end{align*}
In addition, it follows from \eqref{A2.10} and standard embeddings that
\begin{align*}
&\|t (\nabla D_{t} u_{j}, \nabla D_{t} B_j)\|_{L_{t}^{2}(L^{2})}\\
\leq\, & C\|t(\nabla\partial_{t}u_{j},\nabla\partial_{t}B_j)\|_{L_{t}^{2}(L^{2})}+C\|u\|_{L_{t}^{2}(L^{\infty})}\|t(\nabla^{2}u_{j}, \nabla^{2} B_j)\|_{L_{t}^{\infty}(L^{2})}  \nonumber\\
\leq\,&C\|t(\nabla\partial_{t}u_{j},\nabla\partial_{t}B_j)\|_{L_{t}^{2}(L^{2})}+C\|u\|_{L_{t}^{2}(\dot{B}^{3/2}_{2,1})}\|t(\partial_{t}u_{j},\partial_t B_j)\|_{L_{t}^{\infty}(L^{2})} \nonumber\\
\leq\,& Cd_{j}2^{-j/2}\|(u_{0},B_0)\|_{\dot{B}^{1/2}_{2,1}}.
\end{align*}

Our next goal is to derive the weighted $L^2$ estimate of $(\nabla D_t u,\nabla D_t B)$. Applying the operator $D_t=\partial_t+u\cdot\nabla$ 
to \eqref{A2.2}$_1$ and \eqref{A2.2}$_2$, we get
\begin{align}\label{A2.45}
&\rho D_t^2u_j-\Delta D_tu_j+\nabla D_t P_j\nonumber\\
&\quad=-\Delta u\cdot\nabla u_j-2\sum_{i=1}^3\partial_iu\cdot\nabla\partial_iu_j+\nabla u\cdot\nabla P_j+D_t(B\cdot\nabla B_j):=f_j,
\end{align}
and
\begin{align}
& D_t^2B_j-\Delta D_tB_j=\,-\Delta u\cdot\nabla B_j-2\sum_{i=1}^3\partial_iu\cdot\nabla\partial_iB_j+D_t(B\cdot\nabla u_j):=h_j.\label{A2.46}
\end{align}
After taking the $L^{2}$ inner product of \eqref{A2.45} and \eqref{A2.46} with $D_{t}^{2}u_{j}$ and $D_t^2 B_j$ respectively, we deduce that
\begin{align*}
&\frac{1}{2}\frac{{\rm d}}{{\rm d}t}\big(\|\nabla D_t u_j(t)\|_{L^2}^2+\|\nabla D_t B_j(t)\|_{L^2}^2         \big)+\|\sqrt{\rho}D_t^2 u_j\|_{L^2}^2+\|D_t^2 B_j\|_{L^2}^2\nonumber\\
&\quad =\,-\int_{\mathbb{R}^3}\nabla D_t u_j\cdot([\nabla, D_t]D_t u_j)\mathrm{d}x
-\int_{\mathbb{R}^3}\nabla D_t B_j\cdot([\nabla, D_t]D_t B_j)\mathrm{d}x\nonumber\\
&\quad\quad \,-\int_{\mathbb{R}^3}\nabla D_t P_j\cdot D_t^2 u_j\mathrm{d}x+\int_{\mathbb{R}^3}f_j\cdot D_t^2 u_j\mathrm{d}x+\int_{\mathbb{R}^3}h_j\cdot D_t^2 B_j\mathrm{d}x,
\end{align*}
which yields
\begin{align}\label{A2.48}
&\frac{1}{2}\frac{{\rm d}}{{\rm d}t}\big(\|t\nabla D_t u_j(t)\|_{L^2}^2+\|t\nabla D_t B_j(t)\|_{L^2}^2         \big)+\|t\sqrt{\rho}D_t^2 u_j\|_{L^2}^2+\|tD_t^2 B_j\|_{L^2}^2\nonumber\\
  \leq\,&\|{t^{1/2}}\nabla D_t u_j\|_{L^2}^2+\|{t^{1/2}}\nabla D_t B_j\|_{L^2}^2-t^2\int_{\mathbb{R}^3}\nabla D_t u_j\cdot([\nabla, D_t]D_t u_j)\mathrm{d}x
\nonumber\\
&  -t^2\int_{\mathbb{R}^3}\nabla D_t B_j\cdot([\nabla, D_t]D_t B_j)\mathrm{d}x-t^2\int_{\mathbb{R}^3}\nabla D_t P_j\cdot D_t^2 u_j\mathrm{d}x+t^2\int_{\mathbb{R}^3}f_j\cdot D_t^2 u_j\mathrm{d}x\nonumber\\
& +t^2\int_{\mathbb{R}^3}h_j\cdot D_t^2 B_j\mathrm{d}x\nonumber\\
\equiv:\,&\|{t^{1/2}}\nabla D_t u_j\|_{L^2}^2+\|{t^{1/2}}\nabla D_t B_j\|_{L^2}^2+\sum_{j=6}^{10} I_j.
\end{align}

We handle the terms $I_j$ ($j=6, \dots, 10)$ one by one. Let us first estimate $\int _{0}^{t}I_9 {{\rm d}t}^{\prime }$. It follows from \eqref{A2.2} that
\begin{align*}
&\|{t^{1/2}}(\nabla^{2}u_{j},\nabla^2 B_j,\nabla P_{j})\|_{L_{t}^{2}(L^{6})}\nonumber\\
\leq& C\big(\|{t^{1/2}}\nabla\partial_{t}u_{j}\|_{L_{t}^{2}(L^{2})}+\|{t^{1/2}}u\cdot\nabla u_{j}\|_{L_{t}^{2}(L^{6})}+\|{t^{1/2}}B\cdot\nabla u_j\|_{L_{t}^{2}(L^{6})}\big) \nonumber\\
\,&+C\big(\|{t^{1/2}}\nabla\partial_{t}B_{j}\|_{L_{t}^{2}(L^{2})}+\|{t^{1/2}}u\cdot\nabla B_{j}\|_{L_{t}^{2}(L^{6})}+\|{t^{1/2}}B\cdot\nabla B_j\|_{L_{t}^{2}(L^{6})}\big)            \nonumber\\
\leq\,& C\big(\|{t^{1/2}}(\nabla\partial_{t}u_{j},\nabla\partial_{t}B_j)\|_{L_{t}^{2}(L^{2})}+\|(u,B)\|_{L_{t}^{2}(L^{\infty})}\|{t^{1/2}}(\nabla^{2}u_{j}, \nabla^{2} B_j)\|_{L_{t}^{\infty}(L^{2})}\big)\nonumber\\
\leq\,& C\Big(\|{t^{1/2}}(\nabla\partial_{t}u_{j},\nabla\partial_{t}B_j)\|_{L_{t}^{2}(L^{2})}+\|(u,B)\|_{L_{t}^{\infty}(\dot{B}^{3/2}_{2,1})}\|{t^{1/2}}(\nabla^{2}u_{j}, \nabla^{2} B_j)\|_{L_{t}^{\infty}(L^{2})}\Big),
\end{align*}
which, together with \eqref{A2.1} and \eqref{A2.33}, ensures that
\begin{align}\label{A2.50}
\|{t^{1/2}}(\nabla^{2}u_{j},\nabla^2 B_j,\nabla P_{j})\|_{L_{t}^{2}(L^{6})}\leq Cd_{j}2^{j/2}\|(u_{0},B_0)\|_{\dot{B}^{1/2}_{2,1}}.    
\end{align}
Similar to the proof of \cite[Proposition 2.3]{Zp-Adv-2020}, we can handle the estimate of $\|\nabla D_t P_j\|_{L^2}$ as follows:
\begin{align}\label{A2.51}
\|\nabla D_{t} P_{j}\|_{L^{2}}\leq C\big(\|\nabla\mathrm{Tr}(\nabla u\cdot\nabla u_{j})\big\|_{L^{2}}+\|\rho D_{t}^{2}u_{j}\|_{L^{2}}+\|f_{j}\|_{L^{2}}\big).    
\end{align}
Then, by virtue of \eqref{A2.20} and \eqref{A2.50}, we have
\begin{align}\label{A2.52}
&\|t\nabla\mathrm{Tr}(\nabla u\cdot\nabla u_{j})\|_{L_{t}^{2}(L^{2})}+\|t\Delta u\cdot\nabla u_{j}\|_{L_{t}^{2}(L^{2})}+\sum_{i=1}^{3}\|t\partial_{i}u\cdot\nabla\partial_{i}u_{j}\|_{L_{t}^{2}(L^{2})} \nonumber\\
\leq\,&\|{t^{1/2}}\nabla^{2}u\|_{L_{t}^{2}(L^{3})}\|{t^{1/2}}\nabla u_{j}\|_{L_{t}^{\infty}(L^{6})}+\|{t^{1/2}}\nabla u\|_{L_{t}^{\infty}(L^{3})}\|{t^{1/2}}\nabla^{2}u_{j}\|_{L_{t}^{2}(L^{6})} \nonumber\\
\leq\,& C\Big(\|{t^{1/2}}\nabla^{2}u\|_{L_{t}^{2}(L^{3})}\|{t^{1/2}}\nabla^{2}u_{j}\|_{L_{t}^{\infty}(L^{2})}+\|{t^{1/2}}\nabla u\|_{L_{t}^{\infty}(\dot{B}^{1/2}_{2,1})}\|{t^{1/2}}\nabla^{2}u_{j}\|_{L_{t}^{2}(L^{6})}\Big)\nonumber\\
\leq\,&Cd_j2^{j/2}\|(u_0,B_0)\|_{\dot{B}^{1/2}_{2,1}},
\end{align}
where we have used the Stokes estimate
\begin{align*}
\|{t^{1/2}}\nabla^{2}u\|_{L_{t}^{2}(L^{3})}&\leq C\|t^{1/2}\partial_t u\|_{L_{t}^{2}(L^3)}+\|u\|_{L^2_t(L^{\infty})}\|t^{1/2}u\|_{L^{\infty}_t(L^3)}\\
&\leq C\|t^{1/2}\partial_t u\|_{L_{t}^{2}(\dot{B}^{1/2}_{2,1})}+C \|u\|_{L_{t}^{2}(\dot{B}^{3/2}_{2,1})}\|t^{1/2}u\|_{L_{t}^{\infty}(\dot{B}^{3/2}_{2,1})}\\
&\leq C \|(u_0,B_0)\|_{\dot{B}^{1/2}_{2,1}},
\end{align*}
derived from  \eqref{A2.1} and \eqref{A2.20}.

Furthermore, we deduce from \eqref{A2.20} and \eqref{A2.50} that
\begin{align}\label{A2.53}
\|t\nabla u\cdot\nabla P_{j}\|_{L_{t}^{2}(L^{2})}\leq\,&\|{t^{1/2}}\nabla u\|_{L_{t}^{\infty}(L^{3})}\|{t^{1/2}}\nabla P_{j}\|_{L_{t}^{2}(L^{6})}\nonumber\\
\leq\,&Cd_{j}2^{j/2}\|{t^{1/2}}\nabla u\|_{L_{t}^{\infty}(\dot{B}^{1/2}_{2,1})}\|(u_{0},B_0)\|_{\dot{B}^{1/2}_{2,1}}\nonumber\\
\leq\,&Cd_{j}2^{j/2}\|(u_{0},B_0)\|_{\dot{B}^{1/2}_{2,1}}.
\end{align}
Similar to \eqref{A2.52} and \eqref{A2.53}, we can also obtain
\begin{align}\label{A2.54}
&\|tD_t(B\cdot\nabla B_j)\|_{{L_t^2}(L^2)}\nonumber\\
\leq\,& C\|{t^{1/2}}\partial_t B\|_{{L_t^2}(L^3)}\|{t^{1/2}}\nabla B_j\|_{{L_t^{\infty}}(L^6)} +C\|{t^{1/2}}u\|_{{L_t^{\infty}}(L^{\infty})}\|\nabla B\|_{L^2_t(L^3)}\|{t^{1/2}}\nabla B_j\|_{{L_t^{\infty}}(L^6)}\nonumber\\
 &+C\|{t^{1/2}}u\|_{{L_t^{\infty}}(L^{\infty})}\| B\|_{L^2_t(L^{\infty})}\|{t^{1/2}}\nabla^2 B_j\|_{{L_t^{\infty}}(L^2)}+C\|{t^{1/2}}B\|_{{L_t^{\infty}}(L^{\infty})}\|{t^{1/2}}\nabla \partial_t B_j\|_{{L_t^{2}}(L^2)}\nonumber\\
\leq\,& Cd_{j}2^{j/2}\|(u_{0},B_0)\|_{\dot{B}^{1/2}_{2,1}}.
\end{align}
Combining the definition of $f_j$ in \eqref{A2.45} and \eqref{A2.52}--\eqref{A2.54},
we get
\begin{align}\label{A2.55}
 \|tf_j\|_{L_t^2(L^2)}\leq Cd_j2^{\frac{i}{2}}\|(u_0,B_0)\|_{\dot{B}^{1/2}_{2,1}}, 
\end{align}
which leads to
\begin{align}\label{A2.56}
\bigg|\int _{0}^{t}I_9 {{\rm d}t}^{\prime }\bigg| \leq   
Cd_{j}^{2}2^{j}\|(u_{0},B_0)\|_{\dot{B}^{1/2}_{2,1}}^{2}+\frac{1}{6}\|t\sqrt{\rho}D_{t}^{2}u_{j}\|_{L_{t}^{2}(L^{2})}^{2}.
\end{align}

Next, we turn to estimate $\int _{0}^{t}I_{10} {{\rm d}t}^{\prime }$.
Similar to \eqref{A2.52} and \eqref{A2.54}, one has
\begin{align}\label{A2.57}
&\|t\Delta u\cdot\nabla B_{j}\|_{L_{t}^{2}(L^{2})}+\sum_{i=1}^{3}\|t\partial_{i}u\cdot\nabla\partial_{i}B_{j}\|_{L_{t}^{2}(L^{2})} \nonumber\\
\leq\,&\|{t^{1/2}}\nabla^{2}u\|_{L_{t}^{2}(L^{3})}\|{t^{1/2}}\nabla B_{j}\|_{L_{t}^{\infty}(L^{6})}+\|{t^{1/2}}\nabla u\|_{L_{t}^{\infty}(L^{3})}\|{t^{1/2}}\nabla^{2}B_{j}\|_{L_{t}^{2}(L^{6})} \nonumber\\
\leq\,& C\Big(\|{t^{1/2}}\nabla^{2}u\|_{L_{t}^{2}(L^{3})}\|{t^{1/2}}\nabla^{2}u_{j}\|_{L_{t}^{\infty}(L^{2})}+\|{t^{1/2}}\nabla u\|_{L_{t}^{\infty}(\dot{B}^{1/2}_{2,1})}\|{t^{1/2}}\nabla^{2}B_{j}\|_{L_{t}^{2}(L^{6})}\Big)\nonumber\\
\leq\,&Cd_j2^{j/2}\|(u_0,B_0)\|_{\dot{B}^{1/2}_{2,1}},
\end{align}
and
\begin{align}\label{A2.58}
&\|tD_t(B\cdot\nabla u_j)\|_{{L_t^2}(L^2)}\nonumber\\
\leq\,& C\|{t^{1/2}}\partial_t B\|_{{L_t^2}(L^3)}\|{t^{1/2}}\nabla u_j\|_{{L_t^{\infty}}(L^6)} +C\|{t^{1/2}}u\|_{{L_t^{\infty}}(L^{\infty})}\|\nabla B\|_{L^2_t(L^3)}\|{t^{1/2}}\nabla u_j\|_{{L_t^{\infty}}(L^6)}\nonumber\\
\,&+C\|{t^{1/2}}u\|_{{L_t^{\infty}}(L^{\infty})}\| B\|_{L^2_t(L^{\infty})}\|{t^{1/2}}\nabla^2 u_j\|_{{L_t^{\infty}}(L^2)}+C\|{t^{1/2}}B\|_{{L_t^{\infty}}(L^{\infty})}\|{t^{1/2}}\nabla \partial_t u_j\|_{{L_t^{\infty}}(L^2)}\nonumber\\
\leq\,&Cd_{j}2^{j/2}\|(u_{0},B_0)\|_{\dot{B}^{1/2}_{2,1}}.
\end{align}
With \eqref{A2.57} and \eqref{A2.58} in hand, we deduce from the definition
of $h_j$ in \eqref{A2.46} that
\begin{align*}
 \|t h_j\|_{L_t^2(L^2)}\leq Cd_j2^{{i}/{2}}\|(u_0,B_0)\|_{\dot{B}^{1/2}_{2,1}}, 
\end{align*}
which gives
\begin{align}\label{A2.60}
\bigg|\int _{0}^{t}I_{10} {{\rm d}t}^{\prime }\bigg| \leq   
Cd_{j}^{2}2^{j}\|(u_{0},B_0)\|_{\dot{B}^{1/2}_{2,1}}^{2}+\frac{1}{4}\|tD_{t}^{2}B_{j}\|_{L_{t}^{2}(L^{2})}^{2}.    
\end{align}
According to \eqref{A2.51}, \eqref{A2.52} and \eqref{A2.55}, we have
\begin{align}\label{A2.61}
&\|t\nabla^2 D_{t} u_{j}\|_{L_t^{2}(L^2)}+\|t\nabla D_{t} P_{j}\|_{L_t^{2}(L^2)}\nonumber\\
\leq\,&C\big(\|t\nabla\mathrm{Tr}(\nabla u\cdot\nabla u_{j})\big\|_{_{L_t^{2}(L^2)}}+\|t\rho D_{t}^{2}u_{j}\|_{_{L_t^{2}(L^2)}}+\|tf_{j}\|_{_{L_t^{2}(L^2)}}\big)\nonumber\\
\leq\,&Cd_j2^{{i}/{2}}\|(u_0,B_0)\|_{\dot{B}^{1/2}_{2,1}}+C\|t\sqrt{\rho} D_{t}^{2}u_{j}\|_{_{L_t^{2}(L^2)}}. 
\end{align}

Notice that
\begin{align*}
|I_6|\leq C\|\nabla u\|_{L^3}\|t\nabla D_tu_j\|_{L^3}^2\leq C\|\nabla u\|_{L^3}\|t\nabla D_tu_j\|_{L^2}\|t\nabla^2 D_tu_j\|_{L^2}.
\end{align*}
As a consequence, for $\int _{0}^{t}I_{6} {{\rm d}t}^{\prime }$, it holds
\begin{align}\label{A2.63}
\bigg|\int _{0}^{t}I_{6} {{\rm d}t}^{\prime }\bigg| 
\leq\,&   
Cd_{j}^{2}2^{j}\|(u_{0},B_0)\|_{\dot{B}^{1/2}_{2,1}}^2+C\int_{0}^{t}\| u\|_{\dot{B}^{3/2}_{2,1}}^2\|t\nabla D_{t}u_{j}\|_{L^{2}}^{2}{{\rm d}t}^{\prime}+\frac{1}{6}\|t\sqrt{\rho}D_{t}^{2}u_{j}\|_{L_{t}^{2}(L^{2})}^{2}.    
\end{align}

A similar computation yields
\begin{align}\label{A2.64}
\bigg|\int _{0}^{t}I_{7} {{\rm d}t}^{\prime }\bigg| 
\leq\,&   
Cd_{j}^{2}2^{j}\|(u_{0},B_0)\|_{\dot{B}^{1/2}_{2,1}}^2+C\int_{0}^{t}\| u\|_{\dot{B}^{3/2}_{2,1}}^2\|t\nabla D_{t}B_{j}\|_{L^{2}}^{2}{{\rm d}t}^{\prime}+\frac{1}{4}\|tD_{t}^{2}B_{j}\|_{L_{t}^{2}(L^{2})}^{2}.    
\end{align}

Finally, we deal with the remaining term  $\int _{0}^{t}I_{8} {{\rm d}t}^{\prime }$.
Using integration by parts, we obtain
\begin{align}\label{A2.65}
&\int_{\mathbb{R}^{3}}\nabla D_{t} P_{j}\cdot D_{t}^{2}u_{j}\, {\rm d}x\nonumber\\
=\,&\int_{\mathbb{R}^{3}}\nabla D_{t} P_{j}\cdot u\cdot\nabla D_{t}u_{j}\, {\rm d}x-\int_{\mathbb{R}^3}D_{t} P_{j}\cdot \partial_{t} \mathrm{div} D_{t}u_{j}\, {\rm d}x  \nonumber\\
=\,&\int_{\mathbb{R}^3}\nabla D_{t} P_{j}\cdot\partial_{t}u\cdot\nabla u_{j}\, {\rm d}x+\int_{\mathbb{R}^3}\nabla D_{t} P_{j}\cdot\partial_{t}u_{j}\cdot\nabla u\, {\rm d}x +\int_{\mathbb{R}^{3}}\nabla D_{t} P_{j}\cdot u\cdot\nabla D_{t}u_{j}\, {\rm d}x.
\end{align}
It follows from \eqref{A2.20}, \eqref{A2.33}, \eqref{A2.61} and
\eqref{A2.65} that
\begin{align}\label{A2.66}
\bigg|\int _{0}^{t}I_{8} {{\rm d}t}^{\prime }\bigg|
\leq\,&\|t\nabla D_{t} P_{j}\|_{L_{t}^{2}(L^{2})}\|{t^{1/2}}\partial_t u\|_{L_{t}^{2}(\dot{B}^{1/2}_{2,1})}\|{t^{1/2}}\nabla^{2}u_{j}\|_{L_{t}^{\infty}(L^{2})}\nonumber\\
&+\|t\nabla D_{t} P_{j}\|_{L_{t}^{2}(L^{2})}\|{t^{1/2}}\nabla\partial_{t}u_{j}\|_{L_{t}^{2}(L^{2})}\|{t^{1/2}}\nabla u\|_{L_{t}^{\infty}({\dot{B}^{1/2}_{2,1}})}\nonumber\\
&+\int_{0}^{t}\|u\|_{L^{\infty}}\|t\nabla D_{t}u_{j}\|_{L^{2}}\|t\nabla D_{t} P_{j}\|_{L^{2}} {{\rm d}t}^{\prime}\nonumber\\
\leq\,&C\int_{0}^{t}\|u\|_{L^{\infty}}^{2}\|t\nabla D_{t}u_{j}\|_{L^{2}}^{2} {{\rm d}t}^{\prime}+Cd_{j}^{2}2^{j}\|(u_{0},B_0)\|_{\dot{B}^{1/2}_{2,1}}^{2}+\frac{1}{6}\|t\sqrt{\rho}D_{t}^{2}u_{j}\|_{L_{t}^{2}(L^{2})}^{2}.
\end{align}
Substituting \eqref{A2.56}, \eqref{A2.60}, \eqref{A2.63}, \eqref{A2.64} and \eqref{A2.66} into \eqref{A2.48}, and then integrating the resulting inequality  over $[0,t]$, we obtain
\begin{align}\label{A2.67}
\|t\nabla D_{t}&(u_{j},B_j)\|_{L_{t}^{\infty}(L^{2})}^{2}+\|t(\sqrt{\rho}D_{t}^{2}u_{j},D_t^2B_j)\|_{L_{t}^{2}(L^{2})}^{2}\nonumber\\
\leq\, & Cd_{j}^{2}2^{j}\|(u_{0},B_0)\|_{\dot{B}^{1/2}_{2,1}}^{2}+\|{t^{1/2}} (\nabla D_{t} u_{j}, \nabla D_{t} B_j)\|_{L_{t}^{2}(L^{2})}^{2} \nonumber\\
&+C\int_{0}^{t}\|(u,B) \|_{\dot{B}^{3/2}_{2,1}}^{2}\|t\nabla (D_{t}u_{j},D_t B_j) \|_{L^{2}}^{2} \mathrm{d}t^{\prime}.
\end{align}
By the estimates \eqref{A2.1}, \eqref{4.22} and \eqref{A2.33}, the second term on the right-hand side of \eqref{A2.67} is bounded by
\begin{align*}
&\,\|{t^{1/2}}(\nabla D_{t}u_{j},\nabla D_t B_j)\|_{L_{t}^{2}(L^{2})}\nonumber\\
\leq&\,\|{t^{1/2}}(\nabla\partial_{t}u_{j},\nabla\partial_{t}B_j)\|_{L_{t}^{2}(L^{2})}+\|{t^{1/2}}u\cdot(\nabla u_{j},\nabla B_j)\|_{L_{t}^{2}(\dot{H}^1)} \nonumber\\
\leq&\,C\|{t^{1/2}}(\nabla\partial_{t}u_{j},\nabla\partial_{t}B_j)\|_{L_{t}^{2}(L^{2})}+C\|u\|_{L_{t}^{2}(\dot{B}^{3/2}_{2,1})}\|{t^{1/2}}(\nabla u_{j},\nabla B_j)\|_{L_{t}^{\infty}(\dot{H}^1)} \nonumber\\
\leq&\,Cd_{j}2^{{i}/{2}}\|(u_{0},B_0)\|_{\dot{B}^{1/2}_{2,1}},
\end{align*}
which, together with \eqref{A2.1}, \eqref{A2.67} and Gronwall's inequality, leads to
\begin{align}\label{A2.69}
&  \|t (\nabla D_{t} u_{j}, \nabla D_{t} B_j)\|_{L_{t}^{\infty}(L^{2})}^{2}+\|t(D_{t}^{2}u_{j},D_{t}^{2}B_j)\|_{L_{t}^{2}(L^{2})}^{2}\nonumber\\
\leq \,& Cd_{j}^{2}2^{j}\|(u_{0},B_0)\|_{\dot{B}^{1/2}_{2,1}}^{2}\exp\bigg\{C\|(u,B)\|_{L_{t}^{2}(\dot{B}^{3/2}_{2,1})}^{2}\bigg\} \nonumber\\
\leq\, & Cd_{j}^{2}2^{j}\|(u_{0},B_0)\|_{\dot{B}^{1/2}_{2,1}}^{2}.
\end{align}
From \eqref{A2.61} and \eqref{A2.69}, one also obtains
\begin{align}\label{A2.70}
\|t\nabla^{2}(D_{t} u_{j}, D_{t} B_j)\|_{L_{t}^{2}(L^{2})}\leq Cd_{j}2^{j/2}\|(u_{0},B_0)\|_{\dot{B}^{1/2}_{2,1}}. 
\end{align}
With the help of \eqref{A2.35}, \eqref{A2.69} and \eqref{A2.70}, we arrive at the desired estimate \eqref{A2.35} by arguing similarly to that in \eqref{A2.14}.
\end{proof}

Finally, we give some qualitative regularity estimates based on the uniform bounds obtained in Propositions \ref{P3.1}--\ref{prop4.33}.
\begin{prop}\label{P3.6}
Under the assumptions of Proposition \ref{P3.1}, we have for any $T\in(0,T^*)$,
\begin{align}
\|\nabla u\|_{L^4_T(L^2)}&\leq C \|(u_{0},B_0)\|_{\dot{B}^{1/2}_{2,1}},\label{uL4L2:thm2}\\
\|{t^{1/2}}(\nabla u,\nabla B)\|_{\widetilde{L}_{T}^{\infty}(\dot{B}^{{1}/{2}}_{2,1})}&\leq C\|(u_0,B_0)\|_{\dot{B}^{1/2}_{2,1}},\label{4.52}\\
\|t^{1/4}(\nabla u,\nabla B)\|_{{L_T^2}(L^{\infty})}&\leq C\|(u_{0},B_0)\|_{\dot{B}^{{1/2}}_{2,1}}, \label{B3.7211} \\ \label{B3.72} 
\|(\nabla u,\nabla B)\|_{L_T^{1}(L^{\infty})}&\leq  C\|(u_{0},B_0)\|_{\dot{B}^{1/2}_{2,1}},\\\label{NJK4.74}
\|t^{1/4} (\nabla^2 u, \nabla^2 B, \nabla P, \partial_t u,\partial_t B)\|_{L^{2}_T(L^2)} &\leq  C\|(u_{0},B_0)\|_{\dot{B}^{1/2}_{2,1}},\\\label{NJK4.75}
\|t^{3/4}(\partial_{t}u,\partial_t B,\nabla^{2}u,\nabla^2 B)\|_{L_{T}^{\infty}(L^{2})}&\leq  C\|(u_{0},B_0)\|_{\dot{B}^{1/2}_{2,1}},\\\label{NJK4.76}
\|t^{1/4}(\nabla u,\nabla B)\|_{L^{\infty}_T(L^2)}+\|t^{3/4}(\nabla^{2}u,\nabla^2B , \nabla P)\|_{L_{T}^{2}(L^{6})}&\leq  C\|(u_{0},B_0)\|_{\dot{B}^{1/2}_{2,1}},\\ \label{NJK4.77} 
\|t^{3/4}(D_{t}u,D_{t}B)\|_{L_{T}^{\infty}(L^{2})}&\leq  C\|(u_{0},B_0)\|_{\dot{B}^{1/2}_{2,1}},\\
{\| t^{3/4}(\nabla D_t u,\nabla  D_t B)\|_{L_T^2(L^2)}}&\leq C\|(u_0,B_0)\|_{\dot{B}^{1/2}_{2,1}}.\label{34}
\end{align}
\end{prop}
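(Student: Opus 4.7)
The estimates \eqref{uL4L2:thm2}--\eqref{34} are qualitative consequences of the Besov-in-time bounds already obtained in Propositions \ref{P3.1}--\ref{prop4.33} rather than fresh energy estimates. I would combine three ingredients: (i) real interpolation between the Chemin--Lerner spaces $\widetilde{L}^q_T(\dot{B}^s_{2,1})$ already controlled; (ii) the three-dimensional embeddings $\dot{B}^0_{2,1}\hookrightarrow L^2$, $\dot{B}^{1/2}_{2,1}\hookrightarrow L^3$, $\dot{B}^{3/2}_{2,1}\hookrightarrow L^\infty$, together with the Gagliardo--Nirenberg-type inequality $\|\nabla f\|_{L^\infty}\lesssim\|\nabla^2 f\|_{L^2}^{1/2}\|\nabla^2 f\|_{L^6}^{1/2}$; and (iii) the localized Stokes estimates \eqref{A2.10}, \eqref{A2.50}, \eqref{A2.61}, \eqref{A2.69}, \eqref{A2.70} for $(u_j,B_j,\nabla P_j)$, resummed in $j$ via the $\ell^1$-summability of the dyadic coefficients $(d_j)$ that already produced \eqref{A2.20} and \eqref{A2.35}.

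I would organize the argument in three blocks. First, the $L^4_T(L^2)$ bound \eqref{uL4L2:thm2} follows by interpolating $\nabla(u,B)$ between $\widetilde{L}^\infty_T(\dot{B}^{-1/2}_{2,1})$ and $\widetilde{L}^2_T(\dot{B}^{1/2}_{2,1})$ (both controlled by Proposition \ref{P3.1}) to land in $\widetilde{L}^4_T(\dot{B}^0_{2,1})\hookrightarrow L^4_T(L^2)$, while \eqref{4.52} is nothing but the first term of \eqref{A2.20} restated. Second, for the weighted $L^2$- and $L^6$-type estimates \eqref{NJK4.74}--\eqref{NJK4.77} and \eqref{34}, I would return to the Stokes system \eqref{A2.42} for each dyadic block, insert the localized bounds \eqref{A2.10}, \eqref{A2.50}, \eqref{A2.69}, \eqref{A2.70}, multiply by the relevant power $t^{1/4}$ or $t^{3/4}$, and sum in $j$; the material-derivative pieces \eqref{NJK4.77}--\eqref{34} are then obtained in the same way from Proposition \ref{prop4.33}, using $D_t=\partial_t+u\cdot\nabla$ together with the embedding $\dot{B}^{3/2}_{2,1}\hookrightarrow L^\infty$ to absorb the convective term. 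Third, the Lipschitz-in-space bounds \eqref{B3.7211} and \eqref{B3.72} are produced by applying the Gagliardo--Nirenberg inequality above and pairing $t^{1/4}\nabla^2(u,B)$ with $t^{3/4}\nabla^2(u,B)$ via H\"older's inequality on the time variable.

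The main obstacle I expect lies in the \emph{global-in-time} $L^1_T(L^\infty)$ control \eqref{B3.72} of $\nabla(u,B)$ up to $T=\infty$. The natural H\"older split pairs a weighted $L^2_T$-norm of $\nabla u$ with the factor $\|t^{-1/2}\|_{L^2}$, which diverges both at $0$ and at $\infty$. One is forced either to upgrade the intermediate bound on $t^{1/4}\nabla(u,B)$ from $L^2_T(L^\infty)$ to the Lorentz space $L^{2,1}_T(L^\infty)$ and pair it with $t^{-1/4}\in L^{2,\infty}(\mathbb{R}_+)$, in the spirit of \cite{DW-CMMP-2023,Ggl-JFA-2014}, or to split $\int_0^\infty=\int_0^1+\int_1^\infty$ and use the large-time decay encoded in the $t(\nabla^2 u,\nabla^2 B)$ and $t(D_t u,D_t B)$ bounds of Proposition \ref{prop4.33} to handle the tail. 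All remaining estimates in \eqref{uL4L2:thm2}--\eqref{34} are routine Lebesgue/Besov interpolation once the Stokes and material-derivative blocks are in place.
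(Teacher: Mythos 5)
Your plan matches the paper's proof in all essentials: \eqref{uL4L2:thm2} and \eqref{4.52} come from interpolating the Chemin--Lerner bounds of Propositions \ref{P3.1} and \ref{prop4.33}, the weighted $L^2$/$L^6$ estimates come from resumming the localized Stokes bounds (for \eqref{NJK4.75} the paper instead runs a direct $t^{3/2}$-weighted energy estimate on $(\partial_t u,\partial_t B)$ with Gronwall, but your dyadic interpolation of \eqref{A2.33} and \eqref{A2.39} yields the same conclusion), and \eqref{B3.7211}--\eqref{B3.72} come from the Gagliardo--Nirenberg pairing of $\nabla^2(u,B)$ in $L^2$ and $L^6$. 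You also correctly isolated the crux of the global-in-time bound \eqref{B3.72}, and the paper implements precisely your first remedy: it upgrades $t^{1/2}\nabla^2(u,B)$ to $L^{4,1}_T(L^2)$ and $t\,\nabla^2(u,B)$ to $L^{4,1}_T(L^6)$ by real interpolation of the dyadic $L^2_t$ and $L^\infty_t$ bounds, and then pairs the resulting product with $t^{-3/4}\in L^{4/3,\infty}(\mathbb{R}_+)$ via Lemma \ref{D A.9}.
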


\begin{proof}
First, thanks to the interpolation property in Lemma \ref{L2.7}, one has
\begin{align*}
 {\|(\nabla u,\nabla B)\|_{L^4_t(L^2)}}\leq& \, C \|(u,B)\|_{{\widetilde L}_t^{\infty} (\dot{B}_{2,1}^{1/2})}\|(u,B)\|_{\widetilde{L}_t^{\infty} (\dot{B}_{2,1}^{3/2})},
\end{align*}
and
\begin{align*}
 {\|t^{1/4}(\nabla u, \nabla B)\|_{\widetilde{L}^{\infty}_t(\dot{B}^{0}_{2,1})}}\leq& \, C \|(u,B)\|_{{\widetilde L}_t^{\infty} (\dot{B}_{2,1}^{1/2})}\|t^{1/2}(\nabla u,\nabla B)\|_{\widetilde{L}_t^{\infty} (\dot{B}_{2,1}^{3/2})}.
\end{align*}
This, as well as \eqref{A2.1} and \eqref{A2.20}, yields \eqref{uL4L2:thm2} and \eqref{4.52}. 

Next, it follows from \eqref{4.22} that
\begin{align}
\|t^{1/2}(\nabla u_j,\nabla B_j)\|_{L^2_T(L^6)}\leq C d_j 2^{-j/2} \|(u_0,B_0)\|_{\dot{B}^{1/2}_{2,1}}.\label{mmmmooof}
\end{align}
Combining \eqref{A2.50} and \eqref{mmmmooof}, one gains 
\begin{align*}
\|t^{1/2}(\nabla u,\nabla B)\|_{L^2_t(\dot{B}^{1/2}_{6,1})}\leq C \|(u_0,B_0)\|_{\dot{B}^{1/2}_{2,1}}.
\end{align*}
This ensures \eqref{B3.7211} due to the embedding $\dot{B}^{1/2}_{6,1}\hookrightarrow L^{\infty}$.

In order to prove the $L^1(0,T;L^{\infty})$ bound of $(\nabla u,\nabla B)$, we recall that $u_j$ and $B_j$ satisfy the elliptic system \eqref{A2.42}. Applying \eqref{A2.42} and the standard regularity theory of Stokes operator  to \eqref{A2.42} implies that
\begin{align}\label{tttttt}
&\|(t\nabla^{2}u_{j},t\nabla^2 B_j,t\nabla P_{j})\|_{L_t^{\infty}(L^{6})}\nonumber\\
\leq \,& C\|t\sqrt{\rho}\partial_{t}u_{j}\|_{L_t^{\infty}(L^{6})}+C\|tB\cdot\nabla B_j\|_{L_t^{\infty}(L^{6})}+\,C\|t D_{t}B_{j}\|_{L_t^{\infty}(L^{6})}+C\|tB\cdot\nabla u_j\|_{L_t^{\infty}(L^{6})}\nonumber\\
\leq\,& C\|t(\nabla\partial_t u_j,\nabla\partial_t B_j)\|_{L_t^{\infty}(L^{2})}+C\|(u,B)\|_{L_t^{2}(\dot{B}^{3/2}_{2,1})}\|t \nabla^2(u_j,B_j)\|_{L_t^{\infty}(L^2)},
\end{align}
and
\begin{align}\label{tttttt1}
&\|(t\nabla^{2}u_{j},t\nabla^2 B_j,t\nabla P_{j})\|_{L_t^{2}(L^{6})}\nonumber\\
\leq \,& C\|t\sqrt{\rho}\partial_{t}u_{j}\|_{L_t^{2}(L^{6})}+C\|tB\cdot\nabla B_j\|_{L_t^{2}(L^{6})}+\,C\|t\partial_{t}B_{j}\|_{L_t^{2}(L^{6})}+C\|tB\cdot\nabla u_j\|_{L_t^{2}(L^{6})}\nonumber\\
\leq\,& C\|t(\nabla\partial_t u_j,\nabla\partial_t B_j)\|_{L_t^{2}(L^{2})}+C\|(u,B)\|_{L_t^{2}(\dot{B}^{1/2}_{2,1})}\|t\nabla^2(u_j,B_j)\|_{L_t^{2}(L^2)}.
\end{align}
To control the terms involving in \eqref{tttttt} and \eqref{tttttt1}, we recall that, the estimates in \eqref{A2.10}, \eqref{A2.33}, \eqref{A2.35} and \eqref{A2.69} read
\begin{align}
&\|{t^{1/2}}(\nabla^{2}u_{j}, \nabla^{2} B_j)\|_{L_t^{2}(L^2)}+\|t\nabla(\partial_{t}u_{j},\partial_{t}B_{j})\|_{L_t^{2}(L^2)}\leq\, C d_{j}2^{-j/2}\|(u_{0},B_0)\|_{\dot{B}_{2,1}^{1/2}}, \label{4.80}
\end{align}
and
\begin{align}
&\|(\nabla^{2}u_{j}, \nabla^{2} B_j)\|_{L_t^{2}(L^2)}+\|{t^{1/2}}(\nabla^{2}u_{j}, \nabla^{2} B_j)\|_{L_t^{\infty}(L^{2})}+\|t(\nabla\partial_{t}u_{j},\nabla\partial_t B_j)\|_{L_t^{\infty}(L^{2})} \nonumber\\
&\quad \leq\, C d_{j}2^{j/2}\|(u_{0},B_0)\|_{\dot{B}_{2,1}^{1/2}}.\label{4.81}
\end{align}
In addition, one also has
\begin{align}
&\quad\, \|{t}(\nabla^2 u_j,\nabla^2 B_j)\|_{L_{t}^{\infty}(L^{2})} \nonumber\\
& \leq C\|{t}(D_{t}u_j,D_t B_j)\|_{L_{t}^{\infty}(L^{2})}+C\|(u,B)\|_{L_{t}^{\infty}(L^{3})}\|{t}(\nabla u,\nabla B)\|_{L_{t}^{\infty}(L^{6})}  \nonumber\\
&\leq C\|{t}(D_{t}u_j,D_t B_j)\|_{L_{t}^{2}(\dot{B}^{1/2}_{2,1})}+C\|u\|_{L_{t}^{\infty}(\dot{B}^{1/2}_{2,1})}\|{t^{1/2}}(\nabla^2 u_j,\nabla^2 B_j)\|_{L_{t}^{\infty}(L^2)}. \nonumber
\end{align}
As $\|u\|_{L_{t}^{\infty}(\dot{B}^{1/2}_{2,1})}$ is sufficiently small, one obtains
\begin{align}
\|{t}(\nabla^2 u_j,\nabla^2 B_j)\|_{L_{t}^{\infty}(L^{2})} \leq\, C d_{j}2^{-j/2}\|(u_{0},B_0)\|_{\dot{B}_{2,1}^{1/2}}.\label{4.83}
\end{align}
The combination of \eqref{tttttt}--\eqref{4.83} gives rise to
\begin{align}
\|(t\nabla^{2}u_{j},t\nabla^2 B_j,t\nabla P_{j})\|_{L_t^{\infty}(L^{6})}&\leq \, d_j 2^{j/2} \|(u_0,B_0)\|_{\dot{B}^{1/2}_{2,1}},\label{ujLinftyL6}\\
\|(t\nabla^{2}u_{j},t\nabla^2 B_j,t\nabla P_{j})\|_{L_t^{2}(L^{6})}&\leq C d_j 2^{-j/2}\|(u_0,B_0)\|_{\dot{B}^{1/2}_{2,1}}.\label{ujL2L6}
\end{align}
The above inequalities allow us to prove the $L^1(0,T;L^{\infty})$ bound of $(\nabla u,\nabla B)$ by using an interpolation argument. Indeed, since we have $L^{4,1}(0,T;L^2)=\big(L^2(0,T;L^2),L^\infty(0,T;L^2)\big)_{1/2,1}$ derived from Lemma \ref{D A.9}, by \eqref{4.80} and \eqref{4.81}, the following estimate holds:
\begin{align}\label{D4.90}
\|{t^{1/2}}(\nabla^{2}u_{j}, \nabla^{2} B_j,\nabla P_j)\|_{L_t^{4,1}(L^{2})}\leq\,& C\|{t^{1/2}}(\nabla^{2}u_{j}, \nabla^{2} B_j)\|_{L_t^{2}(L^{2})}^{1/2}\|{t^{1/2}}(\nabla^{2}u_{j}, \nabla^{2} B_j)\|_{L_t^{\infty}(L^{2})}^{1/2}\nonumber\\
\leq\,& C d_{j}\|(u_{0},B_0)\|_{\dot{B}_{2,1}^{1/2}}.
\end{align}
Note that Lemma \ref{D A.9} also guarantees $
L^{4,1}(0,T;L^6)=\big(L^2(0,T;^6),L^\infty(0,T;L^6)\big)_{1/2,1}$. Thus, \eqref{ujLinftyL6} and \eqref{ujL2L6} ensure that
\begin{align}\label{D4.94}
\|t (\nabla^{2}u_{j}, \nabla^{2} B_j,\nabla P_j)\|_{L_t^{4,1}(L^{6})}\leq\,& C\|t (\nabla^{2}u_{j}, \nabla^{2} B_j)\|_{L_t^{2}(L^{6})}^{1/2}\|t (\nabla^{2}u_{j}, \nabla^{2} B_j)\|_{L_t^{\infty}(L^{6})}^{1/2}\nonumber\\
\leq\,& C d_{j}\|(u_{0},B_0)\|_{\dot{B}_{2,1}^{1/2}}.
\end{align}
By summing \eqref{D4.90} and \eqref{D4.94} over $j\in\mathbb{Z}$, 
we conclude that
\begin{align}\label{D4.95}
\|t(\nabla^{2}u,\nabla^2 B,\nabla P)\|_{L_t^{4,1}(L^{6})}+\|{t^{1/2}}(\nabla^{2}u,\nabla^2 B,\nabla P_j)\|_{L_t^{4,1}(L^{2})}\leq C\|(u_{0},B_0)\|_{\dot{B}_{2,1}^{1/2}}.   
\end{align}
Consequently, in light of \eqref{D4.95}, Lemma \ref{D A.9}, the Gagliardo-Nirenberg inequality $$\|f\|_{L^{\infty}}\leq C\|f\|_{L^2}^{1/2}\|\nabla f\|_{L^6}^{1/2},$$ 
and
\eqref{D4.95}, we obtain
\begin{align*}
 \int_0^t\|(\nabla u,\nabla B)\|_{L^\infty} {\rm d}t^{\prime} 
\leq\,&C\int_0^t\|\nabla^2(u,B)\|_{L^2}^{1/2}\|\nabla^2(u,B)\|_{L^6}^{1/2}{\rm d}t^{\prime} \nonumber\\
\leq\, &C\int_{0}^{t}t^{-3/4}\|{t^{1/2}}(\nabla^{2}u,\nabla^{2}B)\|_{L^{2}}^{1/2}\|t(\nabla^{2}u,\nabla^{2}B)\|_{L^{6}}^{1/2}{\rm d}t^{\prime} \nonumber\\
\leq\,& C\|t^{-3/4}\|_{L^{\frac{4}{3},\infty}}\|{t^{1/2}}(\nabla^{2}u,\nabla^{2}B)\|_{L^{4,1}(L^{2})}^{1/2}\|t(\nabla^{2}u,\nabla^{2}B)\|_{L^{4,1}(L^{6})}^{1/2} \nonumber\\
\leq\,& C\|(u_{0},B_0)\|_{\dot{B}_{2,1}^{1/2}}.
\end{align*}
This gives \eqref{B3.72} directly.

Moreover, according to \eqref{A2.10}, \eqref{A2.13} and \eqref{4.22}, we conclude that
\begin{align*} 
&\|t^{1/4} (\nabla^2 u, \nabla^2 B, \nabla P, \partial_t u,\partial_t B)\|_{L_t^{2}(L^2)}^2\nonumber\\ 
\leq&\,2\sum_{j\in{\mathbb{Z}}}\sum_{k\leq j}\int_{0}^t\int_{\mathbb{R}^3}{t^{\prime}}^{{1}/{2}}(\nabla^2 u_j, \nabla^2 B_j, \nabla P_j, \partial_t u_j,\partial_t B_j)(\nabla^2 u_k, \nabla^2 B_k, \nabla P_k, \partial_t u_k,\partial_t B_k){\rm d}x{\rm d}t   \nonumber\\
\leq&\,C\sum_{j\in{\mathbb{Z}}}\sum_{k\leq j} \|t^{\frac{1}{2}}(\nabla^2 u_j, \nabla^2 B_j, \nabla P_j, \partial_t u_j,\partial_t B_j)\|_{L_t^2(L^2)} \|(\nabla^2 u_k, \nabla^2 B_k, \nabla P_k, \partial_t u_k,\partial_t B_k)\|_{L_t^2(L^2)}       \nonumber\\
\leq &\, C\|(u_{0},B_0)\|_{\dot{B}^{1/2}_{2,1}}^2,
\end{align*}
which leads to \eqref{NJK4.74}.

Similar to \eqref{A2.28},
one easily deduces from \eqref{A2.1} and Lemma \ref{L2.3} that
\begin{align}\label{NJKS4.96}
&\frac{\rm d}{{\rm d}t}\|(\sqrt{\rho}\partial_t u,\partial_t B)\|_{L^2}^2+\|(\nabla\partial_{t}u,\nabla\partial_t B)\|_{L^{2}}^{2}\nonumber\\
\leq\,& C\Big(1+\|u\|_{\dot{B}^{1/2}_{2,1}}^{2}\Big)\|u\|_{\dot{B}^{3/2}_{2,1}}^{2}\|(\sqrt{\rho}\partial_{t}u,\partial_t B)\|_{L^{2}}^{2}\nonumber\\
&\,+C\|\nabla(u,B)\|_{\dot{H}^{{1}/{2}}}^2\|(\sqrt{\rho}\partial_{t}u,\partial_t B)\|_{L^{2}}^2\nonumber\\
\leq\,&C\|(u,B)\|_{\dot{B}^{{3}/{2}}_{2,1}}^2\|(\sqrt{\rho}\partial_{t}u,\partial_t B)\|_{L^{2}}^{2}.
\end{align}
Here  we have used the fact that
\begin{align}\label{NJK 4.96}
\|(\nabla^2u,\nabla^2B,\nabla P)(t)\|_{L^2}\leq C\|(\partial_t u,\partial_t B)(t)\|_{L^2},    
\end{align}
by an argument similar to \eqref{A2.10}.
By multiplying by \eqref{NJKS4.96} $t^{3/2}$, we obtain
\begin{align*}
&\frac{\rm d}{{\rm d}t}\|t^{3/4}(\sqrt{\rho}\partial_t u,\partial_t B)\|_{L^2}^2+\|t^{{3}/{4}}(\nabla\partial_{t}u,\nabla\partial_t B)\|_{L^{2}}^{2}\nonumber\\
\,&\quad \leq\frac{3}{2}\|t^{1/4}(\partial_t u,\partial_t B)\|_{L^2}^2+C\|(u,B)\|_{\dot{B}^{3/2}_{2,1}}^2\|t^{3/4}(\sqrt{\rho}\partial_{t}u,\partial_t B)\|_{L^{2}}^{2},
\end{align*}
which together with Gronwall's inequality, \eqref{A2.1} and \eqref{NJK4.74}  yields
\begin{align}\label{NJKS4.98}
&\|t^{3/4}(\partial_t u,\partial_t B)\|_{L_t^{\infty}(L^2)}^2+\|t^{3/4}(\nabla \partial_t u,\nabla\partial_t B)\|_{L_t^2(L^2)} \nonumber\\
\leq\,& C\|t^{1/4}(u_t,B_t)\|_{L_t^2(L^2)}\exp\Big\{C\|(u,B)\|_{L^2_t(\dot{B}_{2,1}^{3/2})}^2\Big\}\nonumber\\
\leq&\, C\|(u_0,B_0)\|_{\dot{B}_{2,1}^{{1}/{2}}}^2.
\end{align}
From \eqref{NJK 4.96}, we directly obtain
\begin{align*}
&\|t^{3/4}(\nabla^2 u,\nabla^2 B)\|_{L_t^{\infty}(L^2)}^2
\leq C\|(u_0,B_0)\|_{\dot{B}_{2,1}^{{1}/{2}}}^2,    
\end{align*}
which gives rise to \eqref{NJK4.75}.

To prove \eqref{NJK4.76}, by using Lemma \ref{L2.7}, \eqref{A2.1} and \eqref{A2.20}, one has
\begin{align}\label{NJKS4.100}
\|t^{1/4}(\nabla u,\nabla B)\|_{L_t^{\infty}(L^2)}^2\leq C\|u,B\|_{L_t^{\infty}(\dot{B}^{{1}/{2}}_{2,1})}\|t^{{1}/{2}}(u,B)\|_{L^{\infty}_{t}(\dot{B}^{3/2}_{2,1})}\leq C\|(u_0,B_0)\|_{\dot{B}_{2,1}^{{1}/{2}}}^2.   
\end{align}
Using the classical theory on Stokes and Laplacian operators again, we deduce that
\begin{align}\label{NJKS4.101}
&\|t^{{3}/{4}}(\nabla^2u,\nabla^2B,\nabla P)\|_{L_t^{2}(L^6)}\nonumber\\
\leq&\, C\|t^{{3}/{4}}(\partial_t u,\partial_t B)\|_{L_t^2(L^6)}+C\|t^{{1}/{4}}(u,B)\|_{L^{\infty}_t(L^6)}\|(t^{{1}/{2}}\nabla u,t^{{1}/{2}}\nabla B)\|_{L_t^2(L^{\infty})}  \nonumber\\
\leq&\,C\|t^{{1}/{4}}(\nabla u,\nabla B)\|_{L^{\infty}_t(L^2)}\|t^{{1}/{4}}(\nabla u,\nabla B)\|_{L_t^2(L^{6})}^{{1}/{2}} 
\|t^{{3}/{4}}(\nabla^2 u,\nabla^2 B)\|_{L_t^2(L^{6})}^{{1}/{2}}\nonumber\\
&+\, C\|t^{{3}/{4}}(\nabla\partial_t u,\nabla\partial_t B)\|_{L_t^2(L^2)}.
\end{align}
Combining Young's inequality,  \eqref{NJK4.74}, \eqref{NJKS4.98},
\eqref{NJKS4.100}  with \eqref{NJKS4.101},
we end up with
\begin{align*}
&\|t^{3/4}(\nabla^2u,\nabla^2B,\nabla P)\|_{L_t^{2}(L^6)}\nonumber\\
\leq&\,C\|(u_0,B_0)\|_{\dot{B}_{2,1}^{{1}/{2}}}^2\|t^{{1}/{4}}(\nabla^2 u,\nabla^2 B)\|_{L_t^2(L^{2})}+ C\|t^{{3}/{4}}(\nabla\partial_t u,\nabla\partial_t B)\|_{L_t^2(L^2)}\nonumber\\
\leq&\, C\|(u_0,B_0)\|_{\dot{B}_{2,1}^{{1}/{2}}}.
\end{align*}
By Lemma \ref{L2.6}, Lemma \ref{L2.5}, \eqref{A2.1} and \eqref{NJK4.75}, it also holds
\begin{align*}
\|t^{{3}/{4}}(D_tu,D_t B)\|_{L_t^{\infty}(L^2)}\leq&\, C\|t^{{3}/{4}}(\partial_t u,\partial_t B)\|_{L_t^{\infty}(L^2)}+C\|t^{{3}/{4}}u\cdot\nabla(u,B)\|_{L_t^{\infty}(L^2)} \nonumber\\
\leq&\, C\|t^{{3}/{4}}(\partial_t u,\partial_t B)\|_{L_t^{\infty}(L^2)}+C\|u\|_{L_t^{\infty}(L^{3,\infty})}\|t^{{3}/{4}}(\nabla u,\nabla B)\|_{L_t^{\infty}(L^{6,2})}\nonumber\\
\leq&\, C\|t^{{3}/{4}}(\partial_t u,\partial_t B)\|_{L_t^{\infty}(L^2)}+C\|u\|_{L_t^{\infty}(\dot{B}_{2,1}^{{1}/{2}})}\|t^{{3}/{4}}(\nabla^2 u,\nabla^2 B)\|_{L_t^{\infty}(L^2)}\nonumber\\
\leq&\, C\|(u_0,B_0)\|_{\dot{B}_{2,1}^{{1}/{2}}}.
\end{align*}
Thus, \eqref{NJK4.77} holds. Furthermore, using  Lemma \ref{L2.7}, we infer that
\begin{align*}
 {\| t^{3/4}(\nabla D_t u,\nabla  D_t B)\|_{L_t^2(L^2)}}\leq\,& C  {\| t^{1/2}(D_t u, D_t B)\|^{1/2}_{L_t^2({\dot{B}^{1/2}_{2,1}})}} {\| t(D_t u, D_t B)\|^{1/2}_{L_t^2({\dot{B}^{3/2}_{2,1}})}}. 
\end{align*}
This, as well as \eqref{A2.35}, leads to \eqref{34} and hence we complete the proof of Proposition \ref{P3.6}.
\end{proof}

\vspace{2mm}

Now we are in a position to prove Theorem \ref{T2.1}.

\vspace{1mm}

 \begin{proof}[Proof of Theorem \ref{T2.1}]
  For the regularized initial data $(\rho^\varepsilon_{0},u^\varepsilon_{0},B^\varepsilon_0)$ given in \eqref{appa} with  $0<\varepsilon<1$, the local well-posedness theory (see \cite[Theorem 1.1]{AH-AMBP-2007}) implies that we can construct the approximate regular solutions $(\rho^\varepsilon,u^\varepsilon,B^\varepsilon,\nabla P^\varepsilon)$ on $[0,T_\varepsilon)$ to \eqref{I-1} subject to the initial data $(\rho^\varepsilon_{0},u^\varepsilon_{0},B^\varepsilon_0)$, where $T_{\varepsilon}$ is the maximal time. According to the estimates in Propositions \ref{P3.1}--\ref{P3.6}, one can establish higher-order estimates dependent on $\varepsilon$ (remarking that some scaling invariant regularities, like $L^4(0,T;\dot{H}^1)$, can prevent the blowing up of higher-order norms), and then we can prove $T^\varepsilon=\infty$ and that the solution $(\rho^\varepsilon,u^\varepsilon,B^\varepsilon,\nabla P^\varepsilon)$ is global.

Moreover, in view of the uniform estimates \eqref{A2.1}, \eqref{A2.20}, \eqref{A2.35} \eqref{uL4L2:thm2}--\eqref{34} for $(\rho^{\varepsilon},u^{\varepsilon},B^{\varepsilon},\linebreak \nabla P^{\varepsilon})$ and a standard compactness argument, we can pass the limit as $\varepsilon \rightarrow 0$ and prove that there is a limit $(\rho,u,B,\nabla P)$ which is in fact a global weak solution to the original Cauchy problem \eqref{I-1}--\eqref{d}. Due to Fatou's property, one knows that $(\rho,u,B,\nabla P)$ satisfies the uniform bounds \eqref{A1.4}--\eqref{T2.1:e2}. We now explain the property $u, B\in C(\mathbb{R}_{+};\dot{B}^{1/2}_{2,1})$. For any small $\eta>0$, due to $u\in \widetilde{L}^{\infty}(\mathbb{R}_{+};\dot{B}^{1/2}_{2,1})$, there exists a $j_\eta\in \mathbb{Z}$ such that 
$$
\sum_{|j|\geq j_\eta+1}2^{j/2} \|\dot{\Delta}_j u\|_{L^{\infty}(\mathbb{R}_{+};L^{\infty})}<\eta.
$$
On the other hand, using $t^{1/4}u_t\in L^{2}(\mathbb{R}_{+};L^2)$, one knows that for any $0\leq t_1\leq t_2<\infty$, 
\begin{align*}
\sum_{|j|\leq j_\eta} 2^{j/2} \|\dot{\Delta}_j u(t_1)-\dot{\Delta}_ju(t_2)\|_{L^2}&\leq \sum_{|j|\leq j_\eta} 2^{j/2} \int_{t_1}^{t_2} \|\dot{\Delta}_j  \partial_s u(s)\|_{L^2}\,{\rm ds}\\
&\leq  \sum_{|j|\leq j_\eta} 2^{j/2} \Big(\int_{t_1}^{t_2}s^{-1/2}\,{\rm ds}\Big)^{1/2} \Big(\int_{t_1}^{t_2} \|s^{1/4} \partial_s u(s)\|_{L^2}^2 \,{\rm ds} \Big)^{1/2}\\
&\leq C2^{j_\eta/2} |t_1-t_2|^{1/4} \|s^{1/4} \partial_s u(s)\|_{L^2(\mathbb{R}_{+};L^2)}^2\rightarrow 0,
\end{align*}
as $|t_1-t_2|\rightarrow 0$. Since $\eta$ is arbitrary, we have $u\in C(\mathbb{R}_{+};\dot{B}^{1/2}_{2,1})$. A similar calculation also yields $B\in C(\mathbb{R}_{+};\dot{B}^{1/2}_{2,1})$. Finally, the uniqueness can be proved by Theorem \ref{thmunique} and a similar argument to that in \eqref{3.64}--\eqref{3.68}. Hence, the proof of Theorem \ref{T2.1} is finished. 
\end{proof}

\section{Proof of Theorem \ref{T2.2}}
In this section, we shall prove Theorem \ref{T2.2} on the global existence of Fujita-Kato type solutions under a refined smallness condition, inspired by \cite{AGZ-arXiv-2024}. In view of the rescaling argument \eqref{2.1}--\eqref{2.2}, our proof reduces to the case $\mu=\nu=1$.

\begin{prop}\label{P4.1}
Let $(\rho,u,B,\nabla P)$ be a smooth enough solution of \eqref{I-1} on $[0,T^*)\times\mathbb{R}^3$. Then
under the assumptions of \eqref{A1.6}--\eqref{A1.7}, the estimate \eqref{A1.8} holds. Moreover,
\begin{gather}
\|(u,B)\|_{\widetilde{L}_{T}^{\infty}(\dot{B}_{2,r}^{1/2})}+\|(u,B)\|_{\widetilde{L}_{T}^{2}(\dot{B}_{2,r}^{3/2})}\leq C\|(u_{0},B_0)\|_{\dot{B}_{2,r}^{1/2}},\label{A4.4}\\
\|(u,B)\|_{\widetilde{L}_{T}^{\infty}(\dot{H}^{1/2})}+\|(u,B)\|_{L_{T}^{2}(\dot{H}^{3/2})}+\|t^{-{1}/{4}}(\nabla u,\nabla B)\|_{L_{T}^{2}(L^{2})}\leq C\|(u_{0},B_0)\|_{\dot{H}^{1/2}}, \label{A4.6}
\end{gather}
hold for any $2\leq r\leq \infty$ and $0<T<T^*$.
\end{prop}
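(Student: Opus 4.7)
The plan is to adapt the block-decomposition bootstrap of Proposition~\ref{P3.1} so that it closes under the weaker smallness $\|(u_0,B_0)\|_{\dot{B}^{1/2}_{2,\infty}}\le\varepsilon_3$, and then to re-sum the resulting blockwise bounds in $\ell^r$ for each $r\in[2,\infty]$ to read off the three stated estimates. First, the transport equation $\partial_t\rho+u\cdot\nabla\rho=0$ propagates the pointwise bounds $\inf\rho_0$ and $\sup\rho_0$, which gives \eqref{A1.8} as soon as $u$ is Lipschitz in space on $[0,T^*)$. Then decompose $(u,B,\nabla P)=\sum_{j\in\mathbb{Z}}(u_j,B_j,\nabla P_j)$ as in \eqref{A2.3}, each block solving the quasilinear localized system \eqref{A2.2} with initial data $(\dot{\Delta}_j u_0,\dot{\Delta}_j B_0)$.

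Testing \eqref{A2.2} against $(u_j,B_j)$ produces the basic energy identity \eqref{A2.4}, which requires no smallness. For the higher-order bound, testing against $(\partial_t u_j,\partial_t B_j)$ gives the analogue of \eqref{A2.7}, whose quadrilinear right-hand side must now be controlled using only the $\dot{B}^{1/2}_{2,\infty}$-norm of $(u,B)$. The key device is the Lorentz--H\"older estimate
\[
\Bigl|\int_{\mathbb{R}^3}(u\cdot\nabla u_j)\cdot\partial_t u_j\,\dd x\Bigr|\le C\|u\|_{L^{3,\infty}}\|\nabla u_j\|_{L^{6,2}}\|\partial_t u_j\|_{L^{2,2}}\le C\|u\|_{\dot{B}^{1/2}_{2,\infty}}\|\nabla^2 u_j\|_{L^2}\|\partial_t u_j\|_{L^2},
\]
together with the analogous bounds for the cross-terms $B\cdot\nabla u_j$, $B\cdot\nabla B_j$, $u\cdot\nabla B_j$. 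These rely on the sharper embeddings $\dot{B}^{1/2}_{2,\infty}\hookrightarrow L^{3,\infty}$ and $\dot{H}^1\hookrightarrow L^{6,2}$ and are precisely what permits the initial smallness to be measured in $\dot{B}^{1/2}_{2,\infty}$ rather than $\dot{B}^{1/2}_{2,1}$. Coupled with the Stokes estimate for the elliptic structure \eqref{A2.42}, which returns $\|(\nabla^2u_j,\nabla^2B_j,\nabla P_j)\|_{L^2}\le C\|(\sqrt{\rho}\partial_tu_j,\partial_tB_j)\|_{L^2}$ modulo a term absorbed by smallness, a continuity argument on $T_3^*:=\sup\{t<T^*:\|(u,B)(t)\|_{\dot{B}^{1/2}_{2,\infty}}\le c_3\}$ closes the bootstrap and produces
\[
\|(\nabla u_j,\nabla B_j)\|_{L_t^\infty(L^2)}+\|(\nabla^2u_j,\nabla^2B_j,\partial_tu_j,\partial_tB_j,\nabla P_j)\|_{L_t^2(L^2)}\le Cc_{j,\infty}2^{j/2}\|(u_0,B_0)\|_{\dot{B}^{1/2}_{2,\infty}}
\]
for $t<T_3^*$. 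The frequency-splitting inequality used in \eqref{A2.14}, followed by Young's convolution inequality in $\ell^r$, then recovers \eqref{A4.4} for every $r\in[2,\infty]$ and forces $T_3^*=T^*$. Specializing to $r=2$ produces the $\dot{H}^{1/2}$--$\dot{H}^{3/2}$ part of \eqref{A4.6}, while the initial-layer bound $\|t^{-1/4}(\nabla u,\nabla B)\|_{L_T^2(L^2)}\le C\|(u_0,B_0)\|_{\dot{H}^{1/2}}$ follows from the dyadic identity $\int_0^\infty t^{-1/2}2^{2j}e^{-ct2^{2j}}\,\dd t=C_02^{j}$ applied blockwise, the nonlinear perturbation being absorbed by the $\ell^2$-summability of $(2^{3j/2}\|\dot{\Delta}_j(u,B)\|_{L^2_TL^2})_j$ gained in the previous step.

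The principal obstacle is the very first Lorentz--H\"older bound: one cannot reuse the $\dot{H}^{1/2}$ chain from Proposition~\ref{P3.1}, since that argument requires $\ell^2$-summability of $(2^{j/2}\|\dot{\Delta}_j u_0\|_{L^2})_j$, which is precisely what is \emph{not} assumed here. The resolution is to work entirely in the Lorentz pair $(L^{3,\infty},L^{6,2})$ in the spirit of Gui~\cite{Ggl-JFA-2014} and Abidi--Gui--Zhang~\cite{AGZ-arXiv-2024}; once that Lorentz-refined nonlinear estimate is in hand, the block-level bootstrap and the $\ell^r$-resummation follow the same blueprint as the proof of Theorem~\ref{T2.1}.
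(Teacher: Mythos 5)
Your overall architecture coincides with the paper's: the same block decomposition $(u,B,\nabla P)=\sum_j(u_j,B_j,\nabla P_j)$ solving \eqref{A2.2}, the same unconditional basic energy identity, the same replacement of the $\dot B^{1/2}_{2,1}$ product estimate by a Lorentz-refined one (your $L^{3,\infty}\cdot L^{6,2}\subset L^2$ bound is exactly the content of Lemma \ref{L2.3}, i.e.\ $\|fg\|_{L^2}\le C\|f\|_{\dot B^{1/2}_{2,\infty}}\|g\|_{\dot H^1}$), the same Stokes estimate \eqref{A2.42}, the same bootstrap on $T_3^*$, and the same frequency-splitting resummation in $\ell^r$ to obtain \eqref{A4.4} for all $2\le r\le\infty$. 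Up to and including \eqref{A4.4} and the $\dot H^{1/2}$--$\dot H^{3/2}$ part of \eqref{A4.6}, your proposal is essentially the paper's proof.

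There is, however, a gap in your derivation of the initial-layer bound $\|t^{-1/4}(\nabla u,\nabla B)\|_{L^2_T(L^2)}\le C\|(u_0,B_0)\|_{\dot H^{1/2}}$. The identity $\int_0^\infty t^{-1/2}2^{2j}e^{-ct2^{2j}}\,{\rm d}t=C_02^{j}$ is only useful if each block obeys a pointwise-in-time exponential decay $\|\nabla u_j(t)\|_{L^2}^2\lesssim 2^{2j}e^{-ct2^{2j}}\|\dot\Delta_ju_0\|_{L^2}^2$, i.e.\ if $u_j$ behaved like a solution of the free heat equation. It does not: $u_j$ solves the coupled quasilinear system \eqref{A2.2} with the full convection $\rho u\cdot\nabla u_j$ and the magnetic coupling $B\cdot\nabla B_j$, and no such decay is established (nor is it plausible blockwise, since the coupling transfers energy between frequencies). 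The paper obtains the same $2^{2\alpha j}$ gain without any decay: for $0<\alpha<1/2$ it splits
\begin{equation*}
\int_0^t\tau^{-2\alpha}\|(\nabla u_j,\nabla B_j)\|_{L^2}^2\,{\rm d}\tau
\le t_j^{1-2\alpha}\|(\nabla u_j,\nabla B_j)\|_{L^\infty_t(L^2)}^2+t_j^{-2\alpha}\|(\nabla u_j,\nabla B_j)\|_{L^2_t(L^2)}^2
\end{equation*}
at $t_j=2^{-2j}$, using only the energy bounds $\|(\nabla u_j,\nabla B_j)\|_{L^\infty_t(L^2)}\lesssim 2^{j}\|\dot\Delta_j(u_0,B_0)\|_{L^2}$ and $\|(\nabla u_j,\nabla B_j)\|_{L^2_t(L^2)}\lesssim\|\dot\Delta_j(u_0,B_0)\|_{L^2}$ already at hand, and then closes with a near-diagonal double sum $\sum_k\sum_{j\le k}$ exploiting the $\ell^2$ summability coming from $u_0,B_0\in\dot H^{1/2}$. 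You should replace your heat-kernel step by this splitting; the rest of your argument stands.
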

\begin{proof}
Using similar computations to that in \eqref{A2.2}--\eqref{A2.6}, we conclude that 
\begin{align}\label{A4.7}
&\|(u_j,B_j)\|_{L_t^\infty(L^2)}+\|(\nabla  u_j, \nabla B_j)\|_{L_t^2(L^2)}
\leq\, C\|\dot{\Delta}_j (u_0,B_0)\|_{L^2}. 
\end{align}
In particular, we have
\begin{align}\label{A4.8}
&\|(u_j,B_j)\|_{L_t^\infty(L^2)}+ \|(\nabla  u_j, \nabla B_j)\|_{L_t^2(L^2)}
\leq\,  C2^{-j/2}\|(u_0,B_0)\|_{\dot{B}^{1/2}_{2,\infty}}. 
\end{align}

On the other hand, it follows from \eqref{A2.7} and Lemma \ref{L2.3} that
\begin{align}\label{A4.9}
&\frac{1}{2}\frac{\rm d}{{\rm d}t}\Big(\|\nabla u_{j}(t)\|_{L^{2}}^{2}+\nabla B_{j}(t)\|_{L^{2}}^{2}\Big)+\|\sqrt{\rho}\partial_{t}u_{j}\|_{L^{2}}^{2}+\|\partial_{t}B_{j}\|_{L^{2}}^{2}\nonumber\\
\leq\,& C\big(\|u\cdot\nabla(u_j,B_j)\|_{L^{2}}+\|B\cdot\nabla(u_j,B_j)\|_{L^{2}}\big)\big(\|\sqrt{\rho}\partial_{t}u_{j}\|_{L^{2}}+\|\partial_{t}B_{j}\|_{L^{2}}\big) \nonumber\\
\leq\,& C\|(u,B)\|_{\dot{B}^{1/2}_{2,\infty}}\|(\nabla^{2}u_{j}, \nabla^{2} B_j)\|_{L^{2}}\big(\|\sqrt{\rho}\partial_{t}u_{j}\|_{L^{2}}+\|\partial_{t}B_{j}\|_{L^{2}}\big).
\end{align}
Making use of the elliptic estimates for \eqref{A2.42} and  Lemma \ref{L2.3}, we discover that there exists a generic constant $C_4^*$ such that 
\begin{align}\label{A4.10}
&\|(\nabla^{2}u_{j}, \nabla^{2} B_j)\|_{L^{2}}+\|\nabla P_{j}\|_{L^{2}}\nonumber\\
\leq\,&C\big(\|\sqrt{\rho}\partial_{t}u_{j}\|_{L^{2}}+\|u\cdot\nabla u_{j}\|_{L^{2}}+\|B\cdot\nabla B_{j}\|_{L^{2}}\big)\nonumber\\
&+C\big(\|\partial_{t}B_{j}\|_{L^{2}}+\|u\cdot\nabla B_{j}\|_{L^{2}}+\|B\cdot\nabla u_{j}\|_{L^{2}}\big)             \nonumber\\
\leq\,&C_4^*\Big(\|(\sqrt{\rho}\partial_{t}u_{j},B_j)\|_{L^{2}}+\|(u,B)\|_{\dot{B}^{1/2}_{2,\infty}}\|(\nabla^{2}u_{j}, \nabla^{2} B_j)\|_{L^{2}}\Big).
\end{align}

Now we denote
\begin{align}\label{A4.13}
T_{3}^*:=\sup\Big\{ t\in(0,T^{*}):\|(u,B)(t)\|_{\dot{B}_{2,\infty}^{1/2}}\leq c_3\Big\}   
\end{align}
for some $c_3>0$.
Then, for any $t\in(0,T_3^*)$, \eqref{A4.10} ensures that
\begin{align}
\|(\nabla^{2}u_{j}, \nabla^{2} B_j)\|_{L^{2}}+\|\nabla P_{j}\|_{L^{2}}\leq C\|(\sqrt{\rho}\partial_{t}u_{j},B_j)\|_{L^{2}}.\label{5.888}
\end{align}
By \eqref{A4.9} and \eqref{5.888}, one gets
\begin{align*}
&\frac{1}{2}\frac{{\rm d}}{{\rm d}t}\|(\nabla u_{j},\nabla B_j)\|_{L^{2}}^{2}+2c_{4}\|(\sqrt{\rho} \partial_{t}u_{j},\partial_t B_j,\nabla^{2}u_{j},\nabla^2 B_j,\nabla P_{j})\|_{L^{2}}^{2}\nonumber\\
&\quad  \leq  C\|(u,B)\|_{\dot{B}_{2,\infty}^{1/2}}\|(\nabla^{2}u_{j}, \nabla^{2} B_j)\|_{L^{2}}\|(\sqrt{\rho} \partial_{t}u_{j},\partial_t B_j)\|_{L^{2}}\\
&\quad  \quad+C\|(u,B)\|_{\dot{B}_{2,\infty}^{1/2}}^{2}\|(\nabla^{2}u_{j}, \nabla^{2} B_j)\|_{L^{2}}^{2},
\end{align*}
from which we obtain
\begin{align}\label{A4.12}
&\frac{{\rm d}}{{\rm d}t}\|(\nabla  u_j, \nabla B_j)\|_{L^2}^2+\|(\sqrt{\rho} \partial_{t}u_{j},\partial_t B_j,\nabla^{2}u_{j},\nabla^2 B_j,\nabla P_{j})\|_{L^{2}}^{2}\nonumber\\
&\quad \leq  C_5^*\|(u,B)\|_{\dot{B}{_{2,\infty}^{1/2}}}^2\|\nabla^2(u_j,B_j)\|_{L^2}^2.
\end{align}
for some constant $C_5^*=C_5^*(\rho_0)$.

We now claim that $T_{3}^*=T^{*}$. Otherwise if $T_{3}^*<T^{*}$, 
then for any $ 0\leq t\leq T_{3}^*$, it follows from \eqref{A4.12} that
\begin{align}\label{A4.14}
\frac{{\rm d}}{{\rm d}t}\|(\nabla  u_j, \nabla B_j)\|_{L^2}^2+\frac{1}{2}\|(\sqrt{\rho} \partial_{t}u_{j},\partial_t B_j,\nabla^{2}u_{j},\nabla^2 B_j,\nabla P_{j})\|_{L^{2}}^{2}\leq0,   
\end{align}
as long as $C_5^*c_3\leq \frac{1}{2}$. Integrating \eqref{A4.14} over $[0,t]$ for $t\leq T_{3}^*$, we discover that
\begin{align*}
&\quad\|(\nabla u_{j},\nabla B_j)\|_{L_{t}^{\infty}(L^{2})}+\|(\sqrt{\rho} \partial_{t}u_{j},\partial_t B_j)\|_{_{L_t^2(L^2)}}\leq C2^{j}\|\dot{\Delta}_{j}(u_{0},B_0)\|_{L^{2}}, 
\end{align*}
which, together with \eqref{5.888}, implies that
\begin{align}\label{A4.16}
&\quad\|(\nabla u_{j},\nabla B_j)\|_{L_{t}^{\infty}(L^{2})}+\|(\sqrt{\rho} \partial_{t}u_{j},\partial_t B_j,\nabla^{2}u_{j},\nabla^2 B_j,\nabla P_{j})\|_{_{L_t^2(L^2)}} \leq C2^{j/2}\|(u_0,B_0)\|_{\dot{B}_{2,\infty}^{1/2}}.   
\end{align}
By Bernstein's inequality, \eqref{A2.3}, \eqref{A4.8} and \eqref{A4.16},
we arrive at
\begin{align*}
&\|\dot{\Delta}_{j}(u,B)\|_{L_{t}^{\infty}(L^{2})}+\|\nabla\dot{\Delta}_{j}(u,B)\|_{L_{t}^{2}(L^{2})}\nonumber\\
\leq\,&C\sum_{j^{\prime}\geq j}\big(\|\dot{\Delta}_{j}(u_{j^{\prime}},B_{j^{\prime}})\|_{L_{t}^{\infty}(L^{2})}
+\|\nabla\dot{\Delta}_{j}(u_{j^{\prime}},B_{j^{\prime}})\|_{L_{t}^{2}(L^{2})}\big) \nonumber\\
& +C2^{-j}\sum_{j^{\prime}\leq j}\big(\|\nabla\dot{\Delta}_{j}(u_{j^{\prime}},B_{j^{\prime}})\|_{L_{t}^{\infty}(L^{2})}
+\|\nabla^{2}\dot{\Delta}_{j}(u_{j^{\prime}},B_{j^{\prime}})\|_{L_{t}^{2}(L^{2})}\big) \nonumber\\
\leq\,&C\sum_{j^{\prime}\geq j}\big(\|(u_{j^{\prime}},B_{j^{\prime}})\|_{L_{t}^{\infty}(L^{2})}+\|(\nabla u_{j^{\prime}},B_{j^{\prime}})\|_{L_{t}^{2}(L^{2})}\big) \nonumber\\
&+C2^{-j}\sum_{j^{\prime}\leq j}\big(\|\nabla (u_{j^{\prime}},B_{j^{\prime}})\|_{L_{t}^{\infty}(L^{2})}
+\|\nabla^{2}(u_{j^{\prime}},B_{j^{\prime}})\|_{L_{t}^{2}(L^{2})}\big) \nonumber\\
\leq\,&C2^{-j/2}\|(u_{0},B_0)\|_{\dot{B}^{1/2}_{2,\infty}}.
\end{align*}
Consequently, it holds that
\begin{align*}
\|(u,B)\|_{L_{t}^{\infty}(\dot{B}_{2,\infty}^{1/2})}+\|(u,B)\|_{\tilde{L}_{t}^{2}(\dot{B}_{2,\infty}^{3/2})}\leq C\|(u_{0},B_0)\|_{\dot{B}_{2,\infty}^{1/2}},\quad 0\leq t<T^*_3.
\end{align*}
Since $u,B\in C([0,T^*);\dot{B}^{1/2}_{2,\infty})$ and $\|(u_{0},B_0)\|_{\dot{B}_{2,\infty}^{1/2}}$ is suitably small, a standard continuity argument implies that there exists some $t\in (T_3^*,T^*)$ such that $\|(u,B)(t)\|_{\dot{B}^{1/2}_{2,\infty}}\leq c_3$, which contradicts with the definition of $T^*_3$ in \eqref{A4.13}. Thus, we conclude that $T_3^*=T^*$ and   \eqref{A4.4} holds with $r=\infty$. As $u_0,B_0\in \dot{H}^{1/2}=\dot{B}^{1/2}_{2,2}$, one has $ u_0,B_0\in\dot{B}_{2,r}^{1/2}$ with $2\leq r\leq\infty$. Combining \eqref{A4.7} with \eqref{A4.16}, we further infer
\begin{align*}
&\|\dot{\Delta}_{j}(u,B)\|_{L_{t}^{\infty}(L^{2})} +\|\nabla\dot{\Delta}_{j}(u,B)\|_{L_{t}^{2}(L^{2})}\nonumber\\
\,\leq\,& C\sum_{j^{\prime}\geq j}\|\dot{\Delta}_{j}(u_{0}, B_0)\|_{L^{2}}+C2^{-j}\sum_{j^{\prime}\leq j}\|\nabla\dot{\Delta}_{j}(u_{0},B_0)\|_{L^{2}} \nonumber \\
 \leq\,&C\sum_{j^{\prime}\geq j}c_{j^{\prime},r}2^{-{j^{\prime}}/{2}}\|(u_{0},B_0)\|_{B_{2,r}^{1/2}}+C2^{-j}\sum_{j^{\prime}\leq j}c_{j^{\prime},r}2^{{j^{\prime}}/{2}}\|(u_{0},B_0)\|_{B_{2,r}^{1/2}}\nonumber\\
 \leq\,& Cc_{j,r}2^{-j/2}\|(u_{0},B_0)\|_{B_{2,r}^{1/2}}.
\end{align*}
This yields \eqref{A4.4} with $2\leq r<\infty$.

Now we turn to the proof of \eqref{A4.6}. Let $0<\alpha<1/2$ and $t_j=2^{-2j}$. Then, we obtain
\begin{align*}
&\quad\int_{0}^{t}\tau^{-2\alpha}\|(\nabla u_{j},\nabla B_j)\|_{L^{2}}^{2}\,{\rm d}\tau\, \\
& \leq\int_{0}^{t_j}\tau^{-2\alpha}\|(\nabla u_{j},\nabla B_j)\|_{L^{2}}^{2}\,{\rm d}\tau+\int_{t_j}^{t}\tau^{-2\alpha}\|(\nabla u_{j},\nabla B_j)\|_{L^{2}}^{2}\,{\rm d}\tau  \nonumber\\
& \leq t_0^{1-2\alpha}\,{\rm d}\tau\,\|(\nabla u_{j},\nabla B_j)\|_{L^{\infty}_t(L^{2})}^{2}+t_0^{-2\alpha}\|(\nabla u_{j},\nabla B_j)\|_{L^2_t(L^{2})}^{2},
\end{align*}
which gives rise to
\begin{align}\label{A4.22}
\|\tau^{-\alpha}(\nabla u_{j},\nabla B_j)\|_{L^2_t(L^{2})}\leq C2^{2\alpha j}\|\dot{\Delta}_{j}(u_{0},B_0)\|_{L^{2}},\quad 0\leq t<T^*,
\end{align}
where \eqref{A4.8} and \eqref{A4.16} have been used. Owing to \eqref{A4.7} and \eqref{A4.22}, it holds that
\begin{align*}
\|t^{-{1}/{4}}(\nabla u,\nabla B)\|_{L_{t}^{2}(L^{2})}^{2}& \leq2\sum_{k\in\mathbb{Z}}\| \nabla (u_{k},B_k)\|_{L_{t}^{2}(L^{2})}\sum_{j\leq k}\|t^{-{1}/{2}}(\nabla u_{j},\nabla B_j)\|_{L_{t}^{2}(L^{2})}  \nonumber\\
&\leq C\|(u_{0},B_0)\|_{\dot{H}^{1/2}}^{2}\sum_{k\in\mathbb{Z}}2^{-{k}/{2}}c_{k,2}\sum_{j\leq k}2^{{j}/{2}}c_{j,2}\nonumber \\
&\leq C\|(u_{0},B_0)\|_{\dot{H}^{1/2}}^{2}\sum_{k\in\mathbb{Z}}c_{k,2}^{2}\nonumber\\
&\leq C\|(u_{0},B_0)\|_{\dot{H}^{1/2}}^{2}.
\end{align*}
This completes the proof of Proposition \ref{P4.1}.
\end{proof}

\begin{prop}\label{P4.2}
Under the assumptions in Proposition \ref{P4.1}, it holds for any $T\in(0,T^*)$ and $r\in[2,\infty]$ that
\begin{align}\label{A4.29}
\|t^{1/2}(u,B)\|_{\widetilde{L}_{T}^{\infty}(\dot{B}_{2,r}^{3/2})}+\|t^{1/2}(\partial_{t}u, \partial_{t}B)\|_{\widetilde{L}_{T}^{2}(\dot{B}_{2,r}^{1/2})}\leq Ce^{C\|(u_{0},B_0)\|_{\dot{H}^{1/2}}^{2}}\|(u_{0},B_0)\|_{\dot{B}_{2,r}^{1/2}}.
\end{align}

\end{prop}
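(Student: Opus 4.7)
The plan is to parallel the strategy of Proposition 4.2 from the proof of Theorem \ref{T2.1} (which controlled $t^{1/2}(\nabla u,\nabla B)$ in the $\dot{B}^{1/2}_{2,1}$ framework), but adapted to the $\dot{B}^{1/2}_{2,r}$ framework for $r\in[2,\infty]$ and exploiting only the refined smallness $\|(u,B)\|_{L^\infty_T(\dot{B}^{1/2}_{2,\infty})}\ll 1$ already established in Proposition \ref{P4.1}. I will again work with the Littlewood–Paley blocks $(u_j,B_j,\nabla P_j)$ solving \eqref{A2.2} and the decomposition \eqref{A2.3}. The starting point is the inequality \eqref{A4.9}, combined with the elliptic identity \eqref{5.888}, which is available under the $\dot{B}^{1/2}_{2,\infty}$ smallness. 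The goal is to derive a time-weighted block-wise estimate, then sum over frequencies to recover the Chemin--Lerner norms of $(u,B)$.

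\textbf{Key steps.} First, starting from \eqref{A4.9}, I will apply Young's inequality and insert the elliptic bound $\|\nabla^2(u_j,B_j)\|_{L^2}\lesssim\|(\sqrt\rho\partial_t u_j,\partial_t B_j)\|_{L^2}$ from \eqref{5.888} to obtain a clean differential inequality of the form
$$\frac{d}{dt}\|(\nabla u_j,\nabla B_j)\|_{L^2}^2+c\|(\sqrt\rho\partial_t u_j,\partial_t B_j)\|_{L^2}^2\leq C\mathcal{A}(t)\|(\nabla u_j,\nabla B_j)\|_{L^2}^2,$$
where $\mathcal{A}(t)$ is controlled by $\|(u,B)\|_{\dot{H}^{3/2}}^2$ and hence $L^1_T$-integrable by \eqref{A4.6}. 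Multiplying by $t$ and applying Gronwall's inequality then gives
$$\|t^{1/2}(\nabla u_j,\nabla B_j)\|_{L^\infty_T(L^2)}^2+\|t^{1/2}(\partial_t u_j,\partial_t B_j)\|_{L^2_T(L^2)}^2\leq C\|(\nabla u_j,\nabla B_j)\|_{L^2_T(L^2)}^2\exp\bigl\{C\|(u_0,B_0)\|_{\dot{H}^{1/2}}^2\bigr\}.$$
Using the $\dot{B}^{1/2}_{2,r}$ analog of \eqref{A4.7}--\eqref{A4.16} (which gives $\|(\nabla u_j,\nabla B_j)\|_{L^2_T(L^2)}\leq Cc_{j,r}2^{j/2}\|(u_0,B_0)\|_{\dot{B}^{1/2}_{2,r}}$) yields the block-wise estimate
$$\|t^{1/2}(\nabla u_j,\nabla B_j)\|_{L^\infty_T(L^2)}+\|t^{1/2}(\partial_t u_j,\partial_t B_j)\|_{L^2_T(L^2)}\leq Cc_{j,r}2^{j/2}e^{C\|(u_0,B_0)\|_{\dot{H}^{1/2}}^2}\|(u_0,B_0)\|_{\dot{B}^{1/2}_{2,r}}.$$
Finally, I will execute a frequency-splitting argument of the type in \eqref{A2.14}, bounding $\|\dot\Delta_j(t^{1/2}u)\|_{L^\infty_T(L^2)}\leq\sum_{j'\geq j}\|t^{1/2}u_{j'}\|_{L^\infty_T(L^2)}+2^{-j}\sum_{j'\leq j}\|t^{1/2}\nabla u_{j'}\|_{L^\infty_T(L^2)}$ and invoking Bernstein's inequality to trade $\|t^{1/2}u_{j'}\|_{L^\infty_T(L^2)}\lesssim 2^{-j'}\|t^{1/2}\nabla u_{j'}\|_{L^\infty_T(L^2)}$. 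The resulting discrete convolution in $j$ preserves the $\ell^r$ summability and produces \eqref{A4.29}; the analogous splitting for $t^{1/2}\partial_t(u,B)$ in $\widetilde{L}^2_T(\dot{B}^{1/2}_{2,r})$ completes the proof.

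\textbf{Main obstacle.} The principal difficulty is the nonlinear term estimate feeding into $\mathcal{A}(t)$. In the $\dot{B}^{1/2}_{2,1}$ setting of Theorem \ref{T2.1}, the convenient embedding $\dot{B}^{3/2}_{2,1}\hookrightarrow L^\infty$ allowed a direct bound $\|u\cdot\nabla u_j\|_{L^2}\lesssim\|u\|_{\dot{B}^{3/2}_{2,1}}\|\nabla u_j\|_{L^2}$; here this route is closed because $\dot{H}^{3/2}=\dot{B}^{3/2}_{2,2}$ fails to embed into $L^\infty$. Consequently, we must rely on Lorentz-space refinements such as $\dot{B}^{1/2}_{2,\infty}\hookrightarrow L^{3,\infty}$ and $\dot{H}^1\hookrightarrow L^{6,2}$ (in the spirit already used to derive \eqref{A4.9}), together with sharp Hölder products in Lorentz spaces. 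The exponential factor $\exp(C\|(u_0,B_0)\|_{\dot{H}^{1/2}}^2)$ is then the unavoidable price to pay: it records the Gronwall loss induced by the mere $L^2_T(\dot{H}^{3/2})$ integrability provided by \eqref{A4.6}, as opposed to the pointwise-small $L^\infty_T(\dot{B}^{1/2}_{2,1})$ control exploited in the $L^p$ setting. A secondary technical point is the case $r=\infty$ in the frequency-splitting step, where the supremum over $j$ must be handled via a standard discrete convolution inequality so as not to lose the correct dependence on the $\dot{B}^{1/2}_{2,\infty}$ norm of the initial data.
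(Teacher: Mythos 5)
There is a genuine gap: your outline stops at the first-order weighted block estimates, and these are not enough to close the frequency-splitting step. To recover $\|t^{1/2}(u,B)\|_{\widetilde{L}^{\infty}_T(\dot{B}^{3/2}_{2,r})}$ from the decomposition $u=\sum_{j'}u_{j'}$ one must bound $2^{3j/2}\|\dot{\Delta}_j(t^{1/2}u_{j'})\|_{L^2}$; for the low frequencies $j'\leq j$ the splitting you write, $2^{-j}\sum_{j'\leq j}\|t^{1/2}\nabla u_{j'}\|_{L^\infty_T(L^2)}$, combined with the block bound $\|t^{1/2}\nabla u_{j'}\|_{L^\infty_T(L^2)}\lesssim c_{j',r}2^{-j'/2}$ produces $\sum_{j'\leq j}c_{j',r}2^{(j-j')/2}$, a divergent discrete convolution. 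One has to go one derivative higher on the low-frequency side, i.e. use $2^{-2j}\sum_{j'\leq j}\|t^{1/2}\nabla^2 u_{j'}\|_{L^\infty_T(L^2)}$ together with the second-order block estimate $\|t^{1/2}(\nabla^2 u_{j'},\nabla^2 B_{j'})\|_{L^\infty_T(L^2)}\lesssim c_{j',r}2^{j'/2}$; likewise the $\widetilde{L}^2_T(\dot{B}^{1/2}_{2,r})$ bound on $t^{1/2}\partial_t(u,B)$ needs $\|t^{1/2}(\nabla\partial_t u_{j'},\nabla\partial_t B_{j'})\|_{L^2_T(L^2)}\lesssim c_{j',r}2^{j'/2}$. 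These second-order estimates are the heart of the paper's proof (see \eqref{A4.32}--\eqref{A4.37} and \eqref{5.35}): they come from an energy estimate on the time-differentiated system for $(\partial_t u_j,\partial_t B_j)$, where the terms $I_{11}$--$I_{16}$ are handled via Lemmas \ref{L2.3}--\ref{L2.4} and the auxiliary bound \eqref{A4.39} on $\|t^{1/4}(\partial_t u,\partial_t B)\|_{L^2_t(L^2)}$, and where Gronwall produces the factor $e^{C\|(u_0,B_0)\|_{\dot{H}^{1/2}}^2}$. Your proposal omits this entire layer, so the stated conclusion does not follow from the steps you describe.

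Two secondary inaccuracies reinforce this. First, your "Bernstein trade" $\|t^{1/2}u_{j'}\|_{L^2}\lesssim 2^{-j'}\|t^{1/2}\nabla u_{j'}\|_{L^2}$ is not valid: $u_{j'}$ solves the nonlinear system \eqref{A2.2} and is not spectrally supported near $2^{j'}$ for $t>0$; Bernstein may only be applied to the localized pieces $\dot{\Delta}_j u_{j'}$, as in \eqref{A2.14}. Second, you locate the exponential factor in a Gronwall step at the first-order level, but in the paper the first-order weighted estimate \eqref{A4.30}--\eqref{A4.31} follows from the sign-definite inequality \eqref{A4.14} with no Gronwall and no exponential; the exponential genuinely originates only in the second-order estimate you have skipped.
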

\begin{proof}
Multiplying \eqref{A4.14} by $t$, we have
\begin{align}\label{A4.30}
&\frac{{\rm d}}{{\rm d}t}\|t^{1/2}(\nabla u_{j},\nabla B_j)\|_{L^{2}}^{2}+{c}_{4}\|t^{1/2}(\sqrt{\rho} \partial_{t}u_{j},\partial_t B_j, \nabla^{2}u_{j},\nabla^2B_j, \nabla P_{j})\|_{L^{2}}^{2} \leq\, \|(\nabla u_{j},\nabla B_j)\|_{L^{2}}^{2}.    
\end{align}
Integrating \eqref{A4.30} over $[0,t]$ and using \eqref{A4.7}, we get
\begin{align}\label{A4.31}
&\|t^{1/2}(\nabla u_{j},\nabla B_j)\|_{L_{t}^{\infty}(L^{2})}+\|t^{1/2}(\partial_{t}u_{j},\partial_t B_j,\nabla^{2}u_{j},\nabla^2 B_j,\nabla  P_{j})\|_{L_{t}^{2}(L^{2})}\nonumber\\
&\quad \leq  C\|(\nabla u_{j},\nabla B_j)\|_{L_{t}^{2}(L^{2})}\leq C\|\dot{\Delta}_{j}(u_{0},B_0)\|_{L^{2}}.
\end{align}

To achieve \eqref{A4.29}, we need to establish higher-order estimates of $(u,B)$. One deduces from \eqref{A2.23} that 
\begin{align}\label{A4.32}
&\frac{1}{2}\frac{\rm d}{{\rm d}t}\bigg(\int_{\mathbb{R}^{3}}\rho|\partial_{t}u_{j}|^{2}\, {\rm d}x+\int_{\mathbb{R}^{3}}|\partial_{t}B_{j}|^{2}\, {\rm d}x\bigg)+\|\nabla\partial_{t}u_{j}\|_{L^{2}}^{2}+\|\nabla\partial_{t}B_{j}\|_{L^{2}}^{2}\nonumber\\
=\,&-\int_{\mathbb{R}^{3}}(\rho_{t}\partial_t u_{j})\cdot
\partial_{t}u_{j}\, {\rm d}x-\int_{\mathbb{R}^{3}}(\rho_{t}\partial_t u_{j})\cdot
u\nabla u_{j}\, {\rm d}x
-\int_{\mathbb{R}^{3}}(\rho \partial_t u\cdot\nabla u_{j})\cdot\partial_{t}u_{j}\, {\rm d}x\nonumber\\
&+\int_{\mathbb{R}^{3}}(\partial_t B\cdot\nabla u_{j})\cdot\partial_{t}B_{j}\, {\rm d}x-\int_{\mathbb{R}^{3}}(\partial_t u\cdot\nabla B_{j})\cdot\partial_{t}B_{j}\, {\rm d}x-\int_{\mathbb{R}^{3}}(\partial_t B\cdot\nabla B_{j})\cdot\partial_{t}u_{j}\, {\rm d}x\nonumber\\
\equiv:\,&\sum_{j=11}^{16}I_j.
\end{align}
The terms $I_j$ ($i=11,\dots, 16$) are analyzed as follows. First, we easily obtain that
\begin{align}\label{A4.33}
I_{11}+I_{12}+I_{13}\leq&\, C\|(\partial_t u,\partial_t B)\|_{L^2}\|\nabla \partial_t (u_j,B_j)\|_{L^2}
\|(\nabla  u_j, \nabla B_j)\|_{L^2}^{1/2}\|\nabla^2 (u_j,B_j)\|_{L^2}^{1/2}\nonumber\\
&\,+C\|(u,B)\|_{\dot{B}^{1/2}_{2,\infty}}\|(u,B)\|_{\dot{B}^{3/2}_{2,\infty}}\|\nabla \partial_t (u_j,B_j)\|_{L^2}
\|\nabla^2 (u_j,B_j)\|_{L^2}\nonumber\\
&\,+C\|(u,B)\|_{\dot{B}^{1/2}_{2,\infty}}
\|\nabla^2 (u_j,B_j)\|_{L^2}^2.
\end{align}
By using Sobolev's and Gagliardo-Nirenberg inequalities, we have
\begin{align}\label{A4.34}
I_{14}+I_{15}+I_{16}\leq\,& C\|\partial_t B\|_{L^2}\|\partial_t B_j\|_{L^6}\|\nabla u_j\|_{L^3}
+C\|\partial_t u\|_{L^2}\|\partial_t B_j\|_{L^6}\|\nabla B_j\|_{L^3}\nonumber\\
&+\|\partial_t B\|_{L^2}\|\partial_t u_j\|_{L^6}\|\nabla B_j\|_{L^3}\nonumber\\
\leq&\,C\|(\partial_t u,\partial_t B)\|_{L^2}\|\nabla\partial_t (u_j,B_j)\|_{L^2}\|(\nabla  u_j, \nabla B_j)\|_{L^2}^{1/2}\|\nabla^2 (u_j,B_j)\|_{L^2}^{1/2}.
\end{align}
It thus follows from \eqref{A4.32}--\eqref{A4.34} that
\begin{align*}
&\frac{1}{2}\frac{{\rm d}}{{\rm d}t}\|(\sqrt{\rho} \partial_{t}u_{j},\partial_t B_j)\|_{L^{2}}^{2}+\|(\nabla\partial_{t}u_{j},\nabla\partial_{t}B_j)\|_{L^{2}}^{2}\nonumber\\
\leq\,& C\|(\partial_t u,\partial_t B)\|_{L^2}\|\nabla \partial_t (u_j,B_j)\|_{L^2}
\|(\nabla  u_j, \nabla B_j)\|_{L^2}^{1/2}\|\nabla^2 (u_j,B_j)\|_{L^2}^{1/2}\nonumber\\
& +C\|(u,B)\|_{\dot{B}^{1/2}_{2,\infty}}\|(u,B)\|_{\dot{B}^{3/2}_{2,\infty}}\|\nabla \partial_t (u_j,B_j)\|_{L^2}\|\nabla^2 (u_j,B_j)\|_{L^2}\nonumber\\
& +C\|(u,B)\|_{\dot{B}^{1/2}_{2,\infty}}
\|\nabla^2 (u_j,B_j)\|_{L^2}^2\nonumber\\
\leq\,& C\|(\partial_t u,\partial_t B)\|_{L^2}^2
\|(\nabla  u_j, \nabla B_j)\|_{L^2}\| (\sqrt{\rho}\partial_tu_j,\partial_t B_j)\|_{L^2}\nonumber\\
& +C\|(u,B)\|_{\dot{B}^{1/2}_{2,\infty}}^2\|(u,B)\|_{\dot{B}^{3/2}_{2,\infty}}^2\| (\sqrt{\rho}\partial_tu_j,\partial_t B_j)\|_{L^2}^2,
\end{align*}
which allows us to write
\begin{align}\label{A4.36}
&\frac{{\rm d}}{{\rm d}t}\|t^{1/2}(\sqrt{\rho} \partial_{t}u_{j},\partial_t
 B_j)\|_{L^{2}}^{2}+\|t^{1/2}(\nabla\partial_{t}u_{j},\nabla\partial_{t}B_j)\|_{L^{2}}^{2}\nonumber\\
 \leq\,&\|(\sqrt{\rho} \partial_{t}u_{j},\partial_t B_j)\|_{L^{2}}^{2}+C\|(u,B)\|_{\dot{B}_{2,\infty}^{1/2}}^{2}\|(u,B)\|_{\dot{B}_{2,\infty}^{3/2}}^{2}\|t^{1/2}(\sqrt{\rho} \partial_{t}u_{j},\partial_t B_j)\|_{L^{2}}^{2}\nonumber\\
 \,&+C\|t^{1/4} (\partial_t u,\partial_t B)\|_{L^{2}}^{2}\|(\nabla u_{j},\nabla B_j)\|_{L^{2}}\|t^{1/2}(\sqrt{\rho} \partial_{t}u_{j},\partial_t B_j)\|_{L^{2}}\nonumber\\
 \leq\,& C\Big(\|(u,B)\|_{\dot{B}_{2,\infty}^{1/2}}^{2}\|(u,B)\|_{\dot{B}_{2,\infty}^{3/2}}^{2}+\|t^{1/4}(\partial_t u,\partial_t B)\|_{L^{2}}^{2}\Big)\|t^{1/2}(\sqrt{\rho} \partial_{t}u_{j},\partial_t B_j)\|_{L^{2}}^{2}\nonumber\\
 &+C\big(\|(\partial_{t}u_{j},\partial_t B_j)\|_{L^{2}}^{2}+\|t^{1/4}(\partial_t u,\partial_t B)\|_{L^{2}}^{2}\|(\nabla u_{j},\nabla B_j)\|_{L^{2}}^{2}\big).
\end{align}
Employing Gronwall’s inequality for \eqref{A4.36}, we have
\begin{align}\label{A4.37}
&\|t^{1/2}(\partial_{t}u_{j},\partial_t B_j)\|_{L_{t}^{\infty}(L^{2})}^{2}+\|t^{1/2}(\nabla\partial_{t}u_{j},\nabla\partial_{t}B_j)\|_{L_{t}^{2}(L^{2})}^{2} \nonumber\\
&\quad\leq C\|(\partial_{t}u_{j},\partial_t B_j)\|_{L_{t}^{2}(L^{2})}^{2}+C\|t^{1/4}(\partial_t u,\partial_t B)\|_{L_{t}^{2}(L^{2})}^{2}\|(\nabla u_{j},\nabla B_j)\|_{L_{t}^{\infty}(L^{2})}^{2} \nonumber\\
&\quad\quad \times\exp\bigg\{C\|(u,B)\|_{L_{t}^{\infty}(\dot{B}_{2,\infty}^{1/2})}^{2}\|(u,B)\|_{L_{t}^{2}(\dot{B}_{2,\infty}^{3/2})}^{2}+C\| t^{1/4}(\partial_t u,\partial_t B)\|_{L_{t}^{2}(L^{2})}^{2}\bigg\}.
\end{align}
It follows from \eqref{A1.7}, \eqref{A4.4} and \eqref{A4.6} that
\begin{align}\label{A4.38}
\|(u,B)\|_{L_{t}^{\infty}(\dot{B}_{2,\infty}^{1/2})}^{2}\|(u,B)\|_{L_{t}^{2}(\dot{B}_{2,\infty}^{3/2})}^{2}\leq\,& C\|(u,B)\|_{L_{t}^{\infty}(\dot{B}_{2,\infty}^{1/2})}^{2}\|(u,B)\|_{L_{t}^{2}(\dot{H}^{3/2})}^{2}\nonumber\\
\leq\,& C\|(u_{0},B_0)\|_{\dot{B}_{2,\infty}^{1/2}}^{2}\|(u_{0},B_0)\|_{\dot{H}^{1/2}}^{2}\nonumber\\
\leq\,& C\|(u_{0},B_0)\|_{\dot{H}^{1/2}}^{2}.    
\end{align} 
Also, thanks to \eqref{A4.16} and \eqref{A4.31}, it holds
\begin{align}\label{A4.39}
\|t^{1/4} (\partial_t u,\partial_t B)\|_{L_{t}^{2}(L^{2})}^{2}
\leq&\,C\sum_{j\in\mathbb{Z}}\sum_{k\leq j}\|t^{1/2}(\partial_{t}u_{j},\partial_{t}B_j)\|_{L_{t}^{2}(L^{2})}\|\partial_{t}(u_{k},B_k)\|_{L_{t}^{2}(L^{2})}\nonumber\\
\leq&\,C\sum_{j\in\mathbb{Z}}\sum_{k\leq j}\|\dot{\Delta}_{j}(u_{0},B_0)\|_{L^{2}}\|\nabla\dot{\Delta}_{k}(u_{0},B_0)\|_{L^{2}} \nonumber\\
\leq&\,C\|(u_{0},B_0)\|_{\dot{H}^{1/2}}^{2}\sum_{j\in\mathbb{Z}}c_{j}2^{-j/2}\sum_{k\leq j}c_{k}2^{{k}/{2}}\nonumber\\
\leq&\,C\|(u_{0},B_0)\|_{\dot{H}^{1/2}}^{2}\sum_{j\in\mathbb{Z}}c_{j}^{2}\nonumber\\
\leq &\,C\|(u_{0},B_0)\|_{\dot{H}^{1/2}}^{2},
\end{align}
which, together with \eqref{5.888} and Lemma \ref{L2.3}, ensures that
\begin{align}\label{A4.40}
&\|t^{1/4} (\nabla^{2}u,\nabla^2 B,\nabla P)\|_{L_{t}^{2}(L^{2})}\nonumber\\
 \leq\,& C\|t^{1/4} (\partial_t u,\partial_t B)\|_{L_{t}^{2}(L^{2})}+\|t^{1/4} (u,B)\cdot(\nabla u,\nabla B)\|_{L_{t}^{2}(L^{2})}\nonumber\\
 \leq\,& C\|t^{1/4} (\partial_t u,\partial_t B)\|_{L_{t}^{2}(L^{2})}+\|(u,B)\|_{L_{t}^{\infty}(\dot{B}_{2,\infty}^{1/2})}\|t^{1/4}(\nabla^2 u,\nabla^2 B)\|_{L_{t}^{2}(L^{2})}.
\end{align}
Collecting \eqref{A4.4} and \eqref{A4.40} up, we get
\begin{align}\label{A4.41}
\|t^{1/4}(\nabla^{2}u,\nabla^2 B,\nabla P,\partial_t u,\partial_t B)\|_{L_{t}^{2}(L^{2})}\leq C\|t^{1/4}(\partial_t u,\partial_t B)\|_{L_{t}^{2}(L^{2})}\leq C\|(u_{0},B_0)\|_{\dot{H}^{1/2}}. 
\end{align}
Then, substituting \eqref{A4.16}, \eqref{A4.38} and \eqref{A4.39} into \eqref{A4.37} leads to
\begin{align}\label{5.35}
&\|t^{1/2}(\partial_{t}u_{j},\partial_t B_j)\|_{L_{t}^{\infty}(L^{2})}+\|t^{1/2}(\nabla\partial_{t}u_{j},\nabla\partial_t B_j)\|_{L_{t}^{2}(L^{2})}\nonumber\\
&\quad\leq Ce^{C\|(u_{0},B_0)\|_{\dot{H}^{1/2}}^{2}}2^j\|\dot{\Delta}_{j}(u_{0},B_0)\|_{L^{2}},\quad 0\leq t<T^*.
\end{align}

Thus, for any $2\leq r\leq \infty$, we obtain from \eqref{5.888}, \eqref{A4.31}, \eqref{5.35} that
\begin{align*}
&\|t^{1/2}(\nabla  u_j, \nabla B_j)\|_{L_t^\infty(L^2)}+\|t^{1/2}(\partial_tu_j,\partial_t B_j,\nabla^2u_j, \nabla^2 B_j,\nabla  P_j)\|_{L_t^2(L^2)}\nonumber\\
&\quad\leq  C c_{j,r}2^{-j/2}\|(u_0,B_0)\|_{\dot{B}_{2,r}^{1/2}},
\end{align*}
and
\begin{align*}
&\|t^{1/2}(\nabla^2  u_j, \nabla B_j)\|_{L_t^\infty(L^2)}+\|t^{1/2}(\nabla \partial_tu_j,\nabla \partial_t B_j,\nabla^2u_j, \nabla^2 B_j,\nabla  P_j)\|_{L_t^2(L^2)}\nonumber\\
&\quad\leq  Ce^{C\|(u_{0},B_0)\|_{\dot{H}^{1/2}}^{2}} c_{j,r}2^{j/2}\|(u_0,B_0)\|_{\dot{B}_{2,r}^{1/2}},
\end{align*}
which lead to
\begin{align*}
&\|t^{1/2}\nabla\dot{\Delta}_{j}(u,B)\|_{L_{t}^{\infty}(L^{2})}+\|t^{1/2}\dot{\Delta}_{j}(\partial_tu,\partial_tB)\|_{L_{t}^{2}(L^{2})}\nonumber \\
=\,&\sum_{j^{\prime}\geq j}\Big(\|t^{1/2}\dot{\Delta}_{j}(\nabla u_{j^{\prime}},\nabla B_{j^{\prime}})\|_{L_{t}^{\infty}(L^{2})}+\|t^{1/2}\dot{\Delta}_{j}(\partial_{t} u_{j^{\prime}}, \partial_{t}B_{j^{\prime}})\|_{L_{t}^{2}(L^{2})}\Big)\nonumber \\
&+2^{-j}\sum_{j^{\prime}\leq j-1}\Big(\|t^{1/2}\dot{\Delta}_{j}(\nabla^2 u_{j^{\prime}},\nabla^2 B_{j^{\prime}})\|_{L_{t}^{\infty}(L^{2})}+\|t^{1/2}\dot{\Delta}_{j}(\nabla\partial_{t} u_{j^{\prime}}, \nabla\partial_{t}B_{j^{\prime}})\|_{L_{t}^{2}(L^{2})}\Big) \nonumber\\
\leq\,&C\sum_{j^{\prime}\geq j}\|\dot{\Delta}_{j}(u_{0},B_0)\|_{L^{2}}+C2^{-j}\sum_{j^{\prime}\leq j-1}\|\nabla\dot{\Delta}_{j}(u_{0},B_0)\|_{L^{2}},\nonumber\\
\leq\,&C\sum_{j^{\prime}\geq j}c_{j^{\prime},r}2^{-{j^{\prime}}/{2}}\|(u_0,B_0)\|_{\dot{B}_{2,r}^{1/2}}+C2^{-j}\sum_{j^{\prime}\leq j-1}c_{j^{\prime},r}2^{{j^{\prime}}/{2}}\|(u_0,B_0)\|_{\dot{B}_{2,r}^{1/2}}\nonumber\\
\leq\,&C e^{C\|(u_{0},B_0)\|_{\dot{H}^{1/2}}^{2}} c_{j,r}2^{-j/2}\|(u_0,B_0)\|_{\dot{B}_{2,r}^{1/2}}.
\end{align*}
This gives rise to \eqref{A4.29} and completes the proof of Proposition \ref{P4.2}.
\end{proof}

Finally, we establish some additionally uniform bounds of $(u,B)$ leading to the key properties for the uniqueness.

\begin{prop}\label{P4.3}
Under the assumptions in Proposition \ref{P4.3}, it holds that
\begin{align}\label{A4.53}
\|(\nabla u,\nabla B)\|_{L_{t}^{4}(L^{2})}&+\|(u,B)\|_{L^2_T(L^{\infty})}+\|t^{1/4}(\nabla u,\nabla B)\|_{L_{T}^{\infty}(L^{2})}\leq C(u_0,B_0), \\\label{A4.54}
\|t^{3/4}(\partial_{t}u,\partial_t B,\nabla^{2}u,&\nabla^2 B)\|_{L_{T}^{\infty}(L^{2})}+\|t^{3/4}(\nabla^{2}u,\nabla^2B , \nabla P)\|_{L_{T}^{2}(L^{6})}\leq\ C(u_0,B_0), \\\label{A4.55}
&\|t^{1/2}(\nabla u,\nabla B)\|_{L_{T}^{2}(L^{\infty})}\leq C(u_0,B_0),\\\label{A4.56}
\|t^{3/4}(D_{t}u,D_{t}B)\|_{L_{T}^{\infty}(L^{2})}&+\|t^{3/4}(\nabla \partial_t u,\nabla \partial_t B,\nabla D_{t}u,\nabla D_{t} B)\|_{L_{T}^{2}(L^{2})}\leq C(u_0,B_0),
\end{align}
for any $T<T^*$ and $C(u_0,B_0)=Ce^{C\|(u_{0},B_0)\|_{\dot{H}^{1/2}}^{2}} \|(u_{0},B_0)\|_{\dot{H}^{1/2}}$.   
\end{prop}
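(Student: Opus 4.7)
The proof will closely parallel that of Proposition \ref{P3.6}, but now working within the Lebesgue/Chemin--Lerner framework built from the $\dot{B}^{1/2}_{2,\infty}$--smallness and $\dot{H}^{1/2}$--bounds supplied by Propositions \ref{P4.1}--\ref{P4.2}, rather than from the $\dot{B}^{1/2}_{2,1}$--control available in Section 4. The plan is to proceed from the cheapest estimates, obtained by pure interpolation, up to the material--derivative bounds.

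For \eqref{A4.53}, I would first note that $\dot{H}^1=[\dot{H}^{1/2},\dot{H}^{3/2}]_{1/2}$, so the $L^\infty_T(\dot{H}^{1/2})\cap L^2_T(\dot{H}^{3/2})$ bound of Proposition \ref{P4.1} gives $(\nabla u,\nabla B)\in L^4_T(L^2)$ by the Gagliardo--Nirenberg/time--interpolation argument in \eqref{uL4L2:thm2}. For $\|(u,B)\|_{L^2_T(L^\infty)}$, I would use the embedding $\dot{B}^{1/2}_{6,1}\hookrightarrow L^\infty$ combined with \eqref{A4.4}, applied through the Stokes identity for $\eqref{A2.42}$ to convert time--weighted $\partial_t$ bounds into $L^2_T(L^\infty)$ bounds on $(u,B)$, exactly as in the derivation of \eqref{B3.7211}. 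The time--weighted bound $\|t^{1/4}(\nabla u,\nabla B)\|_{L^\infty_T(L^2)}$ follows by the interpolation
\[
t^{1/4}\nabla\dot{\Delta}_j u=(t^{1/2}\nabla\dot{\Delta}_j u)^{1/2}(\nabla \dot{\Delta}_j u)^{1/2}
\]
together with \eqref{A4.16} and \eqref{A4.31}, giving a $\dot{B}^0_{2,2}$-summable bound; cf.\ \eqref{NJKS4.100}.

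For the $t^{3/4}$--weighted estimates in \eqref{A4.54}, I would multiply the energy identity \eqref{A4.36} by $t$ (so that the weight becomes $t^{3/2}$) and apply Gronwall's inequality. The $L^1_t$--integrability of the remainder terms $\|(u,B)\|_{\dot{B}^{1/2}_{2,\infty}}^2\|(u,B)\|_{\dot{B}^{3/2}_{2,\infty}}^2+\|t^{1/4}(\partial_tu,\partial_tB)\|_{L^2}^2$ is already controlled by \eqref{A4.38}--\eqref{A4.39}, yielding an exponential factor $e^{C\|(u_0,B_0)\|^2_{\dot{H}^{1/2}}}$ in the final constant; the source terms $\|t^{1/4}(\partial_tu,\partial_tB)\|_{L^2_T(L^2)}^2$ and $\|t^{1/4}(\nabla u,\nabla B)\|_{L^\infty_T(L^2)}^2$ are already bounded. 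From $\|t^{3/4}(\partial_tu,\partial_tB)\|_{L^\infty_T(L^2)}$ the $\nabla^2$ bound follows from \eqref{NJK 4.96}, and the $L^2_T(L^6)$ bound of $(\nabla^2u,\nabla^2 B,\nabla P)$ is obtained by Stokes applied at the level of $\dot{H}^1$, decomposing the convection term $u\cdot\nabla(u,B)$ via $\|t^{1/4}(u,B)\|_{L^\infty_T(L^6)}\|t^{1/2}(\nabla u,\nabla B)\|_{L^2_T(L^\infty)}$, exactly as in \eqref{NJKS4.101}.

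The Lipschitz estimate \eqref{A4.55} is the most delicate step, since one lacks direct $\ell^1$--summability in frequencies; I would adapt the Lorentz-space interpolation trick around \eqref{ujLinftyL6}--\eqref{D4.95}. Specifically, applying the Stokes regularity to the localized system \eqref{A2.42}, interpolating between $L^2_T(L^6)$ and $L^\infty_T(L^6)$ via Lemma \ref{D A.9} and the real interpolation identity $L^{4,1}(L^6)=(L^2(L^6),L^\infty(L^6))_{1/2,1}$ would produce a $\|t(\nabla^2 u,\nabla^2 B)\|_{L^{4,1}_T(L^6)}$ bound, which then combines with the Gagliardo--Nirenberg interpolation $\|\nabla v\|_{L^\infty}\lesssim\|\nabla^2 v\|_{L^2}^{1/2}\|\nabla^2 v\|_{L^6}^{1/2}$ and $t^{-3/4}\in L^{4/3,\infty}$ to give \eqref{A4.55}. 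The main technical obstacle will be getting the $L^{4,1}_T(L^6)$ bound on $t(\nabla^2u,\nabla^2 B,\nabla P)$ with only the weaker $\dot{B}^{1/2}_{2,\infty}$-type control on $(u,B)$, where some care is required in handling the coupling term $B\cdot\nabla B$ (and $B\cdot\nabla u$) and closing the estimate without losing a factor that blows up with $j$. Finally, \eqref{A4.56} is obtained as in Proposition \ref{prop4.33}: decompose $D_tu=\partial_tu+u\cdot\nabla u$ and apply the bounds of \eqref{A4.54}--\eqref{A4.55} together with Hölder and the already--established $\|t^{1/2}(u,B)\|_{L^\infty_T(L^\infty)}$ bound from \eqref{A4.29} with $r=\infty$.
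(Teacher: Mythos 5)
Your overall architecture (interpolation from Propositions \ref{P4.1}--\ref{P4.2}, weighted energy estimates plus Gronwall, elliptic regularity, Gagliardo--Nirenberg) is the right one, but several of your individual steps rely on $\ell^1$ frequency summability or $L^\infty$ embeddings that are simply not available in the $\dot B^{1/2}_{2,\infty}/\dot H^{1/2}$ framework, and this is precisely where the paper's proof takes a different and necessary route. First, your plan to get $\|(u,B)\|_{L^2_T(L^\infty)}$ via $\dot B^{1/2}_{6,1}\hookrightarrow L^\infty$ ``as in the derivation of \eqref{B3.7211}'' cannot work here: that argument sums $d_j$ over $j\in\mathbb Z$ and needs $\dot B^{1/2}_{2,1}$ data, whereas now the localized pieces are only $\ell^2$-summable. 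The paper instead writes $\|f\|_{L^\infty}^2\lesssim\|\nabla f\|_{L^2}\|\nabla^2 f\|_{L^2}$ and pairs the \emph{negative}-weight bound $\|t^{-1/4}(\nabla u,\nabla B)\|_{L^2_T(L^2)}$ from \eqref{A4.6} with $\|t^{1/4}(\nabla^2u,\nabla^2B)\|_{L^2_T(L^2)}$ from \eqref{A4.41}; you never invoke \eqref{A4.6}, which is the key device of this section. Second, for \eqref{A4.54} you propose multiplying the \emph{localized} inequality \eqref{A4.36} by $t$; Gronwall then yields $\|t^{3/4}\partial_t u_j\|_{L^\infty_t(L^2)}\lesssim c_{j,2}2^{-j/2}\|(u_0,B_0)\|_{\dot H^{1/2}}$, and $\sum_j c_{j,2}2^{-j/2}$ diverges, so you cannot recover the global bound by summation. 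The paper avoids this entirely by running the $t^{3/2}$-weighted Gronwall argument on the \emph{global} $\partial_t$-energy inequality \eqref{A4.62}, whose source term $\|t^{1/4}(\partial_tu,\partial_tB)\|^2_{L^2_tL^2}$ is controlled by the symmetric double-sum trick \eqref{A4.39}.

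Third, your Lorentz-interpolation route for \eqref{A4.55} is both unnecessary and, as you yourself note, blocked by the lack of $\ell^1$ control needed for the $L^{4,1}_T(L^6)$ bound. The paper's proof is one line: $t\|\nabla u\|_{L^\infty}^2\leq\|t^{1/4}\nabla^2u\|_{L^2}\|t^{3/4}\nabla^2u\|_{L^6}$ followed by Cauchy--Schwarz in time, using \eqref{A4.41} and \eqref{A4.67} (the apparent circularity with \eqref{A4.54} is resolved inside the proof of \eqref{A4.67} by a Young-absorption of $\|t^{3/4}\nabla^2(u,B)\|_{L^2_t(L^6)}^{1/2}$). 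Finally, for \eqref{A4.56} you invoke a bound $\|t^{1/2}(u,B)\|_{L^\infty_T(L^\infty)}$ ``from \eqref{A4.29} with $r=\infty$''; but \eqref{A4.29} only gives $\widetilde L^\infty_T(\dot B^{3/2}_{2,\infty})$, and $\dot B^{3/2}_{2,\infty}$ does not embed into $L^\infty$. The paper handles the convection terms with the Lorentz product laws of Lemmas \ref{L2.5}--\ref{L2.6}, pairing $u\in L^{3,\infty}$ (from $\dot B^{1/2}_{2,\infty}\hookrightarrow L^{3,\infty}$) with $\nabla u\in L^{6,2}$ (from $\dot H^1\hookrightarrow L^{6,2}$). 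Each of these four points is a genuine gap that your proposal, as written, does not close.
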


\begin{proof}
According to \eqref{A1.7}, \eqref{A4.4} and \eqref{A4.29}, we have
\begin{align}\label{A4.57}
\|(\nabla u,\nabla B)\|_{L_{t}^{4}(L^{2})}^{2}\leq &\,C\|(u,B)\|_{L_{t}^{\infty}(\dot{B}_{2,\infty}^{1/2})}\|(u,B)\|_{\widetilde{L}_{t}^{2}(\dot{B}_{2,\infty}^{3/2})}\nonumber\\
\leq &\,C\|(u,B)\|_{L_{t}^{\infty}(\dot{B}_{2,\infty}^{1/2})}\|(u,B)\|_{{L}_{t}^{2}(\dot{H}^{3/2})}\nonumber\\
\leq&\, C\|(u_{0},B_0)\|_{\dot{B}_{2,\infty}^{1/2}}\|(u_{0},B_0)\|_{\dot{H}^{1/2}}, \nonumber\\
\leq&\, C\|(u_{0},B_0)\|_{\dot{H}^{1/2}}^2, 
\end{align}
and
\begin{align}\label{A4.58}
\|t^{1/4}(\nabla u,\nabla B)\|_{L_{t}^{\infty}(L^2)}^{2}\leq &\,C\|(u,B)\|_{L_{t}^{\infty}(\dot{B}_{2,\infty}^{1/2})}\|t^{1/2}(u,B)\|_{L_{t}^{\infty}(\dot{B}_{2,\infty}^{3/2})}\nonumber\\
\leq&\,C\|(u_{0},B_0)\|_{\dot{B}_{2,\infty}^{1/2}}^{2}.
\end{align}
By virtue of Gagliardo-Nirenberg inequality $\|f\|_{L^\infty}^2\leq C\|\nabla f\|_{L^2}\|\nabla^2 f\|_{L^2}$, we deduce from \eqref{A4.6} and \eqref{A4.41}
that
\begin{align}\label{A4.46}
\|(u,B)\|_{L_{t}^{2}(L^{\infty})}^{2}\, \leq&\, C\int_{0}^{t}\|{t^{\prime}}^{-\frac{1}{4}}(\nabla u,\nabla B)\|_{L^{2}}\|{t^{\prime}}^{\frac{1}{4}}\nabla^{2} (u,B)\|_{L^{2}} {\rm d}{t^{\prime}} \nonumber\\
 \leq &\, C\|t^{-\frac{1}{4}}(\nabla u,\nabla B)\|_{L_{t}^{2}(L^{2})}\|t^{1/4}(\nabla^2 u,\nabla^2 B)\|_{L_{t}^{2}(L^{2})}\nonumber\\
\leq&\,C\|(u_{0},B_0)\|_{\dot{H}^{1/2}}.
\end{align}
The desired \eqref{A4.53} follows from  \eqref{A4.57}--\eqref{A4.46}.

Let us prove the inequality \eqref{A4.54}. Similar to the arguments used in \eqref{A4.32}, a direct calculation yields
\begin{align*}
&\frac{1}{2}\frac{\rm d}{{\rm d}t}\bigg(\int_{\mathbb{R}^{3}}\rho|\partial_{t}u_{j}|^{2}\, {\rm d}x+\int_{\mathbb{R}^{3}}|\partial_{t}B_{j}|^{2}\, {\rm d}x\bigg)+\|\nabla\partial_{t}u_{j}\|_{L^{2}}^{2}+\|\nabla\partial_{t}B_{j}\|_{L^{2}}^{2}\nonumber\\
=\,&-\int_{\mathbb{R}^{3}}(\rho_{t}\partial_t u_{j})\cdot
\partial_{t}u_{j}\, {\rm d}x-\int_{\mathbb{R}^{3}}(\rho_{t}\partial_t u_{j})\cdot
u\nabla u_{j}\, {\rm d}x
-\int_{\mathbb{R}^{3}}(\rho \partial_t u\cdot\nabla u_{j})\cdot\partial_{t}u_{j}\, {\rm d}x\nonumber\\
& +\int_{\mathbb{R}^{3}}(\partial_t B\cdot\nabla u_{j})\cdot\partial_{t}B_{j}\, {\rm d}x-\int_{\mathbb{R}^{3}}(\partial_t u\cdot\nabla B_{j})\cdot\partial_{t}B_{j}\, {\rm d}x-\int_{\mathbb{R}^{3}}(\partial_t B\cdot\nabla B_{j})\cdot\partial_{t}u_{j}\, {\rm d}x\nonumber\\
\leq\,&C\|(u,B)\|_{\dot{B}_{2,\infty}^{1/2}}^2\|(u,B)\|_{\dot{B}_{2,\infty}^{3/2}}^2\|\nabla^2(u,B)\|_{L^2}^2+C\|(\sqrt{\rho}\partial_tu,\partial_t B)\|_{L^2}^2\|(\nabla u,\nabla B)\|_{\dot{H}^{1/2}}^2.
\end{align*}
From \eqref{I-1} and Lemma \ref{L2.3}, we have
\begin{align}\label{A4.60}
&\|(\nabla^{2}u,\nabla^2B,\nabla P)\|_{L^{2}}^{2}\nonumber\\
\leq&\, C(\|\sqrt{\rho} \partial_{t}u\|_{L^{2}}^{2}+\|u\cdot\nabla u\|_{L^{2}}^{2}+\|B\cdot\nabla B\|_{L^{2}}^{2}+\|u\cdot\nabla B\|_{L^{2}}^{2}+\|B\cdot\nabla u\|_{L^{2}}^{2})\nonumber\\
\leq&\, C\|(\sqrt{\rho} \partial_{t}u,\partial_t B)\|_{L^{2}}^{2}+C\|(u,B)\|_{\dot{B}_{2,\infty}^{2}}^{2}\|\nabla^{2} (u,B)\|_{L^{2}}^{2},   
\end{align}
which, together with \eqref{A1.7} and \eqref{A4.4},  gives
\begin{align}\label{A4.61}
\|(\nabla^2u,\nabla^2B, \nabla P)\|_{L^2}^2\leq C\|\sqrt{\rho} \partial_{t}u\|_{L^2}^2+C\|\partial_t B\|_{L^2}^2.    
\end{align}
Combining \eqref{A4.60} and \eqref{A4.61} up, we derive that
\begin{align}\label{A4.62}
\frac{{\rm d}}{{\rm d}t}\|(\sqrt{\rho} &\partial_tu,\partial_t B)\|_{L^2}^2+\|\nabla\partial_t(u,B)\|_{L^2}^2\nonumber\\
\leq\,&C\Big(\|(u,B)\|_{\dot{B}_{2,\infty}^{1/2}}^2\|(u,B)\|_{\dot{B}_{2,\infty}^{3/2}}^2+\|(\nabla u,\nabla B)\|_{\dot{H}^{1/2}}^2\Big)\|(\sqrt{\rho} \partial_tu,\partial_t B)\|_{L^2}^2.
\end{align}
By multiplying \eqref{A4.62} by $t^{3/2}$, we obtain that
\begin{align*}
&\frac{{\rm d}}{{\rm d}t}\|t^{3/4}(\sqrt{\rho} \partial_{t}u,\partial_t B)\|_{L^{2}}^{2}+\|t^{3/4}(\nabla\partial_{t}u, \nabla\partial_{t}B)\|_{L^{2}}^{2} 
\\
&\quad\leq\frac{3}{2}\|t^{1/4}(\partial_{t}u,\partial_t B)\|_{L^{2}}^{2}\\
& \quad\quad  +C\Big(\|(u,B)\|_{\dot{B}_{2,\infty}^{1/2}}^2\|(u,B)\|_{\dot{B}_{2,\infty}^{3/2}}^2+\|(\nabla u,\nabla B)\|_{\dot{H}^{1/2}}^2\Big)\|(\sqrt{\rho} \partial_tu,\partial_t B)\|_{L^2}^2.
\end{align*}
Using Gronwall's inequality, we have
\begin{align*}
&\|t^{3/4}(\partial_{t}u,\partial_t B)\|_{L_{t}^{\infty}(L^{2})}^{2}+\|t^{3/4}(\nabla\partial_{t}u, \nabla\partial_{t}B)\|_{L_{t}^{2}(L^{2})}^{2}\nonumber \\
&\quad \leq \frac{3}{2}\|t^{1/4}(\partial_t u,\partial_t B)\|_{L_{t}^{2}(L^{2})}^{2} \exp\Big\{C\Big(1+ \|(u,B)\|_{L_{t}^{\infty}(\dot{B}_{2,\infty}^{1/2})}^{2}\Big) \|(u,B)\|_{L_{t}^{2}(\dot{H}^{3/2})}^{2}\Big\}.
\end{align*}
Combines this with \eqref{A4.4}, \eqref{A4.6}, \eqref{A4.39} and \eqref{A4.61}, we obtain that
\begin{align}\label{A4.65}
&\|t^{3/4}(\partial_{t}u,\nabla^{2}u,\nabla^2 B)\|_{L_{t}^{\infty}(L^{2})}^{2}+\|t^{3/4}(\nabla\partial_{t}u, \nabla\partial_{t}B)\|_{L_{t}^{2}(L^{2})}^{2}\nonumber\\
&\quad\leq  C   e^{C\|(u_{0},B_0)\|_{\dot{H}^{1/2}}^{2}} \|(u_{0},B_0)\|_{\dot{H}^{1/2}}^{2}. 
\end{align}
From \eqref{I-1} and the classical elliptic theory, one infers
\begin{align}
&\|t^{3/4}(\nabla^{2}u,\nabla^2B, \nabla P)\|_{L_{t}^{2}(L^{6})}\nonumber\\ \leq\,&\|t^{3/4}\rho(\partial_{t}u+u\cdot\nabla u)\|_{L_{t}^{2}(L^{6})}+\|t^{3/4}(B\cdot\nabla u,B\cdot\nabla B,u\cdot\nabla B)\|_{L_{t}^{2}(L^{6})} \nonumber\\
\leq\,&C\|t^{3/4}(\partial_{t}u,\partial_t B)\|_{L_{t}^{2}(L^{6})}+C\|t^{1/4}(u,B)\|_{L_{t}^{\infty}(L^{6})}\|t^{1/2}(\nabla u,\nabla B)\|_{L_{t}^{2}(L^{\infty})}\nonumber\\
\leq\,&C\|t^{3/4}\nabla(\partial_{t}u,\partial_t B)\|_{L_{t}^{2}(L^{2})}\nonumber\\
&+C\|t^{1/4}(\nabla u,\nabla B)\|_{L_{t}^{\infty}(L^{2})}\|t^{1/4}(\nabla u,\nabla B)\|_{L_{t}^{2}(L^{6})}^{1/2}\|t^{3/4}\nabla^{2} (u,B)\|_{L_{t}^{2}(L^{6})}^{1/2}\nonumber\\
\leq\,&C\|t^{3/4}\nabla(\partial_{t}u,\partial_t B)\|_{L_{t}^{2}(L^{2})}+\|(u,B)\|_{L_{t}^{\infty}(\dot{H}^{1/2})}\|t^{1/2}(u,B)\|_{L_{t}^{\infty}(\dot{H}^{3/2})}\|t^{3/4}(\nabla^2 u,\nabla^2 B)\|_{L_{t}^{2}(L^{2})}.\nonumber
\end{align}
This, combined with \eqref{A4.6}, \eqref{A4.29} with $r=2$, and \eqref{A4.65}, ensures that
\begin{align}\label{A4.67}
\|t^{3/4}(\nabla^{2}u,\nabla^2B, \nabla P)\|_{L_{t}^{2}(L^{6})}\leq C e^{C\|(u_{0},B_0)\|_{\dot{H}^{1/2}}^{2}} \|(u_{0},B_0)\|_{\dot{H}^{1/2}}.    
\end{align}
In accordance with \eqref{A4.65} and \eqref{A4.67}, \eqref{A4.54} is proved.

With the aid of \eqref{A4.39}, \eqref{A4.54} and the Gagliardo-Nirenberg-Sobolev inequality $$\|f\|_{L^{\infty}}\leq C\|f\|_{L^2}^{1/2}\|\nabla f\|_{L^6}^{1/2},$$ we get
\begin{align*}
\|t^{1/2}(\nabla u,\nabla B)\|_{L_{t}^{2}(L^{\infty})}^{2}
\leq\,& C\int_{0}^{t}\tau \|(\nabla u,\nabla B)(\tau))\|_{L^{\infty}}^{2} {\rm d}\tau\nonumber\\
\leq\,&C\int_{0}^{t} \|\tau^{\frac{1}{4}}\nabla^{2} (u,B)(\tau)\|_{L^{2}}\|\tau^{3/4}\nabla^{2} (u,B)(\tau)\|_{L^{6}} {\rm d}\tau \nonumber\\
\leq\,&C\|t^{1/4}\nabla^{2} (u,B)\|_{L_{t}^{2}(L^{2})}\|t^{3/4}\nabla^{2} (u,B)\|_{L_{t}^{2}(L^{6})}\nonumber\\
\leq\,&Ce^{C\|(u_{0},B_0)\|_{\dot{H}^{1/2}}^{2}}\|(u_{0},B_0)\|_{\dot{H}^{1/2}}^{2}. 
\end{align*}
Hence, \eqref{A4.55} holds.

Let us show the last estimate \eqref{A4.56}. Using the definition $D_t=\partial_t+u\cdot\nabla$, \eqref{A4.31} and Lemmas \ref{lemmabesov}, \ref{L2.3} and \ref{L2.5}, we obtain that
\begin{align}
&\|t^{1/2}(D_{t} u_{j}, D_{t} B_j)\|_{L_{t}^{\infty}(L^{2})}+\|t^{1/2} (\nabla D_{t} u_{j}, \nabla D_{t} B_j)\|_{L_{t}^{2}(L^{2})} \nonumber\\
\leq\,& C\|t^{1/2}(\partial_{t}u_{j},\partial_t B_j)\|_{L_{t}^{\infty}(L^{2})}+C\|t^{1/2}(u\cdot\nabla u_{j},u\cdot\nabla B_{j})\|_{L_{t}^{\infty}(L^{2})}\nonumber\\
& +C\|t^{1/2}(\nabla\partial_{t}u_{j},\nabla\partial_{t}B_j)\|_{L_{t}^{2}(L^{2})}+C\|t^{1/2}(\nabla(u\cdot\nabla)u_{j},\nabla(u\cdot\nabla)B_{j})\|_{L_{t}^{2}(L^{2})}  \nonumber\\
\leq\,& C2^{j}\|\dot{\Delta}_{j}(u_{0},B_0)\|_{L^{2}}+C\|u\|_{L_{t}^{\infty}(L^{3,\infty})}\|t^{1/2}(\nabla u_{j},\nabla B_j)\|_{L_{t}^{\infty}(L^{6,2})}\nonumber \\
& +C\|u\|_{L_{t}^{\infty}(L^{3})}\|t^{1/2}(\nabla^{2}u_{j}, \nabla^{2} B_j)\|_{L_{t}^{2}(L^{6})}+C\|\nabla u\|_{L_{t}^{2}(L^{3,\infty})}\|t^{1/2}(\nabla u_{j},\nabla B_j)\|_{L_{t}^{\infty}(L^{6,2})}\nonumber \\
\leq\, &C2^{j}\|\dot{\Delta}_{j}(u_{0},B_0)\|_{L^{2}}+C\|u\|_{L_{t}^{\infty}(\dot{B}_{2,\infty}^{1/2})}\|t^{1/2}(\nabla^{2}u_{j}, \nabla^{2} B_j)\|_{L_{t}^{\infty}(L^{2})} \nonumber\\
& +C\|u\|_{L_{t}^{\infty}(\dot{H}^{1/2})}\|t^{1/2}(\nabla^{2}u_{j}, \nabla^{2} B_j)\|_{L_{t}^{2}(L^{6})}+C\|\nabla u\|_{L_{t}^{2}(\dot{B}_{2,\infty}^{1/2})}\|t^{1/2}(\nabla^{2}u_{j}, \nabla^{2} B_j)\|_{L_{t}^{\infty}(L^{2})}\nonumber.
\end{align}
This, together with \eqref{A4.4} and \eqref{A4.6}, indicates that
\begin{align}\label{A4.44}
&\|t^{1/2}(D_{t} u_{j}, D_{t} B_j)\|_{L_{t}^{\infty}(L^{2})}+\|t^{1/2} (\nabla D_{t} u_{j}, \nabla D_{t} B_j)\|_{L_{t}^{2}(L^{2})} \nonumber \\
&\quad \leq Ce^{C\|(u_{0},B_0)\|_{\dot{H}^{1/2}}^{2}}  2^{j}\|\dot{\Delta}_{j}(u_{0},B_0)\|_{L^{2}}+C\|(u_{0},B_0)\|_{\dot{H}^{1/2}}\|t^{1/2}(\nabla^{2}u_{j}, \nabla^{2} B_j)\|_{L_{t}^{2}(L^{6})}.
\end{align}
To proceed, it follows from the standard elliptic estimates for \eqref{A2.42} that
\begin{align}\label{A4.45}
&\,\|t^{1/2}(\nabla^{2}u_{j},\nabla^2 B_j, \nabla P_{j})\|_{L_{t}^{2}(L^{6})}\nonumber\\
\leq&\, C\|t^{1/2}\rho (\partial_{t}u_{j}+u\cdot\nabla u_{j})+B\cdot\nabla B_j\|_{L_{t}^{2}(L^{6})} \nonumber\\
&\,+C\|t^{1/2} (\partial_{t}B_{j}+u\cdot\nabla B_{j})+B\cdot\nabla u_j\|_{L_{t}^{2}(L^{6})}\nonumber\\
\leq&\,C\|t^{1/2}(\partial_{t}u_{j},\partial_t B_j)\|_{L_{t}^{2}(L^{6})}+C\|(u,B)\|_{L_{t}^{2}(L^{\infty})}\|t^{1/2}(\nabla u_{j},\nabla B_j)\|_{L_{t}^{\infty}(L^{6})} \nonumber\\
\leq&\,C\|t^{1/2}(\nabla \partial_{t} u_{j},\nabla \partial_{t} B_j)\|_{L_{t}^{2}(L^{2})}+C\|(u,B)\|_{L_{t}^{2}(L^{\infty})}\|t^{1/2}(\nabla^{2}u_{j}, \nabla^{2} B_j)\|_{L_{t}^{\infty}(L^{2})}.
\end{align}
Putting \eqref{5.35} and \eqref{A4.46} into \eqref{A4.45}, we arrive at
\begin{align*}
&\|t^{1/2}(\nabla^{2}u_{j},\nabla^2 B_j, \nabla P_{j})\|_{L_{T}^{2}(L^{6})}\nonumber\\
\leq\,&C\|t^{1/2}(\nabla \partial_{t} u_{j},\nabla \partial_{t} B_j)\|_{L_{t}^{2}(L^{2})}+C\|(u_{0},B_0)\|_{\dot{H}^{1/2}}\|t^{1/2}(\nabla^{2}u_{j}, \nabla^{2} B_j)\|_{L_{t}^{\infty}(L^{2})} \nonumber\\
\leq\,& C2^{j}\|\dot{\Delta}_{j}(u_{0},B_0)\|_{L^{2}}.
\end{align*}
This, combined with \eqref{A4.44}, yields
\begin{align*}
&\|t^{1/2}(D_{t} u_{j}, D_{t} B_j)\|_{L_{t}^{\infty}(L^{2})}+\|t^{1/2} (\nabla D_{t} u_{j}, \nabla D_{t} B_j)\|_{L_{t}^{2}(L^{2})} \nonumber \\
&\quad\leq Ce^{C\|(u_{0},B_0)\|_{\dot{H}^{1/2}}^{2}}  2^{j}\|\dot{\Delta}_{j}(u_{0},B_0)\|_{L^{2}}.
\end{align*}
Using Lemmas \ref{L2.4}--\ref{L2.5}, we further get
\begin{align}
&\,\|t^{3/4}(D_{t}u,D_{t}B)\|_{L_{t}^{\infty}(L^{2})}+\|t^{3/4}(\nabla D_{t}u,\nabla D_{t}B)\|_{L_{t}^{2}(L^{2})} \nonumber\\
\leq&\,C\|t^{3/4}(\partial_{t}u, \partial_{t}B)\|_{L_{t}^{\infty}(L^{2})}+C\|t^{3/4}(u,B)\cdot(\nabla u,\nabla B)\|_{L_{t}^{\infty}(L^{2})}\nonumber\\
&\,+C\|t^{3/4}(\nabla\partial_{t}u, \nabla\partial_{t}B)\|_{L_{t}^{2}(L^{2})}+C\|t^{3/4}\big(\nabla(u\cdot\nabla)u,\nabla(u\cdot\nabla)B\big)\|_{L_{t}^{2}(L^{2})} \nonumber\\
\leq&\,C\|t^{3/4}(\partial_{t}u, \partial_{t}B)\|_{L_{t}^{\infty}(L^{2})}+C\|(u,B)\|_{L_{t}^{\infty}(L^{3,\infty})}\|t^{3/4}(\nabla u,\nabla B)\|_{L_{t}^{\infty}(L^{6,2})} \nonumber\\
&\,+C\|(u,B)\|_{L_{t}^{\infty}(L^{3})}\|t^{3/4}(\nabla^2 u,\nabla^2 B)\|_{L_{t}^{2}(L^{6})}\nonumber\\
&\,+\|(\nabla u,\nabla B)\|_{L_{t}^{2}(\dot{H}^{1/2})}\|t^{3/4}(\nabla^2 u,\nabla^2 B)\|_{L_{t}^{\infty}(L^{2})} \nonumber\\
\leq&\,Ce^{C\|(u_{0},B_0)\|_{\dot{H}^{1/2}}^{2}} \|(u_{0},B_0)\|_{\dot{H}^{1/2}}+\|(u,B)\|_{L_{t}^{\infty}(\dot{B}_{2,\infty}^{1/2})}\|t^{3/4}(\nabla^2 u,\nabla^2 B)\|_{L_{t}^{\infty}(L^{2})} \nonumber\\
&\,+\|(u,B)\|_{L_{t}^{\infty}(\dot{H}^{1/2})}\|t^{3/4}(\nabla^2 u,\nabla^2 B)\|_{L_{t}^{2}(L^{6})}\nonumber\\
&\,+\|(\nabla u,\nabla B)\|_{L_{t}^{2}(\dot{H}^{1/2})}\|t^{3/4}(\nabla^2 u,\nabla^2 B)\|_{L_{t}^{\infty}(L^{2})},\nonumber
\end{align}
which together with \eqref{A2.2} and \eqref{A4.54} implies \eqref{A4.56}. Thus, we complete the proof of Proposition \ref{P4.3}.
\end{proof}

With Propositions \ref{P4.1}--\ref{P4.3} in hand, now we state the proof of Theorem \ref{T2.2}.

\vspace{1mm}
 
\begin{proof}[Proof of Theorem \ref{T2.2}] 
Similar to the proofs in Theorems \ref{T2.3}--\ref{T2.1}, we can obtain an approximate solution sequence $\{(\rho^{\varepsilon},u^{\varepsilon},B^{\varepsilon},\nabla P^{\varepsilon})\}_{0<\varepsilon<1}$ which are global in time. Furthermore, based on the uniform bounds obtained in Propositions \ref{P4.1}--\ref{P4.3} and a standard compactness argument, one can show that as $\varepsilon\rightarrow 0$, $(\rho^{\varepsilon},u^{\varepsilon},B^{\varepsilon},\nabla P^{\varepsilon})$ converges to the limit $(\rho,u,B,\nabla P)$, which is a global solution to the Cauchy problem \eqref{I-1}--\eqref{d}. Furthermore, according to Fatou's property, we can show that $(\rho,u,B,\nabla P)$ obeys \eqref{A1.8}--\eqref{1.22}. The property $u,B\in$ $C([0,\infty),\dot{H}^{1/2})$ can be directly obtained
by using a similar process to that in \cite[Page 18]{AGZ-arXiv-2024}. In addition, as in \eqref{3.64}--\eqref{3.67}, we can prove the uniqueness based on Theorem \ref{thmunique} and the bounds \eqref{A1.8}--\eqref{1.22}. For brevity, we omit the details. Thus, the proof of Theorem \ref{T2.2} is completed.
\end{proof}

\section{Uniqueness of solutions}

This section is devoted to the stability estimates in Proposition \ref{propunique} below, which, in particular, implies Theorem \ref{thmunique} on the uniqueness 
by taking the same initial data.

\begin{prop}\label{propunique}
Let $(\rho_{1},u_{1},B_{1},\nabla P_{1})$ and $(\rho_{2},u_{2},B_{2},\nabla P_{2})$ be two solutions to the system \eqref{I-1} on $[0,T]\times\mathbb{R}^3$ supplemented with the same initial density $\rho_0$ but with possibly different initial velocity and magnetic field $(u_{0,1},B_{0,1})$ and $(u_{0,2},B_{0,2})$. Denote
\begin{equation*}
    \begin{aligned}
    f_1(t):=\|t^{1/2}\nabla u_2(t)\|_{L^{\infty}}^2, \quad f_2(t):=\|(\nabla u_2,\nabla B_2)(t)\|_{L^2}^4+\|t^{3/4}\nabla D_t u_2(t)\|_{L^2}^2.
    \end{aligned}
\end{equation*}Then there exists a constant $C$ and a small time $T_0\in(0,T]$ depending on $\|(\rho_1,\rho_2)\|_{L^{\infty}_T(L^{\infty})}$  such that the following stability estimate holds{\rm:}
\begin{align}
&\sup_{t\in[0,T]}\Big\{\|\sqrt{\rho_{1}}(u_1-u_2)(t)\|_{L^2}^2+\|(B_1-B_2)(t)\|_{L^2}^2+t^{-3/2}\|(\rho_1-\rho_2)(t)\|_{\dot{W}^{-1,3}}^2\Big\}\nonumber\\
&\quad+ \int_0^T \|(\nabla (u_1-u_2),\nabla(B_1-B_2)(t)\|_{L^2}^2 \,{\rm d}t\nonumber\\
&\quad\quad\quad\leq C\exp\bigg\{C\int_0^T f_2(t)\,{\rm d}t\, \exp\bigg\{C\int_0^Tf_1(t)\,{\rm d}t\bigg\}\bigg\}\nonumber\\
&\quad\quad\quad\quad \times \Big\{\|\sqrt{\rho_{0}}(u_{0,1}-u_{0,2})\|_{L^2}^2+\| B_{0,1}-B_{0,2}\|_{L^2}^2\Big\}.\label{stability}
\end{align}

\end{prop}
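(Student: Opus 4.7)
The plan is to work with the differences $(\delta\rho,\delta u,\delta B,\nabla\delta P):=(\rho_1-\rho_2,u_1-u_2,B_1-B_2,\nabla P_1-\nabla P_2)$, which satisfy
\begin{align*}
  &\partial_t\delta\rho + \mathrm{div}(u_1\delta\rho) + \mathrm{div}(\rho_2\,\delta u)=0,\\
  &\rho_1(\partial_t\delta u+u_1\cdot\nabla\delta u) - \Delta\delta u + \nabla\delta P = -\delta\rho\, D_{t,2}u_2 - \rho_1\delta u\cdot\nabla u_2 + B_1\cdot\nabla\delta B + \delta B\cdot\nabla B_2,\\
  &\partial_t\delta B+u_1\cdot\nabla\delta B - \Delta\delta B = B_1\cdot\nabla\delta u + \delta B\cdot\nabla u_2 - \delta u\cdot\nabla B_2,
\end{align*}
with $\delta\rho|_{t=0}=0$ and $D_{t,2}:=\partial_t+u_2\cdot\nabla$. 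The first step is to test the momentum equation by $\delta u$ and the induction equation by $\delta B$ and sum; the cancellation $\int B_1\cdot\nabla(\delta u\cdot\delta B)\,\mathrm{d}x=0$ (from $\mathrm{div}\,B_1=0$) eliminates the leading magnetic interaction, leaving
\begin{equation*}
  \tfrac12\tfrac{\mathrm{d}}{\mathrm{d}t}\bigl(\|\sqrt{\rho_1}\delta u\|_{L^2}^2+\|\delta B\|_{L^2}^2\bigr) + \|\nabla\delta u\|_{L^2}^2 + \|\nabla\delta B\|_{L^2}^2 = -\langle\delta\rho\,D_{t,2}u_2,\delta u\rangle + \mathcal{N}(\delta u,\delta B),
\end{equation*}
where $\mathcal{N}$ collects the quadratic lower-order terms built from $\nabla u_2$ and $\nabla B_2$.

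\textbf{Bounding the right-hand side.}
The critical contribution is the first one, which I would handle by the $\dot{W}^{-1,3}$--$\dot{W}^{1,3/2}$ duality:
\begin{equation*}
  |\langle\delta\rho\,D_{t,2}u_2,\delta u\rangle|\lesssim \|\delta\rho\|_{\dot{W}^{-1,3}}\bigl(\|\nabla D_{t,2} u_2\|_{L^2}\|\delta u\|_{L^6} + \|D_{t,2}u_2\|_{L^6}\|\nabla\delta u\|_{L^2}\bigr).
\end{equation*}
Inserting the weights $t^{-3/4}\|\delta\rho\|_{\dot{W}^{-1,3}}$ and $t^{3/4}\|\nabla D_{t,2}u_2\|_{L^2}$, using $\|\delta u\|_{L^6}\lesssim\|\nabla\delta u\|_{L^2}$ and Sobolev embedding on $D_{t,2}u_2$, and applying Young's inequality to absorb the dissipation, this contribution becomes $\lesssim f_2(t)\bigl(t^{-3/2}\|\delta\rho\|_{\dot{W}^{-1,3}}^2\bigr)$ plus half of the parabolic terms on the left. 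For the pieces of $\mathcal{N}$ involving $\nabla u_2$, the bound $\|\nabla u_2\cdot\delta v\|_{L^2}\leq\|\nabla u_2\|_{L^\infty}\|\delta v\|_{L^2}$ produces the coefficient $f_1(t)$ via the $t^{1/2}$ weight; for those with $\nabla B_2$, the splitting $\|\delta v\|_{L^3}\lesssim\|\delta v\|_{L^2}^{1/2}\|\nabla\delta v\|_{L^2}^{1/2}$ together with $\nabla B_2\in L^4L^2$ yields the $\|(\nabla u_2,\nabla B_2)\|_{L^2}^4$ contribution inside $f_2$.

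\textbf{Density estimate and Gronwall closure.}
To control $\|\delta\rho(t)\|_{\dot{W}^{-1,3}}$ I would use duality: for any test $g$ with $\|g\|_{\dot{W}^{1,3/2}}\leq 1$, solve the backward transport $\partial_s\phi+u_1\cdot\nabla\phi=0$ on $[0,t]$ with terminal datum $\phi(t)=g$; then the identity
\begin{equation*}
  \langle\delta\rho(t),g\rangle=-\int_0^t\!\!\int \rho_2\,\delta u\cdot\nabla\phi\,\mathrm{d}x\,\mathrm{d}s
\end{equation*}
follows from $\delta\rho(0)=0$. The hard part is controlling $\|\nabla\phi(s)\|_{L^{3/2}}$ uniformly in $s$: its evolution $\partial_s\nabla\phi+u_1\cdot\nabla\nabla\phi=-(\nabla u_1)^\top\nabla\phi$ would normally require the unavailable Lipschitz bound $\nabla u_1\in L^1_tL^\infty_x$. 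Following the approach of \cite{HSWZ-arXiv-2024-08}, I would mollify $u_1\to u_1^\varepsilon$, apply DiPerna--Lions flow theory to the regularized backward transport, use $\delta u\in L^\infty_tL^2\cap L^2_t\dot{H}^1\hookrightarrow L^4_tL^3$ to bound the right-hand side uniformly in $\varepsilon$, and pass to the limit, obtaining an estimate of the form $t^{-3/4}\|\delta\rho(t)\|_{\dot{W}^{-1,3}}\lesssim\bigl(\sup_{s\leq t}\|\sqrt{\rho_1}\delta u(s)\|_{L^2}\bigr)^{1/2}\cdot(\text{dissipation factor})^{1/2}$. Finally, setting $Y(t):=\|\sqrt{\rho_1}\delta u\|_{L^2}^2+\|\delta B\|_{L^2}^2+t^{-3/2}\|\delta\rho\|_{\dot{W}^{-1,3}}^2$, collecting the above produces
\begin{equation*}
  \tfrac{\mathrm{d}}{\mathrm{d}t}Y(t)+\tfrac12\bigl(\|\nabla\delta u\|_{L^2}^2+\|\nabla\delta B\|_{L^2}^2\bigr)\lesssim\bigl(f_1(t)+f_2(t)\bigr)Y(t)
\end{equation*}
on some short interval $[0,T_0]$ depending only on $\|\rho_i\|_{L^\infty(0,T;L^\infty)}$, and the nested-exponential form of \eqref{stability} then follows from Gronwall's inequality applied with $f_1$ multiplying an already-exponential factor arising from the $J_1$-term; a standard iteration argument extends the bound to $[0,T]$.
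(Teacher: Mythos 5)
Your overall architecture matches the paper's: energy estimates for $\delta u$ and $\delta B$ with the cancellation of the $B_1\cdot\nabla$ interaction terms, a duality estimate for $\delta\rho$ in $\dot{W}^{-1,3}$ via a backward transport equation, the $t^{3/4}$-weighted treatment of $\langle\delta\rho\,D_tu_2,\delta u\rangle$ producing $f_2$, and a Gronwall closure giving the nested exponential. However, there is one genuine gap in the density step. You decompose the continuity equation as $\partial_t\delta\rho+\dive(u_1\delta\rho)+\dive(\rho_2\delta u)=0$ and accordingly solve the backward transport along $u_1$. The duality argument then requires a quantitative bound on $\|\nabla\phi(s)\|_{L^{3/2}}$, and as you yourself observe this would need $\nabla u_1\in L^1_tL^\infty_x$, which is not assumed. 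Your proposed repair --- mollify $u_1$ and invoke DiPerna--Lions --- does not close this: DiPerna--Lions theory yields a measure-preserving renormalized flow but no Sobolev control of the flow map, so it cannot deliver the $\dot{W}^{1,3/2}$ bound on $\phi$ that the pairing $\int_0^t\langle\rho_2\delta u,\nabla\phi\rangle\,\mathrm{d}s$ demands. The paper avoids this entirely by choosing the other algebraically equivalent splitting, $\partial_t\delta\rho=-\dive(\rho_1\delta u+\delta\rho\,u_2)$, and transporting backward along the \emph{regular} velocity $u_2$; the hypothesis $t^{1/2}\nabla u_2\in L^2(0,T;L^\infty)$ then gives $\|\nabla\Psi(s)\|_{L^{3/2}}\leq C\exp\{C\|t^{1/2}\nabla u_2\|_{L^2_T(L^\infty)}|\log(t/s)|^{1/2}\}\|\nabla\psi\|_{L^{3/2}}$, whose time singularity is integrable and produces exactly the $\exp\{C\int_0^Tf_1\}$ factor. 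Switching your transport field from $u_1$ to $u_2$ (and the source from $\rho_2\delta u$ to $\rho_1\delta u$) repairs the argument and aligns it with the paper's proof; the rest of your sketch is sound.
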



\begin{proof}

Our arguments are inspired by the recent work \cite{HSWZ-arXiv-2024-08} concerning the inhomogeneous Navier-Stokes equations.
Note that $(\delta \rho, \delta u,\delta B)$ satisfies the error system
\begin{equation}\label{deltaMHD}
 \left\{\begin{aligned}
 &\partial_t\delta\rho=-\dive ( \rho_ 1\delta u+\delta \rho u_2),\\
    &\rho_1 (\partial_t\delta u+u_1\cdot\nabla\delta u)+\nabla\delta P- \Delta\delta u=-\delta\rho D_t u_2-\rho_{1}\delta u\cdot\nabla u_{2}+B_1\cdot\nabla\delta B+\delta B\cdot\nabla B_2 ,\\
    & \partial_t \delta B+u_1\cdot\nabla \delta B-\Delta \delta B=-\delta u\cdot\nabla B_2+B_1\cdot \nabla \delta u+\delta B\cdot \nabla u_2,\\
    &\dive \delta u=\dive \delta B=0.
\end{aligned}\right.
\end{equation}
We estimate $\delta\rho$, $\delta u$ and $\delta B$ in turn.

(1) {\emph{Estimate of $\delta\rho$}}.
First, we aim to show
\begin{equation}\label{deltarho}
\begin{aligned}
&\|\delta \rho(t)\|_{\dot{W}^{-1,3}}\leq Ce^{C\|t^{1/2}u_2\|_{L^2_T(L^{\infty})}^2} t^{3/4}\|\sqrt{\rho_1}\delta u\|_{L^{\infty}_t(L^2)}^{1/2}\|\nabla\delta u\|_{L^2_t(L^2)}^{1/2},\quad t\in(0,T].
\end{aligned}
\end{equation}
To achieve \eqref{deltarho}, for any function $\psi\in \dot{W}^{1,3/2} $ and $t\in(0,T]$, one considers the backward transport equation
\begin{equation}\label{Psi}
\begin{aligned}
&\partial_s\Psi(s,x)+u_2(s,x)\cdot\nabla \Psi(s,x)=0,\quad \Psi(t,x)=\psi(x),\quad (s,x)\in (0,t]\times \mathbb{R}^3.
\end{aligned}
\end{equation}
Noticing that  $$\|u_2\|_{L^1(s,t;L^{\infty})}\leq \|t^{1/2}u_2\|_{L^2_T(L^{\infty})}|\log t/s|^{1/2},$$
 when $\Psi\in C^1_c(\mathbb{R}^3)$ the existence of $\Psi$ for all $s\in(0,t]$ can be shown by the classical theorem for linear transport equations ({\!\!\cite{BCD-Book-2011}}). For $\Psi\in \dot{W}^{1,3/2} $, one can construct the solution $\Psi$ by performing an approximation argument and using the fact that $\dot{W}^{1,3/2} $ is a Banach space. Furthermore, the solution $\Psi$ to \eqref{Psi} obeys the estimate
\begin{align}\label{es:Psi}
\|\nabla \Psi(s)\|_{L^{3/2}}\leq\, &C \exp\bigg\{\int_s^t \|\nabla u_2(\tau)\|_{L^{\infty}}\,{\rm d}t'\bigg\} \|\nabla \psi\|_{L^{3/2}} \nonumber\\
\leq \, & C \exp\Big\{C\|t^{1/2}u_2\|_{L^2_T(L^{\infty})}|\log t/s|^{1/2}\Big\}\|\nabla \psi\|_{L^{3/2}}.
\end{align}
It thus follows from $\eqref{deltaMHD}_1$ and \eqref{Psi} that
\begin{align*} 
\int_{\mathbb{R}^3}\delta\rho(t,x)\psi(x)\, {\rm d}x&\,=\int_{\mathbb{R}^3}\delta\rho(t,x)\Psi(t,x)\, {\rm d}x\\
& \,=\int_0^t \frac{{\rm d}}{{\rm d}s}\langle \delta \rho (s), \Psi(s)\rangle {\rm d}s\\
&\,=\int_0^t \langle \rho_1 \delta u(s),\nabla\Psi(s)\rangle {\rm d}s.
\end{align*}
This, combined with \eqref{es:Psi}, $\rho_1\in L^{\infty}_T(L^{\infty})$, and the fact that
\begin{equation*}
\begin{aligned}
\int_0^t e^{C\|t^{1/2}u_2\|_{L^2_T(L^{\infty})}|\log t/s|} {\rm d}s&\,=t\int_0^1 e^{C\|t^{1/2}u_2\|_{L^2_T(L^{\infty})}|\log \tau |} {\rm d}\tau\\
&\,\leq Ct e^{C\|t^{1/2}u_2\|_{L^2_T(L^{\infty})}^2}\int_0^1e^{-\frac{1}{2}\log \tau}{\rm d}\tau\leq  Ct e^{C\|t^{1/2}u_2\|_{L^2_T(L^{\infty})}^2},
\end{aligned}
\end{equation*}
gives rise to
\begin{equation*}
\begin{aligned}
\int_{\mathbb{R}^3}\delta\rho(t,x)\psi(x)\, {\rm d}x&\leq \|\nabla\Psi\|_{L^{\frac{4}{3}}(0,t;L^{3/2})} \|\rho_1\delta u\|_{L^4(0,t;L^3)}\\
&\leq C e^{C\|t^{1/2}u_2\|_{L^2_T(L^{\infty})}^2} t^{3/4}  \|\psi\|_{L^{3/2}}\|\rho_1\delta u\|_{L^{\infty}_t(L^2)}^{1/2}\|\rho_1\delta u\|_{L^2_t(L^6)}^{1/2}\\
&\leq C e^{C\|t^{1/2}u_2\|_{L^2_T(L^{\infty})}^2} t^{3/4}  \|\sqrt{\rho_1}\delta u\|_{L^{\infty}_t(L^2)}^{1/2}\|\nabla \delta u\|_{L^2_t(L^2)}^{1/2}.
\end{aligned}
\end{equation*}
Thus, \eqref{deltarho} holds.

(2){\emph{Estimate of $\delta u$}}.
Taking the inner product of $\eqref{deltaMHD}_{1}$ by $\delta u$ and integrating the result over $[0,t]$, we get
\begin{align}
&\frac{1}{2}\|\sqrt{\rho_{1}}\delta u(t)\|_{L^2}^2+ \int_0^t \|\nabla \delta u(s)\|_{L^2}^2{\rm d}s\nonumber\\
&\quad=\frac{1}{2}\|\sqrt{\rho_{0}}(u_{0,1}-u_{0,2})\|_{L^2}^2+\int_0^t\int_{\mathbb{R}^3} (B_1\cdot\nabla\delta B)\cdot \delta u \, {\rm d}x{\rm d}s+\delta I_1+\delta I_2+\delta I_3,\label{ddtdeltau}
\end{align}
with
\begin{equation}\nonumber
\begin{aligned}
\delta I_1&:=-\int_0^t\int_{\mathbb{R}^3} \rho_1(\delta u\cdot \nabla u_2)\cdot \delta u \, {\rm d}x{\rm d}s,\\
\delta I_2&:= \int_0^t\int_{\mathbb{R}^3}(\delta B\cdot\nabla B_2 )\cdot \delta u\, {\rm d}x{\rm d}s,\\
\delta I_3&:=-\int_0^t\int_{\mathbb{R}^3} \delta\rho D_s u_2\cdot \delta u \, {\rm d}x{\rm d}s.
\end{aligned}
\end{equation}
The terms $\delta I_i$ ($i=1,2,3$) are analyzed as follows. Due to H\"older's and Sobolev's inequalities and the upper bound of $\rho_1$, it is clear that
\begin{align}\label{J1}
\delta I_1&\leq \int_0^t \|\rho_1\delta u(s)\|_{L^2}^{1/2}\|\rho_1\delta u(s)\|_{L^6}^{1/2} \|\delta u(s)\|_{L^6}\|\nabla u_2(s)\|_{L^{2}}{\rm d}s \nonumber\\
&\leq \frac{1}{4}\int_0^t \|\nabla \delta u(s)\|_{L^2}^2+C\int_0^t \|\nabla u_2(s)\|_{L^2}^2\|\sqrt{\rho_1}\delta u(s)\|_{L^2}^2{\rm d}s.
\end{align}
Similar, it holds that
\begin{align}
\delta I_2&\leq \int_0^t \|\nabla B_2(s)\|_{L^2}\|\delta B(s)\|_{L^2}^{1/2}\|\delta B(s)\|_{L^6}^{1/2} \|\delta u(s)\|_{L^6} {\rm d}s\nonumber\\
&\leq \frac{1}{4}\int_0^t \|(\nabla \delta u, \nabla \delta B)(s)\|_{L^2}^2 {\rm d}s+ C\int_0^t\|\nabla B_2(s)\|_{L^2}^4\|\delta B(s)\|_{L^2}^2{\rm d}s. \label{J2}
\end{align}
To handle $\delta J_3$, we deduce from \eqref{deltarho} that
\begin{equation}\nonumber
\begin{aligned}
\delta I_3&=\int_0^t \|\delta \rho(s)\|_{\dot{W}^{-1,3}}\| D_t u_2 \cdot\delta u(s)\|_{\dot{W}^{1,3/2}}\,{\rm d}s \\
&\leq Ce^{C\|t^{1/2}u_2\|_{L^2_T(L^{\infty})}^2}\int_0^t \|\sqrt{\rho_1}\delta u\|_{L^{\infty}_s(L^2)}^{1/2}\|\nabla\delta u\|_{L^2_s(L^2)}^{1/2}\|s^{3/4}\nabla( D_s u_2\cdot \delta u )(s)\|_{L^{3/2}}\,{\rm d}s.
\end{aligned}
\end{equation}
Here, the standard embedding theorem yields
\begin{equation}\nonumber
\begin{aligned}
\|s^{3/4}\nabla( D_s u_2\cdot \delta u )\|_{L^{3/2}}&\leq \|s^{3/4}\nabla D_s u_2\|_{L^2}\|\delta u\|_{L^6}+C\|s^{3/4}D_s u_2\|_{L^6} \|\nabla\delta u\|_{L^2}\\
&\leq C\|s^{3/4}\nabla D_s u_2\|_{L^2}\|\nabla\delta u\|_{L^2}.
\end{aligned}
\end{equation}
Thus, we discover that
\begin{align}
\delta I_3&
\leq \frac{1}{4}\int_0^t\|\nabla\delta u(s)\|_{L^2}^2\,{\rm d}s\nonumber\\
&\quad+Ce^{C\|t^{1/2}u_2\|_{L^2_T(L^{\infty})}^2}\int_0^t\|s^{3/4}\nabla D_s u_2(s)\|_{L^2}^2( \|\sqrt{\rho_1}\delta u\|_{L^{\infty}_s(L^2)}^2+\|\nabla\delta u\|_{L^2_s(L^2)}^2)\,{\rm d}s.\label{J3}
  \end{align}
Substituting \eqref{J1}, \eqref{J2} and \eqref{J3} into \eqref{ddtdeltau} yields
\begin{align}\label{deltau}
&\frac{1}{2}\|\sqrt{\rho_{1}}\delta u(t)\|_{L^2}^2+ \frac{1}{2}\int_0^t \|\nabla \delta u(s)\|_{L^2}^2\,{\rm d}s\nonumber\\
&\quad\leq \frac{1}{2}\|\sqrt{\rho_{0}}(u_{0,1}-u_{0,2})\|_{L^2}^2+\int_0^t\int_{\mathbb{R}^3} (B_1\cdot\nabla\delta B)\cdot \delta u \, {\rm d}x{\rm d}s\nonumber\\
&\quad\quad+Ce^{C\|t^{1/2}u_2\|_{L^2_T(L^{\infty})}^2}\int_0^t\|s^{3/4}\nabla D_s u_2(s)\|_{L^2}^2( \|\sqrt{\rho_1}\delta u\|_{L^{\infty}_s(L^2)}^2+\|\nabla\delta u\|_{L^2_s(L^2)}^2)\,{\rm d}s\nonumber\\
&\quad\quad+\frac{1}{4}\int_0^t \|\nabla \delta B(s)\|_{L^2}^2{\rm d}s+ C\int_0^t\|\nabla B_2(s)\|_{L^2}^4\|\delta B\|_{L^{\infty}_s(L^2)}^2{\rm d}s.
\end{align}

(3){\emph{Estimate of $\delta B$}}.
We deduce from $\eqref{deltaMHD}_2$ that
\begin{equation}
\begin{aligned}
&\frac{1}{2}\|\delta B(t)\|_{L^2}^2+\int_0^t \|\nabla\delta B(s)\|_{L^2}^2\,{\rm d}s=\frac{1}{2}\| B_{0,1}-B_{0,2}\|_{L^2}^2+\delta J_1+\delta J_2+\delta J_3 \label{deltaB}
\end{aligned}
\end{equation}
with
\begin{equation*}
\begin{aligned}
\delta J_1&:=\int_0^t\int_{\mathbb{R}^3} (B_1\cdot\nabla\delta u)\cdot \delta B\, {\rm d}x{\rm d}s,\\
\delta J_2&:=\int_0^t\int_{\mathbb{R}^3} (\delta B \cdot\nabla u_2)\cdot \delta B \, {\rm d}x{\rm d}s,\\
\delta J_3&:= -\int_0^t\int_{\mathbb{R}^3} (\delta u\cdot \nabla B_2)\cdot \delta B\, {\rm d}x{\rm d}s.
\end{aligned}
\end{equation*}
We handle $\delta J_1$, $\delta J_2$ and $\delta J_3$ in turn. By integration by parts, it follows that
\begin{equation}\label{deltaB1}
\begin{aligned}
\delta J_1=-\int_0^t\int_{\mathbb{R}^3} (B_1\cdot\nabla\delta B)\cdot \delta u\, {\rm d}x{\rm d}s,
\end{aligned}
\end{equation}
which can be cancelled by the first term on the right-hand side of  \eqref{deltau}.  

For $\delta J_2$, it is observed that
\begin{align}
\delta J_2&=-\int_0^t\int_{\mathbb{R}^3} (\nabla\delta B \cdot u_2)\cdot \delta B \, {\rm d}x{\rm d}s\nonumber\\
&\leq \int_0^t \|u_2(s)\|_{L^{6}}  \|\delta B(s)\|_{L^3} \|\nabla\delta B(s)\|_{L^2}{\rm d}s\nonumber\\
&\leq C \int_0^t \|\nabla u_2(s)\|_{L^{2}}\|\nabla \delta B(s)\|_{L^2}^{3/2} \|\delta B(s)\|_{L^2}^{1/2}{\rm d}s\nonumber\\
&\leq \frac{1}{4}\int_0^t \|\nabla\delta B(s)\|_{L^2}^2\,{\rm d}s +C\int_0^t \|\nabla u_2(s)\|_{L^{2}}^4 \|\delta B(s)\|_{L^2}^2\,{\rm d}s.\label{deltaB2}
\end{align}
In addition, one also has
\begin{align}
\delta J_3&\leq \int_0^t \|\delta u(s)\|_{L^6}\|\delta B(s)\|_{L^2}^{1/2}\|\delta B(s)\|_{L^6}^{1/2} \|\nabla B_2(s)\|_{L^2}\,{\rm d}s\nonumber\\
&\leq \frac{1}{4}\int_0^t \|(\nabla\delta B,\nabla\delta u)(s)\|_{L^2}^2\,{\rm d}s +C\int_0^t \|\nabla B_2(s)\|_{L^{2}}^4 \|\delta B(s)\|_{L^2}^2\,{\rm d}s.\label{deltaB3}
\end{align}

Now we denote
\begin{equation*}
\begin{aligned}
\delta E(t):=\|\sqrt{\rho_{1}}\delta u\|_{L^{\infty}_t(L^2)}^2+ \|\delta B \|_{L^{\infty}_t(L^2)}^2+\int_0^{T_0} \|(\nabla \delta u,\nabla\delta B)(s)\|_{L^2}^2\,{\rm d}t.
\end{aligned}
\end{equation*}
Collecting \eqref{deltau} and \eqref{deltaB}--\eqref{deltaB3} up, we infer that, for any $t\in [0,T]$, 
\begin{equation*}
\begin{aligned}
\delta E(t)&\leq C(\|\sqrt{\rho_{0}}(u_{0,1}-u_{0,2})\|_{L^2}^2+\| B_{0,1}-B_{0,2}\|_{L^2}^2)\\
&\quad+Ce^{C\|t^{1/2}u_2\|_{L^2_T(L^{\infty})}^2}\int_0^t\Big(\|(\nabla u_2,\nabla B_2)(s)\|_{L^2}^4+\|s^{3/4}D_s u_2(s)\|_{L^2}^2\Big) \delta E(s)\,{\rm d}s.
\end{aligned}
\end{equation*}
Taking advantage of Gronwall's inequality, we have 
\begin{align}\label{findelta}
\delta E(t)\leq\, &  C\exp\bigg\{C\int_0^T f_2(t)\,{\rm d}t\, \exp\bigg\{C\int_0^Tf_1(t)\,{\rm d}t\bigg\}\bigg\}\nonumber\\
&\times \Big\{\|\sqrt{\rho_{0}}(u_{0,1}-u_{0,2})\|_{L^2}^2+\| B_{0,1}-B_{0,2}\|_{L^2}^2\Big\}.
\end{align}
Combining \eqref{findelta} and \eqref{deltarho} up, we get \eqref{stability} and finish the proof of Proposition \ref{propunique}.
\end{proof}

\appendix

\section{Technical lemmas}
Now, we collect some lemmas that have been used frequently in this paper.

The first lemma concerns the classical Bernstein's inequalities.
\begin{lem}{\rm(\!\!\cite{BCD-Book-2011})}
Let $1\leq p\leq q\leq \infty$ and $k\in \mathbb{N}$. Define the ball $\mathcal{B}=\{\xi\in\mathbb{R}^{3}~| ~|\xi|\leq  R\}$ and the annulus $\mathcal{C}=\{\xi\in\mathbb{R}^{3}~|~ \lambda r\leq |\xi|\leq R\}$ with two constants $r>0$ and $R>r$. For any $u\in L^p$ and $\lambda>0$, it holds
\begin{align}
{\rm{Supp}}~& \mathcal{F}(u) \subset \lambda \mathcal{B} \Rightarrow \|D^{k}u\|_{L^q}\lesssim\lambda^{k+3(\frac{1}{p}-\frac{1}{q})}\|u\|_{L^p}, \nonumber\\
{\rm{Supp}}~ \mathcal{F}&(u) \subset \lambda \mathcal{C} \Rightarrow \lambda^{k}\|u\|_{L^{p}}\lesssim\|D^{k}u\|_{L^{p}}\lesssim \lambda^{k}\|u\|_{L^{p}}.\nonumber
\end{align}
\end{lem}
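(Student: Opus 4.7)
The plan is to prove both Bernstein inequalities via a Fourier multiplier representation together with the scaling behaviour of $L^r$ norms under dilations and Young's convolution inequality. The crucial fact is that any frequency-localized function $u$ equals $\varphi(D/\lambda)u$ for a fixed Schwartz cutoff $\varphi$ adapted to the relevant ball or annulus, which turns $D^\alpha u$ into the convolution of $u$ against a fixed Schwartz kernel rescaled by $\lambda$.

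For the ball case, I would fix once and for all a radial $\chi_0 \in C_c^\infty(\mathbb{R}^3)$ with $\chi_0 \equiv 1$ on a neighborhood of $\mathcal{B}$. Since $\mathrm{Supp}\,\widehat{u}\subset\lambda\mathcal{B}$, for any multi-index $\alpha$ with $|\alpha|\leq k$ we get $\widehat{D^\alpha u}(\xi)=(i\xi)^\alpha \chi_0(\xi/\lambda)\widehat{u}(\xi)$. Inverting the Fourier transform and using $[m(\cdot/\lambda)]^{\vee}(x)=\lambda^3 m^{\vee}(\lambda x)$, this reads
\begin{equation*}
D^\alpha u = \lambda^{k+3}\,\bigl(h_\alpha(\lambda\cdot)\bigr)\ast u,\qquad h_\alpha:=\bigl((i\xi)^\alpha \chi_0(\xi)\bigr)^{\vee}\in\mathcal{S}(\mathbb{R}^3).
\end{equation*}
Choosing $r$ via $1+1/q=1/p+1/r$ and applying Young's inequality, together with the scaling $\|h_\alpha(\lambda\cdot)\|_{L^r}=\lambda^{-3/r}\|h_\alpha\|_{L^r}$, yields the claimed exponent $k+3(1/p-1/q)$.

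For the annulus case, the upper bound is immediate from the ball case with $q=p$ applied to the enclosing ball. The lower bound requires inverting $k$ derivatives, and this will be the main (mild) obstacle. I would pick $\widetilde\chi\in C_c^\infty(\mathbb{R}^3\setminus\{0\})$ equal to $1$ on $\mathcal{C}$ and supported in a slightly larger annulus, and exploit the multinomial identity $|\xi|^{2k}=\sum_{|\alpha|=k}\binom{k}{\alpha}\xi^{2\alpha}$ (valid pointwise) to write, on the support of $\widetilde\chi$,
\begin{equation*}
\widetilde\chi(\xi)=\sum_{|\alpha|=k}\binom{k}{\alpha} g_\alpha(\xi)\,\xi^\alpha,\qquad g_\alpha(\xi):=\widetilde\chi(\xi)\,\xi^\alpha/|\xi|^{2k}.
\end{equation*}
Each $g_\alpha$ is smooth and compactly supported, hence its inverse Fourier transform $\widecheck{g_\alpha}$ belongs to $L^1$. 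Since $\mathrm{Supp}\,\widehat{u}\subset\lambda\mathcal{C}$, substituting $\widehat{u}=\widetilde\chi(\xi/\lambda)\widehat{u}$ and converting $\xi^\alpha\widehat{u}$ into $(-i)^k\widehat{\partial^\alpha u}$ gives
\begin{equation*}
u=\lambda^{-k}\sum_{|\alpha|=k}(-i)^k\binom{k}{\alpha}\lambda^3\,\widecheck{g_\alpha}(\lambda\cdot)\ast \partial^\alpha u.
\end{equation*}
A final application of Young's inequality with the $L^1\ast L^p\to L^p$ endpoint, together with $\|\widecheck{g_\alpha}(\lambda\cdot)\|_{L^1}=\lambda^{-3}\|\widecheck{g_\alpha}\|_{L^1}$, produces $\lambda^k\|u\|_{L^p}\lesssim\|D^k u\|_{L^p}$, completing the proof.

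The only subtlety is producing the decomposition of $\widetilde\chi$ into smooth compactly supported multipliers against $\xi^\alpha$ of order exactly $k$ (rather than $2k$ that would come from a naive use of $(-\Delta)^k$); the multinomial identity above circumvents this cleanly because dividing by $|\xi|^{2k}$ is harmless on the support of $\widetilde\chi$, which stays away from the origin.
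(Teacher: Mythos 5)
Your proof is correct, and since the paper states this lemma without proof (it is quoted verbatim from the cited reference of Bahouri--Chemin--Danchin), the relevant comparison is with the standard argument there, which yours reproduces essentially exactly: dilation-invariant convolution kernels plus Young's inequality for the direct bound, and the division of a fixed annular cutoff by $|\xi|^{2k}$ via the multinomial identity to invert $k$ derivatives for the reverse bound. The only cosmetic slip is writing $\lambda^{k+3}$ where $\lambda^{|\alpha|+3}$ is meant for general $|\alpha|\leq k$, which is harmless since the statement only involves derivatives of order exactly $k$.
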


The Besov spaces have the following properties.
\begin{lem}\label{lemmabesov}{\rm(\!\!\cite{BCD-Book-2011})}
The following properties hold in $\mathbb{R}^3${\rm:}
\begin{itemize}
\item{} For $s\in\mathbb{R}$, $1\leq p_{1}\leq p_{2}\leq \infty$ and $1\leq r_{1}\leq r_{2}\leq \infty$, it holds
\begin{equation}\notag
\begin{aligned}
\dot{B}^{s}_{p_{1},r_{1}}\hookrightarrow \dot{B}^{s-(\frac{1}{p_{1}}-\frac{1}{p_{2}})}_{p_{2},r_{2}}{\rm;}
\end{aligned}
\end{equation}
\item{} For $1\leq p\leq q\leq\infty$, we have the following chain of continuous embedding:
\begin{equation}\nonumber
\begin{aligned}
\dot{B}^{3/p-3/q}_{p,1}\hookrightarrow \dot{B}^{0}_{q,1}\hookrightarrow L^{q}\hookrightarrow \dot{B}^{0}_{q,\infty}{\rm;}
\end{aligned}
\end{equation}
\item{} If $p<\infty$, then $\dot{B}^{d/p}_{p,1}$ is continuously embedded in the set of continuous functions decaying to 0 at infinity{\rm;}
\item{}  For any $\varepsilon>0$, it holds that
\begin{equation}\nonumber
\begin{aligned}
H^{s+\varepsilon}\hookrightarrow \dot{B}^{s}_{2,1}\hookrightarrow \dot{H}^{s}{\rm;}
\end{aligned}
\end{equation}
\item{}
Let $\Lambda^{\sigma}$ be defined by $\Lambda^{\sigma}=(-\Delta)^{\frac{\sigma}{2}}u:=\mathcal{F}^{-}\big{(} |\xi|^{\sigma}\mathcal{F}(u) \big{)}$ for $\sigma\in \mathbb{R}$ and $u\in\dot{S}^{'}_{h}(\mathbb{R}^3)$, then $\Lambda^{\sigma}$ is an isomorphism from $\dot{B}^{s}_{p,r}$ to $\dot{B}^{s-\sigma}_{p,r}${\rm;}
\item{} Let $1\leq p_{1},p_{2},r_{1},r_{2}\leq \infty$, $s_{1}\in\mathbb{R}$ and $s_{2}\in\mathbb{R}$ satisfy
    $$
    s_{2}<3/p_2\quad\text{\text{or}}\quad s_{2}=3/p_2~\text{and}~r_{2}=1.
    $$
    The space $\dot{B}^{s_{1}}_{p_{1},r_{1}}\cap \dot{B}^{s_{2}}_{p_{2},r_{2}}$ endowed with the norm $\|\cdot \|_{\dot{B}^{s_{1}}_{p_{1},r_{1}}}+\|\cdot\|_{\dot{B}^{s_{2}}_{p_{2},r_{2}}}$ is a Banach space and has the weak compact and Fatou properties$:$ If $u_{n}$ is a uniformly bounded sequence of $\dot{B}^{s_{1}}_{p_{1},r_{1}}\cap \dot{B}^{s_{2}}_{p_{2},r_{2}}$, then an element $u$ of $\dot{B}^{s_{1}}_{p_{1},r_{1}}\cap \dot{B}^{s_{2}}_{p_{2},r_{2}}$ and a subsequence $u_{n_{k}}$ exist such that $u_{n_{k}}\rightarrow u $ in $\mathcal{S}'$ and
    \begin{equation}\nonumber
    \begin{aligned}
    \|u\|_{\dot{B}^{s_{1}}_{p_{1},r_{1}}\cap \dot{B}^{s_{2}}_{p_{2},r_{2}}}\lesssim \liminf_{n_{k}\rightarrow \infty} \|u_{n_{k}}\|_{\dot{B}^{s_{1}}_{p_{1},r_{1}}\cap \dot{B}^{s_{2}}_{p_{2},r_{2}}}.
    \end{aligned}
    \end{equation}
\end{itemize}
\end{lem}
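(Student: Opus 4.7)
The plan is to verify each item via the Littlewood-Paley characterization of $\dot{B}^s_{p,r}$, relying almost exclusively on Bernstein's inequality (Lemma 2.2 above) together with elementary properties of sequence spaces. I will treat the items roughly in the order listed, since later items either reduce to earlier ones or share the same machinery. For the first embedding, fix $f \in \dot{B}^{s}_{p_{1},r_{1}}$. Since $\dot{\Delta}_j f$ is spectrally localized in an annulus of size $2^j$, Bernstein gives $\|\dot{\Delta}_j f\|_{L^{p_2}} \lesssim 2^{3j(1/p_1 - 1/p_2)}\|\dot{\Delta}_j f\|_{L^{p_1}}$. Multiplying by $2^{j(s-3(1/p_1-1/p_2))}$ and using the elementary embedding $\ell^{r_1}(\mathbb{Z}) \hookrightarrow \ell^{r_2}(\mathbb{Z})$ for $r_1 \leq r_2$ yields the claim.

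The chain in the second item follows by taking $s=3/p-3/q$, $r_1=r_2=1$ in item one, while $\dot{B}^0_{q,1} \hookrightarrow L^q$ comes from writing $f = \sum_j \dot{\Delta}_j f$ and applying the triangle inequality, and $L^q \hookrightarrow \dot{B}^0_{q,\infty}$ follows from Young's convolution inequality applied to $\dot{\Delta}_j f = 2^{3j}\check{\varphi}(2^j \cdot) \ast f$. The third item combines item two (so $\dot{B}^{d/p}_{p,1} \hookrightarrow \dot{B}^0_{\infty,1}$) with the observation that the Littlewood-Paley series of any $f \in \dot{B}^0_{\infty,1}$ converges uniformly (its general term is controlled by a summable sequence), so the limit is uniformly continuous; decay at infinity follows by approximating $f$ in $\dot{B}^{d/p}_{p,1}$ by Schwartz functions, which is standard for $p<\infty$.

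For item four, one direction uses Cauchy--Schwarz in the form
\begin{equation*}
\sum_{j} 2^{js}\|\dot{\Delta}_j f\|_{L^2} \leq \Big(\sum_{j} 2^{-2j\varepsilon}\Big)^{1/2}\Big(\sum_{j} 2^{2j(s+\varepsilon)}\|\dot{\Delta}_j f\|_{L^2}^2\Big)^{1/2},
\end{equation*}
which gives $H^{s+\varepsilon} \hookrightarrow \dot{B}^s_{2,1}$; the other direction reduces to $\ell^1 \hookrightarrow \ell^2$ after identifying $\dot{H}^s \simeq \dot{B}^s_{2,2}$ through Plancherel. Item five is immediate from the definition: $\Lambda^\sigma$ commutes with $\dot{\Delta}_j$ (both being Fourier multipliers) and Bernstein yields $\|\Lambda^\sigma \dot{\Delta}_j f\|_{L^p} \sim 2^{j\sigma}\|\dot{\Delta}_j f\|_{L^p}$, so the Besov norms are shifted by exactly $\sigma$.

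The last item is the subtle one and I expect it to be the main obstacle. Completeness of the intersection space requires checking that a Cauchy sequence $\{u_n\}$ in $\dot{B}^{s_1}_{p_1,r_1}\cap\dot{B}^{s_2}_{p_2,r_2}$ admits a distributional limit inside $\mathcal{S}'_h$; here the constraint $s_2 < 3/p_2$ (or $s_2=3/p_2, r_2=1$) is used to ensure that the sum of low-frequency blocks $S_j u_n$ remains controlled in $L^{\infty}$ by a summable sequence, thereby preventing a polynomial drift in the limit and guaranteeing the defining property $\lim_{j\to-\infty}\|S_j u\|_{L^\infty}=0$. High-frequency blocks are then controlled by the $\dot{B}^{s_1}_{p_1,r_1}$ norm. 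The Fatou-type property follows by combining this construction with the lower semicontinuity of the $\ell^r$ norm under pointwise convergence, extracting a weakly-$\ast$ convergent subsequence in each dyadic block via Banach--Alaoglu in $L^{p_i}$, and then passing to the $\liminf$ after diagonalization. The delicate point throughout is verifying that the candidate limit lies in $\mathcal{S}'_h$ rather than merely in $\mathcal{S}'$, since otherwise the Littlewood-Paley representation would no longer be unique and the norms would be ill-defined on the intersection; this is precisely where the restriction on $(s_2,r_2)$ enters essentially.
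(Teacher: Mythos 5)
The paper offers no proof of this lemma; it is quoted verbatim from Bahouri--Chemin--Danchin, so there is no in-paper argument to measure you against, only the standard textbook one. Your treatment of the first, second, third and fifth items is exactly that standard Littlewood--Paley/Bernstein argument and is essentially correct. One remark: your Bernstein computation produces the exponent $s-3(\frac{1}{p_1}-\frac{1}{p_2})$, i.e.\ you are (correctly) proving the dimensionally consistent form of the first embedding; the factor $3$ is missing from the statement as printed, and your own use of item one to derive $\dot{B}^{3/p-3/q}_{p,1}\hookrightarrow\dot{B}^{0}_{q,1}$ tacitly relies on that corrected form. For item five, note that the paper's Bernstein lemma is stated only for integer derivatives $k\in\mathbb{N}$, so the equivalence $\|\Lambda^{\sigma}\dot{\Delta}_j f\|_{L^p}\sim 2^{j\sigma}\|\dot{\Delta}_j f\|_{L^p}$ needs the fractional-power version (a multiplier estimate on the annulus); the idea is right but the cited inequality does not literally cover it. Item six is a plan rather than a proof, but you correctly isolate the genuine subtlety, namely that the candidate limit must land in $\mathcal{S}'_h$ and that the condition on $(s_2,r_2)$ is what controls the low-frequency partial sums; the full argument is the (homogeneous analogue of the) Fatou/compactness theorem in the cited book.

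The one step that actually fails is your proof of $H^{s+\varepsilon}\hookrightarrow\dot{B}^{s}_{2,1}$. In your displayed Cauchy--Schwarz inequality the factor $(\sum_{j\in\mathbb{Z}}2^{-2j\varepsilon})^{1/2}$ is infinite, because $2^{-2j\varepsilon}\to\infty$ as $j\to-\infty$; moreover the second factor is the homogeneous $\dot{H}^{s+\varepsilon}$ norm, which gives no control of low frequencies, so the inequality as written proves nothing. The correct argument splits the sum: for $j\geq0$ apply Cauchy--Schwarz against $\sum_{j\geq0}2^{-2j\varepsilon}<\infty$ and the $\dot{H}^{s+\varepsilon}$ part of the inhomogeneous norm, while for $j<0$ use $\|\dot{\Delta}_jf\|_{L^2}\leq C\|f\|_{L^2}$ together with $\sum_{j<0}2^{js}<\infty$, which requires $s>0$. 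Indeed the embedding is false for $s\leq0$ (take $\hat{f}$ with mass $|j|^{-1}$ on the annulus of radius $2^{j}$, $j<0$: then $f\in H^{\varepsilon}$ but $\sum_{j<0}\|\dot{\Delta}_jf\|_{L^2}=\infty$), so the restriction $s>0$ should be recorded; it is harmless for the paper, which only invokes this item with $s=1/2$ or $s=3/2$.
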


The following interpolation inequalities in Besov spaces are also useful in our analysis.

\begin{lem}{\rm(\!\!\cite{BCD-Book-2011})}\label{L2.7}
Assume that $s_1$ and $s_2$ are real numbers satisfying $s_1<s_2$, and let $0<\theta<1$ and
$1\leq p,r\leq\infty$. Then there exists a constant $C$  depending only on $\theta$ and $s_2-s_1$, such that for any  tempered distribution $u$, it holds
\begin{align*}
\|u\|_{\dot{B}_{p,r}^{\theta s_1+(1-\theta)s_2}}\leq\|u\|_{\dot{B}_{p,r}^{s_1}}^{\theta}\|u\|_{\dot{B}_{p,r}^{s_2}}^{1-\theta},    
\end{align*}
and
\begin{align*}
\|u\|_{\dot{B}_{p,1}^{\theta s_1+(1-\theta)s_2}}\leq C\|u\|_{\dot{B}_{p,\infty}^{s_1}}^{\theta}\|u\|_{\dot{B}_{p,\infty}^{s_2}}^{1-\theta}.  
\end{align*} 
\end{lem}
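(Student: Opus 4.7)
The plan is to prove both inequalities by a direct computation on the Littlewood-Paley coefficients, exploiting that the Besov norm $\|u\|_{\dot{B}^s_{p,r}}$ is, by definition, the $\ell^r$-norm of the weighted sequence $\bigl(2^{js}\|\dot{\Delta}_j u\|_{L^p}\bigr)_{j\in\mathbb{Z}}$. Setting $\sigma:=\theta s_1+(1-\theta)s_2$, the key identity is the pointwise factorisation
\begin{equation*}
2^{j\sigma}\|\dot{\Delta}_j u\|_{L^p} \;=\; \bigl(2^{js_1}\|\dot{\Delta}_j u\|_{L^p}\bigr)^{\theta}\,\bigl(2^{js_2}\|\dot{\Delta}_j u\|_{L^p}\bigr)^{1-\theta},
\end{equation*}
which holds for every $j\in\mathbb{Z}$ simply because $\theta+(1-\theta)=1$.

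For the first inequality, apply the $\ell^r(\mathbb{Z})$ norm to both sides of this identity and use Hölder's inequality for sequences with conjugate exponents $1/\theta$ and $1/(1-\theta)$ applied to $\ell^{r/\theta}$ and $\ell^{r/(1-\theta)}$ (the $r=\infty$ case is trivial since one may take the supremum separately on each factor). This gives exactly
\begin{equation*}
\|u\|_{\dot{B}^{\sigma}_{p,r}}\leq \|u\|_{\dot{B}^{s_1}_{p,r}}^{\theta}\,\|u\|_{\dot{B}^{s_2}_{p,r}}^{1-\theta},
\end{equation*}
with constant $1$ and no restriction on $r$.

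For the second inequality the target norm is $\ell^1$ while the right-hand side only controls $\ell^{\infty}$, so the simple Hölder argument fails and one must trade frequency localisation for summability. I would use the two one-sided bounds, valid for any $j$:
\begin{equation*}
\|\dot{\Delta}_j u\|_{L^p} \leq 2^{-j s_1}\|u\|_{\dot{B}^{s_1}_{p,\infty}},\qquad \|\dot{\Delta}_j u\|_{L^p} \leq 2^{-j s_2}\|u\|_{\dot{B}^{s_2}_{p,\infty}}.
\end{equation*}
Pick a threshold $N\in\mathbb{Z}$ and split $\sum_{j\in\mathbb{Z}} 2^{j\sigma}\|\dot{\Delta}_j u\|_{L^p}$ into $j\le N$ and $j>N$. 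On the low-frequency part, using the first bound and $\sigma-s_1=(1-\theta)(s_2-s_1)>0$ yields a convergent geometric sum bounded by $C(s_2-s_1,\theta)\,2^{N(1-\theta)(s_2-s_1)}\|u\|_{\dot{B}^{s_1}_{p,\infty}}$; on the high-frequency part, using the second bound and $\sigma-s_2=-\theta(s_2-s_1)<0$ gives $C(s_2-s_1,\theta)\,2^{-N\theta(s_2-s_1)}\|u\|_{\dot{B}^{s_2}_{p,\infty}}$. The prefactors are of the form $(1-2^{-(1-\theta)(s_2-s_1)})^{-1}$ and $(1-2^{-\theta(s_2-s_1)})^{-1}$, so they depend only on $\theta$ and $s_2-s_1$.

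The only remaining step is an optimisation in $N$: choose $N$ to be the integer nearest to $\frac{1}{s_2-s_1}\log_2\bigl(\|u\|_{\dot{B}^{s_2}_{p,\infty}}/\|u\|_{\dot{B}^{s_1}_{p,\infty}}\bigr)$, which equalises the two terms up to a multiplicative constant and produces
\begin{equation*}
\|u\|_{\dot{B}^{\sigma}_{p,1}}\leq C\,\|u\|_{\dot{B}^{s_1}_{p,\infty}}^{\theta}\,\|u\|_{\dot{B}^{s_2}_{p,\infty}}^{1-\theta}.
\end{equation*}
Here the degenerate cases where one of the right-hand factors vanishes or is infinite are handled trivially (if both are finite and nonzero the optimisation is standard; otherwise either side is zero or infinite). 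No serious obstacle is expected: the only mild technical point is keeping track of the constants in the geometric series to confirm the claimed dependence on $\theta$ and $s_2-s_1$.
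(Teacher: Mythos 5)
Your argument is correct and is exactly the standard proof of this classical interpolation inequality: the paper itself offers no proof but simply cites \cite{BCD-Book-2011}, where the same two-step argument (H\"older on the Littlewood--Paley coefficients for the first estimate, a frequency splitting at an optimized threshold $N$ for the second) is given. Both the verification that $\sigma-s_1=(1-\theta)(s_2-s_1)>0$ and $\sigma-s_2=-\theta(s_2-s_1)<0$ and the resulting dependence of the constant only on $\theta$ and $s_2-s_1$ check out.
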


There are some embedding relations with respect to Lorentz spaces and Besov spaces.

\begin{lem}{\rm(\!\!\cite[Proposition A.3]{AGZ-arXiv-2024})}\label{L2.6}

The following properties in $\mathbb{R}^3$ hold{\rm:}

\begin{itemize}
\item $H^{1} \hookrightarrow L^{6,2} ${\rm;}

\item $\dot{B}_{p,r}^{-1+3/p}\hookrightarrow L^{3,r} $, for $1\leq p<3$ and $1\leq r\leq\infty${\rm;}


\end{itemize}
    
\end{lem}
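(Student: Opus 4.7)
The plan is to reduce both embeddings to the general fact that $\dot B^0_{p,q}(\mathbb{R}^3)\hookrightarrow L^{p,q}(\mathbb{R}^3)$ for $1<p<\infty$ and $1\leq q\leq \infty$, combined with Bernstein's inequality to shift derivatives across the integrability index. The Besov-to-Lorentz embedding itself is obtained by real interpolation from the two endpoint cases $\dot B^0_{p,1}\hookrightarrow L^p$ (which follows at once from the Littlewood–Paley reconstruction and the triangle inequality) and $\dot B^0_{p,\infty}\hookrightarrow L^{p,\infty}$ (via a distribution-function estimate splitting $u=\sum_{j\leq N}\dot\Delta_j u+\sum_{j>N}\dot\Delta_j u$ and optimizing in $N$). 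Once this is in hand, everything else is structural.

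For the first bullet $H^1\hookrightarrow L^{6,2}$, I would first recall the identification $\dot H^1=\dot B^1_{2,2}$ (since $\dot H^s=\dot B^s_{2,2}$). Applying Bernstein's inequality in $\mathbb{R}^3$ at each dyadic block yields
\[
\|\dot\Delta_j u\|_{L^6}\lesssim 2^{j(3/2-3/6)}\|\dot\Delta_j u\|_{L^2}=2^{j/2}\|\dot\Delta_j u\|_{L^2}.
\]
Taking $\ell^2$-norms in $j$ gives $\|u\|_{\dot B^0_{6,2}}\lesssim\|u\|_{\dot B^1_{2,2}}=\|u\|_{\dot H^1}$, and the aforementioned $\dot B^0_{6,2}\hookrightarrow L^{6,2}$ finishes the bullet (using $H^1\hookrightarrow \dot H^1\cap L^2$ if one wants the inhomogeneous version and the low-frequency part is absorbed by $L^2\hookrightarrow L^{6,2}$ via the support of $\chi$).

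For the second bullet, the same template works. Bernstein in $\mathbb{R}^3$ for $p\leq 3$ gives
\[
\|\dot\Delta_j u\|_{L^3}\lesssim 2^{j(3/p-1)}\|\dot\Delta_j u\|_{L^p},
\]
so that $2^{j\cdot 0}\|\dot\Delta_j u\|_{L^3}\lesssim 2^{j(-1+3/p)}\|\dot\Delta_j u\|_{L^p}$. Taking $\ell^r$-norms yields $\|u\|_{\dot B^0_{3,r}}\lesssim \|u\|_{\dot B^{-1+3/p}_{p,r}}$, and then $\dot B^0_{3,r}\hookrightarrow L^{3,r}$ concludes the proof. The strict inequality $p<3$ ensures $-1+3/p>0$, i.e.\ the exponent shift is genuinely ``gaining regularity at the cost of integrability''; when $p=1$ the same Bernstein bound still applies.

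The only non-routine step is the Besov-to-Lorentz embedding $\dot B^0_{p,q}\hookrightarrow L^{p,q}$, which I expect to cite from \cite{BCD-Book-2011} (or to sketch by real interpolation between $\dot B^0_{p,1}\hookrightarrow L^p$ and $\dot B^0_{p,\infty}\hookrightarrow L^{p,\infty}$, recalling that $(L^p,L^p)_{\theta,q}=L^{p,q}$ and that real interpolation commutes with the Besov $\ell^q$ index). Everything else is a mechanical application of Bernstein's inequality, so no further obstacles are anticipated.
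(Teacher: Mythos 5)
The paper does not prove this lemma (it is quoted from \cite[Proposition A.3]{AGZ-arXiv-2024}), so your argument must stand on its own, and it does not: the central reduction to the claim $\dot B^0_{p,q}\hookrightarrow L^{p,q}$ for all $1<p<\infty$, $1\leq q\leq\infty$ is false in the generality you need. By Bernstein you convert the second bullet into $\dot B^{0}_{3,r}\hookrightarrow L^{3,r}$, but already for $r=3$ this would say $\dot B^0_{3,3}\hookrightarrow L^3$, which fails (the sharp condition for $\dot B^0_{p,q}\hookrightarrow L^p$ is $q\leq\min(p,2)$), and for $r=\infty$ the claim $\dot B^0_{3,\infty}\hookrightarrow L^{3,\infty}$ is also false: a lacunary sum $u=\phi\sum_{j=1}^{N}e^{i2^jx_1}$ has $\|u\|_{\dot B^0_{p,\infty}}\approx 1$ while $|u|\sim N^{1/2}$ on a set of measure $\sim 1$, so $\|u\|_{L^{p,\infty}}\gtrsim N^{1/2}$. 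Your proposed proof of the $q=\infty$ endpoint (splitting at frequency $2^N$ and optimizing) breaks down precisely because at smoothness zero neither the low-frequency $L^{\infty}$ bound nor the high-frequency tail is geometrically summable; that Chebyshev argument genuinely requires $s>0$. Moreover, even granting both endpoints, real interpolation of $(\dot B^0_{p,1},\dot B^0_{p,\infty})$ against $(L^{p,p},L^{p,\infty})$ with the \emph{same} primary index $p$ does not return matching secondary indices: the identification $(L^{p,q_0},L^{p,q_1})_{\theta,q}=L^{p,q}$ holds only when $1/q=(1-\theta)/q_0+\theta/q_1$, which is incompatible with the choice of $\theta$ forced on the Besov side.

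The repair is to \emph{not} discard the positive smoothness $s=-1+3/p>0$ by pushing it into the integrability index. Instead write $\dot B^{s}_{p,r}=(\dot B^{s-\varepsilon}_{p,1},\dot B^{s+\varepsilon}_{p,1})_{1/2,r}$ (interpolation in the smoothness index, which is valid for every $r\in[1,\infty]$ because $s-\varepsilon\neq s+\varepsilon$), use the elementary embeddings $\dot B^{s\pm\varepsilon}_{p,1}\hookrightarrow L^{p_{\pm}}$ with $1/p_{\pm}=1/p-(s\pm\varepsilon)/3$ (these follow from Bernstein plus the triangle inequality since the third index is $1$), and then apply $(L^{p_-},L^{p_+})_{1/2,r}=L^{3,r}$, which \emph{does} hold with the functor's secondary index because $p_-\neq p_+$. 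The same scheme with $\dot H^1=\dot B^1_{2,2}=(\dot B^{1-\varepsilon}_{2,1},\dot B^{1+\varepsilon}_{2,1})_{1/2,2}$ gives $\dot H^1\hookrightarrow L^{6,2}$ and hence $H^1\hookrightarrow L^{6,2}$. (Separately, your intermediate computation $2^{j(3/2-3/6)}=2^{j/2}$ should read $2^{j}$; the conclusion you draw from it is the one corresponding to the correct exponent, so this is only a slip, but the structural issue above is the real gap.)
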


Some properties of Lorentz spaces are stated as follows:

\begin{lem}{\rm(\!\!\cite[Proposition A.1]{DW-CMMP-2023})}\label{D A.9}

$(i)$ Interpolation: For all $T>0$, $1\leq r,q\leq\infty$ and $\theta\in(0,1)$, there exists a constant $C$ such that for any  tempered distribution $u$, we have
\begin{align*}
\|u\|_{L^{p,r}_T(L^q)}&\leq C\|u\|_{L^{p_1}_T (L^q)}^{\theta} \| u\|_{L^{p_2}_T(L^q)}^{1-\theta}, 
\end{align*}
where $1<p_{1}<p<p_{2}<\infty$ are such that $1/p =(1-\theta)/p_{1}+\theta/p_{2}${\rm;}

$(ii)$ For any $k>0$, we have
$\|x^{-k}1_{\mathbb{R}_{+}}\|_{L^{1/k,\infty}}=1$.    
\end{lem}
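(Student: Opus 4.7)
The plan is to treat the two items separately: (ii) follows from a direct computation of the distribution function, while (i) reduces to the classical scalar interpolation inequality for Lorentz spaces applied to the scalar function $t\mapsto \|u(t)\|_{L^q}$.

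For part (ii), I would compute the distribution function and the non-increasing rearrangement of $f(x)=x^{-k}\mathbf{1}_{\mathbb{R}_+}$ explicitly. Since $f$ is strictly decreasing on $(0,\infty)$, for any $s>0$ one has
\[
d_f(s)=\bigl|\{x>0:x^{-k}>s\}\bigr|=\bigl|\{x>0:x<s^{-1/k}\}\bigr|=s^{-1/k},
\]
so that inverting the relation $d_f(s)\leq \tau$ gives $f^*(\tau)=\tau^{-k}$. Plugging this into the definition of the weak-$L^{1/k}$ norm yields
\[
\|f\|_{L^{1/k,\infty}}=\sup_{\tau>0}\tau^{k}f^*(\tau)=\sup_{\tau>0}\tau^{k}\cdot\tau^{-k}=1,
\]
which is (ii).

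For part (i), the plan is to reduce to a purely scalar interpolation inequality on $(0,T)$. Setting $g(t):=\|u(t,\cdot)\|_{L^q}$, the definition of the mixed Lorentz norm gives $\|u\|_{L^{p,r}_T(L^q)}=\|g\|_{L^{p,r}(0,T)}$ and $\|u\|_{L^{p_i}_T(L^q)}=\|g\|_{L^{p_i}(0,T)}$ for $i=1,2$, so it suffices to establish
\[
\|g\|_{L^{p,r}(0,T)}\leq C\,\|g\|_{L^{p_1}(0,T)}^{1-\theta}\,\|g\|_{L^{p_2}(0,T)}^{\theta}
\]
for every nonnegative measurable $g$. I would derive this from the classical real-interpolation identification $(L^{p_1},L^{p_2})_{\theta,r}=L^{p,r}$ with equivalent norms, valid whenever $1<p_1<p_2<\infty$ and $1/p=(1-\theta)/p_1+\theta/p_2$, combined with the elementary K-functional estimate $K(t,g;L^{p_1},L^{p_2})\leq\min\bigl(\|g\|_{L^{p_1}},\,t\,\|g\|_{L^{p_2}}\bigr)$; splitting the defining integral $\int_0^\infty\bigl(t^{-\theta}K(t,g)\bigr)^{r}\frac{dt}{t}$ at the crossover $t_*=\|g\|_{L^{p_1}}/\|g\|_{L^{p_2}}$ then yields the desired product bound with a constant depending only on $\theta,r,p_1,p_2$.

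The one substantive step is the identification $(L^{p_1},L^{p_2})_{\theta,r}=L^{p,r}$, which is a classical result obtained by computing the K-functional of the pair $(L^{p_1},L^{p_2})$ in terms of the non-increasing rearrangement $g^*$ via the Calder\'on--Mityagin theorem; the hypothesis $1<p_1<p_2<\infty$ provides the reflexivity and the sharp form of the rearrangement bound required. Once the scalar inequality is in place, the mixed-norm version is automatic because the definition of $L^{p,r}_T(L^q)$ only sees the scalar quantity $\|u(t,\cdot)\|_{L^q}$. A minor cosmetic point: the exponents stated in the lemma are written with $\theta$ and $1-\theta$ swapped relative to the usual convention $\|g\|_p\leq\|g\|_{p_1}^{1-\theta}\|g\|_{p_2}^{\theta}$ attached to $1/p=(1-\theta)/p_1+\theta/p_2$; this amounts to relabelling the interpolation parameter and does not affect the argument.
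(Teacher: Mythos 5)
The paper does not prove this lemma at all: it is imported verbatim as \cite[Proposition A.1]{DW-CMMP-2023}, so there is no internal argument to compare yours against. That said, your proof is correct and is essentially the standard one. Part (ii) is exactly the direct computation $d_f(s)=s^{-1/k}$, $f^*(\tau)=\tau^{-k}$, $\sup_{\tau>0}\tau^k f^*(\tau)=1$. For part (i), the reduction to the scalar function $g(t)=\|u(t,\cdot)\|_{L^q}$ is the right move, and the splitting of $\int_0^\infty (t^{-\theta}K(t,g))^r\,\frac{{\rm d}t}{t}$ at $t_*=\|g\|_{L^{p_1}}/\|g\|_{L^{p_2}}$ gives the product bound with a constant depending only on $\theta,r$. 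Note that the identification $(L^{p_1},L^{p_2})_{\theta,r}=L^{p,r}$, which you flag as the one substantive input, is already built into the paper's own definition of Lorentz spaces (the ``alternatively, \dots real interpolation'' clause in Definition 2.3), so within this paper's framework your argument is essentially self-contained once the elementary K-functional bound is recorded. Your observation that the exponents $\theta$ and $1-\theta$ in the displayed inequality are inconsistent with the stated relation $1/p=(1-\theta)/p_1+\theta/p_2$ is also correct; this is a relabelling slip in the statement (a scaling check $u\mapsto u(\lambda\,\cdot)$ confirms the exponent on $\|u\|_{L^{p_i}_T(L^q)}$ must be the coefficient of $1/p_i$), and it does not affect the validity of the lemma or of the places where it is used.
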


\begin{lem} {\rm(\!\!\cite[Lemma 2.1]{AGZ-arXiv-2024})}\label{L2.3}
Let $f\in\dot{B}_{2,\infty}^{1/2}$ and
$g\in\dot{H}^1 $. It holds that
\begin{align*}
\|f g\|_{L^2}\leq C\|f\|_{\dot{B}_{2,\infty}^{1/2}}\|g\|_{\dot{H}^1}.
\end{align*}
\end{lem}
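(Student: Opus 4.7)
The plan is to bypass any Bony paraproduct decomposition and instead exploit the natural endpoint Lorentz embeddings in $\mathbb{R}^3$ combined with H\"older's inequality in Lorentz spaces, which is precisely the setting in which Lemma \ref{L2.6} has been recorded in the appendix.

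First I would invoke the two ingredients already available in Lemma \ref{L2.6}: the embedding $\dot{B}^{-1+3/p}_{p,r}\hookrightarrow L^{3,r}$ applied with $p=2$ and $r=\infty$ gives
$\|f\|_{L^{3,\infty}}\leq C\|f\|_{\dot{B}^{1/2}_{2,\infty}}$,
while $H^{1}\hookrightarrow L^{6,2}$ (whose homogeneous version on $\mathbb{R}^3$ follows by a standard scaling/real interpolation argument between $\dot{H}^{1/2}\hookrightarrow L^{3,2}$ and $\dot{H}^{3/2}\hookrightarrow L^{\infty,2}$, or directly by interpolating the classical $\dot{H}^1\hookrightarrow L^{6}$ with itself) yields $\|g\|_{L^{6,2}}\leq C\|g\|_{\dot{H}^{1}}$.

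Second, I would apply H\"older's inequality in Lorentz spaces with the exponents
\[
\frac{1}{3}+\frac{1}{6}=\frac{1}{2},\qquad \frac{1}{\infty}+\frac{1}{2}=\frac{1}{2},
\]
so that the product law $L^{3,\infty}\cdot L^{6,2}\hookrightarrow L^{2,2}=L^{2}$ produces
$\|fg\|_{L^{2}}\leq C\|f\|_{L^{3,\infty}}\|g\|_{L^{6,2}}$.
Chaining this with the two embeddings above gives the claimed estimate.

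The only delicate point is the H\"older inequality when one factor lives in the weak-Lorentz space $L^{3,\infty}$; this is not a corollary of the usual H\"older inequality but follows from O'Neil's convolution/product inequality for Lorentz spaces, which is a well-known fact. If one prefers a self-contained Littlewood--Paley proof, the alternative would be a paraproduct decomposition $fg=T_{f}g+T_{g}f+R(f,g)$ and to estimate each piece by Bernstein's inequality: the low-frequency factor from $\dot{B}^{1/2}_{2,\infty}$ can only be bounded in $L^{3}$ (uniformly in frequency by the embedding $\dot{B}^{1/2}_{2,\infty}\hookrightarrow \dot{B}^{0}_{3,\infty}$), and this is compensated exactly by the $2^{k/2}$ Bernstein gain on the high-frequency factor in $\dot{H}^{1}=\dot{B}^{1}_{2,2}$, with an almost-orthogonality/Cauchy--Schwarz step closing the $L^{2}$ summation in $k$. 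I expect this latter route to work cleanly but to be more verbose, so the Lorentz--H\"older argument is the main proposal.
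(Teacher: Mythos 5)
The paper does not prove this lemma; it is imported verbatim from \cite[Lemma 2.1]{AGZ-arXiv-2024}, so there is no in-paper argument to compare against. Your main route is correct and is in fact the standard proof of this estimate (and the one used in the cited source): the embedding $\dot{B}^{1/2}_{2,\infty}\hookrightarrow L^{3,\infty}$ is the $p=2$, $r=\infty$ case of Lemma \ref{L2.6}, the embedding $\dot{H}^1\hookrightarrow L^{6,2}$ is the homogeneous version of the first item of Lemma \ref{L2.6}, and the product law $L^{3,\infty}\cdot L^{6,2}\hookrightarrow L^{2,2}=L^2$ is exactly item $(ii)$ of Lemma \ref{L2.5} (O'Neil). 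So the proof assembles three facts the paper already records in its appendix.

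Two soft spots. First, your justification of $\dot{H}^1\hookrightarrow L^{6,2}$ is off: $\dot{H}^{3/2}(\mathbb{R}^3)$ does not embed into $L^\infty$, and with the paper's definition the space $L^{\infty,2}$ is degenerate, so interpolating against it is not available; likewise ``interpolating $\dot{H}^1\hookrightarrow L^6$ with itself'' yields nothing. The correct elementary routes are either real interpolation between two subcritical Sobolev embeddings, e.g. $\dot{H}^{3/4}\hookrightarrow L^4$ and $\dot{H}^{5/4}\hookrightarrow L^{12}$ with $(L^4,L^{12})_{1/2,2}=L^{6,2}$, or a scaling argument applied to the inhomogeneous embedding $H^1\hookrightarrow L^{6,2}$ already stated in Lemma \ref{L2.6}. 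Second, your fallback Littlewood--Paley sketch would not close as described: in the piece where $f$ carries the high frequency (the paraproduct $T_g f$), the best one gets from $\|\dot{\Delta}_jf\|_{L^2}\lesssim 2^{-j/2}\|f\|_{\dot{B}^{1/2}_{2,\infty}}$ is a bound that is merely uniform in $j$, i.e. $T_gf\in\dot{B}^0_{2,\infty}$, which does not embed into $L^2$; the lack of any $\ell^r$-summability in $\dot{B}^{1/2}_{2,\infty}$ is precisely why the Lorentz-space refinement is needed here. Keep the Lorentz argument as the proof and drop, or substantially rework, the paraproduct alternative.
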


\begin{lem}{\rm(\!\!\cite[Lemma 2.2]{AGZ-arXiv-2024})}\label{L2.4}
Let $f\in\dot{B}_{2,\infty}^{1/2}\cap\dot{B}_{2,\infty}^{3/2} $. Then we have
\begin{align*}
\|f^2\|_{\dot{B}_{2,\infty}^{1/2}}\leq C\|f\|_{\dot{B}_{2,\infty}^{1/2}}\|f\|_{\dot{B}_{2,\infty}^{3/2}}.    
\end{align*}
\end{lem}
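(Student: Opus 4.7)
My plan is to reduce this product estimate to two standard Besov paraproduct/remainder bounds via Bony's decomposition, matching regularities so that one factor carries $\dot{B}^{1/2}_{2,\infty}$ and the other carries $\dot{B}^{3/2}_{2,\infty}$. Concretely, I would write $f^{2}=2T_{f}f+R(f,f)$ with $T_{f}g:=\sum_{j}S_{j-1}f\,\dot\Delta_{j}g$ and $R(f,g):=\sum_{j}\dot\Delta_{j}f\,\widetilde{\Delta}_{j}g$, where $\widetilde{\Delta}_{j}:=\dot\Delta_{j-1}+\dot\Delta_{j}+\dot\Delta_{j+1}$, and then estimate the two pieces separately in $\dot{B}^{1/2}_{2,\infty}$.

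For the paraproduct I would apply the classical estimate from \cite{BCD-Book-2011},
\[
\|T_{f}g\|_{\dot{B}^{s+t}_{p,r}}\lesssim \|f\|_{\dot{B}^{t}_{\infty,\infty}}\|g\|_{\dot{B}^{s}_{p,r}}\qquad (t<0),
\]
with $s=3/2$, $t=-1$, $(p,r)=(2,\infty)$, and combine it with the $3/p$-shift embedding $\dot{B}^{1/2}_{2,\infty}\hookrightarrow \dot{B}^{-1}_{\infty,\infty}$ recorded in Lemma~\ref{lemmabesov} to obtain
\[
\|T_{f}f\|_{\dot{B}^{1/2}_{2,\infty}}\lesssim \|f\|_{\dot{B}^{-1}_{\infty,\infty}}\|f\|_{\dot{B}^{3/2}_{2,\infty}}\lesssim \|f\|_{\dot{B}^{1/2}_{2,\infty}}\|f\|_{\dot{B}^{3/2}_{2,\infty}}.
\]
For the remainder, I plan to invoke the classical bound
\[
\|R(f,g)\|_{\dot{B}^{s_{1}+s_{2}}_{p,\infty}}\lesssim \|f\|_{\dot{B}^{s_{1}}_{p_{1},\infty}}\|g\|_{\dot{B}^{s_{2}}_{p_{2},\infty}},
\]
valid under $s_{1}+s_{2}>0$ and $1/p=1/p_{1}+1/p_{2}\leq 1$, applied with $(s_{1},s_{2})=(1/2,3/2)$ and $(p_{1},p_{2},p)=(2,2,1)$. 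This gives $\|R(f,f)\|_{\dot{B}^{2}_{1,\infty}}\lesssim \|f\|_{\dot{B}^{1/2}_{2,\infty}}\|f\|_{\dot{B}^{3/2}_{2,\infty}}$, after which a further $3/p$-embedding $\dot{B}^{2}_{1,\infty}\hookrightarrow \dot{B}^{1/2}_{2,\infty}$ (again costing exactly $3/2$ derivatives as $p$ goes from $1$ to $2$, by Bernstein applied to each dyadic block) brings the estimate back into the target space. Adding the two contributions yields the claim.

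The delicate point, and what I expect to be the main obstacle, is that $\dot{B}^{3/2}_{2,\infty}$ fails to embed into $L^{\infty}$ in $\mathbb{R}^{3}$, so the naive Moser-type inequality $\|fg\|_{\dot{B}^{1/2}_{2,\infty}}\lesssim \|f\|_{L^{\infty}}\|g\|_{\dot{B}^{1/2}_{2,\infty}}$ is unavailable and one must genuinely exploit the high/low-frequency split to transfer a derivative between the two factors. The heart of the argument is the correct bookkeeping of regularity indices: verifying that the pair $(-1,3/2)$ for the paraproduct and the pair $(1/2,3/2)$ with $p=1$ for the remainder both fall inside the range where the standard Besov product lemmas apply, and that the resulting target spaces ($\dot{B}^{1/2}_{2,\infty}$ and $\dot{B}^{2}_{1,\infty}$ respectively) embed into $\dot{B}^{1/2}_{2,\infty}$ with the right constant via the $3/p$-shift. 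Once this bookkeeping is in place, the desired inequality follows from two direct applications of the product estimates of \cite{BCD-Book-2011}.
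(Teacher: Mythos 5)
Your argument is correct: the index bookkeeping checks out ($\dot{B}^{1/2}_{2,\infty}\hookrightarrow\dot{B}^{-1}_{\infty,\infty}$ for the paraproduct with $t=-1<0$, and $s_1+s_2=2>0$ with $1/p_1+1/p_2=1$ for the remainder, followed by $\dot{B}^{2}_{1,\infty}\hookrightarrow\dot{B}^{1/2}_{2,\infty}$). Note that the paper itself gives no proof of this lemma — it is quoted verbatim from \cite[Lemma 2.2]{AGZ-arXiv-2024} — and the Bony-decomposition route you take is the standard (and essentially the only natural) way to prove such a product law when neither factor lies in $L^{\infty}$, so your proposal matches what the cited source does.
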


In the following, we recall some product laws in Lorentz and Besov spaces, which play a key role in controlling nonlinear terms. 

\begin{lem}{\rm(\!\!\cite{LR-Book-2002,Or-DM-1963})}\label{L2.5}
Let $\Omega$ be a domain in $\mathbb{R}^d$ with $d\geq1$, and let $f\in L^{p_1,q_1}(\Omega)$, $g\in L^{p_2,q_2}(\Omega)$ with $1\leq p,q,p_k,q_k\leq\infty$ for $k=1,2$.

$(i)$ If $1\leq q
_1\leq q_2\leq\infty$, we have the embeddings
\begin{align*}
L^{p,q_{1}}(\Omega)\hookrightarrow L^{p,q_{2}}(\Omega),\quad L^{p,p}(\Omega)=L^{p}(\Omega){\rm;}
\end{align*}

$(ii)$ If $1/p= 1/p_{1}+ 1/p_{2}$ and $1/q=1/q_{1}+1/q_{2}$, it holds that
\begin{align*}
\|fg\|_{L^{p,q}(\Omega)}\leq C\|f\|_{L^{p_{1},q_{1}}(\Omega)}\|g\|_{L^{p_{2},q_{2}}(\Omega)}{\rm;}   
\end{align*}

$(iii)$ If $1<p<\infty$, it holds that
\begin{align*}
\|fg\|_{L^{p,q}(\Omega)}\leq C\|f\|_{L^{p,q}(\Omega)}\|g\|_{L^{\infty}(\Omega)}.   
\end{align*}\end{lem}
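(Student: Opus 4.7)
The plan is to reduce every assertion to quantitative facts about the non-increasing rearrangement $f^{*}$ (and its Hardy maximal function $f^{**}(t):=t^{-1}\int_{0}^{t}f^{*}(s)\,{\rm d}s$), after which the three claims follow from (a) monotonicity of $\ell^{q}$-quasinorms of power-weighted sequences, (b) the O'Neil rearrangement inequality for products, and (c) Hardy's inequality on decreasing functions. I would treat the parts in the order (i), (iii), (ii), since (iii) is a short warm-up for the product machinery needed in (ii).

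For (i), first observe the pointwise bound $t^{1/p}f^{*}(t)\le C_{p,q_{1}}\|f\|_{L^{p,q_{1}}}$ for every $t>0$, obtained by monotonicity of $f^{*}$ applied to the defining integral. This already handles $q_{2}=\infty$; for $q_{1}\le q_{2}<\infty$ I would split $\int_{0}^{\infty}(t^{1/p}f^{*}(t))^{q_{2}}\,{\rm d}t/t$ by factoring out $(t^{1/p}f^{*}(t))^{q_{2}-q_{1}}$ and bounding it by the $L^{p,q_{1}}$-norm raised to that power, leaving an $L^{p,q_{1}}$-integrand. The equality $L^{p,p}=L^{p}$ then follows from the layer-cake identity $\|f\|_{L^{p}}^{p}=p\int_{0}^{\infty}\tau^{p-1}d_{f}(\tau)\,{\rm d}\tau$ combined with the change of variable $\tau=f^{*}(t)$, which yields exactly $\int_{0}^{\infty}(t^{1/p}f^{*}(t))^{p}\,{\rm d}t/t$.

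For (iii), the key remark is $g^{*}(t)\le\|g\|_{L^{\infty}}$ for every $t>0$, together with the elementary pointwise estimate $(fg)^{*}(t)\le f^{*}(t)\,g^{*}(t)$ (which here simplifies even further since $g^{*}$ is constant in $t$). Inserting this in the definition of $L^{p,q}$ gives $\|fg\|_{L^{p,q}}\le\|f\|_{L^{p,q}}\|g\|_{L^{\infty}}$ directly.

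For (ii), I would invoke the O'Neil rearrangement inequality $(fg)^{*}(t)\le f^{**}(t)g^{**}(t)$; then the strategy is to estimate the $L^{p,q}$-norm of $(fg)^{*}$ by the $L^{p,q}$-norm of the right-hand side using Hölder in the Haar measure ${\rm d}t/t$ with exponents $q_{1},q_{2}$ (so that $1/q=1/q_{1}+1/q_{2}$) and in the power weight $t^{1/p}=t^{1/p_{1}}\cdot t^{1/p_{2}}$. The final ingredient is Hardy's inequality for non-increasing functions: $\|t^{1/p_{k}}f^{**}(t)\|_{L^{q_{k}}({\rm d}t/t)}\le C_{p_{k},q_{k}}\|t^{1/p_{k}}f^{*}(t)\|_{L^{q_{k}}({\rm d}t/t)}$, valid precisely when $p_{k}>1$, which is the content of $1/p=1/p_{1}+1/p_{2}$ with $p,p_{k}\ge1$ in the relevant range. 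The main obstacle I anticipate is the borderline behaviour when one of the $p_{k}$ equals $1$ or one of the $q$-indices equals $\infty$: in these degenerate cases Hardy's inequality fails and one must either interpret the target space as the weak Lorentz space or pass through a direct distributional argument using $d_{fg}(\tau)\le d_{f}(\tau/(2g^{*}(t)))+d_{g}(\tau/(2f^{*}(t)))$; this is the only nontrivial step and is where the constant $C$ in the statement acquires its dependence on $p_{1},p_{2},q_{1},q_{2}$.
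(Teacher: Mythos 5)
The paper does not prove this lemma: it is quoted verbatim from the cited references (O'Neil's 1963 paper and Lemarié-Rieusset's book), so there is no in-paper argument to compare against. Your sketch is essentially the standard proof from those sources, and parts (i) and (iii) are correct as outlined. One small caution in (iii): the ``elementary pointwise estimate'' $(fg)^{*}(t)\le f^{*}(t)g^{*}(t)$ is false in general (the correct submultiplicativity is $(fg)^{*}(t_{1}+t_{2})\le f^{*}(t_{1})g^{*}(t_{2})$, hence $(fg)^{*}(2t)\le f^{*}(t)g^{*}(t)$); what saves you here is that $|fg|\le\|g\|_{L^{\infty}}|f|$ pointwise a.e., so $(fg)^{*}\le\|g\|_{L^{\infty}}f^{*}$ directly by monotonicity of the rearrangement, which is all that (iii) needs.

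In (ii) your route through $f^{**}$ and Hardy's inequality is more roundabout than necessary and, as you yourself note, degenerates when some $p_{k}=1$ -- a case the hypotheses of the lemma allow. Two remarks. First, the inequality $(fg)^{**}(t)\le f^{**}(t)g^{**}(t)$ that one might be tempted to use is actually false (test $f=g=\chi_{[0,1]}$ at $t=2$); the pointwise version $(fg)^{*}(t)\le C\,f^{**}(t)g^{**}(t)$ you invoke does hold, but only because $(fg)^{*}(t)\le f^{*}(t/2)g^{*}(t/2)\le f^{**}(t/2)g^{**}(t/2)\le 4f^{**}(t)g^{**}(t)$. Second, once you have $(fg)^{*}(t)\le f^{*}(t/2)g^{*}(t/2)$ there is no need for $f^{**}$ or Hardy at all: substitute $s=t/2$, split $s^{1/p}=s^{1/p_{1}}s^{1/p_{2}}$, and apply H\"older in $L^{q}({\rm d}s/s)$ with exponents $q_{1}/q$ and $q_{2}/q$. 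This covers the entire stated range $1\le p_{k},q_{k}\le\infty$ in one stroke and is the argument actually found in the cited references. So the proposal is correct in substance, but the Hardy-inequality detour introduces an avoidable gap at the endpoint $p_{k}=1$ that the direct rearrangement route closes for free.
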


To prove the global existence of solutions of the system \eqref{I-1}  in the $L^p$ framework, we recall two important lemmas pertaining to the maximal regularity estimates of the Stokes system and the heat equation with sources in Lebesgue spaces.

\begin{lem}{\rm(\!\!\cite[Proposition A.5]{DW-CMMP-2023})}\label{L2.8}
Let $1<p,q<\infty$ and $1\leq r\leq\infty$. Then, for any $u_{0}\in\dot{B}_{p,r}^{2-2/q}$ with $\dive  u_{0}=0$, and any $f\in L^{q,r}(0,T;L^{p})$, the following Cauchy problem of the Stokes system in $[0,T]\times\mathbb{R}^d${\rm:}
\begin{equation*}
\left\{
\begin{aligned}
&  \partial_t u-\Delta u+\nabla P=f,\\
& \dive u=0,\quad\quad\quad\quad\\
& u|_{t=0}=u_0,
\end{aligned}\right.
\end{equation*}
has a unique solution $(u,\nabla P)$ with $u\in{C}([0,T];\dot{B}_{p,r}^{2-2/q})$ and
\begin{align*}
&\|u\|_{L^{\infty}_T(\dot{B}_{p,r}^{2-2/q})}+\|(\partial_t u,\nabla^{2}u,\nabla P)\|_{L^{q,r}(0,T;L^{p})}\leq C\big(\|u_{0}\|_{\dot{B}_{p,r}^{2-2/q}}+\|f\|_{L^{q,r}(0,T;L^{p})}\big),
\end{align*}
where $C$ is a constant  independent of $T$, $u_0$ and $f$.

In addition, let $\widetilde{s}>q$ be such that
\begin{gather*}
\frac{1}{q}-\frac{1}{\widetilde{s}}\leq\frac{1}{2},\quad\frac{3}{2p}+\frac{1}{q}-\frac{1}{\widetilde{s}}> \frac{1}{2},
\end{gather*}
and define $\widetilde{m}\geq p$ by the relation
\begin{align*}
\frac{3}{2\widetilde m}+\frac{1}{\widetilde s}=\frac{3}{2p}+\frac{1}{q}-\frac{1}{2}.   
\end{align*}
Then, the following inequality holds:
\begin{align*}&\|\nabla u\|_{L^{\widetilde{s},r}(0,T;L^{\widetilde{m}})}
\leq C(\|u\|_{L^{\infty}_T(\dot{B}_{p,r}^{2-2/q})}+\|(\partial_t u,\nabla^{2}u)\|_{L^{q,r}(0,T;L^{p})}).
\end{align*}

Finally, if $2/q+3/p>2$, then for all $q<s<\infty$ and $p<m<\infty$ such that
\begin{align*}
\frac 3{2m}+\frac1s=\frac 3{2p}+\frac1q-1,    
\end{align*}
then it holds that
\begin{align*}&\|u\|_{L^{s,r}(0,T;L^{m})}\leq C\big(\|u\|_{L^{\infty}_T(\dot{B}_{p,r}^{2-2/q})}+\|(\partial_t u,\nabla^{2}u)\|_{L^{q,r}(0,T;L^{p})}\big).
\end{align*}
\end{lem}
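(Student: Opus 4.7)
The plan is to treat the initial datum and the forcing separately via Duhamel's formula. Write
\[
u(t)=e^{t\Delta_{\mathbb P}}u_0+\int_0^t e^{(t-s)\Delta_{\mathbb P}}\mathbb P f(s)\,{\rm d}s,
\]
where $\Delta_{\mathbb P}$ denotes the Stokes operator acting on solenoidal vector fields and $\mathbb P$ is the Helmholtz projection. Since $\mathbb P$ is bounded on $L^p$ for $1<p<\infty$, the pressure can be recovered from $\nabla P=(I-\mathbb P)(f+\Delta u-\partial_t u)$, so any maximal regularity bound on $u$ transfers immediately to $\nabla P$.

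For the forcing piece with $u_0=0$, the classical Lebesgue-in-time maximal regularity $\|\partial_t u\|_{L^q_T(L^p)}+\|\nabla^2 u\|_{L^q_T(L^p)}\leq C\|f\|_{L^q_T(L^p)}$ is a consequence of Mihlin's multiplier theorem applied to the Fourier symbols $i\tau/(i\tau+|\xi|^2)$ and $|\xi|^2/(i\tau+|\xi|^2)$, combined with the $L^p$-boundedness of $\mathbb P$. To upgrade to Lorentz time norms I would fix $1<q_1<q<q_2<\infty$ and apply the real interpolation identity
\[
L^{q,r}(0,T;L^p)=\bigl(L^{q_1}(0,T;L^p),L^{q_2}(0,T;L^p)\bigr)_{\theta,r},\qquad 1/q=(1-\theta)/q_1+\theta/q_2,
\]
interpolating the corresponding bounded solution operators. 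For the initial-datum piece I would rely on the heat-semigroup characterization $\|u_0\|_{\dot B^{2-2/q}_{p,r}}\sim \bigl\|t^{1/q-1/2}\nabla e^{t\Delta}u_0\bigr\|_{L^r(\mathbb R_+;{\rm d}t/t)}$, together with its analogues for $\partial_t e^{t\Delta}u_0$ and $\nabla^2 e^{t\Delta}u_0$, in order to obtain simultaneously the uniform-in-time Besov bound and the Lorentz maximal regularity; continuity $u\in C([0,T];\dot B^{2-2/q}_{p,r})$ follows by a density argument on Schwartz data.

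The additional gradient estimate in $L^{\widetilde s,r}(0,T;L^{\widetilde m})$ I would obtain by interpolating in the space variable at each fixed time. A Gagliardo-Nirenberg inequality provides
\[
\|\nabla u(t)\|_{L^{\widetilde m}}\lesssim \|u(t)\|_{\dot B^{2-2/q}_{p,r}}^{1-\sigma}\|\nabla^2 u(t)\|_{L^p}^{\sigma},
\]
where $\sigma$ is chosen so that the scaling identity $3/(2\widetilde m)+1/\widetilde s=3/(2p)+1/q-1/2$ matches $\sigma=q/\widetilde s$. Taking the $L^{\widetilde s,r}$ Lorentz norm in time, applying H\"older in Lorentz spaces (Lemma \ref{L2.5}), and invoking the already established bounds $u\in L^\infty_T(\dot B^{2-2/q}_{p,r})$ and $\nabla^2 u\in L^{q,r}_T(L^p)$ yields the claim. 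The $L^{s,r}(L^m)$ bound on $u$ itself is obtained by the same mechanism with $\sigma=q/s$ and the identity $3/(2m)+1/s=3/(2p)+1/q-1$; the assumption $2/q+3/p>2$ is exactly what forces all interpolation exponents to lie in the admissible open ranges.

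The main obstacle, in my view, is the coupling of Lorentz time interpolation with the Besov-in-space setting for the initial condition: one must arrange for $\partial_t u$, $\nabla^2 u$, $\nabla P$ and $u\in C([0,T];\dot B^{2-2/q}_{p,r})$ to inherit a compatible $L^{q,r}$-type bound simultaneously, which constrains the choice of interpolation endpoints $q_1,q_2$ so that the heat-semigroup characterization of the Besov norm remains compatible with the interpolation scale. A secondary technical difficulty lies in checking that the Gagliardo-Nirenberg exponents in the two embedding statements satisfy $1<\widetilde m<\infty$, $q<\widetilde s<\infty$ and the analogous constraints for $(s,m)$; these are precisely the scaling restrictions made in the statement, and cross-checking them against the Sobolev embedding thresholds is where the exponent bookkeeping becomes delicate.
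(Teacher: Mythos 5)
The paper offers no proof of this lemma: it is quoted verbatim from \cite[Proposition A.5]{DW-CMMP-2023}, so your attempt has to be measured against the argument of that reference. Your architecture for the maximal-regularity part is sound and is essentially the standard one: Duhamel splitting, $L^p$-boundedness of the Helmholtz projector to recover $\nabla P$, classical $L^q_T(L^p)$ maximal regularity upgraded to $L^{q,r}$ by real interpolation in the time exponent, and a caloric characterization of $\dot B^{2-2/q}_{p,r}$ for the free evolution. Two cosmetic caveats: the characterization via $t^{1/q-1/2}\nabla e^{t\Delta}u_0$ is an \emph{equivalent} norm only when $2-2/q<1$ (for general $q$ use $t^{1/q}\Delta e^{t\Delta}u_0$), though you only need the one-sided inequality; and continuity into $\dot B^{2-2/q}_{p,\infty}$ cannot be obtained by density of Schwartz data when $r=\infty$.

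The genuine gap is in your proof of the two embedding inequalities. Pointwise-in-time Gagliardo--Nirenberg gives $\|\nabla u(t)\|_{L^{\widetilde m}}\lesssim A^{1-\sigma}h(t)^{\sigma}$ with $A=\|u\|_{L^{\infty}_T(\dot B^{2-2/q}_{p,r})}$, $h(t)=\|\nabla^2 u(t)\|_{L^p}$ and $\sigma=q/\widetilde s<1$. But $\|h^{\sigma}\|_{L^{\widetilde s,r}}=\|h\|^{\sigma}_{L^{q,\sigma r}}$, and since $\sigma r<r$ the space $L^{q,\sigma r}$ is \emph{strictly smaller} than $L^{q,r}$: knowing $h\in L^{q,r}$ only yields $\nabla u\in L^{\widetilde s,\,r/\sigma}(L^{\widetilde m})$, which is strictly weaker than the claimed $L^{\widetilde s,r}(L^{\widetilde m})$. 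The loss is not harmless here, because the paper applies these bounds with $r=1$. The same defect affects the $L^{s,r}(L^m)$ bound on $u$. The repair is to avoid interpolating pointwise in time: estimate the Duhamel integral by $\bigl\|\int_0^t e^{(t-s)\Delta}\mathbb{P}f\,{\rm d}s\bigr\|_{L^m}\lesssim\int_0^t (t-s)^{-\frac32(\frac1p-\frac1m)}\|f(s)\|_{L^p}\,{\rm d}s$ and apply O'Neil's convolution inequality in Lorentz spaces with the kernel $t^{-\frac32(\frac1p-\frac1m)}\in L^{a,\infty}$, where $1/a=1+1/s-1/q$ (this is exactly where $s>q$ and the scaling relation enter), treating the free part by summing the dyadic blocks $e^{-ct2^{2j}}\|\dot\Delta_j u_0\|_{L^p}$; alternatively, prove the embedding for two Lebesgue time exponents $q_1<q<q_2$ and real-interpolate the full solution operator. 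Either route preserves the second Lorentz index, whereas your pointwise interpolation cannot.
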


\vspace{3mm}

\begin{lem}{\rm({\!\!\!\cite[Proposition 2.1]{DMT-JEE-2021})}} \label{L2.9}
Let $1< p, q< \infty$ and $1\leq r\leq \infty$. Then, for any $u_0\in \dot{B} _{p, r}^{2- 2/ q}$
and $f\in L^{q, r}(0,T;L^{p})$, the following Cauchy problem of the heat equation in $[0,T]\times\mathbb{R}^d${\rm:}
\begin{equation*}
\left\{
\begin{aligned}
&  \partial_t u -\Delta u=f,\\
& \dive u=0,\\
& u|_{t=0}=u_0,
\end{aligned}\right.
\end{equation*}
has a unique solution $u$ satisfying $u\in C([0,T];\dot{B}^{2-2/q}_{p,r})$ and
\begin{align*}
&\|u\|_{L^{\infty}(\dot{B}_{p,r}^{2-2/q})}+\|(\partial_t u,\nabla^{2}u)\|_{L^{q,r}(L^{p})}
\leq C\big(\|u_{0}\|_{\dot{B}_{p,r}^{2-2/q}}+\|f\|_{L^{q,r}(L^{p})}\big),
\end{align*}
for a constant $C$ independent of $T$, $u_0$ and $f$.

Furthermore, if $2/q + 3/p>2$, then for all $q < s < \infty$ and $p<m<\infty$ such that
\begin{align*}
\frac 3{2m}+\frac1s=\frac 3{2p}+\frac1q-1,    
\end{align*}
it holds that
\begin{align*}
&\|u\|_{L^{s,r}(L^{m})}
\leq C\big(\|u\|_{L^{\infty}(\dot{B}_{p,r}^{2-2/q})}+\|(\partial_t u,\nabla^{2}u)\|_{L^{q,r}(L^{p})}\big).
\end{align*}   
\end{lem}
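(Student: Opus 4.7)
The plan is to exploit linearity and split $u = v + w$, where $v(t) := e^{t\Delta}u_0$ solves the homogeneous heat equation with initial data $u_0$ and $w(t) := \int_0^t e^{(t-s)\Delta} f(s)\,{\rm d}s$ carries the forcing with zero initial data. Each piece will be analyzed with the homogeneous Littlewood-Paley decomposition. Since the problem is posed on all of $\mathbb{R}^d$ and the heat kernel has a scaling invariance, all estimates should come out with constants independent of $T$.

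For the homogeneous piece $v$, the standard pointwise heat-semigroup bound on a Littlewood-Paley block gives $\|\dot{\Delta}_j v(t)\|_{L^p} \lesssim e^{-c t 2^{2j}}\|\dot{\Delta}_j u_0\|_{L^p}$, which combined with Bernstein's inequality upgrades to $\|\nabla^2\dot{\Delta}_j v(t)\|_{L^p} + \|\partial_t\dot{\Delta}_j v(t)\|_{L^p} \lesssim 2^{2j} e^{-ct2^{2j}}\|\dot{\Delta}_j u_0\|_{L^p}$. A substitution $\tau = ct2^{2j}$ then yields
$$\|(\partial_t,\nabla^2)\dot{\Delta}_j v\|_{L^q(\mathbb{R}_+;L^p)} \lesssim 2^{(2-2/q)j}\|\dot{\Delta}_j u_0\|_{L^p}.$$
Taking the $\ell^r(\mathbb{Z})$ norm reproduces exactly $\|u_0\|_{\dot{B}^{2-2/q}_{p,r}}$. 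The upgrade to Lorentz in time $L^{q,r}$ is obtained either by computing the non-increasing rearrangement of $t \mapsto 2^{2j} e^{-ct 2^{2j}}$ directly (it is strictly decreasing in $t$), or by real interpolation $L^{q,r} = (L^{q_1},L^{q_2})_{\theta,r}$ between two $L^{q_i}$ bounds of the form just proved. Time continuity with values in $\dot{B}^{2-2/q}_{p,r}$ follows from the uniform semigroup bound together with density of Schwartz data.

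For the forcing piece $w$, the backbone is the classical heat-equation maximal $L^q$--$L^p$ regularity $\|\partial_t w\|_{L^q(0,T;L^p)} + \|\nabla^2 w\|_{L^q(0,T;L^p)} \leq C\|f\|_{L^q(0,T;L^p)}$, provable via operator-valued Fourier multiplier theorems of Weis type or directly by Calder\'on--Zygmund theory applied to the singular parabolic kernel $\partial_t\nabla^2 G_{t-s}$, with constant independent of $T$ by the homogeneity of the kernel. The upgrade to Lorentz in time is then obtained by real interpolation of the solution map between two $L^{q_i}$ maximal regularity inequalities with $q_1 < q < q_2$. The $L^\infty_T(\dot{B}^{2-2/q}_{p,r})$ bound on $w$ follows from the parabolic trace embedding $\dot{W}^{2,1}_{p,(q,r)}((0,T)\times\mathbb{R}^d) \hookrightarrow C([0,T];\dot{B}^{2-2/q}_{p,r})$, which itself is a direct consequence of the Duhamel representation and the semigroup estimates from the first step.

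The sharp space-time estimate $\|u\|_{L^{s,r}(L^m)}$ under the scaling relation $\frac{3}{2m} + \frac{1}{s} = \frac{3}{2p} + \frac{1}{q} - 1$ is a parabolic Sobolev-type embedding. Applying Bernstein in space gives $\|\dot{\Delta}_j u\|_{L^m} \lesssim 2^{jd(1/p - 1/m)}\|\dot{\Delta}_j u\|_{L^p}$, and the scaling condition is precisely the one that allows interpolation of $L^{s,r}_t(L^p)$ between the endpoint spaces $L^\infty_t(L^p)$ (which controls $\|u\|_{L^\infty(\dot{B}^{2-2/q}_{p,r})}$) and $L^q_t(L^p)$ (which controls $\|\nabla^2 u\|_{L^{q,r}(L^p)}$). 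The main obstacle is preserving the Lorentz index $r$ sharply through each of these three stages: the textbook maximal regularity results naturally land in $L^q$ or in the weak-type $L^{q,\infty}$, and getting the correct $L^{q,r}$ for an arbitrary $r \in [1,\infty]$ requires a careful choice of endpoint exponents $(q_1,q_2,\theta)$ so that the real-interpolation functor outputs exactly the target index, together with verification that the maximal regularity constants at the two endpoints remain uniformly controlled as $T$ varies.
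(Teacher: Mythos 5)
This lemma is not proved in the paper at all: it is quoted verbatim as Proposition 2.1 of \cite{DMT-JEE-2021}, so there is no in-paper argument to compare against. Your outline reconstructs the standard proof of that proposition --- Duhamel splitting, exponential decay of the heat semigroup on Littlewood--Paley blocks for the free part (which, after the substitution $\tau=ct2^{2j}$ and summation in $j$, produces exactly $\|u_0\|_{\dot{B}^{2-2/q}_{p,r}}$), $T$-independent maximal $L^q(L^p)$ regularity for the Duhamel part, and real interpolation $(L^{q_1}(L^p),L^{q_2}(L^p))_{\theta,r}=L^{q,r}(L^p)$ of the linear, $T$-uniformly bounded solution operator to reach the Lorentz index --- and this is sound. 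One caution on the last estimate: as literally written, a pointwise-in-time Gagliardo--Nirenberg inequality followed by H\"older in Lorentz time applied to $\|\nabla^2u(t)\|_{L^p}^{\theta}$ lands you in $L^{q,\theta r}$ for the gradient factor, which is strictly smaller than the available $L^{q,r}$; the fix is the one you already gesture at, namely to run the interpolation at the level of the $K$-functional (so that $\|u\|_{(L^{\infty}(L^p),L^{q}(L^p))_{\theta,r}}\lesssim\|u\|_{L^{\infty}(L^p)}^{1-\theta}\|u\|_{L^{q}(L^p)}^{\theta}$ holds for every $r$) block by block, with the first endpoint being the Sobolev embedding $\dot{B}^{2-2/q}_{p,r}\hookrightarrow L^{p^{\sharp}}$, $1/p^{\sharp}=1/p-(2-2/q)/3$, which is where the hypothesis $2/q+3/p>2$ enters. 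The claimed continuity $u\in C([0,T];\dot{B}^{2-2/q}_{p,r})$ also deserves more than a wave for $r=\infty$, where density fails and only weak-$*$ type continuity comes for free.
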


\bigskip 
{\bf Acknowledgements:}
The authors are indebted to Prof. Guilong Gui
and Dr. Tiantian Hao for their valuable
suggestions on this paper. Li and  Ni  are supported by NSFC (Grant Nos. 12331007, 12071212).  
And Li is also supported by the ``333 Project" of Jiangsu Province.
Shou is supported by NSFC (Grant No. 12301275).


\bibliographystyle{plain}

\end{document}